\newcommand{\mysetminus}{\!\setminus\!} 
\newcommand{\mysetminus}{\!\setminus\!} 
\newcommand{\safemath}[2]{\newcommand{#1}{\ensuremath{#2}\xspace}}
\newcommand{\diam}{\operatorname{diam}}
\newcommand{\lebmeasure}{\lambda}
\newcommand{\ssa}{\mathsf{a}}
\newcommand{\ssb}{\mathsf{b}}
\newcommand{\ssc}{\mathsf{c}}
\newcommand{\ssd}{\mathsf{d}}
\newcommand{\sse}{\mathsf{e}}
\newcommand{\ssf}{\mathsf{f}}
\newcommand{\ssg}{\mathsf{g}}
\newcommand{\ssh}{\mathsf{h}}
\newcommand{\ssi}{\mathsf{i}}
\newcommand{\ssj}{\mathsf{j}}
\newcommand{\ssk}{\mathsf{k}}
\newcommand{\ssl}{\mathsf{l}}
\newcommand{\ssm}{\mathsf{m}}
\newcommand{\ssn}{\mathsf{n}}
\newcommand{\sso}{\mathsf{o}}
\newcommand{\ssp}{\mathsf{p}}
\newcommand{\ssq}{\mathsf{q}}
\newcommand{\ssr}{\mathsf{r}}
\newcommand{\sss}{\mathsf{s}}
\newcommand{\sst}{\mathsf{t}}
\newcommand{\ssu}{\mathsf{u}}
\newcommand{\ssv}{\mathsf{v}}
\newcommand{\ssw}{\mathsf{w}}
\newcommand{\ssx}{\mathsf{x}}
\newcommand{\ssy}{\mathsf{y}}
\newcommand{\ssz}{\mathsf{z}}
\safemath{\bmsa}{\bm{\ssa}}
\safemath{\bmsb}{\bm{\ssb}}
\safemath{\bmsc}{\bm{\ssc}}
\safemath{\bmsd}{\bm{\ssd}}
\safemath{\bmse}{\bm{\sse}}
\safemath{\bmsf}{\bm{\ssf}}
\safemath{\bmsg}{\bm{\ssg}}
\safemath{\bmsh}{\bm{\ssh}}
\safemath{\bmsi}{\bm{\ssi}}
\safemath{\bmsj}{\bm{\ssj}}
\safemath{\bmsk}{\bm{\ssk}}
\safemath{\bmsl}{\bm{\ssl}}
\safemath{\bmsm}{\bm{\ssm}}
\safemath{\bmsn}{\bm{\ssn}}
\safemath{\bmso}{\bm{\sso}}
\safemath{\bmsp}{\bm{\ssp}}
\safemath{\bmsq}{\bm{\ssq}}
\safemath{\bmsr}{\bm{\ssr}}
\safemath{\bmss}{\bm{\sss}}
\safemath{\bmst}{\bm{\sst}}
\safemath{\bmsu}{\bm{\ssu}}
\safemath{\bmsv}{\bm{\ssv}}
\safemath{\bmsw}{\bm{\ssw}}
\safemath{\bmsx}{\bm{\ssx}}
\safemath{\bmsy}{\bm{\ssy}}
\safemath{\bmsz}{\bm{\ssz}}
\bmdefine{\bmualphad}{\upalpha}
\bmdefine{\bmubetad}{\upbeta}
\bmdefine{\bmuchid}{\upchi}
\bmdefine{\bmudeltad}{\updelta}
\bmdefine{\bmuepsilond}{\upepsilon}
\bmdefine{\bmuvarepsilond}{\upvarepsilon}
\bmdefine{\bmuetad}{\upeta}
\bmdefine{\bmugammad}{\upgamma}
\bmdefine{\bmuiotad}{\upiota}
\bmdefine{\bmukappad}{\upkappa}
\bmdefine{\bmulambdad}{\uplambda}
\bmdefine{\bmumud}{\upmu}
\bmdefine{\bmunud}{\upnu}
\bmdefine{\bmuomegad}{\upomega}
\bmdefine{\bmuphid}{\upphi}
\bmdefine{\bmuvarphid}{\upvarphi}
\bmdefine{\bmupid}{\uppi}
\bmdefine{\bmuvarpid}{\upvarpi}
\bmdefine{\bmupsid}{\uppsi}
\bmdefine{\bmurhod}{\uprho}
\bmdefine{\bmuvarrhod}{\upvarrho}
\bmdefine{\bmusigmad}{\upsigma}
\bmdefine{\bmuvarsigmad}{\upvarsigma}
\bmdefine{\bmutaud}{\uptau}
\bmdefine{\bmuthetad}{\uptheta}
\bmdefine{\bmuvarthetad}{\upvartheta}
\bmdefine{\bmuupsilond}{\upupsilon}
\bmdefine{\bmuxid}{\upxi}
\bmdefine{\bmuzetad}{\upzeta}
\safemath{\bmua}{\mathbf{a}}
\safemath{\bmub}{\mathbf{b}}
\safemath{\bmuc}{\mathbf{c}}
\safemath{\bmud}{\mathbf{d}}
\safemath{\bmue}{\mathbf{e}}
\safemath{\bmuf}{\mathbf{f}}
\safemath{\bmug}{\mathbf{g}}
\safemath{\bmuh}{\mathbf{h}}
\safemath{\bmui}{\mathbf{i}}
\safemath{\bmuj}{\mathbf{j}}
\safemath{\bmuk}{\mathbf{k}}
\safemath{\bmul}{\mathbf{l}}
\safemath{\bmum}{\mathbf{m}}
\safemath{\bmun}{\mathbf{n}}
\safemath{\bmuo}{\mathbf{o}}
\safemath{\bmup}{\mathbf{p}}
\safemath{\bmuq}{\mathbf{q}}
\safemath{\bmur}{\mathbf{r}}
\safemath{\bmus}{\mathbf{s}}
\safemath{\bmut}{\mathbf{t}}
\safemath{\bmuu}{\mathbf{u}}
\safemath{\bmuv}{\mathbf{v}}
\safemath{\bmuw}{\mathbf{w}}
\safemath{\bmux}{\mathbf{x}}
\safemath{\bmuy}{\mathbf{y}}
\safemath{\bmuz}{\mathbf{z}}
\safemath{\bmualpha}{\bmualphad}
\safemath{\bmubeta}{\bmubetad}
\safemath{\bmuchi}{\bumchid}
\safemath{\bmudelta}{\bmudeltad}
\safemath{\bmuepsilon}{\bmuepsilond}
\safemath{\bmuvarepsilon}{\bmuvarepsilond}
\safemath{\bmueta}{\bmuetad}
\safemath{\bmugamma}{\bmugammad}
\safemath{\bmuiota}{\bmuiotad}
\safemath{\bmukappa}{\bmukappad}
\safemath{\bmulambda}{\bmulambdad}
\safemath{\bmumu}{\bmumud}
\safemath{\bmunu}{\bmunud}
\safemath{\bmuomega}{\bmuomegad}
\safemath{\bmuphi}{\bmuphid}
\safemath{\bmuvarphi}{\bmuvarphid}
\safemath{\bmupi}{\bmupid}
\safemath{\bmuvarpi}{\bmuvarpid}
\safemath{\bmupsi}{\bmupsid}
\safemath{\bmurho}{\bmurhod}
\safemath{\bmuvarrho}{\bmuvarrhod}
\safemath{\bmusigma}{\bmusigmad}
\safemath{\bmuvarsigma}{\bmuvarsigmad}
\safemath{\bmutau}{\bmutaud}
\safemath{\bmutheta}{\bmuthetad}
\safemath{\bmuvartheta}{\bmuvarthetad}
\safemath{\bmuupsilon}{\bmuupsilond}
\safemath{\bmuxi}{\bmuxid}
\safemath{\bmuzeta}{\bmuzetad}
\bmdefine{\bmiad}{a}
\bmdefine{\bmibd}{b}
\bmdefine{\bmicd}{c}
\bmdefine{\bmidd}{d}
\bmdefine{\bmied}{e}
\bmdefine{\bmifd}{f}
\bmdefine{\bmigd}{g}
\bmdefine{\bmihd}{h}
\bmdefine{\bmiid}{i}
\bmdefine{\bmijd}{j}
\bmdefine{\bmikd}{k}
\bmdefine{\bmild}{l}
\bmdefine{\bmimd}{m}
\bmdefine{\bmind}{n}
\bmdefine{\bmiod}{o}
\bmdefine{\bmipd}{p}
\bmdefine{\bmiqd}{q}
\bmdefine{\bmird}{r}
\bmdefine{\bmisd}{s}
\bmdefine{\bmitd}{t}
\bmdefine{\bmiud}{u}
\bmdefine{\bmivd}{v}
\bmdefine{\bmiwd}{w}
\bmdefine{\bmixd}{x}
\bmdefine{\bmiyd}{y}
\bmdefine{\bmizd}{z}
\bmdefine{\bmialphad}{\alpha}
\bmdefine{\bmibetad}{\beta}
\bmdefine{\bmichid}{\chi}
\bmdefine{\bmideltad}{\delta}
\bmdefine{\bmiepsilond}{\epsilon}
\bmdefine{\bmivarepsilond}{\varepsilon}
\bmdefine{\bmietad}{\eta}
\bmdefine{\bmigammad}{\gamma}
\bmdefine{\bmiiotad}{\iota}
\bmdefine{\bmikappad}{\kappa}
\bmdefine{\bmivarkappad}{\varkappa}
\bmdefine{\bmilambdad}{\lambda}
\bmdefine{\bmimud}{\mu}
\bmdefine{\bminud}{\nu}
\bmdefine{\bmiomegad}{\omega}
\bmdefine{\bmiphid}{\phi}
\bmdefine{\bmivarphid}{\varphi}
\bmdefine{\bmipid}{\pi}
\bmdefine{\bmivarpid}{\varpi}
\bmdefine{\bmipsid}{\psi}
\bmdefine{\bmirhod}{\rho}
\bmdefine{\bmivarrhod}{\varrho}
\bmdefine{\bmisigmad}{\sigma}
\bmdefine{\bmivarsigmad}{\varsigma}
\bmdefine{\bmitaud}{\tau}
\bmdefine{\bmithetad}{\theta}
\bmdefine{\bmivarthetad}{\vartheta}
\bmdefine{\bmiupsilond}{\upsilon}
\bmdefine{\bmixid}{\xi}
\bmdefine{\bmizetad}{\zeta}
\safemath{\bmia}{\bmiad}
\safemath{\bmib}{\bmibd}
\safemath{\bmic}{\bmicd}
\safemath{\bmid}{\bmidd}
\safemath{\bmie}{\bmied}
\safemath{\bmif}{\bmifd}
\safemath{\bmig}{\bmigd}
\safemath{\bmih}{\bmihd}
\safemath{\bmii}{\bmiid}
\safemath{\bmij}{\bmijd}
\safemath{\bmik}{\bmikd}
\safemath{\bmil}{\bmild}
\safemath{\bmim}{\bmimd}
\safemath{\bmin}{\bmind}
\safemath{\bmio}{\bmiod}
\safemath{\bmip}{\bmipd}
\safemath{\bmiq}{\bmiqd}
\safemath{\bmir}{\bmird}
\safemath{\bmis}{\bmisd}
\safemath{\bmit}{\bmitd}
\safemath{\bmiu}{\bmiud}
\safemath{\bmiv}{\bmivd}
\safemath{\bmiw}{\bmiwd}
\safemath{\bmix}{\bmixd}
\safemath{\bmiy}{\bmiyd}
\safemath{\bmiz}{\bmizd}
\safemath{\bmialpha}{\bmialphad}
\safemath{\bmibeta}{\bmibetad}
\safemath{\bmichi}{\bmichid}
\safemath{\bmidelta}{\bmideltad}
\safemath{\bmiepsilon}{\bmiepsilond}
\safemath{\bmivarepsilon}{\bmivarepsilond}
\safemath{\bmieta}{\bmietad}
\safemath{\bmigamma}{\bmigammad}
\safemath{\bmiiota}{\bmiiotad}
\safemath{\bmikappa}{\bmikappad}
\safemath{\bmivarkappa}{\bmivarkappad}
\safemath{\bmilambda}{\bmilambdad}
\safemath{\bmimu}{\bmimud}
\safemath{\bminu}{\bminud}
\safemath{\bmiomega}{\bmiomegad}
\safemath{\bmiphi}{\bmiphid}
\safemath{\bmivarphi}{\bmivarphid}
\safemath{\bmipi}{\bmipid}
\safemath{\bmivarpi}{\bmivarpid}
\safemath{\bmipsi}{\bmipsid}
\safemath{\bmirho}{\bmirhod}
\safemath{\bmivarrho}{\bmivarrhod}
\safemath{\bmisigma}{\bmisigmad}
\safemath{\bmivarsigma}{\bmivarsigmad}
\safemath{\bmitau}{\bmitaud}
\safemath{\bmitheta}{\bmithetad}
\safemath{\bmivartheta}{\bmivarthetad}
\safemath{\bmiupsilon}{\bmiupsilond}
\safemath{\bmixi}{\bmixid}
\safemath{\bmizeta}{\bmizetad}
\bmdefine{\bmuDeltad}{\Updelta}
\bmdefine{\bmuGammad}{\Upgamma}
\bmdefine{\bmuLambdad}{\Uplambda}
\bmdefine{\bmuOmegad}{\Upomega}
\bmdefine{\bmuPhid}{\Upphi}
\bmdefine{\bmuPid}{\Uppi}
\bmdefine{\bmuPsid}{\Uppsi}
\bmdefine{\bmuSigmad}{\Upsigma}
\bmdefine{\bmuThetad}{\Uptheta}
\bmdefine{\bmuUpsilond}{\Upupsilon}
\bmdefine{\bmuXid}{\Upxi}
\safemath{\bmuA}{\mathbf{A}}
\safemath{\bmuB}{\mathbf{B}}
\safemath{\bmuC}{\mathbf{C}}
\safemath{\bmuD}{\mathbf{D}}
\safemath{\bmuE}{\mathbf{E}}
\safemath{\bmuF}{\mathbf{F}}
\safemath{\bmuG}{\mathbf{G}}
\safemath{\bmuH}{\mathbf{H}}
\safemath{\bmuI}{\mathbf{I}}
\safemath{\bmuJ}{\mathbf{J}}
\safemath{\bmuK}{\mathbf{K}}
\safemath{\bmuL}{\mathbf{L}}
\safemath{\bmuM}{\mathbf{M}}
\safemath{\bmuN}{\mathbf{N}}
\safemath{\bmuO}{\mathbf{O}}
\safemath{\bmuP}{\mathbf{P}}
\safemath{\bmuQ}{\mathbf{Q}}
\safemath{\bmuR}{\mathbf{R}}
\safemath{\bmuS}{\mathbf{S}}
\safemath{\bmuT}{\mathbf{T}}
\safemath{\bmuU}{\mathbf{U}}
\safemath{\bmuV}{\mathbf{V}}
\safemath{\bmuW}{\mathbf{W}}
\safemath{\bmuX}{\mathbf{X}}
\safemath{\bmuY}{\mathbf{Y}}
\safemath{\bmuZ}{\mathbf{Z}}
\safemath{\bmuZero}{\mathbf{0}}
\safemath{\bmuOne}{\mathbf{1}}
\safemath{\bmuDelta}{\bmuDeltad}
\safemath{\bmuGamma}{\bmuGammad}
\safemath{\bmuLambda}{\bmuLambdad}
\safemath{\bmuOmega}{\bmuOmegad}
\safemath{\bmuPhi}{\bmuPhid}
\safemath{\bmuPi}{\bmuPid}
\safemath{\bmuPsi}{\bmuPsid}
\safemath{\bmuSigma}{\bmuSigmad}
\safemath{\bmuTheta}{\bmuThetad}
\safemath{\bmuUpsilon}{\bmuUpsilond}
\safemath{\bmuXi}{\bmuXid}
\bmdefine{\bmiAd}{A}
\bmdefine{\bmiBd}{B}
\bmdefine{\bmiCd}{C}
\bmdefine{\bmiDd}{D}
\bmdefine{\bmiEd}{E}
\bmdefine{\bmiFd}{F}
\bmdefine{\bmiGd}{G}
\bmdefine{\bmiHd}{H}
\bmdefine{\bmiId}{I}
\bmdefine{\bmiJd}{J}
\bmdefine{\bmiKd}{K}
\bmdefine{\bmiLd}{L}
\bmdefine{\bmiMd}{M}
\bmdefine{\bmiOd}{N}
\bmdefine{\bmiPd}{O}
\bmdefine{\bmiQd}{P}
\bmdefine{\bmiRd}{R}
\bmdefine{\bmiSd}{S}
\bmdefine{\bmiTd}{T}
\bmdefine{\bmiUd}{U}
\bmdefine{\bmiVd}{V}
\bmdefine{\bmiWd}{W}
\bmdefine{\bmiXd}{X}
\bmdefine{\bmiYd}{Y}
\bmdefine{\bmiZd}{Z}
\bmdefine{\bmiDeltad}{\Delta}
\bmdefine{\bmiGammad}{\Gamma}
\bmdefine{\bmiLambdad}{\Lambda}
\bmdefine{\bmiOmegad}{\Omega}
\bmdefine{\bmiPhid}{\Phi}
\bmdefine{\bmiPid}{\Pi}
\bmdefine{\bmiPsid}{\Psi}
\bmdefine{\bmiSigmad}{\Sigma}
\bmdefine{\bmiThetad}{\Theta}
\bmdefine{\bmiUpsilond}{\Upsilon}
\bmdefine{\bmiXid}{\Xi}
\safemath{\bmiA}{\bmiAd}
\safemath{\bmiB}{\bmiBd}
\safemath{\bmiC}{\bmiCd}
\safemath{\bmiD}{\bmiDd}
\safemath{\bmiE}{\bmiEd}
\safemath{\bmiF}{\bmiFd}
\safemath{\bmiG}{\bmiGd}
\safemath{\bmiH}{\bmiHd}
\safemath{\bmiI}{\bmiId}
\safemath{\bmiJ}{\bmiJd}
\safemath{\bmiK}{\bmiKd}
\safemath{\bmiL}{\bmiLd}
\safemath{\bmiM}{\bmiMd}
\safemath{\bmiN}{\bmiNd}
\safemath{\bmiO}{\bmiOd}
\safemath{\bmiP}{\bmiPd}
\safemath{\bmiQ}{\bmiQd}
\safemath{\bmiR}{\bmiRd}
\safemath{\bmiS}{\bmiSd}
\safemath{\bmiT}{\bmiTd}
\safemath{\bmiU}{\bmiUd}
\safemath{\bmiV}{\bmiVd}
\safemath{\bmiW}{\bmiWd}
\safemath{\bmiX}{\bmiXd}
\safemath{\bmiY}{\bmiYd}
\safemath{\bmiZ}{\bmiZd}
\safemath{\bmiDelta}{\bmiDeltad}
\safemath{\bmiGamma}{\bmiGammad}
\safemath{\bmiLambda}{\bmiLambdad}
\safemath{\bmiOmega}{\bmiOmegad}
\safemath{\bmiPhi}{\bmiPhid}
\safemath{\bmiPi}{\bmiPid}
\safemath{\bmiPsi}{\bmiPsid}
\safemath{\bmiSigma}{\bmiSigmad}
\safemath{\bmiTheta}{\bmiThetad}
\safemath{\bmiUpsilon}{\bmiUpsilond}
\safemath{\bmiXi}{\bmiXid}
\safemath{\evA}{\mathcal{A}}
\safemath{\evB}{\mathcal{B}}
\safemath{\evC}{\mathcal{C}}
\safemath{\evD}{\mathcal{D}}
\safemath{\evE}{\mathcal{E}}
\safemath{\evF}{\mathcal{F}}
\safemath{\evG}{\mathcal{G}}
\safemath{\evH}{\mathcal{H}}
\safemath{\evI}{\mathcal{I}}
\safemath{\evJ}{\mathcal{J}}
\safemath{\evK}{\mathcal{K}}
\safemath{\evL}{\mathcal{L}}
\safemath{\evM}{\mathcal{M}}
\safemath{\evN}{\mathcal{N}}
\safemath{\evO}{\mathcal{O}}
\safemath{\evP}{\mathcal{P}}
\safemath{\evQ}{\mathcal{Q}}
\safemath{\evR}{\mathcal{R}}
\safemath{\evS}{\mathcal{S}}
\safemath{\evT}{\mathcal{T}}
\safemath{\evU}{\mathcal{U}}
\safemath{\evV}{\mathcal{V}}
\safemath{\evW}{\mathcal{W}}
\safemath{\evX}{\mathcal{X}}
\safemath{\evY}{\mathcal{Y}}
\safemath{\evZ}{\mathcal{Z}}
\safemath{\setA}{\mathcal{A}}
\safemath{\setB}{\mathcal{B}}
\safemath{\setC}{\mathcal{C}}
\safemath{\setD}{\mathcal{D}}
\safemath{\setE}{\mathcal{E}}
\safemath{\setF}{\mathcal{F}}
\safemath{\setG}{\mathcal{G}}
\safemath{\setH}{\mathcal{H}}
\safemath{\setI}{\mathcal{I}}
\safemath{\setJ}{\mathcal{J}}
\safemath{\setK}{\mathcal{K}}
\safemath{\setL}{\mathcal{L}}
\safemath{\setM}{\mathcal{M}}
\safemath{\setN}{\mathcal{N}}
\safemath{\setO}{\mathcal{O}}
\safemath{\setP}{\mathcal{P}}
\safemath{\setQ}{\mathcal{Q}}
\safemath{\setR}{\mathcal{R}}
\safemath{\setS}{\mathcal{S}}
\safemath{\setT}{\mathcal{T}}
\safemath{\setU}{\mathcal{U}}
\safemath{\setV}{\mathcal{V}}
\safemath{\setW}{\mathcal{W}}
\safemath{\setX}{\mathcal{X}}
\safemath{\setY}{\mathcal{Y}}
\safemath{\setZ}{\mathcal{Z}}
\safemath{\emptySet}{\varnothing}
\safemath{\colA}{\mathscr{A}}
\safemath{\colB}{\mathscr{B}}
\safemath{\colC}{\mathscr{C}}
\safemath{\colD}{\mathscr{D}}
\safemath{\colE}{\mathscr{E}}
\safemath{\colF}{\mathscr{F}}
\safemath{\colG}{\mathscr{G}}
\safemath{\colH}{\mathscr{H}}
\safemath{\colI}{\mathscr{I}}
\safemath{\colJ}{\mathscr{J}}
\safemath{\colK}{\mathscr{K}}
\safemath{\colL}{\mathscr{L}}
\safemath{\colM}{\mathscr{M}}
\safemath{\colN}{\mathscr{N}}
\safemath{\colO}{\mathscr{O}}
\safemath{\colP}{\mathscr{P}}
\safemath{\colQ}{\mathscr{Q}}
\safemath{\colR}{\mathscr{R}}
\safemath{\colS}{\mathscr{S}}
\safemath{\colT}{\mathscr{T}}
\safemath{\colU}{\mathscr{U}}
\safemath{\colV}{\mathscr{V}}
\safemath{\colW}{\mathscr{W}}
\safemath{\colX}{\mathscr{X}}
\safemath{\colY}{\mathscr{Y}}
\safemath{\colZ}{\mathscr{Z}}
\safemath{\opA}{\operatorname{A}}
\safemath{\opB}{\operatorname{B}}
\safemath{\opC}{\operatorname{C}}
\safemath{\opD}{\operatorname{D}}
\safemath{\opE}{\operatorname{E}}
\safemath{\opF}{\operatorname{F}}
\safemath{\opG}{\operatorname{G}}
\safemath{\opH}{\operatorname{H}}
\safemath{\opI}{\operatorname{I}}
\safemath{\opJ}{\operatorname{J}}
\safemath{\opK}{\operatorname{K}}
\safemath{\opL}{\operatorname{L}}
\safemath{\opM}{\operatorname{M}}
\safemath{\opN}{\operatorname{N}}
\safemath{\opO}{\operatorname{O}}
\safemath{\opP}{\operatorname{P}}
\safemath{\opQ}{\operatorname{Q}}
\safemath{\opR}{\operatorname{R}}
\safemath{\opS}{\operatorname{S}}
\safemath{\opT}{\operatorname{T}}
\safemath{\opU}{\operatorname{U}}
\safemath{\opV}{\operatorname{V}}
\safemath{\opW}{\operatorname{W}}
\safemath{\opX}{\operatorname{X}}
\safemath{\opY}{\operatorname{Y}}
\safemath{\opZ}{\operatorname{Z}}
\safemath{\opZero}{\operatorname{O}}
\safemath{\identityop}{\opI}
\safemath{\sca}{a}
\safemath{\scb}{b}
\safemath{\scc}{c}
\safemath{\scd}{d}
\safemath{\sce}{e}
\safemath{\scf}{f}
\safemath{\scg}{g}
\safemath{\sch}{h}
\safemath{\sci}{i}
\safemath{\scj}{j}
\safemath{\sck}{k}
\safemath{\scl}{l}
\safemath{\scm}{m}
\safemath{\scn}{n}
\safemath{\sco}{o}
\safemath{\scp}{p}
\safemath{\scq}{q}
\safemath{\scr}{r}
\safemath{\scs}{s}
\safemath{\sct}{t}
\safemath{\scu}{u}
\safemath{\scv}{v}
\safemath{\scw}{w}
\safemath{\scx}{x}
\safemath{\scy}{y}
\safemath{\scz}{z}
\safemath{\scA}{A}
\safemath{\scB}{B}
\safemath{\scC}{C}
\safemath{\scD}{D}
\safemath{\scE}{E}
\safemath{\scF}{F}
\safemath{\scG}{G}
\safemath{\scH}{H}
\safemath{\scI}{I}
\safemath{\scJ}{J}
\safemath{\scK}{K}
\safemath{\scL}{L}
\safemath{\scM}{M}
\safemath{\scN}{N}
\safemath{\scO}{O}
\safemath{\scP}{P}
\safemath{\scQ}{Q}
\safemath{\scR}{R}
\safemath{\scS}{S}
\safemath{\scT}{T}
\safemath{\scU}{U}
\safemath{\scV}{V}
\safemath{\scW}{W}
\safemath{\scX}{X}
\safemath{\scY}{Y}
\safemath{\scZ}{Z}
\safemath{\scalpha}{\alpha}
\safemath{\scbeta}{\beta}
\safemath{\scchi}{\chi}
\safemath{\scdelta}{\delta}
\safemath{\scepsilon}{\epsilon}
\safemath{\scvarepsilon}{\varepsilon}
\safemath{\sceta}{\eta}
\safemath{\scgamma}{\gamma}
\safemath{\sciota}{\iota}
\safemath{\sckappa}{\kappa}
\safemath{\scvarkappa}{\varkappa}
\safemath{\sclambda}{\lambda}
\safemath{\scmu}{\mu}
\safemath{\scnu}{\nu}
\safemath{\scomega}{\omega}
\safemath{\scphi}{\phi}
\safemath{\scvarphi}{\varphi}
\safemath{\scpi}{\pi}
\safemath{\scvarpi}{\varpi}
\safemath{\scpsi}{\psi}
\safemath{\scrho}{\rho}
\safemath{\scvarrho}{\varrho}
\safemath{\scsigma}{\sigma}
\safemath{\scvarsigma}{\varsigma}
\safemath{\sctau}{\tau}
\safemath{\sctheta}{\theta}
\safemath{\scvartheta}{\vartheta}
\safemath{\scupsilon}{\upsilon}
\safemath{\scxi}{\xi}
\safemath{\sczeta}{\zeta}
\safemath{\veca}{{\boldsymbol{a}}}
\safemath{\vecb}{{\boldsymbol{b}}}
\safemath{\vecc}{{\boldsymbol{c}}}
\safemath{\vecd}{{\boldsymbol{d}}}
\safemath{\vece}{{\boldsymbol{e}}}
\safemath{\vecf}{{\boldsymbol{f}}}
\safemath{\vecg}{{\boldsymbol{g}}}
\safemath{\vech}{{\boldsymbol{h}}}
\safemath{\veci}{{\boldsymbol{i}}}
\safemath{\vecj}{{\boldsymbol{j}}}
\safemath{\veck}{{\boldsymbol{k}}}
\safemath{\vecl}{{\boldsymbol{l}}}
\safemath{\vecm}{{\boldsymbol{m}}}
\safemath{\vecn}{{\boldsymbol{n}}}
\safemath{\veco}{{\boldsymbol{o}}}
\safemath{\vecp}{{\boldsymbol{p}}}
\safemath{\vecq}{{\boldsymbol{q}}}
\safemath{\vecr}{{\boldsymbol{r}}}
\safemath{\vecs}{{\boldsymbol{s}}}
\safemath{\vect}{{\boldsymbol{t}}}
\safemath{\vecu}{{\boldsymbol{u}}}
\safemath{\vecv}{{\boldsymbol{v}}}
\safemath{\vecw}{{\boldsymbol{w}}}
\safemath{\vecx}{{\boldsymbol{x}}}
\safemath{\vecy}{{\boldsymbol{y}}}
\safemath{\vecz}{{\boldsymbol{z}}}
\safemath{\veczero}{{\boldsymbol{0}}}
\safemath{\vecone}{{\boldsymbol{1}}}
\safemath{\vecalpha}{\upalpha}
\safemath{\vecbeta}{\upbeta}
\safemath{\vecchi}{\upchi}
\safemath{\vecdelta}{\updelta}
\safemath{\vecepsilon}{\upepsilon}
\safemath{\vecvarepsilon}{\upvarepsilon}
\safemath{\veceta}{\upeta}
\safemath{\vecgamma}{\upgamma}
\safemath{\veciota}{\upiota}
\safemath{\veckappa}{\upkappa}
\safemath{\veclambda}{\uplambda}
\safemath{\vecmu}{\text{\textmu}}
\safemath{\vecnu}{\upnu}
\safemath{\vecomega}{\upomega}
\safemath{\vecphi}{\upphi}
\safemath{\vecvarphi}{\upvarphi}
\safemath{\vecpi}{\uppi}
\safemath{\vecvarpi}{\upvarpi}
\safemath{\vecpsi}{\uppsi}
\safemath{\vecrho}{\uprho}
\safemath{\vecvarrho}{\upvarrho}
\safemath{\vecsigma}{\upsigma}
\safemath{\vecvarsigma}{\upvarsigma}
\safemath{\vectau}{\uptau}
\safemath{\vectheta}{\uptheta}
\safemath{\vecvartheta}{\upvartheta}
\safemath{\vecupsilon}{\upupsilon}
\safemath{\vecxi}{\upxi}
\safemath{\veczeta}{\upzeta}
\safemath{\vecac}{a}
\safemath{\vecbc}{b}
\safemath{\veccc}{c}
\safemath{\vecdc}{d}
\safemath{\vecec}{e}
\safemath{\vecfc}{f}
\safemath{\vecgc}{g}
\safemath{\vechc}{h}
\safemath{\vecic}{i}
\safemath{\vecjc}{j}
\safemath{\veckc}{k}
\safemath{\veclc}{l}
\safemath{\vecmc}{m}
\safemath{\vecnc}{n}
\safemath{\vecoc}{o}
\safemath{\vecpc}{p}
\safemath{\vecqc}{q}
\safemath{\vecrc}{r}
\safemath{\vecsc}{s}
\safemath{\vectc}{t}
\safemath{\vecuc}{u}
\safemath{\vecvc}{v}
\safemath{\vecwc}{w}
\safemath{\vecxc}{x}
\safemath{\vecyc}{y}
\safemath{\veczc}{z}
\safemath{\matA}{{\boldsymbol{A}}}
\safemath{\matB}{{\boldsymbol{B}}}
\safemath{\matC}{{\boldsymbol{C}}}
\safemath{\matD}{{\boldsymbol{D}}}
\safemath{\matE}{{\boldsymbol{E}}}
\safemath{\matF}{{\boldsymbol{F}}}
\safemath{\matG}{{\boldsymbol{G}}}
\safemath{\matH}{{\boldsymbol{H}}}
\safemath{\matI}{{\boldsymbol{I}}}
\safemath{\matJ}{{\boldsymbol{J}}}
\safemath{\matK}{{\boldsymbol{K}}}
\safemath{\matL}{{\boldsymbol{L}}}
\safemath{\matM}{{\boldsymbol{M}}}
\safemath{\matN}{{\boldsymbol{N}}}
\safemath{\matO}{{\boldsymbol{O}}}
\safemath{\matP}{{\boldsymbol{P}}}
\safemath{\matQ}{{\boldsymbol{Q}}}
\safemath{\matR}{{\boldsymbol{R}}}
\safemath{\matS}{{\boldsymbol{S}}}
\safemath{\matT}{{\boldsymbol{T}}}
\safemath{\matU}{{\boldsymbol{U}}}
\safemath{\matV}{{\boldsymbol{V}}}
\safemath{\matW}{{\boldsymbol{W}}}
\safemath{\matX}{{\boldsymbol{X}}}
\safemath{\matY}{{\boldsymbol{Y}}}
\safemath{\matZ}{{\boldsymbol{Z}}}
\safemath{\matzero}{{\boldsymbol{0}}}
\safemath{\matDelta}{\Updelta}
\safemath{\matGamma}{\Upgammma}
\safemath{\matLambda}{\Uplambda}
\safemath{\matOmega}{\Upomega}
\safemath{\matPhi}{\Upphi}
\safemath{\matPi}{\Uppi}
\safemath{\matPsi}{\Uppsi}
\safemath{\matSigma}{\Upsigma}
\safemath{\matTheta}{\Uptheta}
\safemath{\matUpsilon}{\Upupsilon}
\safemath{\matXi}{\Upxi}
\safemath{\matidentity}{\matI}
\safemath{\vecunit}{\vece} 
\safemath{\matone}{\matO}
\safemath{\matAc}{a}
\safemath{\matBc}{b}
\safemath{\matCc}{c}
\safemath{\matDc}{d}
\safemath{\matEc}{e}
\safemath{\matFc}{f}
\safemath{\matGc}{g}
\safemath{\matHc}{h}
\safemath{\matIc}{i}
\safemath{\matJc}{j}
\safemath{\matKc}{k}
\safemath{\matLc}{l}
\safemath{\matMc}{m}
\safemath{\matNc}{n}
\safemath{\matOc}{o}
\safemath{\matPc}{p}
\safemath{\matQc}{q}
\safemath{\matRc}{r}
\safemath{\matSc}{s}
\safemath{\matTc}{t}
\safemath{\matUc}{u}
\safemath{\matVc}{v}
\safemath{\matWc}{w}
\safemath{\matXc}{x}
\safemath{\matYc}{y}
\safemath{\matZc}{z}
\safemath{\rnda}{\mathsf{a}}
\safemath{\rndb}{\mathsf{b}}
\safemath{\rndc}{\mathsf{c}}
\safemath{\rndd}{\mathsf{d}}
\safemath{\rnde}{\mathsf{e}}
\safemath{\rndf}{\mathsf{f}}
\safemath{\rndg}{\mathsf{g}}
\safemath{\rndh}{\mathsf{h}}
\safemath{\rndi}{\mathsf{i}}
\safemath{\rndj}{\mathsf{j}}
\safemath{\rndk}{\mathsf{k}}
\safemath{\rndl}{\mathsf{l}}
\safemath{\rndm}{\mathsf{m}}
\safemath{\rndn}{\mathsf{n}}
\safemath{\rndo}{\mathsf{o}}
\safemath{\rndp}{\mathsf{p}}
\safemath{\rndq}{\mathsf{q}}
\safemath{\rndr}{\mathsf{r}}
\safemath{\rnds}{\mathsf{s}}
\safemath{\rndt}{\mathsf{t}}
\safemath{\rndu}{\mathsf{u}}
\safemath{\rndv}{\mathsf{v}}
\safemath{\rndw}{\mathsf{w}}
\safemath{\rndx}{\mathsf{x}}
\safemath{\rndy}{\mathsf{y}}
\safemath{\rndz}{\mathsf{z}}
\safemath{\rndA}{\bmiA}
\safemath{\rndB}{\bmiB}
\safemath{\rndC}{\bmiC}
\safemath{\rndD}{\bmiD}
\safemath{\rndE}{\bmiE}
\safemath{\rndF}{\bmiF}
\safemath{\rndG}{\bmiG}
\safemath{\rndH}{\bmiH}
\safemath{\rndI}{\bmiI}
\safemath{\rndJ}{\bmiJ}
\safemath{\rndK}{\bmiK}
\safemath{\rndL}{\bmiL}
\safemath{\rndM}{\bmiM}
\safemath{\rndN}{\bmiN}
\safemath{\rndO}{\bmiO}
\safemath{\rndP}{\bmiP}
\safemath{\rndQ}{\bmiQ}
\safemath{\rndR}{\bmiR}
\safemath{\rndS}{\bmiS}
\safemath{\rndT}{\bmiT}
\safemath{\rndU}{\bmiU}
\safemath{\rndV}{\bmiV}
\safemath{\rndW}{\bmiW}
\safemath{\rndX}{\bmiX}
\safemath{\rndY}{\bmiY}
\safemath{\rndZ}{\bmiZ}
\safemath{\rndalpha}{\bmialpha}
\safemath{\rndbeta}{\bmibeta}
\safemath{\rndchi}{\bmichi}
\safemath{\rnddelta}{\bmidelta}
\safemath{\rndepsilon}{\bmiepsilon}
\safemath{\rndvarepsilon}{\bmivarepsilon}
\safemath{\rndeta}{\bmieta}
\safemath{\rndgamma}{\bmigamma}
\safemath{\rndiota}{\bmiiota}
\safemath{\rndkappa}{\bmikappa}
\safemath{\rndlambda}{\bmilambda}
\safemath{\rndmu}{\bmimu}
\safemath{\rndnu}{\bminu}
\safemath{\rndomega}{\bmiomega}
\safemath{\rndphi}{\bmiphi}
\safemath{\rndvarphi}{\bmivarphi}
\safemath{\rndpi}{\bmipi}
\safemath{\rndvarpi}{\bmivarpi}
\safemath{\rndpsi}{\bmipsi}
\safemath{\rndrho}{\bmirho}
\safemath{\rndvarrho}{\bmivarrho}
\safemath{\rndsigma}{\bmisigma}
\safemath{\rndvarsigma}{\bmivarsigma}
\safemath{\rndtau}{\bmitau}
\safemath{\rndtheta}{\bmitheta}
\safemath{\rndvartheta}{\bmivartheta}
\safemath{\rndupsilon}{\bmiupsilon}
\safemath{\rndxi}{\bmixi}
\safemath{\rndzeta}{\bmizeta}
\safemath{\rveca}{{\boldsymbol{\mathsf{a}}}}
\safemath{\rvecb}{{\boldsymbol{\mathsf{b}}}}
\safemath{\rvecc}{{\boldsymbol{\mathsf{c}}}}
\safemath{\rvecd}{{\boldsymbol{\mathsf{d}}}}
\safemath{\rvece}{{\boldsymbol{\mathsf{e}}}}
\safemath{\rvecf}{{\boldsymbol{\mathsf{f}}}}
\safemath{\rvecg}{{\boldsymbol{\mathsf{g}}}}
\safemath{\rvech}{{\boldsymbol{\mathsf{h}}}}
\safemath{\rveci}{{\boldsymbol{\mathsf{i}}}}
\safemath{\rvecj}{{\boldsymbol{\mathsf{j}}}}
\safemath{\rveck}{{\boldsymbol{\mathsf{k}}}}
\safemath{\rvecl}{{\boldsymbol{\mathsf{l}}}}
\safemath{\rvecm}{{\boldsymbol{\mathsf{m}}}}
\safemath{\rvecn}{{\boldsymbol{\mathsf{n}}}}
\safemath{\rveco}{{\boldsymbol{\mathsf{o}}}}
\safemath{\rvecp}{{\boldsymbol{\mathsf{p}}}}
\safemath{\rvecq}{{\boldsymbol{\mathsf{q}}}}
\safemath{\rvecr}{{\boldsymbol{\mathsf{r}}}}
\safemath{\rvecs}{{\boldsymbol{\mathsf{s}}}}
\safemath{\rvect}{{\boldsymbol{\mathsf{t}}}}
\safemath{\rvecu}{{\boldsymbol{\mathsf{u}}}}
\safemath{\rvecv}{{\boldsymbol{\mathsf{v}}}}
\safemath{\rvecw}{{\boldsymbol{\mathsf{w}}}}
\safemath{\rvecx}{{\boldsymbol{\mathsf{x}}}}
\safemath{\rvecy}{{\boldsymbol{\mathsf{y}}}}
\safemath{\rvecz}{{\boldsymbol{\mathsf{z}}}}
\safemath{\rvecalpha}{\bmualpha}
\safemath{\rvecbeta}{\bmubeta}
\safemath{\rvecchi}{\bmuchi}
\safemath{\rvecdelta}{\bmudelta}
\safemath{\rvecepsilon}{\bmuepsilon}
\safemath{\rvecvarepsilon}{\bmuvarepsilon}
\safemath{\rveceta}{\bmueta}
\safemath{\rvecgamma}{\bmugamma}
\safemath{\rveciota}{\bmuiota}
\safemath{\rveckappa}{\bmukappa}
\safemath{\rveclambda}{\bmulambda}
\safemath{\rvecmu}{\bmumu}
\safemath{\rvecnu}{\bmunu}
\safemath{\rvecomega}{\bmuomega}
\safemath{\rvecphi}{\bmuphi}
\safemath{\rvecvarphi}{\bmuvarphi}
\safemath{\rvecpi}{\bmupi}
\safemath{\rvecvarpi}{\bmuvarpi}
\safemath{\rvecpsi}{\bmupsi}
\safemath{\rvecrho}{\bmurho}
\safemath{\rvecvarrho}{\bmuvarrho}
\safemath{\rvecsigma}{\bmusigma}
\safemath{\rvecvarsigma}{\bmuvarsigma}
\safemath{\rvectau}{\bmutau}
\safemath{\rvectheta}{\bmutheta}
\safemath{\rvecvartheta}{\bmuvartheta}
\safemath{\rvecupsilon}{\bmuupsilon}
\safemath{\rvecxi}{\bmuxi}
\safemath{\rveczeta}{\bmuzeta}
\safemath{\rvecac}{\rnda}
\safemath{\rvecbc}{\rndb}
\safemath{\rveccc}{\rndc}
\safemath{\rvecdc}{\rndd}
\safemath{\rvecec}{\rnde}
\safemath{\rvecfc}{\rndf}
\safemath{\rvecgc}{\rndg}
\safemath{\rvechc}{\rndh}
\safemath{\rvecic}{\rndi}
\safemath{\rvecjc}{\rndj}
\safemath{\rveckc}{\rndk}
\safemath{\rveclc}{\rndl}
\safemath{\rvecmc}{\rndm}
\safemath{\rvecnc}{\rndn}
\safemath{\rvecoc}{\rndo}
\safemath{\rvecpc}{\rndp}
\safemath{\rvecqc}{\rndq}
\safemath{\rvecrc}{\rndr}
\safemath{\rvecsc}{\rnds}
\safemath{\rvectc}{\rndt}
\safemath{\rvecuc}{\rndu}
\safemath{\rvecvc}{\rndv}
\safemath{\rvecwc}{\rndw}
\safemath{\rvecxc}{\rndx}
\safemath{\rvecyc}{\rndy}
\safemath{\rveczc}{\rndz}
\safemath{\rmatA}{{\boldsymbol{\mathsf{A}}}}
\safemath{\rmatB}{{\boldsymbol{\mathsf{B}}}}
\safemath{\rmatC}{{\boldsymbol{\mathsf{C}}}}
\safemath{\rmatD}{{\boldsymbol{\mathsf{D}}}}
\safemath{\rmatE}{{\boldsymbol{\mathsf{E}}}}
\safemath{\rmatF}{{\boldsymbol{\mathsf{F}}}}
\safemath{\rmatG}{{\boldsymbol{\mathsf{G}}}}
\safemath{\rmatH}{{\boldsymbol{\mathsf{H}}}}
\safemath{\rmatI}{{\boldsymbol{\mathsf{I}}}}
\safemath{\rmatJ}{{\boldsymbol{\mathsf{J}}}}
\safemath{\rmatK}{{\boldsymbol{\mathsf{K}}}}
\safemath{\rmatL}{{\boldsymbol{\mathsf{L}}}}
\safemath{\rmatM}{{\boldsymbol{\mathsf{M}}}}
\safemath{\rmatN}{{\boldsymbol{\mathsf{N}}}}
\safemath{\rmatO}{{\boldsymbol{\mathsf{O}}}}
\safemath{\rmatP}{{\boldsymbol{\mathsf{P}}}}
\safemath{\rmatQ}{{\boldsymbol{\mathsf{Q}}}}
\safemath{\rmatR}{{\boldsymbol{\mathsf{R}}}}
\safemath{\rmatS}{{\boldsymbol{\mathsf{S}}}}
\safemath{\rmatT}{{\boldsymbol{\mathsf{T}}}}
\safemath{\rmatU}{{\boldsymbol{\mathsf{U}}}}
\safemath{\rmatV}{{\boldsymbol{\mathsf{V}}}}
\safemath{\rmatW}{{\boldsymbol{\mathsf{W}}}}
\safemath{\rmatX}{{\boldsymbol{\mathsf{X}}}}
\safemath{\rmatY}{{\boldsymbol{\mathsf{Y}}}}
\safemath{\rmatZ}{{\boldsymbol{\mathsf{Z}}}}
\safemath{\rmatDelta}{\bmuDelta}
\safemath{\rmatGamma}{\bmuGamma}
\safemath{\rmatLambda}{\bmuLambda}
\safemath{\rmatOmega}{\bmuOmega}
\safemath{\rmatPhi}{\bmuPhi}
\safemath{\rmatPi}{\bmuPi}
\safemath{\rmatPsi}{\bmuPsi}
\safemath{\rmatSigma}{\bmuSigma}
\safemath{\rmatTheta}{\bmuTheta}
\safemath{\rmatUpsilon}{\bmuUpsilon}
\safemath{\rmatXi}{\bmuXi}
\safemath{\rmatAc}{\rnda}
\safemath{\rmatBc}{\rndb}
\safemath{\rmatCc}{\rndc}
\safemath{\rmatDc}{\rndd}
\safemath{\rmatEc}{\rnde}
\safemath{\rmatFc}{\rndf}
\safemath{\rmatGc}{\rndg}
\safemath{\rmatHc}{\rndh}
\safemath{\rmatIc}{\rndi}
\safemath{\rmatJc}{\rndj}
\safemath{\rmatKc}{\rndk}
\safemath{\rmatLc}{\rndl}
\safemath{\rmatMc}{\rndm}
\safemath{\rmatNc}{\rndn}
\safemath{\rmatOc}{\rndo}
\safemath{\rmatPc}{\rndp}
\safemath{\rmatQc}{\rndq}
\safemath{\rmatRc}{\rndr}
\safemath{\rmatSc}{\rnds}
\safemath{\rmatTc}{\rndt}
\safemath{\rmatUc}{\rndu}
\safemath{\rmatVc}{\rndv}
\safemath{\rmatWc}{\rndw}
\safemath{\rmatXc}{\rndx}
\safemath{\rmatYc}{\rndy}
\safemath{\rmatZc}{\rndz}
\newenvironment{textbmatrix}{	\setlength{\arraycolsep}{2.5pt}%
								\big[\begin{matrix}}{\end{matrix}\big]%
								\raisebox{0.08ex}{\vphantom{M}}}
 \def\btm{\begin{textbmatrix}}
 \def\etm{\end{textbmatrix}}
\DeclareMathOperator{\rank}{rank}			
\DeclareMathOperator{\adj}{adj}				
\safemath{\fun}{\scf}						
\safemath{\vrbl}{x}						
\safemath{\altvrbl}{y}						
\safemath{\aaltvrbl}{z}						
\safemath{\vvrbl}{\vecx}						
\safemath{\altvvrbl}{\vecy}						
\safemath{\aaltvvrbl}{\vecz}						
\safemath{\altfun}{\scg}
\safemath{\aaltfun}{\sch}
\safemath{\bel}{\sce}					
\safemath{\altbel}{\sce}					
\safemath{\frel}{g}					
\safemath{\altfrel}{g}					
\safemath{\dfrel}{\tilde{g}}					
\safemath{\altdfrel}{\tilde{g}}					
\safemath{\mat}{\matA}						
\safemath{\matc}{\matAc}						
\safemath{\altmat}{\matB}						
\safemath{\altmatc}{\matBc}						
\safemath{\vectr}{\vecu}						
\safemath{\vectrc}{\vecuc}						
\safemath{\altvectr}{\vecv}						
\safemath{\aaltvectr}{\vect}						
\safemath{\altvectrc}{\vecvc}						
\safemath{\genvar}{u}						
\safemath{\altgenvar}{v}						
\safemath{\rvectr}{\rvecu}						
\safemath{\rvectrc}{\rvecuc}						
\safemath{\raltvectr}{\rvecv}						
\safemath{\raaltvectr}{\rvect}						
\safemath{\raltvectrc}{\rvecvc}						
\safemath{\rgenvar}{\rndu}						
\safemath{\raltgenvar}{\rndv}						
\newcommand{\nullspace}{\setN}	 			
\newcommand{\card}[1]{\lvert#1\rvert}			
\newcommand{\ind}[1]{\mathbbm{1}_{#1}}				
\newcommand{\conj}[1]{\ensuremath{#1^{*}}} 	
\newcommand{\tp}[1]{\ensuremath{#1^{\mathsf{T}}}} 		
\newcommand{\inv}[1]{\ensuremath{#1^{-1}}} 	
\safemath{\dirac}{\delta}					
\safemath{\diracp}{\dirac(\time)}			
\safemath{\krond}{\dirac}					
\safemath{\indfun}{I}						
\safemath{\stepfun}{u}						
\safemath{\upto}{\uparrow}
\safemath{\downto}{\downarrow}
\safemath{\iu}{\mathrm{i}}							
\safemath{\maj}{\succ}
\newcommand{\dftmat}[1]{\matF_{#1}}			
\safemath{\mdft}{\dftmat{}}					
\safemath{\runity}{\beta}					
\safemath{\eval}{\lambda}					
\safemath{\veval}{\veclambda}				
\safemath{\littleo}{\sco}					
\let\im\undefined
\safemath{\re}{\Re}				
\safemath{\im}{\Im}				
\safemath{\euclidspace}{\complexset}			
\safemath{\confunspace}{\setC}				
\newcommand{\banachseqspace}[1]{l^{#1}}		
\safemath{\hilseqspace}{\banachseqspace{2}}	
\newcommand{\banachfunspace}[1]{\setL^{#1}}	
\safemath{\hilfunspace}{\banachfunspace{2}}	
\safemath{\hilfunspacep}{\hilfunspace(\complexset)}	
\safemath{\schwarzspace}{\setS}				
\newcommand{\hadj}[1]{#1^{\star}}			
\safemath{\SNR}{\rho} 				
\safemath{\SINR}{\text{\sc sinr}} 				
\safemath{\No}{N_0}							
\safemath{\Es}{E_s}							
\safemath{\Eb}{E_b}							
\safemath{\EbNo}{\frac{\Eb}{\No}}
\safemath{\EsNo}{\frac{\Es}{\No}}
\safemath{\NoVar}{\variance}                 
\let\time\undefined
\safemath{\time}{\sct}						
\safemath{\dtime}{\sck}						
\safemath{\delay}{\sctau}					
\safemath{\ddelay}{\scl}						
\safemath{\doppler}{\scnu}					
\safemath{\ddoppler}{\scm}					
\safemath{\freq}{\scf}						
\safemath{\dfreq}{\scn}						
\safemath{\Dtime}{\Delta\time}
\safemath{\Dfreq}{\Delta\freq}
\safemath{\Ddtime}{\dtime}
\safemath{\Ddfreq}{\dfreq}
\safemath{\bandwidth}{\scB}
\safemath{\maxdoppler}{\doppler_{0}}			
\safemath{\maxdelay}{\delay_{0}}				
\safemath{\spread}{\Delta_{\CHop}}			
\DeclareMathOperator{\CHop}{\ensuremath{\opH}} 
\safemath{\kernel}{\rndk_{\CHop}}			
\safemath{\kernelp}{\kernel(\time,\time')}	
\safemath{\tvir}{\rndh_{\CHop}}				
\safemath{\tvirp}{\tvir(\time,\delay)}		
\safemath{\tvirc}{\conj{\rndh}_{\CHop}}		
\safemath{\tvtf}{\rndl_{\CHop}}				
\safemath{\tvtfp}{\tvtf(\time,\freq)}			
\safemath{\tvtfc}{\conj{\rndl}_{\CHop}}		
\safemath{\spf}{\rnds_{\CHop}}				
\safemath{\spfp}{\spf(\doppler,\delay)}		
\safemath{\spfc}{\conj{\rnds}_{\CHop}}		
\safemath{\bff}{\rndb_{\CHop}}				
\safemath{\bffp}{\bff(\doppler,\freq)}		
\safemath{\irc}{\scr_{\rndh}}				
\safemath{\tfc}{\scr_{\rndl}}				
\safemath{\spc}{\scr_{\rnds}}				
\safemath{\bfc}{\scr_{\rndb}}				
\safemath{\scaf}{\scc_{\rnds}}				
\safemath{\scafp}{\scaf(\doppler,\delay)}		
\safemath{\ccf}{\scc_{\rndl}}				
\safemath{\ccfp}{\ccf(\Dtime,\Dfreq)}			
\safemath{\cic}{\scc_{\rndh}}				
\safemath{\cicp}{\cic(\Dtime,\delay)}			
\safemath{\mi}{I}							
\safemath{\capacity}{C}					
\DeclareMathOperator{\Prob}{\opP}		
\safemath{\normal}{\mathcal{N}}			
\safemath{\jpg}{\mathcal{CN}}			
\safemath{\uniform}{\mathcal{U}}				
\safemath{\mchain}{\leftrightarrow}		
\safemath{\dB}{\,\mathrm{dB}}
\safemath{\dBm}{\,\mathrm{dBm}}
\safemath{\Hz}{\,\mathrm{Hz}}
\safemath{\kHz}{\,\mathrm{kHz}}
\safemath{\MHz}{\,\mathrm{MHz}}
\safemath{\GHz}{\,\mathrm{GHz}}
\safemath{\s}{\,\mathrm{s}}
\safemath{\ms}{\,\mathrm{ms}}
\safemath{\mus}{\,\mathrm{\text{\textmu}s}}
\safemath{\ns}{\,\mathrm{ns}}
\safemath{\ps}{\,\mathrm{ps}}
\safemath{\meter}{\,\mathrm{m}}
\safemath{\mm}{\,\mathrm{mm}}
\safemath{\cm}{\,\mathrm{cm}}
\safemath{\m}{\,\mathrm{m}}
\safemath{\W}{\,\mathrm{W}}
\safemath{\mW}{\, \mathrm{mW}}
\safemath{\J}{\,\mathrm{J}}
\safemath{\K}{\,\mathrm{K}}
\safemath{\bit}{\,\mathrm{bit}}
\safemath{\nat}{\,\mathrm{nat}}
\safemath{\define}{\triangleq}					
\safemath{\equivalent}{\sim}
\safemath{\distas}{\sim}					
\safemath{\sdiff}{\Delta}				
\safemath{\setdiff}{\setminus}				
\safemath{\reals}{\mathbb R}
\safemath{\positivereals}{\reals^{+}}
\safemath{\integers}{\mathbb Z}
\safemath{\posint}{\integers^{+}}
\safemath{\naturals}{\mathbb N}
\safemath{\posnaturals}{\naturals^{+}}
\safemath{\complexset}{\mathbb C}
\safemath{\rationals}{\mathbb Q}
\safemath{\iSet}{\setI}
\safemath{\rel}{\bowtie}					
\safemath{\eqrel}{\sim}					
\safemath{\rlord}{\leq}					
\safemath{\slord}{<}						
\safemath{\rpord}{\preceq}				
\safemath{\rrpord}{\succeq}				
\safemath{\spord}{\prec}					
\safemath{\sig}{\sigma}					
\safemath{\metric}{d}					
\safemath{\setfun}{\Phi}					
\safemath{\measure}{\mu}					
\safemath{\altmeasure}{\lambda}					
\newcommand{\outerm}[1]{#1^{\star}}		
\newcommand{\innerm}[1]{#1_{\star}}		
\safemath{\omeasure}{\outerm{\measure}}		
\safemath{\imeasure}{\innerm{\measure}}		
\safemath{\aecol}{\colS^{\star}_{\measure}} 
\safemath{\emeasure}{\bar{\measure}_{0}}	
\safemath{\rmeasure}{\tilde{\measure}}	
\safemath{\bmeasure}{\measure_{0}}		
\safemath{\glength}{\measure_{\altfun}}	
\safemath{\lebmea}{\lambda}				
\safemath{\blebmea}{\lebmea_{0}}			
\safemath{\sfun}{s}						
\safemath{\absintspace}{\colL^{1}}		
\safemath{\sqintspace}{\colL^{2}}		
\safemath{\abssumspace}{l^{1}}		
\safemath{\sqsumspace}{l^{2}}		
\safemath{\sfield}{\setF}				
\safemath{\vectors}{\setV}				
\safemath{\vecspace}{(\vectors,\sfield)}	
\safemath{\linspace}{\setV}				
\safemath{\clinspace}{(\linspace,\sfield)} 
\safemath{\nspace}{\setU}				
\safemath{\metspace}{\setM}				
\safemath{\bspace}{\setB}				
\safemath{\ipspace}{\setG}				
\safemath{\hilspace}{\setH}				
\safemath{\blospace}{\setG}				
\safemath{\lop}{\opT}					
\safemath{\altlop}{\opS}					
\safemath{\nullsp}{\nullspace(\lop)}		
\safemath{\lfun}{l}						
\safemath{\altlfun}{g}					
\newcommand{\dual}[1]{#1^{'}}			
\safemath{\dsum}{\oplus}					
\safemath{\funspace}{\colL}				
\renewcommand{\adj}[1]{#1^{\times}}		
\safemath{\adjlop}{\adj{\lop}}			
\safemath{\hadjlop}{\hadj{\lop}}			
\safemath{\tow}{\xrightarrow{w}}			
\safemath{\tows}{\xrightarrow{w^{*}}}		
\safemath{\cparam}{\lambda}
\safemath{\lopl}{\lop_{\cparam}}		
\safemath{\iop}{\opI}					
\safemath{\resolop}{\opR}				
\safemath{\resolvent}{\resolop_{\cparam}(\lop)}	
\safemath{\reset}{\setQ}
\safemath{\spectrum}{\setS}
\safemath{\resolset}{\reset(\lop)}		
\safemath{\lopspec}{\spectrum(\lop)}		
\safemath{\pspec}{\spectrum_{p}(\lop)}	
\safemath{\cspec}{\spectrum_{c}(\lop)}	
\safemath{\rspec}{\spectrum_{r}(\lop)}	
\safemath{\ev}{\cparam}					
\newcommand{\specrad}[1]{r_{#1}}			
\safemath{\lopsrad}{\specrad{\lop}}		
\safemath{\pop}{\opP}					
\safemath{\specfam}{\colE}				
\safemath{\specop}{\opE_{\cparam}}		
\safemath{\altspecop}{\opE_{\mu}}		
\safemath{\vmulti}{\vecone}				
\safemath{\unitaryop}{\opU}				
\safemath{\sval}{\sigma}					
\safemath{\corrcoef}{\rho}				
\safemath{\sangle}{\theta}				
\let\time\undefined
\safemath{\iset}{\setI}				
\safemath{\shift}{\nu}
\safemath{\scale}{\alpha}
\safemath{\time}{t}
\safemath{\specfreq}{\theta}	
\newcommand{\transopgen}[1]{\opT_{#1}} 
\safemath{\transop}{\transopgen{\delay}}
\newcommand{\modopgen}[1]{\opM_{#1}}	
\safemath{\modop}{\modopgen{\shift}}
\newcommand{\dilopgen}[1]{\opD_{#1}}	
\safemath{\dilop}{\dilopgen{\scale}}
\safemath{\fram}{\setF}				
\safemath{\dfram}{\dual{\fram}}		
\safemath{\ufb}{B}					
\safemath{\lfb}{A}					
\safemath{\sop}{\hadj{\aop}}				
\safemath{\aop}{\opT}			
\safemath{\fop}{\opS}				
\safemath{\daop}{\tilde\opT}			
\safemath{\dsop}{\hadj{\tilde\opT}}				
\safemath{\ifop}{\inv{\fop}}			
\safemath{\rifop}{\fop^{-1/2}}			
\safemath{\transeq}{\setT}			
\safemath{\nfun}{\Phi}				
\safemath{\funvec}{\vecf}			
\safemath{\altfunvec}{\vecg}
\safemath{\samplespace}{\Omega}
\safemath{\probspace}{(\samplespace,\sfield,\Prob)}	
\safemath{\ccoef}{\rho}			
\safemath{\infstate}{\vecpi}				
\safemath{\typset}{\setA_{\epsilon}^{(N)}}	
\safemath{\expequal}{\doteq}				
\safemath{\code}{C}						
\safemath{\dstringset}{\setD^{\star}}		
\safemath{\cwlength}{l}					
\safemath{\elength}{L}					
\safemath{\extension}{C^{\star}}			
\safemath{\approaches}{\rightarrow}		
\safemath{\evnt}{\setA}					
\safemath{\altevnt}{\setB}					
\safemath{\rv}{\rndx}					
\safemath{\altrv}{\rndy}					
\safemath{\complexrv}{\rndu}					
\safemath{\altcrv}{\rndv}				
\safemath{\rvec}{\rvecx}					
\safemath{\altrvec}{\rvecy}				
\safemath{\crvec}{\rvecu}				
\safemath{\altcrvec}{\rvecv}				
\safemath{\variance}{\sigma^{2}}			
\safemath{\map}{T}						
\safemath{\jacobian}{\matJ}					
\safemath{\wvec}{\rvecw}					
\safemath{\wrv}{\rndw}					
\safemath{\orthmat}{\matQ}				
\safemath{\evmat}{\matLambda}			
\safemath{\identity}{\matidentity}		
\safemath{\innovec}{\vecv}				
\safemath{\convas}{\xrightarrow{\text{a.s.}}}	
\safemath{\convr}{\xrightarrow{\text{r}}}	
\safemath{\convp}{\xrightarrow{\text{P}}}	
\safemath{\convd}{\xrightarrow{\text{D}}}	
\safemath{\ltis}{\opL}				
\safemath{\ir}{h}					
\safemath{\tf}{\MakeUppercase{\ir}}	
\theoremstyle{definition}
\newtheorem{thm}{Theorem}[section]
\newtheorem{lem}{Lemma}[section]
\newtheorem{prp}{Proposition}[section]
\newtheorem{cor}{Corollary}[section]
\newtheorem{exa}{Example}[section]
\newtheorem{dfn}{Definition}[section]
\definecolor{tblblue}{rgb}{0.93,0.93,1.0}
\definecolor{tblred}{rgb}{1,0.93,0.93}
\definecolor{darkblue}{rgb}{0,0,0.7} 
\definecolor{darkgreen}{RGB}{20,120,43} 
\definecolor{darkred}{rgb}{0.8,0,0} 
\definecolor{lightblue}{RGB}{101,124,191}
\definecolor{skyblue}{RGB}{135,206,235}
\definecolor{gold}{RGB}{204,168,66}
\definecolor{strongblue}{RGB}{60,146,228}
\definecolor{lightgray}{gray}{0.5}
\definecolor{verylightgray}{RGB}{101,124,191}
\definecolor{mistyrose}{RGB}{238,213,210}
\definecolor{firebrick3}{RGB}{205,38,38}
\newcommand{\mydots}{\dots\,}
\author{\IEEEauthorblockN{
Giovanni Alberti,
Helmut B\"olcskei,
Camillo De Lellis,\\ 
G\"unther Koliander,   
and 
Erwin Riegler
}
\thanks{G. Alberti is with the Dept.~of~Mathematics, University of Pisa, Italy (e-mail: galberti1@dm.unipi.it).}
\thanks{H. B\"olcskei is with the Dept.~of~IT~\&~EE and the Dept. of Mathematics, ETH Zurich, Switzerland (e-mail: hboelcskei@ethz.ch).}
\thanks{C. De Lellis is with the IAS, Princeton, NJ, USA and the University of Zurich, Switzerland (e-mail: camillo.delellis@math.ias.edu).}
\thanks{G. Koliander is with the ARI, Austrian Academy of Sciences, Vienna, Austria (e-mail: guenther.koliander@kfs.oeaw.ac.at). His work was supported by the Vienna Science and Technology Fund (WWTF): MA16-053.}
\thanks{E. Riegler is with the Dept.~of~IT~\&~EE, ETH Zurich, Switzerland (e-mail:  eriegler@mins.ee.ethz.ch).}
\thanks{The material in this paper was presented in part at the 2016 IEEE International Symposium on Information Theory \cite{albdekori16}.}
}
\begin{document}

\title{Lossless Analog Compression}
\maketitle

\begin{abstract}
We establish the fundamental limits of lossless analog compression by considering the recovery of arbitrary random vectors $\rvecx\in\reals^m$ from the noiseless linear measurements $\rvecy=\matA\rvecx$ with measurement matrix $\matA\in\reals^{n\times m}$. Our theory is inspired by the groundbreaking work of Wu and Verd\'u (2010) on almost lossless analog compression, but applies to  the  nonasymptotic, i.e., fixed-$m$ case, and considers zero error probability. Specifically, our achievability result states that, for Lebesgue-almost all $\matA$, the random vector $\rvecx$ can be recovered with zero error probability provided that $n>K(\rvecx)$, where $K(\rvecx)$ is given by the infimum of the lower modified Minkowski dimension over all support sets $\setU$ of $\rvecx$ (i.e., sets $\setU\subseteq\reals^m$ with $\opP[\rvecx\in\setU]=1$). We  then particularize this achievability result to the class of $s$-rectifiable random vectors as introduced in Koliander \emph{et al.} (2016); these  are  random vectors of absolutely continuous distribution---with respect to  the $s$-dimensional Hausdorff measure---supported on countable unions of $s$-dimensional $C^1$-submanifolds of $\reals^m$. Countable unions of $C^1$-submanifolds include essentially all signal models used in the compressed sensing literature such as the standard union of subspaces model underlying much of compressed sensing theory and spectrum-blind sampling, smooth submanifolds, block-sparsity, and low-rank matrices as considered in the  matrix completion problem. Specifically, we prove that, for Lebesgue-almost all $\matA$, $s$-rectifiable random vectors $\rvecx$ can be recovered with zero error probability from $n>s$ linear measurements. This threshold is, however, found not to be tight as exemplified by the construction of an $s$-rectifiable random vector that can be recovered with zero error probability from $n<s$ linear measurements. Motivated by this observation, we introduce the new class of $s$-analytic random vectors, which admit a strong converse in the sense of $n\geq s$ being necessary for recovery with probability of error smaller than one. The central conceptual tools in the development of our theory are geometric measure theory and the theory of real analytic functions.  
\end{abstract}

\begin{IEEEkeywords}Analog compression, lossless compression, compressed sensing, Minkowski dimension, geometric measure theory\end{IEEEkeywords}

\section{Introduction}\label{sec:Introduction}

Compressed sensing 
\cite{dost89,do06,carota06,caelnera11,kudubo12} 
deals with the recovery of unknown sparse vectors $\vecx\in\reals^m$ from a small (relative to $m$) number, $n$, of linear measurements of the form $\vecy=\matA\vecx$, where $\matA\in\reals^{n\times m}$ is  the measurement matrix.\footnote{Throughout the paper, we assume that $n\leq m$.} 
Known recovery guarantees  can be  
categorized  as deterministic, probabilistic, and information-theoretic.  The literature in all three categories is abundant and the ensuing overview is hence necessarily highly incomplete, yet representative. 

Deterministic results, such as  those in \cite{dost89,grni03,elbr02,doel03, tr04,kudubo12}, are uniform in the sense of applying to all $s$-sparse vectors $\vecx\in\reals^m$,  i.e.,  vectors $\vecx$ 
that are supported on 
a finite  union of   $s$-dimensional linear subspaces of $\reals^m$,  
and  for a fixed  measurement matrix $\matA$.  Typical  recovery guarantees  say that   $s$-sparse vectors $\vecx$ can be recovered  through convex optimization algorithms or greedy algorithms  provided that \cite[Chapter 3]{el10}
$s<\frac{1}{2}(1+1/\mu)$, where $\mu$ denotes the coherence of $\matA$, i.e., the largest (in absolute value) inner product of any two different columns of $\matA$. 
The Welch bound \cite{we74} implies that the minimum number of linear measurements, $n$, required for uniform recovery is of order $s^2$, a result known as the  ``square-root bottleneck''  all coherence-based recovery thresholds suffer from.

Probabilistic results are either based on random 
measurement matrices $\rmatA$  (\!\!\cite{carota06,do06,cata06,tr08,pobrst13,badarowa08,caelnera11}) 
or deterministic $\matA$ and random $s$-sparse vectors $\rvecx$ (\!\!\cite{tr08,pobrst13}),
and typically state that $s$-sparse vectors can be recovered, again using convex optimization algorithms or greedy algorithms, with high probability, provided that $n$ is of order $s\log m$. 

An information-theoretic framework for compressed sensing,  fashioned as an almost lossless analog compression problem, was developed by Wu and Verd\'u \cite{wuve10,wuve12}. 
Specifically, \cite{wuve10} derives asymptotic (in $m$) achievability results and converses for linear encoders and measurable decoders, measurable encoders and Lipschitz continuous decoders, and continuous encoders and continuous decoders. 
For the particular case of linear encoders and measurable decoders,  \cite{wuve10}
shows that, asymptotically in $m$, 
for Lebesgue almost all (a.a.) measurement matrices $\matA$, the random  vector $\rvecx$ can be recovered with arbitrarily small probability of error from $n=\lfloor R m\rfloor$ linear measurements, provided that $R>R_\mathrm{B}$, where $R_\mathrm{B}$ denotes the Minkowski dimension compression rate \cite[Definition 10]{wuve10} of the random process generating $\rvecx$. 
For the special case of $\rvecx$ with independent and identically distributed (i.i.d.)  discrete-continuous mixture entries, a matching converse  exists.  
Discrete-continuous mixture distributions $\rho\mu^\mathrm{c}+(1-\rho)\mu^\mathrm{d}$ are relevant as 
they mimic sparse vectors for large $m$.  
In particular, if the discrete part $\mu^\mathrm{d}$ is a Dirac measure at $0$, then the nonzero entries of $\rvecx$ can be generated only by the continuous part  $\mu^\mathrm{c}$, and the fraction of nonzero entries in $\rvecx$ converges---in probability---to $\rho$ as $m$ tends to infinity. 
A nonasymptotic, i.e., fixed-$m$, statement in  \cite{wuve10} says that a.a. (with respect to a  $\sigma$-finite Borel measure on $\reals^m$) $s$-sparse random vectors can be recovered with zero error probability  provided that $n>s$. 
Again, this result holds for Lebesgue a.a. measurement matrices $\matA\in\reals^{n\times m}$. A corresponding converse does not seem to be available. For recent work on the connection  between lossy  compression of stochastic processes under distortion constraints and mean dimension theory for dynamical systems, we refer the interested reader to \cite{lits18,veve18,gusp18}.

\emph{Contributions.} We establish the fundamental limits of lossless, i.e., zero error probability, analog compression in the nonasymptotic, i.e., fixed-$m$,  regime for arbitrary random vectors $\rvecx\in\reals^m$.  
Specifically, we show that $\rvecx$ can be recovered with zero error probability provided that $n>K(\rvecx)$, with  $K(\rvecx)$ given by the infimum of the lower modified Minkowski dimension over all support sets $\setU$ of $\rvecx$, i.e., all sets $\setU\subseteq\reals^m$ with $\opP[\rvecx\in\setU]=1$. 
This statement holds for Lebesgue-a.a. measurement matrices. 
Lower modified Minkowski dimension vastly generalizes the notion of  $s$-sparsity, and allows for arbitrary support sets that are not necessarily finite unions of $s$-dimensional linear subspaces. For $s$-sparse vectors, we get the recovery guarantee $n>s$ showing that our information-theoretic thresholds  suffer neither  from the square-root bottleneck \cite{we74}  nor from a $\log m$-factor \cite{tr08,pobrst13}. 
We hasten to add, however, that we do not 
specify explicit decoders that achieve these thresholds, rather we provide 
existence results  absent computational considerations. 
The central conceptual element in the proof of our achievability result is the probabilistic null-space property first reported in \cite{striagbo15}. We emphasize that it is the usage of modified Minkowski dimension, as opposed to  Minkowski dimension  \cite{wuve10,striagbo15}, that allows us to obtain achievability results for zero  error probability. 
The asymptotic achievability result for  linear encoders in \cite{wuve10} can be recovered in our framework.  

We  particularize our achievability result to $s$-rectifiable random vectors $\rvecx$ as introduced in \cite{kopirihl15}; these are  random vectors 
supported on  countable unions of $s$-dimensional $C^1$-submanifolds of $\reals^m$ and of  
absolutely continuous---with respect to  $s$-dimensional Hausdorff measure---distribution.  
Countable unions of $C^1$-submanifolds  include numerous signal models prevalent in the compressed sensing literature, namely, the standard union of subspaces model underlying much of compressed sensing theory \cite{elmi09,baceduhe10} and spectrum-blind sampling \cite{febr96,miel09}, smooth submanifolds \cite{bawa09}, block-sparsity \cite{stpaha09,elkubo10,huzh10}, and  low-rank matrices as considered in the  matrix completion problem \cite{care09,capl11,ristbo15}. 
Our achievability result shows that $s$-rectifiable random vectors can be recovered with zero error probability provided that $n>s$. Again, 
this statement holds for Lebesgue-a.a. measurement matrices. Absolute continuity   with respect to  $s$-dimensional Hausdorff  measure is a regularity condition ensuring  that the distribution 
is not  too concentrated; in particular,  sets of Hausdorff dimension $t<s$ are guaranteed to carry zero probability mass. 
One would therefore expect $n\geq s$ to be necessary for zero error recovery of $s$-rectifiable random vectors. 
It turns out, however, that, perhaps surprisingly, this is  not  the case in general. 
An example elucidating this phenomenon constructs  a set $\setG\subseteq\reals^2$ of positive $2$-dimensional Hausdorff measure  that can be compressed linearly in a one-to-one fashion into $\reals$. 
This will then be seen to lead to the statement that every $2$-rectifiable random vector of distribution  absolutely continuous with respect to   $2$-dimensional Hausdorff measure restricted to $\setG$ can be recovered 
with zero error probability from a single  linear measurement.  
What renders this result  surprising is that all this is possible even though   $\setG$ contains 
the image of a Borel set in $\reals^2$ of positive  Lebesgue measure under a $C^\infty$-embedding. 
The picture changes completely when the embedding is real analytic. 
Specifically, we show that if a set $\setU\subseteq\reals^m$ contains 
the  real analytic embedding of a Borel set in $\reals^s$ of positive  Lebesgue measure,  it cannot be compressed linearly (in fact, not even  through a nonlinear real analytic mapping) in a one-to-one fashion into $\reals^n$ with $n<s$. This leads to the new concept of $s$-analytic random vectors,  which allows  a strong converse in the sense of $n\geq s$ being necessary for recovery of $\rvecx$  with probability of error smaller than one. The qualifier ``strong''  refers to the fact that recovery from 
$n< s$ linear measurements is not possible even if we allow an arbitrary positive error probability  strictly smaller than one. 
The only strong converse available in the literature applies to random vectors $\rvecx$ with i.i.d. discrete-continuous mixture entries \cite{wuve10}. 

\emph{Organization of the paper.}
In Section \ref{sec.ach}, we present our achievability results, with the central statement  in Theorem \ref{th1}. 
Section \ref{sec:rectifiable} particularizes these  results to $s$-rectifiable random vectors, and presents 
an example of a $2$-rectifiable random vector that can be recovered from a single linear measurement with zero error probability. 
In Section \ref{sec:analytic}, we introduce  and characterize the new class of $s$-analytic random vectors and we derive a corresponding  strong converse, stated in Theorem \ref{thm.converse}. 
Sections  \ref{th1.proof}--\ref{proof.converse}  contain the proofs of the main technical results stated in Sections  \ref{sec.ach}--\ref{sec:analytic}. 
Appendices \ref{ProofLemmaExarec}--\ref{auxlemma1proof} contain   proofs of further technical results stated in the main body of the paper. 
In Appendices \ref{app:geo}--\ref{app:rea}, we summarize concepts and basic  results from (geometric) measure theory, the theory of set-valued functions,  sequences of functions in several variables, and  real analytic mappings, all needed throughout the paper. 
The reader not familiar with these results is advised to consult  the  corresponding appendices before studying the proofs of our main results.  
These appendices also contain new results, which are highly specific and would disrupt the flow of the paper if presented in the main body.

\emph{Notation.} 
We use capital boldface roman letters $\matA,\matB,\dots$  to denote deterministic matrices and 
lower-case boldface roman letters $\veca,\vecb,\dots$ to designate  deterministic vectors. 
Random matrices and vectors are set in sans-serif font, e.g., $\rmatA$ and $\rvecx$. 
The $m\times m$ identity matrix is denoted by $\matI_m$. 
We write $\rank(\matA)$ and $\ker(\matA)$ for the rank and the kernel of $\matA$, respectively. 
The superscript  $\tp{}$ stands for transposition. 
The $i$-th unit vector  is denoted by $\vece_i$.
For a vector $\vecx\in\reals^m$, 
$\|\vecx\|_2=\sqrt{\tp{\vecx}\vecx}$ is its Euclidean norm  and $\|\vecx\|_0$ denotes the number of nonzero entries of $\vecx$. 
For the set $\setA$, we write  $\operatorname{card}(\setA)$ for its cardinality, $\overline{\setA}$ for its closure,  
$2^{\setA}$ for its power set, and 
$\ind{\setA}$ for the indicator function on $\setA$. 
With $\setA\subseteq\reals^m$ and $\setB\subseteq\reals^n$, we let   
$\setA\times\setB=\{(\veca, \vecb):\veca\in\setA, \vecb\in\setB\}$ 
 and 
$\setA\otimes \setB=\{\veca\otimes\vecb:\veca\in\setA, \vecb\in\setB\}$, where $\otimes$ denotes the Kronecker product (see \cite[Definition 4.2.1]{hojo91}).  
For $\setA,\setB\subseteq\reals^m$, we write $\setA\subsetneq\setB$ to express  strict inclusion according to    
$\setA\subseteq\setB$ with $\setA\neq\setB$ and we let $\setA-\setB=\{\veca-\vecb:\veca\in\setA,\vecb\in\setB\}$.   
We set $\overline{\reals}=\reals\cup\{-\infty,\infty\}$. 
For the Euclidean space $(\reals^k,\|\cdot\|_2)$, we designate the open ball of radius $\rho$ centered at $\vecu\in \reals^k$ by $\setB_k(\vecu,\rho)$.
We write $\colS(\setX)$ for a general $\sigma$-algebra on $\setX$, $\colB(\setX)$ for the Borel $\sigma$-algebra on a topological space $\setX$, 
and $\colL(\reals^m)$ for the Lebesgue $\sigma$-algebra on $\reals^m$.   
The product $\sigma$-algebra of $\colS(\setX)$ and $\colS(\setY)$ is denoted by $\colS(\setX)\otimes\colS(\setY)$.  
For  measures $\mu$ and $\nu$ on the same measurable space, we denote absolute continuity of  $\mu$ with respect to $\nu$ by $\mu\ll\nu$. 
We write $\mu\times\nu$ for the product measure of $\mu$ and $\nu$.  
Throughout we assume, without loss of generality (w.l.o.g.), that a measure on a measurable space is defined on all subsets of the measurable space (see \cite[Remark 1.2.6]{krpa08}).   
The Lebesgue measure on $\reals^k$ and $\reals^{k\times l}$ is designated as $\lebmeasure^{k}$ and  $\lebmeasure^{k\times l}$, respectively. 
The distribution of a random vector $\rvecx$ is denoted by $\mu_\rvecx$. 
If $f\colon\reals^k\to\reals^l$ is differentiable, we write $D\!f(\vecv)\in\reals^{l\times k}$ for its differential at $\vecv\in\reals^k$ and  define the $\min\{k,l\}$-dimensional Jacobian $J\! f(\vecv)$ at $\vecv\in\reals^k$ by 
\begin{align}
J\! f(\vecv)=
\begin{cases}\label{eq:Jacobiandef}
\sqrt{\det(\tp{(Df(\vecv))}Df(\vecv))}& \text{if $l\geq k$}\\ 
\sqrt{\det(Df(\vecv)\tp{(Df(\vecv))})}& \text{else.}
\end{cases}
\end{align}
For an open set $\setU\subseteq\reals^k$, a differentiable mapping $f\colon\setU\to\reals^l$, where $l\geq k$, is called an  immersion if 
$J\! f(\vecv)>0$ for all $\vecv\in\setU$. 
A one-to-one immersion is referred to as an embedding. 
For a mapping $f$, we write $f\equiv \veczero$  if it is identically zero and 
 $f\not\equiv \veczero$  otherwise. 
For $f\colon\setU\to\setV$ and $g\colon\setV\to\setW$, the composition $g\circ f\colon\setU\to\setW$ is defined as $(g\circ f)(x)=g(f(x))$ for all $x\in\setU$. 
For $f\colon\setU\to\setV$ and $\setA\subseteq \setU$, $f|_\setA$ denotes the restriction of $f$ to $\setA$. 
For $f\colon\setU\to\setV$ and $\setB\subseteq \setV$, we set $f^{-1}(\setB)=\{x\in\setU:f(x)\in\setB\}$. 

\section{Achievability}\label{sec.ach}

In classical compressed sensing theory \cite{do06,carota06,caelnera11,kudubo12},  one typically deals with the recovery of $s$-sparse vectors $\vecx\in\reals^m$, i.e.,  vectors $\vecx$ that are supported on a finite  union of   $s$-dimensional linear subspaces of $\reals^m$. 
The purpose of this paper is the development of a  comprehensive theory of signal recovery in the sense of allowing arbitrary support sets $\setU$, which  are not necessarily unions of (a finite number of)  linear subspaces of $\reals^m$. 
Formalizing this idea requires a suitable dimension measure for general nonempty sets. There is a rich variety of dimension measures available in the literature \cite{ro11,fa14,kade94}.  
Our choice  will be guided by the requirement of information-theoretic operational significance. Specifically, the dimension measure should allow the formulation of  nonasymptotic, i.e., fixed-$m$, 
achievability results with 
zero error probability. 
The modified  Minkowski dimension will turn out to meet  these requirements.  

We first recall the definitions of   Minkowski dimension and modified Minkowski dimension, compare the two concepts, and state the basic properties  of modified Minkowski dimension needed in the remainder of the paper.

\begin{dfn}(Minkowski dimension\footnote{Minkowski dimension is sometimes also referred to as box-counting dimension, which is the origin of the subscript B in the notation $\dim_\mathrm{B}(\cdot)$ used henceforth.})\cite[Equivalent definitions 2.1]{fa14} \label{dfndim}
For $\setU\subseteq \reals^{m}$ nonempty, the lower and upper Minkowski dimension of $\setU$ is defined as 
\begin{align}
\underline{\dim}_\mathrm{B}(\setU)&=\liminf_{\rho\to 0} \frac{\log N_\setU(\rho)}{\log \frac{1}{\rho}}\label{eq:lowerB}
\end{align} 
and
\begin{align} 
\overline{\dim}_\mathrm{B}(\setU)&=\limsup_{\rho\to 0} \frac{\log N_\setU(\rho)}{\log \frac{1}{\rho}},\label{eq:upperB}
\end{align} 
respectively, where 
\begin{align}\label{eq:coveringnumber}
N_\setU(\rho)=\min\Big\{k \in\naturals : \setU\subseteq\hspace*{-4truemm} \bigcup_{i\in\{1,\dots,k\}}\hspace*{-4truemm} \setB_{m}(\vecu_i,\rho),\ \vecu_i\in \reals^{m}\Big\}
\end{align}
is the covering number of $\setU$ for radius $\rho$.
If $\underline{\dim}_\mathrm{B}(\setU)=\overline{\dim}_\mathrm{B}(\setU)$, this common value, denoted by  $\dim_\mathrm{B}(\setU)$, is   
the Minkowski dimension of $\setU$.
\end{dfn}

\begin{dfn}(Modified Minkowski dimension)\cite[p. 37]{fa14} \label{dfndimlocal} 
For $\setU\subseteq \reals^{m}$ nonempty, the lower and upper modified Minkowski dimension of $\setU$ is defined as 
\begin{align}\label{eq:MBlower}
\underline{\dim}_\mathrm{MB}(\setU)=\inf\mleft\{\sup_{i\in\naturals} \underline{\dim}_\mathrm{B}(\setU_i) : \setU\subseteq \bigcup_{i\in\naturals}\setU_i\mright\}
\end{align} 
and
\begin{align} \label{eq:MBupper}
\overline{\dim}_\mathrm{MB}(\setU)=\inf\mleft\{\sup_{i\in\naturals} \overline{\dim}_\mathrm{B}(\setU_i) : \setU\subseteq \bigcup_{i\in\naturals}\setU_i\mright\},
\end{align}
respectively, where in \eqref{eq:MBlower}  and \eqref{eq:MBupper}  the infima are over all possible  coverings $\{\setU_i\}_{i\in\naturals}$ of $\setU$ by nonempty bounded sets $\setU_i$. 
If $\underline{\dim}_\mathrm{MB}(\setU)=\overline{\dim}_\mathrm{MB}(\setU)$, this common value, denoted by  $\dim_\mathrm{MB}(\setU)$, is   
the modified Minkowski dimension of $\setU$.
\end{dfn}

The main properties of   modified Minkowski dimension are summarized in Lemma \ref{lem:Mpropmindim}. 
In particular, 
\begin{align}\label{eq:ineqMB1}
\underline{\dim}_\mathrm{MB}(\cdot)\leq \underline{\dim}_\mathrm{B}(\cdot)
\end{align}
and 
\begin{align}\label{eq:ineqMB2}
\overline{\dim}_\mathrm{MB}(\cdot)\leq \overline{\dim}_\mathrm{B}(\cdot).
\end{align}
Both lower and upper modified Minkowski dimension have  the advantage of being countably stable, a key property we will use frequently. 
In contrast, upper Minkowski dimension is only finitely stable, and lower Minkowski dimension is not even finitely stable (see \cite[p. 34]{fa14}). 
For example, all countable subsets of $\reals^{m}$ have  modified Minkowski dimension equal to zero (the Minkowski dimension  of a single point in $\reals^{m}$ equals zero),    
but there exist infinitely countable sets with nonzero Minkowski dimension: 
\begin{exa}\cite[Example 2.7]{fa14}\label{exa:MBB}
Let $\setF=\{0,1/2,1/3,\dots\}$. Then, ${\dim}_\mathrm{MB}(\setF)=0<{\dim}_\mathrm{B}(\setF)=1/2$.
\end{exa}

Minkowski dimension and modified Minkowski dimension also behave  differently for unbounded sets. 
Specifically, by  monotonicity of (upper) modified Minkowski dimension,  
$\underline{\dim}_\mathrm{MB}(\setA)\leq \overline{\dim}_\mathrm{MB}(\setA)\leq{\dim}_\mathrm{MB}(\reals^m)= m$ for all $\setA\subseteq\reals^m$, in particular also for unbounded sets, whereas $\underline{\dim}_\mathrm{B}(\setA)=\overline{\dim}_\mathrm{B}(\setA)=\infty$ for all unbounded sets $\setA$ as a consequence of $N_\setA(\rho)=\infty$ for all $\rho\in(0,\infty)$. 
Working with lower modified Minkowski dimension will allow us to consider  arbitrary random vectors, regardless of whether they admit bounded support sets or not. 

The following example shows that  the modified Minkowski dimension agrees with the  sparsity notion   used 
in classical compressed sensing theory.

\begin{exa}\label{exa:subspace}
For $\setI$ finite or countable infinite, let $\setT_i$, $i\in\setI$,  be linear subspaces with their Euclidean dimensions $\dim(\setT_i)$ satisfying 
\begin{align}\label{eq:union1}
\max_{i\in\setI}\dim(\setT_i)=s,
\end{align}
and consider the union of subspaces 
\begin{align}\label{eq:union2}
\setU&=\bigcup_{i\in\setI}\setT_i.
\end{align} 
As every  linear subspace is a smooth  submanifold of $\reals^m$, it  follows from Properties \ref{Mpropmanifold}  and 
\ref{Mpropcountablestable} of Lemma  \ref{lem:Mpropmindim}  that $\dim_\mathrm{MB}(\setU)=s$. 
In the union of subspaces model, prevalent in compressed sensing theory \cite{do06,carota06,caelnera11,kudubo12}, 
$|\setI|={m\choose s}$ and the subspaces $\setT_i$ correspond to different sparsity patterns, each of cardinality equal to the sparsity level $s$.  

\end{exa}

The aim of the present paper is to  develop a theory for lossless analog compression of arbitrary  random vectors $\rvecx\in\reals^m$. 
An obvious choice for the stochastic equivalent of the sparsity level $s$ 
is  the stochastic sparsity level $S(\rvecx)$ defined as  
\begin{align}\label{def:sl}
S(\rvecx)=\min\mleft\{s: \exists\setT_1,\dots,\setT_k\ \text{with}   \opP\mleft[\rvecx\in\bigcup_{i=1}^k\setT_i\mright]=1\ \mright\}, 
\end{align}
where every $\setT_i$ is a linear subspace of $\reals^m$ of dimension $\dim(\setT_i)\leq s$ and $k\in\naturals$.  
This definition is, however, specific to the (finite) union of linear subspaces structure.  
The  theory we develop here requires a more general notion of description complexity,  which we define in terms of the lower modified Minkowski dimension according to  
\begin{align}\label{def:dc}
K(\rvecx)= \inf\{\underline{\dim}_\mathrm{MB}(\setU):\setU\subseteq \reals^{m}, \opP[\rvecx\in\setU]=1\}.
\end{align}
Sets satisfying  $\opP[\rvecx\in\setU]=1$ are hereafter referred to as support sets of $\rvecx$. 
While the definition of $S(\rvecx)$ involves minimization of  Euclidean dimensions of linear subspaces, $K(\rvecx)$ is defined by minimizing the lower modified Minkowski dimension of general support sets. 
Definitions \eqref{def:sl} and \eqref{def:dc} imply directly that $K(\rvecx)\leq S(\rvecx)$ for all random vectors $\rvecx\in\reals^m$ (see Example \ref{exa:subspace}).   
We will see in Section 
\ref{sec:rectifiable} that this inequality can actually be strict. 

Next, we show that application of a locally Lipschitz mapping cannot increase a random vector's 
description complexity. This result will allow us to construct random vectors with low description complexity  out of existing ones simply by applying locally Lipschitz mappings. 
The formal statement is as follows. 

\begin{lem}\label{lem:dic}
Let $\rvecx\in\reals^k$ and $f\colon \reals^k\to \reals^m$  be locally Lipschitz. Then,   $K(f(\rvecx))\leq K(\rvecx)$. 
\end{lem}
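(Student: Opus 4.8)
The plan is to show that every support set $\setU$ of $\rvecx$ produces, via $f$, a support set of $f(\rvecx)$ whose lower modified Minkowski dimension does not exceed that of $\setU$; taking the infimum over all support sets $\setU$ then yields the claim. So fix a support set $\setU \subseteq \reals^k$ of $\rvecx$, i.e.\ $\opP[\rvecx\in\setU]=1$. Then $f(\setU)$ is a support set of $f(\rvecx)$, since $\{f(\rvecx)\in f(\setU)\}\supseteq\{\rvecx\in\setU\}$, hence has probability one. It therefore suffices to prove
\begin{align}\label{eq:dic-key}
\underline{\dim}_\mathrm{MB}(f(\setU)) \le \underline{\dim}_\mathrm{MB}(\setU),
\end{align}
because then $K(f(\rvecx)) \le \underline{\dim}_\mathrm{MB}(f(\setU)) \le \underline{\dim}_\mathrm{MB}(\setU)$, and taking the infimum over all support sets $\setU$ of $\rvecx$ gives $K(f(\rvecx)) \le K(\rvecx)$.

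To establish \eqref{eq:dic-key}, I would combine two facts. First, the behavior of \emph{lower Minkowski dimension} under Lipschitz maps on bounded sets: if $g$ is Lipschitz on a bounded set $\setV\subseteq\reals^k$ with constant $L$, then a cover of $\setV$ by $N_\setV(\rho)$ balls of radius $\rho$ maps to a cover of $g(\setV)$ by $N_\setV(\rho)$ balls of radius $L\rho$, so $N_{g(\setV)}(L\rho)\le N_\setV(\rho)$; feeding this into \eqref{eq:lowerB} gives $\underline{\dim}_\mathrm{B}(g(\setV))\le \underline{\dim}_\mathrm{B}(\setV)$. Second, the passage from Lipschitz to \emph{locally} Lipschitz via the countable stability of modified Minkowski dimension (Lemma~\ref{lem:Mpropmindim}): since $f$ is locally Lipschitz, $\reals^k = \bigcup_{j\in\naturals} \setB_k(\veczero,j)$ and $f$ is Lipschitz on each bounded ball $\setB_k(\veczero,j)$ (its closure being compact), with some constant $L_j$. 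Now take any countable cover $\setU\subseteq\bigcup_{i\in\naturals}\setU_i$ by nonempty bounded sets with $\sup_i \underline{\dim}_\mathrm{B}(\setU_i)$ close to $\underline{\dim}_\mathrm{MB}(\setU)$. The sets $\setU_i\cap\setB_k(\veczero,j)$, over $i,j\in\naturals$, form a countable cover of $\setU$ by bounded sets, and $f$ is Lipschitz on each with constant $L_j$, so $f(\setU_i\cap\setB_k(\veczero,j))$ are bounded sets covering $f(\setU)$ with $\underline{\dim}_\mathrm{B}(f(\setU_i\cap\setB_k(\veczero,j)))\le \underline{\dim}_\mathrm{B}(\setU_i\cap\setB_k(\veczero,j))\le \underline{\dim}_\mathrm{B}(\setU_i)$, the last step by monotonicity of lower Minkowski dimension. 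Taking the supremum over $i,j$ and then the infimum over covers of $\setU$ yields \eqref{eq:dic-key}.

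The one genuinely delicate point is that lower Minkowski dimension is \emph{not} countably stable, so one cannot work with it directly on an arbitrary countable cover; the whole argument hinges on the fact that the \emph{infimum over covers} in the definition of $\underline{\dim}_\mathrm{MB}$ absorbs the extra intersection with balls $\setB_k(\veczero,j)$ at no cost, precisely because modified Minkowski dimension is countably stable. I would therefore be careful to only ever apply the Lipschitz covering estimate to individual bounded pieces and to pass to suprema/infima in the right order. A minor technical caveat: one should note that if $\setU_i\cap\setB_k(\veczero,j)$ is empty or its image is a single point, the corresponding term contributes dimension $0$ and can simply be discarded from the cover, so the reduction to nonempty bounded sets in Definition~\ref{dfndimlocal} is unproblematic. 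Everything else is the routine covering-number bookkeeping sketched above.
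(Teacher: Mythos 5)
Your proposal is correct and follows essentially the same route as the paper: the paper's proof is a two-line chain of inequalities that reduces the claim to Property \ref{MpropL} of Lemma \ref{lem:Mpropmindim} (locally Lipschitz maps do not increase lower modified Minkowski dimension), whereas you have simply inlined a re-derivation of that property (the bounded/Lipschitz covering estimate plus countable stability), which is exactly how the appendix proves it. One small inefficiency: since the $\setU_i$ in your cover are already bounded, intersecting them additionally with the balls $\setB_k(\veczero,j)$ is redundant — you could apply the Lipschitz bound directly to $f|_{\overline{\setU_i}}$ — but this does not affect correctness.
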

\begin{proof}
\begin{align}
&K(f(\rvecx))\\
&= \inf\{\underline{\dim}_\mathrm{MB}(\setV):\setV\subseteq\reals^{m},\, \opP[f(\rvecx)\in\setV]=1\}\\
&= \inf\{\underline{\dim}_\mathrm{MB}(f(\setU)):\setU\subseteq\reals^{k},\, \opP[\rvecx\in \setU]=1\}\\
&\leq \inf\{\underline{\dim}_\mathrm{MB}(\setU):\setU\subseteq\reals^{k},\, \opP[\rvecx\in \setU]=1\} \label{eq:stepLMB}\\
&=K(\rvecx),
\end{align}
where \eqref{eq:stepLMB} follows from Property \ref{MpropL}  of Lemma  \ref{lem:Mpropmindim}.  
\end{proof}
When the mapping $f$ is invertible and both $f$ and $f^{-1}$ are locally Lipschitz, the description complexity remains unchanged: 

\begin{cor}\label{cor:dic}
Let $\rvecx\in\reals^m$ and consider an invertible  mapping $f\colon \reals^m\to \reals^m$. Suppose that $f$ and $f^{-1}$ are both locally Lipschitz. Then, $K(\rvecx)=K(f(\rvecx))$. 
\end{cor}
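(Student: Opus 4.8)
The plan is to derive this immediately from Lemma \ref{lem:dic} by applying it twice, once to $f$ and once to $f^{-1}$. There is essentially no obstacle here; the only thing to be careful about is matching the domains and codomains so that Lemma \ref{lem:dic} applies verbatim in both directions.

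First I would apply Lemma \ref{lem:dic} to the random vector $\rvecx \in \reals^m$ and the locally Lipschitz mapping $f\colon\reals^m\to\reals^m$, obtaining $K(f(\rvecx))\leq K(\rvecx)$. Next I would apply Lemma \ref{lem:dic} again, this time to the random vector $\rvecy \define f(\rvecx)\in\reals^m$ and the locally Lipschitz mapping $f^{-1}\colon\reals^m\to\reals^m$, obtaining $K(f^{-1}(\rvecy))\leq K(\rvecy)$. Since $f^{-1}(\rvecy) = f^{-1}(f(\rvecx)) = \rvecx$ (using that $f$ is invertible), this reads $K(\rvecx)\leq K(f(\rvecx))$. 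Combining the two inequalities yields $K(\rvecx)=K(f(\rvecx))$.

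\begin{proof}
Since $f$ is locally Lipschitz, Lemma \ref{lem:dic} applied to $\rvecx$ and $f$ gives $K(f(\rvecx))\leq K(\rvecx)$. Since $f$ is invertible and $f^{-1}$ is locally Lipschitz, Lemma \ref{lem:dic} applied to the random vector $f(\rvecx)$ and the mapping $f^{-1}$ gives
\begin{align}
K(\rvecx)=K\bigl(f^{-1}(f(\rvecx))\bigr)\leq K(f(\rvecx)).
\end{align}
Combining these two inequalities establishes $K(\rvecx)=K(f(\rvecx))$.
\end{proof}
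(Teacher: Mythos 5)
Your proof is correct and follows exactly the same route as the paper: apply Lemma \ref{lem:dic} once to $f$ and once to $f^{-1}$ (the paper writes this compactly as $K(\rvecx)= K((f^{-1}\circ f)(\rvecx))\leq K(f(\rvecx))\leq K(\rvecx)$). No differences worth noting.
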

\begin{proof}
$K(\rvecx)= K((f^{-1}\circ f)(\rvecx))\leq K(f(\rvecx))\leq K(\rvecx)$, where we applied Lemma \ref{lem:dic} twice.    
\end{proof}
As a consequence of Corollary \ref{cor:dic}, the description complexity $K(\rvecx)$ is invariant under a basis change.
Our main achievability result can now be formulated  as follows. 

\begin{thm}(Achievability)\label{th1}
For $\rvecx\in\reals^{m}$,  $n>K(\rvecx)$  is sufficient for the existence of a 
Borel measurable mapping $g\colon\reals^{n\times m}\times\reals^n\to\reals^m$, 
referred to as  (measurable) decoder, satisfying    
\begin{align}\label{eq:decoder}
\opP\mleft[g(\matA,\matA\rvecx)\neq\rvecx\mright]=0\quad \text{for $\lebmeasure^{n\times m}$-\text{a.a.}$\ \matA\in\reals^{n\times m}$.} 
\end{align}
\end{thm}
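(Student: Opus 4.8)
The plan is to reduce the statement to a measure-theoretic claim about when the kernel of $\matA$ intersects a support set $\setU$ of $\rvecx$ only in $\veczero$, and then to invoke a Fubini-type argument over the space of measurement matrices. First I would fix a support set $\setU\subseteq\reals^m$ with $\opP[\rvecx\in\setU]=1$ and $\underline{\dim}_\mathrm{MB}(\setU)<n$; such a $\setU$ exists by the definition \eqref{def:dc} of $K(\rvecx)$ and the hypothesis $n>K(\rvecx)$. The key observation is that a single universal decoder works: given $(\matA,\vecy)$, the decoder $g$ attempts to identify the unique point of $\setU\cap(\vecx_0+\ker(\matA))$ when it is unique, and outputs anything (say $\veczero$) otherwise. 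Uniqueness of this intersection for a fixed realization $\rvecx=\vecx$ is equivalent to $\setU\cap(\setU-\setU)\cap(\vecx+\ker\matA)\setminus\{\vecx\}$-type emptiness; more precisely, decoding succeeds for realization $\vecx$ whenever $\ker(\matA)\cap(\setU-\vecx)=\{\veczero\}$, equivalently $\vecx\notin \setU_{\matA}$ where $\setU_{\matA}=\{\vecx\in\setU: \exists\,\vecx'\in\setU,\ \vecx'\neq\vecx,\ \matA\vecx'=\matA\vecx\}$. So the goal becomes: for $\lebmeasure^{n\times m}$-a.a.\ $\matA$, $\opP[\rvecx\in\setU_{\matA}]=0$.

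The central tool here is the probabilistic null-space property alluded to after Theorem~\ref{th1} (the result from \cite{striagbo15}), combined with the countable stability of modified Minkowski dimension. Writing $\setD=\setU-\setU$ (the difference set), decoding fails for realization $\vecx$ only if $\ker(\matA)$ contains a nonzero element of $\setD$, i.e.\ $\ker(\matA)\cap(\setD\setminus\{\veczero\})\neq\emptyset$. I would therefore first show that $\underline{\dim}_\mathrm{MB}(\setD)<n$: since $\underline{\dim}_\mathrm{MB}(\setU)<n$, Definition~\ref{dfndimlocal} gives a countable cover $\setU\subseteq\bigcup_i\setU_i$ with $\sup_i\underline{\dim}_\mathrm{B}(\setU_i)<n$; then $\setD\subseteq\bigcup_{i,j}(\setU_i-\setU_j)$, and each $\setU_i-\setU_j$ has lower Minkowski dimension bounded by $\underline{\dim}_\mathrm{B}(\setU_i)+\underline{\dim}_\mathrm{B}(\setU_j)$ — wait, that bound is too weak; instead I would use that $\setU_i-\setU_j$ is Lipschitz-image of $\setU_i\times\setU_j$ and bound via the product, or more carefully invoke the analogue of Property~\ref{MpropL} of Lemma~\ref{lem:Mpropmindim} for difference sets. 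The cleaner route, and the one I expect the paper to take, is to work directly with $\setU$: for a fixed realization $\vecx$, the set of "bad" matrices $\{\matA:\matA\vecx'=\matA\vecx\text{ for some }\vecx'\in\setU\setminus\{\vecx\}\}$ can be shown to have $\lebmeasure^{n\times m}$-measure zero precisely when $\underline{\dim}_\mathrm{MB}(\setU-\vecx)=\underline{\dim}_\mathrm{MB}(\setU)<n$, using a covering-number estimate: cover $\setU$ by balls of radius $\rho$, count them as $N(\rho)\lesssim\rho^{-\alpha}$ with $\alpha<n$ along a subsequence, and note that for each pair of centers the set of $\matA$ mapping that pair to near-coincidence has small measure, so a union bound over $N(\rho)^2$ pairs still tends to zero. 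Then Fubini (exchanging the $\matA$-integral and the $\rvecx$-expectation) converts "for a.a.\ $\vecx$, a.a.\ $\matA$ is good" into "for a.a.\ $\matA$, $\opP[\text{bad}]=0$."

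The main obstacle is making the covering/union-bound argument over matrices rigorous while getting the dimension bookkeeping exactly right — specifically, showing that the measure of matrices $\matA$ for which $\ker(\matA)$ comes within distance $\epsilon$ of a fixed nonzero vector $\vecv$ is $O((\epsilon/\|\vecv\|)^{?})$ with an exponent large enough to beat the $\rho^{-2\alpha}$ growth of the pair count after optimizing over $\epsilon\sim\rho$, and handling the passage to the $\liminf$ (which only gives the good exponent along a subsequence of $\rho\to 0$) together with the unboundedness of $\setU$ via the countable decomposition into bounded pieces. I would structure the proof so that this quantitative matrix-measure estimate is isolated as the technical core (this is essentially the probabilistic null-space property of \cite{striagbo15} adapted to modified Minkowski dimension and a single fixed $\vecx$), then assemble: (i) extract a good support set $\setU$; (ii) for fixed $\vecx$, bound the bad-matrix set measure using the estimate and the covering numbers of $\setU$; (iii) apply Fubini to swap quantifiers; (iv) verify the resulting $g$ is Borel measurable, which follows because $\setU$ can be taken to be a countable union of closed bounded sets (e.g.\ by replacing each $\setU_i$ in the cover with its closure, which does not increase $\underline{\dim}_\mathrm{B}$) so that $\setU_{\matA}$ and hence $g$ are built from Borel operations.
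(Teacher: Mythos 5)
Your plan is essentially the paper's: replace the support set with a countable union of compact sets (Property~\ref{Mequiv} of Lemma~\ref{lem:Mpropmindim}), reduce decoding failure for a fixed realization $\vecx$ to $\ker(\matA)$ meeting the translate $\setU-\vecx$ nontrivially, invoke the probabilistic null-space property (Proposition~\ref{prp:ns}, which the paper obtains from~\cite[Proposition~1]{striagbo15} together with countable subadditivity to handle unbounded sets) using translation-invariance of $\underline{\dim}_\mathrm{MB}$, and exchange quantifiers via Fubini (Corollary~\ref{cor.fubini}). Your exploratory detour through the difference set $\setU-\setU$ is unnecessary --- and indeed problematic, as its dimension can blow up --- but you recognize this yourself and pivot to the translate argument, which is the correct one. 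The one step you gloss over is the Borel measurability of $g$: ``built from Borel operations'' does not suffice, because the decoder must make a jointly-Borel \emph{selection} of a consistent point from the set-valued map $(\matA,\vecy)\mapsto\{\vecv\in\setU_i:\matA\vecv=\vecy\}$ on each piece; the paper handles this by applying a measurable-selection result for normal integrands (Lemmata~\ref{lem:normal} and~\ref{lem:normal3}) to the continuous function $f(\matA,\vecy,\vecv)=\|\vecy-\matA\vecv\|_2$ over the compact piece $\setU_i$, and then chains the per-piece selectors via Lemma~\ref{lem:combine}. This is exactly where compactness --- rather than mere boundedness or closedness --- of the pieces $\setU_i$ is essential, so Property~\ref{Mequiv}, which guarantees a cover by compact sets of low lower Minkowski dimension, is doing more work than just tidying.
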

\begin{proof}See Section \ref{th1.proof}.\end{proof}

Theorem \ref{th1} generalizes the  achievability result for linear encoders in \cite{wuve10} in the sense of being  nonasymptotic (i.e., it applies for finite $m$)  and guaranteeing zero error probability.   

The central conceptual element in the proof of Theorem~\ref{th1} is the following probabilistic null-space property for arbitrary (possibly unbounded) nonempty sets, first reported in \cite{striagbo15} for  bounded sets and expressed in terms of  lower Minkowski dimension.   
If the lower modified Minkowski dimension of a nonempty  set $\setU$ is smaller than $n$, then,  for $\lebmeasure^{n\times m}$-a.a.  measurement matrices $\matA\in\reals^{n\times m}$, the set $\setU$ intersects the $(m-n)$-dimensional kernel of $\matA$  at most trivially. 
What is remarkable here is that the notions of Euclidean dimension (for the kernel of the linear mapping induced by $\matA$) and of lower modified  Minkowski dimension (for $\setU$) are compatible.  
The formal statement is as follows.  

\begin{prp}\label{prp:ns}  
Let $\setU\subseteq\reals^m$ be   nonempty with $\underline{\dim}_\mathrm{MB}(\setU)<n$. Then,   
\begin{align}\label{eq:proptoshow}
\ker(\matA)\cap(\setU\mysetminus\{\matzero\})=\emptyset\quad \text{for $\lebmeasure^{n\times m}$-\text{a.a.}$\ \matA\in\reals^{n\times m}$.} 
\end{align}
\end{prp}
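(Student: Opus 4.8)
The plan is to reduce the statement to a countable union and then to a scaling/covering argument for bounded sets. First I would use the countable stability of lower modified Minkowski dimension: by definition \eqref{eq:MBlower}, since $\underline{\dim}_\mathrm{MB}(\setU)<n$, there is a covering $\setU\subseteq\bigcup_{i\in\naturals}\setU_i$ by nonempty bounded sets with $\sup_{i\in\naturals}\underline{\dim}_\mathrm{B}(\setU_i)<n$, i.e.\ $\underline{\dim}_\mathrm{B}(\setU_i)<n$ for every $i$. Since a countable union of $\lebmeasure^{n\times m}$-null sets is $\lebmeasure^{n\times m}$-null, it suffices to prove the claim for each bounded $\setU_i$ separately; thus I may assume from now on that $\setU$ is bounded with $\underline{\dim}_\mathrm{B}(\setU)=:d<n$. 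Replacing $\setU$ by $\setU\setminus\{\matzero\}$ changes nothing, so I want to show $\ker(\matA)\cap\setU=\emptyset$ for $\lebmeasure^{n\times m}$-a.a.\ $\matA$, under the assumption $\matzero\notin\setU$ (the origin lies in every kernel but is excluded).

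Next I would fix a $\rho_0>0$ with $\setU\subseteq\setB_m(\veczero,\rho_0)$ and, more importantly, with $\inf_{\vecu\in\setU}\|\vecu\|_2=:c>0$ (possible since $\matzero\notin\overline{\setU}$ is not guaranteed, so actually I should instead decompose $\setU$ into the pieces $\setU\cap\{2^{-(k+1)}\le\|\vecu\|_2< 2^{-k}\}$, $k\in\integers$, together with $\setU\cap\{\|\vecu\|_2\ge 1\}$; each piece is bounded, bounded away from $\matzero$, still has lower Minkowski dimension $<n$, and there are countably many, so again it suffices to treat one such piece). So assume $\setU\subseteq\{\vecu:\,c\le\|\vecu\|_2\le R\}$ with $0<c\le R<\infty$ and $\underline{\dim}_\mathrm{B}(\setU)<n$. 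Pick $\alpha$ with $\underline{\dim}_\mathrm{B}(\setU)<\alpha<n$. By \eqref{eq:lowerB} there is a sequence $\rho_j\downarrow 0$ with $N_\setU(\rho_j)\le \rho_j^{-\alpha}$ for all large $j$. Cover $\setU$ by $N_\setU(\rho_j)$ balls $\setB_m(\vecu_\ell,\rho_j)$, $\ell=1,\dots,N_\setU(\rho_j)$, with centers $\vecu_\ell$ which we may take in $\setU$ (up to doubling the radius). The key estimate: for a fixed $\vecu\ne\matzero$, the set of matrices $\matA\in\reals^{n\times m}$ with a row of norm at most $1$, say restricted to the compact set $\{\matA:\|\matA\|\le T\}$, for which $\|\matA\vecu\|_2\le \varepsilon$ has $\lebmeasure^{n\times m}$-measure $O\big((\varepsilon/\|\vecu\|_2)^{n}\big)$; indeed $\matA\vecu$ has $n$ independent rows each of which is a linear functional of the corresponding row of $\matA$ with gradient $\vecu$, so the preimage of a ball of radius $\varepsilon$ is a slab of width $O(\varepsilon/\|\vecu\|_2)$ in each of the $n$ row-blocks. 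If $\matv\in\ker(\matA)$ lies within $\rho_j$ of $\setU$, i.e.\ $\|\vecu_\ell-\matv\|_2\le 2\rho_j$ for some $\ell$, then $\|\matA\vecu_\ell\|_2=\|\matA(\vecu_\ell-\matv)\|_2\le 2\rho_j\|\matA\|$. Hence, on $\{\|\matA\|\le T\}$,
\begin{align}
\lebmeasure^{n\times m}\big(\{\matA:\ker(\matA)\cap\setU\ne\emptyset,\ \|\matA\|\le T\}\big)
\le \sum_{\ell=1}^{N_\setU(\rho_j)} \lebmeasure^{n\times m}\big(\{\|\matA\vecu_\ell\|_2\le 2T\rho_j,\ \|\matA\|\le T\}\big)
\le C\, N_\setU(\rho_j)\,\big(\rho_j/c\big)^{n}
\le C'\,\rho_j^{\,n-\alpha}.
\end{align}
Letting $j\to\infty$ and using $n-\alpha>0$ gives that this measure is $0$ for every $T$; taking $T\to\infty$ through integers finishes the bounded-piece case, and reassembling the countably many pieces via $\sigma$-additivity of the null ideal completes the proof.

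The main obstacle I anticipate is the geometric/measure-theoretic core estimate — bounding $\lebmeasure^{n\times m}\{\matA:\|\matA\vecu\|_2\le\varepsilon\}$ (restricted to a compact set of matrices) by $O\big((\varepsilon/\|\vecu\|_2)^n\big)$ uniformly, and being careful that the implied constant does not blow up as the covering centers vary. This is really a Fubini computation decomposing $\matA$ row by row: the $i$-th coordinate of $\matA\vecu$ depends only on the $i$-th row $\mathbf a_i$ and equals $\langle \mathbf a_i,\vecu\rangle$, so the condition $\|\matA\vecu\|_2\le\varepsilon$ implies $|\langle\mathbf a_i,\vecu\rangle|\le\varepsilon$ for each $i$, and the set of $\mathbf a_i$ in a fixed large cube satisfying this has $m$-dimensional measure $O(\varepsilon/\|\vecu\|_2)$. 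A secondary point requiring care is the exclusion of $\matzero$ and the lower bound $c$ on $\|\vecu\|_2$: this is exactly why the dyadic decomposition of $\setU$ by norm-annuli is needed, and why the statement removes $\matzero$ from $\setU$. (I expect the authors' proof to route this through a $\sigma$-finite covering lemma or Fubini-type argument already developed in an appendix; the skeleton above is the direct route.)
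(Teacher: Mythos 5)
Your argument is correct and its opening move is exactly the paper's: use the covering definition of lower modified Minkowski dimension to reduce to countably many bounded pieces $\setU_i$ with $\underline{\dim}_\mathrm{B}(\setU_i)<n$, then invoke countable subadditivity of $\lebmeasure^{n\times m}$. At that point the paper simply cites \cite[Proposition~1]{striagbo15}, which is precisely the bounded-set, lower-Minkowski-dimension version of the claim, whereas you supply a direct proof of it. Your proof of the bounded case is sound: the dyadic norm-annulus decomposition of each $\setU_i$ handles the exclusion of $\matzero$ and yields a uniform lower bound $c>0$ on $\|\vecu\|_2$; the row-by-row Fubini slab estimate $\lebmeasure^{n\times m}\{\matA:\|\matA\vecu\|_2\le\varepsilon,\ \|\matA\|\le T\}\le C(T,m,n)\,(\varepsilon/\|\vecu\|_2)^n$ is the right quantitative input; and combining it with the covering bound $N_{\setU}(\rho_j)\le\rho_j^{-\alpha}$, valid along a subsequence $\rho_j\downarrow 0$ because $\underline{\dim}_\mathrm{B}$ is a $\liminf$, gives an $O(\rho_j^{\,n-\alpha})$ estimate with $n-\alpha>0$, which vanishes as $j\to\infty$; letting $T\to\infty$ through integers then finishes the bounded piece. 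One step you should make explicit, since you rely on it when passing to the annular pieces, is that each such piece inherits $\underline{\dim}_\mathrm{B}<n$ by monotonicity of lower Minkowski dimension (subsets have no larger covering numbers). Net effect: your route is fully self-contained at the cost of length; the paper's is shorter because it outsources the core geometric estimate to the earlier reference, and your sketch is in effect a proof of that cited result.
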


\begin{proof}   
By  definition of lower modified Minkowski dimension, there exists a covering $\{\setU_i\}_{i\in\naturals}$ of $\setU$ by  nonempty bounded sets $\setU_i$ 
 with  $\underline{\dim}_\mathrm{B}(\setU_i) < n$ for all $i\in\naturals$. 
The countable subadditivity of Lebesgue measure now implies that 
\begin{align}
&\lebmeasure^{n\times m}\mleft\{\matA\in\reals^{n\times m} :\ker(\matA)\cap(\setU\mysetminus\{\matzero\})\neq \emptyset  \mright\}\\
&\leq \sum_{i\in\naturals}\lebmeasure^{n\times m}\mleft\{\matA\in\reals^{n\times m} : \ker(\matA)\cap(\setU_i\mysetminus\{\matzero\})\neq \emptyset \mright\}.\label{eq:sumzero1}
\end{align}
The proof is  concluded by noting that  \cite[Proposition 1]{striagbo15}  with 
$\underline{\dim}_\mathrm{B}(\setU_i)<n$ for all $i\in\naturals$ implies that every term in the sum  of \eqref{eq:sumzero1} equals zero. 
\end{proof}

We close this section by elucidating the level of generality of our theory through particularization of the achievability result Theorem \ref{th1} to random vectors supported on   attractor sets of systems of contractions as defined below.   
 The formal definition is as follows. 
 Let $\setA\subseteq\reals^m$ be closed. For  $i=1,\dots,k$, consider  $s_i\colon \setA\to\setA$  and $c_i\in (0,1)$ such that 
\begin{align}
\|s_i(\vecu)-s_i(\vecv)\|_2\leq c_i\|\vecu-\vecv\|_2\quad \text{for all}\ \vecu,\vecv\in\setA. \label{eq:contractions}
\end{align} 
Such mappings are called contractions. By \cite[Theorem 9.1]{fa14}, there exists a unique compact set $\setK\subseteq\setA$, referred to as an attractor set,  such that 
\begin{align}
\setK=\bigcup_{i=1}^k s_i(\setK). 
\end{align}
Thanks to \cite[Proposition 9.6]{fa14}, $\overline{\dim}_\mathrm{B}(\setK)\leq d$, where $d>0$ is the unique solution of 
\begin{align}\label{eq:defdimC}
\sum_{i=1}^k c_i^d=1. 
\end{align} 
If, in addition, $\setK$ satisfies the open set condition \cite[Equation (9.12)]{fa14}, then  ${\dim}_\mathrm{B}(\setK)= d$ by  \cite[Theorem 9.3]{fa14}. 
The middle-third Cantor set \cite[Example 9.1]{fa14}, Sierpi\'{n}ski gaskets   \cite[Example 9.4]{fa14},  and the  modified von Koch curves \cite[Example 9.5]{fa14} are all attractor sets (with their underlying contractions) that meet the open set condition \cite[Chapter 9]{fa14}. 
As $\underline{\dim}_\mathrm{MB}(\setK)\leq \overline{\dim}_\mathrm{B}(\setK)$ by Property \ref{MpropMltB} of Lemma \ref{lem:Mpropmindim}, every $\rvecx\in\reals^m$ such that  $\opP[\rvecx\in\setK]=1$ has description complexity $K(\rvecx)\leq d$ (see \eqref{def:dc}). 
For an excellent in-depth treatment of attractor sets of systems of contractions, the interested reader is referred to  \cite[Chapter 9]{fa14}.
We finally note that for self-similar distributions \cite[Section 3.E]{wuve10} on attractor sets $\setK$ satisfying the  open set condition \cite[Equation (9.12)]{fa14}, an achievability result in terms  of  information dimension  was reported in  \cite[Theorem 7]{wuve10}.

\section{Rectifiable Sets and Rectifiable Random Vectors}\label{sec:rectifiable}
The  signal models employed in classical compressed sensing \cite{do06,carota06},  model-based compressed sensing \cite{baceduhe10}, and block-sparsity \cite{stpaha09,elkubo10,huzh10}  all fall under the rubric of finite  unions of  linear subspaces.  
More general prevalent signal models in the theory of sparse signal recovery   include finite unions of smooth submanifolds, either in explicit form  as in   \cite{bawa09} or implicitly in the context of low-rank matrix recovery  \cite{care09,capl11,ristbo15}. 
All these models are subsumed by  the   
countable unions of $C^1$-submanifolds structure, formalized next using the notion of rectifiable sets.  
We start with the definition of rectifiable sets. 
 
\begin{dfn}(Rectifiable sets)\cite[Definition 3.2.14]{fed69}\label{Dfn:recset}
Let  $s\in\naturals$, and consider a measure $\mu$  on $\reals^m$.  A nonempty set $\setU\subseteq\reals^{m}$ is 
\begin{enumerate}[label=\roman*)]
\item $s$-rectifiable if there exist a compact set $\setA\subseteq\reals^s$ and a Lipschitz  mapping $\varphi\colon\setA\to \reals^{m}$  such that $\setU=\varphi(\setA)$, \label{recd1}
\item countably $s$-rectifiable if it is the countable union of $s$-rectifiable sets,\label{recd2}
\item countably $(\mu,s)$-rectifiable if it is $\mu$-measurable and there exists a countably $s$-rectifiable set $\setV$ such that $\mu(\setU\mysetminus\setV)=0$.\label{recd3}
\item $(\mu,s)$-rectifiable if it is  countably $(\mu,s)$-rectifiable and $\mu(\setU)<\infty$.\label{recd4}
\end{enumerate}
\end{dfn}
Our definitions of $s$-rectifiability and countably $s$-rectifiability
differ from those of \cite[Definition 3.2.14]{fed69} as we require the $s$-rectifiable
set to be the Lipschitz image of a compact rather than a bounded set. 
The more restrictive definitions \ref{recd1} and \ref{recd2} above have
the advantage of $s$-rectifiable sets and countably $s$-rectifiable sets being 
guaranteed to be Borel. (This holds since the image of a compact set under a continuous mapping is a compact set, and a countable union of compact sets is Borel.) 
We note however that, by Lemma \ref{lem:equivrec},  our definitions of  $(\mu,s)$-rectifiable 
and countably  $(\mu,s)$-rectifiable (see Definition \ref{Dfn:recset}, Items \ref{recd3} and \ref{recd4}) are nevertheless equivalent to those in \cite[Definition 3.2.14]{fed69}. 

In what follows, we only need Items \ref{recd3} and \ref{recd4} in Definition \ref{Dfn:recset} for the specific case of $\mu= \mathscr{H}^s$  (see Definition \ref{dfnH}), in which measurability of $\setU$ is guaranteed for all Borel sets.   
 Therefore, $s$-rectifiable sets and countably $s$-rectifiable sets  are also   $\mathscr{H}^{s}$-measurable, 
which leads to the following chain of implications: 
\begin{center} 
$\setU$ is $s$-rectifiable $\Rightarrow$ $\setU$ is countably $s$-rectifiable $\Rightarrow$ $\setU$ is countably $(\mathscr{H}^{s},s)$-rectifiable.
\end{center} 

The following result  collects properties of (countably) $s$-rectifiable sets for later use.  

\begin{lem} \label{LemmaExarec}$ $
\begin{enumerate}[label=\roman*)]
\item \label{exarec}
If $\setU\subseteq\reals^m$ is $s$-rectifiable, then it is $t$-rectifiable for all $t\in\naturals$  with $t>s$. 
\item \label{exar1}
For  locally Lipschitz $\varphi_i$, $i\in\naturals$, the set
\begin{align} 
\setV=\bigcup_{i\in\naturals}\varphi_i(\reals^s)
\end{align}
is countably $s$-rectifiable. 
\item \label{exaProdrec}
If $\setU\subseteq\reals^m$ is countably  $s$-rectifiable and  $\setV\subseteq\reals^n$ is countably $t$-rectifiable, then 
\begin{align}
\setW=\{\tp{(\tp{\vecu}\ \tp{\vecv})}: \vecu\in\setU, \vecv\in\setV\}\subseteq\reals^{m+n}
\end{align} 
is countably $(s+t)$-rectifiable.  
\item \label{exaC1}
Every $s$-dimensional $C^1$-submanifold \cite[Definition 5.3.1]{krpa08} of $\reals^m$ is countably  $s$-rectifiable. In particular, every $s$-dimensional affine subspace of $\reals^m$ is countably $s$-rectifiable.
\item \label{exasum}
For  $\setA_i$  countably $s_i$-rectifiable and $s_i\leq s$, $i\in\naturals$,  the set
\begin{align}
\setA=\bigcup_{i\in\naturals}\setA_i 
\end{align}
is  countably $s$-rectifiable.  
\end{enumerate}
\end{lem}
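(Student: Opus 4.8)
The plan is to establish the five items essentially in the order stated, reusing item (i) in the proof of item (v), and relying throughout on the elementary fact that a locally Lipschitz map restricted to a compact set is Lipschitz (proved by a Lebesgue-number argument together with the boundedness of the image of a compact set). I would state this fact once, at the outset, or point to the corresponding appendix.

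\emph{Items (i)--(iii).} For (i), write $\setU=\varphi(\setA)$ with $\setA\subseteq\reals^s$ compact and $\varphi$ Lipschitz, identify $\reals^s$ with $\reals^s\times\{\veczero\}\subseteq\reals^t$, observe that $\setA\times\{\veczero\}$ is compact in $\reals^t$, and set $\psi(\vecu,\veczero)=\varphi(\vecu)$; since $\psi$ is the composition of $\varphi$ with the (Lipschitz) projection onto the first $s$ coordinates, it is Lipschitz, and $\psi(\setA\times\{\veczero\})=\setU$, so $\setU$ is $t$-rectifiable. For (ii), exhaust $\reals^s$ by the closed balls $\overline{\setB_s(\veczero,k)}$, $k\in\naturals$; each $\varphi_i$ restricted to such a ball is Lipschitz, hence $\varphi_i(\overline{\setB_s(\veczero,k)})$ is $s$-rectifiable, and $\setV=\bigcup_{i\in\naturals}\bigcup_{k\in\naturals}\varphi_i(\overline{\setB_s(\veczero,k)})$ is a countable union of $s$-rectifiable sets. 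For (iii), write $\setU=\bigcup_i\varphi_i(\setA_i)$ and $\setV=\bigcup_j\psi_j(\setB_j)$ with $\setA_i\subseteq\reals^s$, $\setB_j\subseteq\reals^t$ compact and $\varphi_i,\psi_j$ Lipschitz; for each pair $(i,j)$ the product $\setA_i\times\setB_j\subseteq\reals^{s+t}$ is compact, and $(\veca,\vecb)\mapsto\tp{(\tp{\varphi_i(\veca)}\ \tp{\psi_j(\vecb)})}$ is Lipschitz because its squared increment equals the sum of the squared increments of $\varphi_i$ and $\psi_j$; its image is $\{\tp{(\tp{\vecu}\ \tp{\vecv})}:\vecu\in\varphi_i(\setA_i),\vecv\in\psi_j(\setB_j)\}$, which is therefore $(s+t)$-rectifiable, and $\setW$ is the union of these over the countably many pairs $(i,j)$.

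\emph{Items (iv) and (v).} For (iv), I would use that an $s$-dimensional $C^1$-submanifold $\setM\subseteq\reals^m$ is, near each of its points, the image of an open subset of $\reals^s$ under a $C^1$ (hence locally Lipschitz) map; since $\setM\subseteq\reals^m$ is second countable, countably many such images cover $\setM$. Exhausting each open parameter domain by compact sets and applying the locally-Lipschitz-on-compacta fact as in (ii) then exhibits $\setM$ as a countable union of $s$-rectifiable sets. The ``in particular'' claim follows because an $s$-dimensional affine subspace is the image of $\reals^s$ under an affine, hence globally Lipschitz, map, so item (ii) applies with a single map. For (v), write each $\setA_i=\bigcup_j\setA_{i,j}$ with $\setA_{i,j}$ $s_i$-rectifiable; since $s_i\leq s$, item (i) (used when $s_i<s$, trivial when $s_i=s$) makes each $\setA_{i,j}$ $s$-rectifiable, and $\setA=\bigcup_{i\in\naturals}\bigcup_{j\in\naturals}\setA_{i,j}$ is a countable union of $s$-rectifiable sets.

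The only part requiring genuine care is (iv): one must extract from the precise definition of $C^1$-submanifold in \cite[Definition 5.3.1]{krpa08} that local $C^1$-parametrizations by open subsets of $\reals^s$ are available, verify via second countability of $\setM\subseteq\reals^m$ that countably many of them cover $\setM$, and then organize the resulting double countable union. Everything else reduces to the single recurring observation that a locally Lipschitz map is Lipschitz on each compact subset of its domain.
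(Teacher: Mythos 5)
Your proposal is correct and follows essentially the same route as the paper's proof for all five items: the coordinate embedding for (i), exhaustion of $\reals^s$ by closed balls combined with the locally-Lipschitz-on-compacta fact for (ii), the Cartesian-product map on compact products for (iii), local $C^1$-parametrizations plus a Lindelöf/second-countability argument for (iv), and reduction to (i) for (v). The minor variations — verifying Lipschitzness of $\theta_{i,j}$ via squared increments rather than by assertion, and handling the affine-subspace case directly via (ii) — are cosmetic and do not constitute a different approach.
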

\begin{proof} See Appendix \ref{ProofLemmaExarec}.\end{proof}

Countable unions of $s$-dimensional  $C^1$-submanifolds of $\reals^m$ are countably  $s$-rectifiable  by Properties \ref{exaC1} and  \ref{exasum} of Lemma 
 \ref{LemmaExarec}. For countably $(\mathscr{H}^{s},s)$-rectifiable sets we even get an equivalence result, namely: 

\begin{thm}\cite[Theorem 3.2.29]{fed69}\label{thm:C1}
A set $\setU\subseteq\reals^{m}$ is countably $(\mathscr{H}^{s},s)$-rectifiable if and only if $\mathscr{H}^{s}$-a.a. of $\setU$ is contained in the countable union of $s$-dimensional  $C^1$-submanifolds of $\reals^m$. 
\end{thm}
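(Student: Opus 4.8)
The statement is a classical theorem of Federer \cite[Theorem 3.2.29]{fed69}, so the plan is to reproduce the structure of its proof while invoking the ingredients already collected in the appendices. Here ``$\mathscr{H}^{s}$-a.a.\ of $\setU$ is contained in a countable union of $C^1$-submanifolds'' means that there is a set $\setN\subseteq\setU$ with $\mathscr{H}^{s}(\setN)=0$ such that $\setU\mysetminus\setN$ lies in a countable union of $s$-dimensional $C^1$-submanifolds of $\reals^m$. The two implications are of quite different character: the ``if'' direction follows directly from the closure properties of countable $s$-rectifiability established in Lemma \ref{LemmaExarec}, whereas the ``only if'' direction rests on Rademacher's theorem, the area formula, and a Lusin-type $C^1$-approximation of Lipschitz maps (itself a consequence of the Whitney extension theorem).

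\emph{Sufficiency.} Suppose $\setU\mysetminus\setN\subseteq\bigcup_{i\in\naturals}\setM_i$ with $\mathscr{H}^{s}(\setN)=0$ and each $\setM_i$ an $s$-dimensional $C^1$-submanifold of $\reals^m$. By Property \ref{exaC1} of Lemma \ref{LemmaExarec} each $\setM_i$ is countably $s$-rectifiable, and by Property \ref{exasum} so is $\setV\define\bigcup_{i\in\naturals}\setM_i$. Since $\mathscr{H}^{s}(\setU\mysetminus\setV)\leq\mathscr{H}^{s}(\setN)=0$ and $\setU$ is $\mathscr{H}^{s}$-measurable (e.g., Borel, as is the case throughout), $\setU$ is countably $(\mathscr{H}^{s},s)$-rectifiable by Definition \ref{Dfn:recset}\ref{recd3}.

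\emph{Necessity.} Let $\setU$ be countably $(\mathscr{H}^{s},s)$-rectifiable; by definition there is a countably $s$-rectifiable set $\setV$ with $\mathscr{H}^{s}(\setU\mysetminus\setV)=0$, and $\setV$ is a countable union of Lipschitz images $\varphi(\setA)$ of compact sets $\setA\subseteq\reals^s$. By countable stability it suffices to show that a single such image $\varphi(\setA)$ is, up to an $\mathscr{H}^{s}$-null set, contained in countably many $s$-dimensional $C^1$-submanifolds. Extend $\varphi$ componentwise to a Lipschitz $f\colon\reals^s\to\reals^m$. By Rademacher's theorem $f$ is differentiable $\lebmeasure^s$-a.e.; let $E$ be its set of differentiability points in $\setA$. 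On $E_0\define\{\vecv\in E: Jf(\vecv)=0\}$ the area formula gives $\mathscr{H}^{s}(f(E_0))\leq\int_{E_0}Jf\,d\lebmeasure^s=0$, so $f(E_0)$ is negligible; on $E_1\define E\mysetminus E_0$ the differential $Df$ has rank $s$. The Lusin-type approximation of Lipschitz maps yields, for each $k\in\naturals$, a map $g_k\in C^1(\reals^s;\reals^m)$ with $\lebmeasure^s\bigl(\{\vecv: f(\vecv)\neq g_k(\vecv)\ \text{or}\ Df(\vecv)\neq Dg_k(\vecv)\}\bigr)<1/k$. Setting $F_k\define\{\vecv\in E_1: f(\vecv)=g_k(\vecv)\ \text{and}\ Df(\vecv)=Dg_k(\vecv)\}$, we get $\lebmeasure^s\bigl(E_1\mysetminus\bigcup_k F_k\bigr)=0$, hence $\mathscr{H}^{s}\bigl(f(E_1\mysetminus\bigcup_k F_k)\bigr)=0$ since $f$ is Lipschitz. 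On $F_k$ we have $\rank\,Dg_k=\rank\,Df=s$, so the open set $O_k\define\{\vecv:\rank\,Dg_k(\vecv)=s\}$ contains $F_k$; covering $O_k$ by countably many open balls $B_{k,l}$ on which the $C^1$-immersion $g_k$ is injective — hence an embedding — each $g_k(B_{k,l})$ is an $s$-dimensional $C^1$-submanifold and $f(F_k)=g_k(F_k)\subseteq\bigcup_l g_k(B_{k,l})$. Collecting the countably many submanifolds $\{g_k(B_{k,l})\}_{k,l}$ over all pieces $\varphi(\setA)$ of $\setV$, and absorbing $f(E_0)$, $f\bigl(E_1\mysetminus\bigcup_k F_k\bigr)$ and $\setU\mysetminus\setV$ into the exceptional $\mathscr{H}^{s}$-null set, finishes the argument.

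\emph{Main obstacle.} The crux is the ``only if'' direction, and within it the Lusin-type $C^1$-approximation together with the passage from ``image of a measurable set under a $C^1$-immersion'' to ``subset of countably many $C^1$-submanifolds''. The approximation itself is the Whitney extension theorem applied on the large set where the difference quotients of $f$ mimic those of a $C^1$ map; the subsequent geometric step requires restricting to balls on which $g_k$ is injective, which is precisely why the rank reduction (and the area-formula treatment of the set $\{Jf=0\}$) is needed. The remaining burden is bookkeeping: ensuring that the union of the successively discarded null sets is itself $\mathscr{H}^{s}$-null.
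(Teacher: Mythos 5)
The paper does not prove Theorem~\ref{thm:C1}; it is quoted directly from \cite[Theorem 3.2.29]{fed69}, so there is no in-paper argument to compare against. Your reconstruction follows the classical route (Rademacher's theorem, the area formula on the rank-deficient set, the $C^1$-Lusin approximation) and is correct in outline, and the measurability caveat you flag in the sufficiency direction is appropriate---it is built into Definition~\ref{Dfn:recset}, item~\ref{recd3}.

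One step in the necessity direction needs repair. The inference ``open balls $B_{k,l}$ on which the $C^1$-immersion $g_k$ is injective --- hence an embedding'' is not valid: a one-to-one $C^1$-immersion on an open set need not be a topological embedding, and its image need not be a $C^1$-submanifold. A concrete counterexample is $t\mapsto\tp{(\sin 2t\ \ \ \sin t)}$ on $(-\pi,\pi)$, which is injective and has nowhere-vanishing derivative, yet its image (the open figure-eight) is not locally Euclidean at the origin. What the argument actually requires---and what the local immersion theorem, obtained from the inverse function theorem applied to an augmented map, supplies directly---is that every $\vecv\in O_k$ has an open neighborhood $V$ on which $g_k|_V$ is a genuine $C^1$-\emph{embedding}, so that $g_k(V)$ is an $s$-dimensional $C^1$-submanifold; the Lindel\"of property of $\reals^s$ then yields the required countable cover of $O_k$ by such neighborhoods. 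With that substitution, the necessity direction is sound.
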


We now show that the upper modified Minkowski dimension of a countably $s$-rectifiable set is upper-bounded by $s$. 
This will allow us to conclude that, for a random vector $\rvecx$ admitting  a countably  $s$-rectifiable support set, $K(\rvecx)\leq s$.   
The formal statement is as follows. 

\begin{lem}\label{lem:mdimrec}
If  $\setU\subseteq\reals^{m}$ is countably $s$-rectifiable, then $\overline{\dim}_\mathrm{MB}(\setU)\leq s$.
\end{lem}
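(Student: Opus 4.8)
The plan is to reduce the statement, via the definition of the upper modified Minkowski dimension (equivalently, its countable stability, Property \ref{Mpropcountablestable} of Lemma \ref{lem:Mpropmindim}), to the case of a single $s$-rectifiable set, and then to bound the covering numbers of a Lipschitz image in terms of those of its domain, which is a bounded subset of $\reals^s$. Concretely, by Definition \ref{Dfn:recset}, Item \ref{recd2}, write $\setU=\bigcup_{i\in\naturals}\setU_i$ with each $\setU_i$ $s$-rectifiable, say $\setU_i=\varphi_i(\setA_i)$ with $\setA_i\subseteq\reals^s$ compact and $\varphi_i\colon\setA_i\to\reals^m$ Lipschitz (Item \ref{recd1}). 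Each $\setU_i$ is then a continuous image of a compact set, hence compact, nonempty, and bounded, so $\{\setU_i\}_{i\in\naturals}$ is an admissible covering in \eqref{eq:MBupper}. This gives $\overline{\dim}_\mathrm{MB}(\setU)\leq\sup_{i\in\naturals}\overline{\dim}_\mathrm{B}(\setU_i)$, and it remains to show $\overline{\dim}_\mathrm{B}(\varphi(\setA))\leq s$ whenever $\setA\subseteq\reals^s$ is compact and $\varphi$ is Lipschitz.

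For this, I would first record that a bounded subset $\setA$ of $\reals^s$ satisfies $\overline{\dim}_\mathrm{B}(\setA)\leq s$: covering a cube containing $\setA$ by a grid of side-$\rho$ subcubes and discarding those that miss $\setA$ yields $N_\setA(\rho\sqrt{s})\leq C\rho^{-s}$ for all small $\rho$ (with $C$ depending only on $s$ and the size of the cube), so the $\limsup$ in \eqref{eq:upperB} defining $\overline{\dim}_\mathrm{B}(\setA)$ is at most $s$. Next I would transfer this through $\varphi$: letting $L$ be a Lipschitz constant of $\varphi$, if $\setA$ is covered by $k$ balls $\setB_s(\vecu_j,\rho)$ with centers $\vecu_j\in\setA$ — which can always be arranged with $k\leq N_\setA(\rho/2)$, since an optimal cover by balls of radius $\rho/2$ that meet $\setA$ can be re-centered at points of $\setA$ by enlarging the radius to $\rho$ — then $\varphi(\setA)\subseteq\bigcup_{j=1}^{k}\setB_m(\varphi(\vecu_j),L\rho)$, so $N_{\varphi(\setA)}(L\rho)\leq N_\setA(\rho/2)$. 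Substituting this into \eqref{eq:upperB} and noting that the multiplicative constants $L$ and $1/2$ are absorbed in the ratio $\log N_{\varphi(\setA)}(\cdot)/\log(1/\cdot)$ as the radius tends to $0$, I obtain $\overline{\dim}_\mathrm{B}(\varphi(\setA))\leq\overline{\dim}_\mathrm{B}(\setA)\leq s$, which combined with the previous paragraph completes the proof. (Alternatively, once $\overline{\dim}_\mathrm{MB}(\setA)\leq\overline{\dim}_\mathrm{B}(\setA)\leq s$ is in hand, one may invoke the Lipschitz-monotonicity of upper modified Minkowski dimension, e.g.\ Property \ref{MpropL} of Lemma \ref{lem:Mpropmindim}, directly.)

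There is no genuine obstacle here; the only point requiring a little care is the covering-number estimate for the Lipschitz image — keeping the ball centers inside the domain of $\varphi$ and tracking the harmless rescalings of the radius by $L$ and $1/2$, which do not affect the $\limsup$ defining the dimension. It is worth noting that the argument uses only plain Lipschitz rectifiability: neither absolute continuity with respect to $\mathscr{H}^s$ nor the $C^1$-characterization of Theorem \ref{thm:C1} is needed.
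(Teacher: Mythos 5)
Your proof is correct, and it follows the same overall skeleton as the paper's (decompose $\setU$ into a countable union of Lipschitz images $\varphi_i(\setA_i)$ of compact subsets of $\reals^s$, then bound each piece by $s$), but the inner step is handled by genuinely different means. The paper passes to the \emph{modified} upper Minkowski dimension of each piece and chains together four abstract properties of Lemma~\ref{lem:Mpropmindim} — countable stability (\ref{Mpropcountablestable}), Lipschitz monotonicity (\ref{MpropL2}), set monotonicity (\ref{Mpropmonotonic}), and the manifold dimension formula (\ref{Mpropmanifold}). You instead note that the collection $\{\varphi_i(\setA_i)\}_{i\in\naturals}$ is itself an admissible covering in the defining infimum \eqref{eq:MBupper}, which immediately gives $\overline{\dim}_\mathrm{MB}(\setU)\le\sup_i\overline{\dim}_\mathrm{B}(\varphi_i(\setA_i))$ without ever invoking countable stability, and then bound the \emph{ordinary} upper Minkowski dimension of a Lipschitz image by an explicit covering-number comparison ($N_{\varphi(\setA)}(L\rho)\le N_\setA(\rho/2)$, after re-centering an optimal $\rho/2$-cover on $\setA$) together with the elementary grid estimate $N_\setA(\rho)\le C\rho^{-s}$ for bounded $\setA\subseteq\reals^s$. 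Your route is more self-contained (it avoids all of Lemma~\ref{lem:Mpropmindim} except implicitly), and it only ever uses the definitional inequality rather than the stronger countable-stability equality; the paper's version is terser and keeps all such reasoning inside its standing toolbox. Both are sound.
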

\begin{proof}
Suppose that $\setU\subseteq\reals^{m}$ is countably $s$-rectifiable. Then, 
Definition  \ref{Dfn:recset} implies that there exist nonempty compact sets $\setA_i\subseteq\reals^s$ and Lipschitz mappings $\varphi_i\colon\setA_i\to \reals^{m}$, with  $i\in\naturals$, such that 
\begin{align}\label{eq:recS}
\setU=\bigcup_{i\in\naturals}\varphi_i(\setA_i).
\end{align}
Thus,  
\begin{align}
\overline{\dim}_\mathrm{MB}(\setU)
&=\sup_{i\in\naturals}\overline{\dim}_\mathrm{MB}(\varphi_{i}(\setA_i))\label{eq:step1mbrec}\\
&\leq\sup_{i\in\naturals}\overline{\dim}_\mathrm{MB}(\setA_i)\label{eq:step2mbrec}\\
&\leq\sup_{i\in\naturals}\overline{\dim}_\mathrm{MB}(\reals^s)\label{eq:step3mbrec}\\
&= s, \label{eq:leqs1}
\end{align}
where the individual steps follow from  properties of  Lemma \ref{lem:Mpropmindim}, namely,  
\eqref{eq:step1mbrec} is by Property \ref{Mpropcountablestable}, 
\eqref{eq:step2mbrec} by Property \ref{MpropL2},
\eqref{eq:step3mbrec} by Property \ref{Mpropmonotonic},
and 
\eqref{eq:leqs1}  by Property \ref{Mpropmanifold}.  
\end{proof}

We next  investigate the effect of locally Lipschitz mappings on (countably) $s$-rectifiable and countably $(\mathscr{H}^{s},s)$-rectifiable sets. 

\begin{lem}\label{lem:recsetlip}
Let $\setU\subseteq\reals^m$, and consider a  locally Lipschitz $f\colon\reals^m\to\reals^n$.  If $\setU$ is 
\begin{enumerate}[label=\roman*)]
\item $s$-rectifiable, then $f(\setU)$ is $s$-rectifiable,\label{recf1}
\item countably $s$-rectifiable, then $f(\setU)$ is countably $s$-rectifiable,\label{recf2}
\item countably $(\mathscr{H}^{s},s)$-rectifiable and Borel, then $f(\setU)=\setA\cup\setB$, where $\setA$ is a countably $(\mathscr{H}^{s},s)$-rectifiable  Borel set and 
$\mathscr{H}^{s}(\setB)=0$.\label{recf3}
\end{enumerate}
\end{lem}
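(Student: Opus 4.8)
The plan is to prove the three items in order, deriving \ref{recf2} and \ref{recf3} from \ref{recf1} together with two standard facts about locally Lipschitz maps: that they are Lipschitz on every compact set, and that they send $\mathscr{H}^s$-null sets to $\mathscr{H}^s$-null sets. Both can be cited from the (geometric) measure-theory appendices; the second follows from a routine covering argument using the bound $\mathscr{H}^s(f(E))\le L^s\,\mathscr{H}^s(E)$ on an $L$-Lipschitz patch together with countable subadditivity. For \ref{recf1}, I would use Definition \ref{Dfn:recset}, Item \ref{recd1}, to write $\setU=\varphi(\setA)$ with $\setA\subseteq\reals^s$ compact and $\varphi$ Lipschitz, so that $f(\setU)=(f\composition\varphi)(\setA)$. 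Since $\varphi(\setA)$ is compact (continuous image of a compact set), $f$ is Lipschitz on $\varphi(\setA)$; hence $f\composition\varphi$ is Lipschitz on $\setA$, and $f(\setU)$ is the Lipschitz image of a compact subset of $\reals^s$, i.e., $s$-rectifiable.

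For \ref{recf2}, if $\setU=\bigcup_{i\in\naturals}\setU_i$ with each $\setU_i$ $s$-rectifiable (Definition \ref{Dfn:recset}, Item \ref{recd2}), then $f(\setU)=\bigcup_{i\in\naturals}f(\setU_i)$ is a countable union of $s$-rectifiable sets by \ref{recf1}, hence countably $s$-rectifiable; this is immediate.

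For \ref{recf3}, use Definition \ref{Dfn:recset}, Item \ref{recd3}, to obtain a countably $s$-rectifiable set $\setV=\bigcup_{i\in\naturals}\varphi_i(\setA_i)$ — with $\setA_i\subseteq\reals^s$ compact and $\varphi_i$ Lipschitz — such that $\mathscr{H}^s(\setU\mysetminus\setV)=0$, and split $f(\setU)=f(\setU\mysetminus\setV)\union f(\setU\intersect\setV)$. Since $f$ preserves $\mathscr{H}^s$-null sets, $\mathscr{H}^s(f(\setU\mysetminus\setV))=0$. For the second term, observe $\setU\intersect\setV=\bigcup_{i\in\naturals}\varphi_i(\setD_i)$ with $\setD_i\define\varphi_i^{-1}(\setU)\intersect\setA_i$, which is a bounded Borel subset of $\reals^s$ since $\setU$ is Borel and $\varphi_i$ continuous. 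By inner regularity of Lebesgue measure on $\reals^s$, write $\setD_i=\setC_i\union\setN_i$ with $\setC_i$ a countable union of compact subsets of $\setD_i$ and $\mathscr{H}^s(\setN_i)=0$. As in \ref{recf1}, $f\composition\varphi_i$ is Lipschitz on $\setA_i\supseteq\setD_i$, so $(f\composition\varphi_i)(\setC_i)$ is a countable union of compact $s$-rectifiable sets (hence countably $s$-rectifiable and Borel), while $\mathscr{H}^s\bigl((f\composition\varphi_i)(\setN_i)\bigr)=0$. Taking $\setA\define\bigcup_{i\in\naturals}(f\composition\varphi_i)(\setC_i)$ — which is Borel, countably $s$-rectifiable, and therefore countably $(\mathscr{H}^s,s)$-rectifiable by the chain of implications following Definition \ref{Dfn:recset} — and $\setB\define f(\setU\mysetminus\setV)\union\bigcup_{i\in\naturals}(f\composition\varphi_i)(\setN_i)$, countable subadditivity of $\mathscr{H}^s$ gives $\mathscr{H}^s(\setB)=0$, and regrouping the unions yields $f(\setU)=\setA\union\setB$.

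The \emph{main obstacle} is the Borelness requirement in \ref{recf3}: the image of a Borel set under a (locally) Lipschitz map need not be Borel, so one cannot simply set $\setA=f(\setU\intersect\setV)$. The remedy is exactly the inner-regularity decomposition $\setD_i=\setC_i\union\setN_i$, which replaces each Borel preimage in $\reals^s$ by an $F_\sigma$ set up to an $\mathscr{H}^s$-null set; applying $f\composition\varphi_i$ then produces an $F_\sigma$ set (a countable union of compact $s$-rectifiable sets) plus an $\mathscr{H}^s$-null remainder, which is precisely the form of the claimed conclusion. Everything else is bookkeeping with countable unions and the two elementary Lipschitz facts noted at the outset.
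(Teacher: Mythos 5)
Your proofs of parts \ref{recf1} and \ref{recf2} coincide with the paper's (the paper invokes Lemma \ref{lem:lip} on compositions of locally Lipschitz maps plus Lemma \ref{lem:lipcompact}, while you argue directly that $f$ is Lipschitz on the compact set $\varphi(\setA)$ and hence $f\circ\varphi$ is Lipschitz on $\setA$; both routes are standard and equivalent). For part \ref{recf3} you take a genuinely different route, and it is correct. The paper decomposes $\setU$ itself inside $\reals^m$: it first cites that $\mathscr{H}^s|_\setU$ is $\sigma$-finite (an external lemma on countably $(\mathscr{H}^s,s)$-rectifiable sets), then applies Lemma \ref{lem:inner} (inner regularity of a $\sigma$-finite Borel measure) to write $\setU$ as a countable union of compacts plus an $\mathscr{H}^s$-null set, and finally proves countable $(\mathscr{H}^s,s)$-rectifiability of $\setA=\bigcup_j f(\setV_j)$ by a separate two-line inequality $\mathscr{H}^s(\setA\mysetminus f(\setU_1))\le\mathscr{H}^s(f(\setU\mysetminus\setU_1))=0$. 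You instead pull the problem back to the parameter domain $\reals^s$: you replace $\setU$ by a countably $s$-rectifiable $\setV=\bigcup_i\varphi_i(\setA_i)$ up to a null set, write $\setU\cap\setV=\bigcup_i\varphi_i(\setD_i)$ with $\setD_i=\varphi_i^{-1}(\setU)$ a bounded Borel subset of $\reals^s$, and apply inner regularity of \emph{Lebesgue} measure on $\reals^s$ to split $\setD_i$ into an $F_\sigma$ set plus a null set. Your route avoids the appeal to $\sigma$-finiteness of $\mathscr{H}^s|_\setU$ and to inner regularity of the restricted Hausdorff measure, relying only on the more elementary inner regularity of $\lebmeasure^s$; it also makes the countable $(\mathscr{H}^s,s)$-rectifiability of $\setA$ immediate by construction, rather than requiring the paper's closing inequality. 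The price is somewhat heavier bookkeeping with the parametrizations $\varphi_i$, whereas the paper's argument stays entirely in $\reals^m$ and produces the Borel $F_\sigma$ set $\setA$ in a single step.
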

\begin{proof}
See Appendix \ref{lem:recsetlipproof}.
\end{proof}
A slightly weaker version of this statement, valid for Lipschitz mappings, was derived previously in \cite[Lemma 4]{kopirihl15}. 

We are now ready to define rectifiable random vectors. 

\begin{dfn}(Rectifiable random vectors)\cite[Definition 11]{kopirihl15}\label{dfnXrec}
A random vector $\rvecx\in\reals^{m}$ is $s$-rectifiable if there exists a countably $(\mathscr{H}^{s},s)$-rectifiable set $\setU\subseteq \reals^{m}$ such that  $\mu_\rvecx\ll \mathscr{H}^{s}|_\setU$. The corresponding value $s$ is the rectifiability parameter. 
\end{dfn}

It turns out that an  $s$-rectifiable random vector $\rvecx$ always admits a countably $s$-rectifiable support set and, therefore, has description complexity $K(\rvecx)\leq s$ by Lemma \ref{lem:mdimrec}. The formal statement is as follows.

\begin{lem}\label{lem:recsup}
Every $s$-rectifiable random vector $\rvecx\in\reals^m$ admits a  countably $s$-rectifiable support set.  In particular, every  $s$-rectifiable random vector $\rvecx$ has description complexity  $K(\rvecx)\leq s$. 
\end{lem}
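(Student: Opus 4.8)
The plan is to extract, from the countably $(\mathscr{H}^{s},s)$-rectifiable set furnished by Definition \ref{dfnXrec}, an associated \emph{countably $s$-rectifiable} set, and then to show that this set already carries all the probability mass of $\rvecx$. Concretely: since $\rvecx$ is $s$-rectifiable, there exists a countably $(\mathscr{H}^{s},s)$-rectifiable set $\setU\subseteq\reals^{m}$ with $\mu_\rvecx\ll\mathscr{H}^{s}|_\setU$. Specializing Item \ref{recd3} of Definition \ref{Dfn:recset} to $\mu=\mathscr{H}^{s}$, there is a countably $s$-rectifiable set $\setV\subseteq\reals^{m}$ such that $\mathscr{H}^{s}(\setU\setminus\setV)=0$. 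I claim that $\setV$ is the desired support set of $\rvecx$.

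To verify the claim, note that $\mathscr{H}^{s}|_\setU(\reals^{m}\setminus\setV)=\mathscr{H}^{s}(\setU\cap(\reals^{m}\setminus\setV))=\mathscr{H}^{s}(\setU\setminus\setV)=0$. By the absolute-continuity relation $\mu_\rvecx\ll\mathscr{H}^{s}|_\setU$ this forces $\mu_\rvecx(\reals^{m}\setminus\setV)=0$, and since $\mu_\rvecx$ is a probability measure, this is equivalent to $\opP[\rvecx\in\setV]=\mu_\rvecx(\setV)=1$. Hence $\setV$ is a countably $s$-rectifiable support set of $\rvecx$. No measurability subtlety intervenes here, because by the standing convention of the paper every measure is taken to be defined on all subsets of the underlying space.

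The ``in particular'' statement is then immediate: applying Lemma \ref{lem:mdimrec} to the countably $s$-rectifiable set $\setV$ gives $\overline{\dim}_\mathrm{MB}(\setV)\leq s$, hence $\underline{\dim}_\mathrm{MB}(\setV)\leq\overline{\dim}_\mathrm{MB}(\setV)\leq s$, and since $\setV$ is a support set of $\rvecx$, the definition \eqref{def:dc} of description complexity yields $K(\rvecx)\leq\underline{\dim}_\mathrm{MB}(\setV)\leq s$.

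The whole argument is essentially a careful unwinding of the definitions, so I do not anticipate a genuine obstacle. The one step that deserves attention is the chain $\mathscr{H}^{s}(\setU\setminus\setV)=0\Rightarrow\mathscr{H}^{s}|_\setU(\reals^{m}\setminus\setV)=0\Rightarrow\mu_\rvecx(\reals^{m}\setminus\setV)=0$, i.e., transporting the $\mathscr{H}^{s}$-null set $\setU\setminus\setV$ correctly first through the restriction measure $\mathscr{H}^{s}|_\setU$ and then through absolute continuity; once this is in place, the bound on $K(\rvecx)$ drops out of Lemma \ref{lem:mdimrec} and \eqref{def:dc}.
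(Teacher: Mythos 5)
Your proof is correct and follows essentially the same route as the paper: extract $\setV$ from the definition of countable $(\mathscr{H}^{s},s)$-rectifiability, show $\opP[\rvecx\in\setV]=1$, and conclude via Lemma~\ref{lem:mdimrec} and \eqref{def:dc}. Your middle step is in fact a cleaner shortcut --- the paper reaches $\mu_\rvecx(\reals^m\setminus\setV)=0$ by introducing an auxiliary set $\setW=(\setU\setminus\setV)\cup\setV$ and chaining $\mu_\rvecx\ll\mathscr{H}^{s}|_\setU\ll\mathscr{H}^{s}|_\setW=\mathscr{H}^{s}|_\setV$, whereas you simply observe $\mathscr{H}^{s}|_\setU(\reals^m\setminus\setV)=\mathscr{H}^{s}(\setU\setminus\setV)=0$ and invoke absolute continuity once.
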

\begin{proof}
Suppose that $\rvecx$ is $s$-rectifiable. Then,   there exists a countably $(\mathscr{H}^{s},s)$-rectifiable set $\setU\subseteq\reals^m$ such that 
$\mu_\rvecx\ll\mathscr{H}^{s}|_\setU$. 
As $\setU$ is countably $(\mathscr{H}^{s},s)$-rectifiable, by Definition \ref{Dfn:recset}  there exists a  countably $s$-rectifiable set $\setV\subseteq\reals^m$ such that $\mathscr{H}^{s}(\setU\,\mysetminus\setV)=0$. Set $\setW=(\setU\mysetminus\setV)\cup\setV$ and note that  $\setU\subseteq\setW$ implies  
$\mathscr{H}^{s}|_\setU\ll\mathscr{H}^{s}|_\setW$ by monotonicity of $\mathscr{H}^{s}$. Moreover,  from the definition of $\setW$, the countable additivity of $\mathscr{H}^{s}$, and $\mathscr{H}^{s}(\setU\mysetminus\setV)=0$, it follows that $\mathscr{H}^{s}|_\setW=\mathscr{H}^{s}|_\setV$. Thus,  
$\mathscr{H}^{s}|_\setU\ll\mathscr{H}^{s}|_\setV$, and  
$\mu_\rvecx\ll\mathscr{H}^{s}|_\setU$ implies $\mu_\rvecx\ll\mathscr{H}^{s}|_\setV$. Therefore, as $\mathscr{H}^{s}|_\setV(\reals^m\mysetminus\setV)=0$, 
we can conclude that  
$\opP[\rvecx\in\reals^m\,\mysetminus\setV]=0$, which is equivalent to $\opP[\rvecx\in\setV]=1$, that is, the    countably $s$-rectifiable set $\setV$ is a support set of $\rvecx$. 
The proof is now concluded by noting that $K(\rvecx)\leq\overline{\dim}_\mathrm{MB}(\setV)\leq s$, where the latter inequality is by Lemma \ref{lem:mdimrec}.  
\end{proof}

In light of Theorem \ref{thm:C1}, an $s$-rectifiable random vector $\rvecx\in\reals^m$ is supported on a countable union of $s$-dimensional $C^1$-submanifolds of $\reals^{m}$. Absolute continuity of $\mu_\rvecx$ with respect to the $s$-dimensional Hausdorff measure is a regularity condition  guaranteeing  that $\rvecx$
cannot have positive probability measure on sets of Hausdorff dimension $t<s$ (see Property \ref{Hjump} of Lemma \ref{lem:prophausdorff}). 
This regularity condition together with the sharp transition behavior of Hausdorff measures (see Figure \ref{fig:hd} in Appendix \ref{app:geo}) implies uniqueness of the 
 rectifiability parameter. The corresponding formal statement is as follows. 

\begin{lem}\label{lem:uniques}
If $\rvecx\in\reals^m$  is both $s$ and $t$-rectifiable, then  $s=t$. 
\end{lem}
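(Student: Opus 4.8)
The plan is to argue by contradiction, exploiting the fact that a countably $s$-rectifiable set is $\mathscr{H}^t$-null whenever $t>s$, and then playing this against the absolute-continuity condition defining $t$-rectifiability. Suppose $\rvecx$ is both $s$- and $t$-rectifiable; since $s,t\in\naturals$, if $s\neq t$ we may assume without loss of generality that $s<t$, and I will show this is impossible.

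First I would apply Lemma \ref{lem:recsup} to the $s$-rectifiability of $\rvecx$: this yields a countably $s$-rectifiable support set $\setV\subseteq\reals^m$, i.e., $\mu_\rvecx(\setV)=\opP[\rvecx\in\setV]=1$. Next, I would show that $\mathscr{H}^s(\setV)<\infty$ is irrelevant and in fact $\mathscr{H}^t(\setV)=0$ for the larger exponent $t$. Indeed, by Definition \ref{Dfn:recset} we may write $\setV=\bigcup_{i\in\naturals}\varphi_i(\setA_i)$ with each $\setA_i\subseteq\reals^s$ compact and each $\varphi_i$ Lipschitz. For every $i$, the set $\setA_i$ is a bounded subset of $\reals^s$, hence $\mathscr{H}^t(\setA_i)=0$ since $t>s$; as $\varphi_i$ is Lipschitz, the scaling behaviour of Hausdorff measure under Lipschitz maps gives $\mathscr{H}^t(\varphi_i(\setA_i))\leq(\operatorname{Lip}\varphi_i)^t\,\mathscr{H}^t(\setA_i)=0$. (Both facts are among the standard properties of Hausdorff measure collected in Lemma \ref{lem:prophausdorff}.) Countable subadditivity of $\mathscr{H}^t$ then yields $\mathscr{H}^t(\setV)=0$.

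Finally I would invoke the $t$-rectifiability of $\rvecx$: by Definition \ref{dfnXrec} there is a countably $(\mathscr{H}^t,t)$-rectifiable set $\setU\subseteq\reals^m$ with $\mu_\rvecx\ll\mathscr{H}^t|_\setU$. By monotonicity of $\mathscr{H}^t$ we have $\mathscr{H}^t|_\setU(\setV)=\mathscr{H}^t(\setU\cap\setV)\leq\mathscr{H}^t(\setV)=0$, so absolute continuity forces $\mu_\rvecx(\setV)=0$, contradicting $\mu_\rvecx(\setV)=1$. Hence $s=t$.

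The only step with any content is the middle one, the vanishing of $\mathscr{H}^t$ on a countably $s$-rectifiable set for $t>s$; this rests entirely on elementary properties of Hausdorff measure (its behaviour under Lipschitz maps and the fact that $\mathscr{H}^t$ annihilates subsets of $\reals^s$ when $t>s$), which are recorded in Appendix \ref{app:geo}. Everything else is routine bookkeeping with support sets, restrictions of measures, and absolute continuity, so I do not anticipate any genuine obstacle.
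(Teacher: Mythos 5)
Your proof is correct and takes a genuinely more streamlined route than the paper's. The paper invokes Lemma~\ref{lem:recsup} \emph{twice}, obtaining a countably $r$-rectifiable support set and a countably $t$-rectifiable support set, intersects them piece-by-piece via a union bound to isolate a single pair of Lipschitz images on which $\mu_\rvecx$ places positive mass, and then produces a contradiction by showing that $\mathscr{H}^{r}$ of one piece is simultaneously $\infty$ (from Property~\ref{Hjump} applied on the intersection) and $<\infty$ (from the Lipschitz estimate and $\mathscr{H}^{r}=\lebmeasure^{r}$ on the compact preimage). You instead use Lemma~\ref{lem:recsup} only once, for the smaller exponent, show directly that the resulting countably $s$-rectifiable support set $\setV$ satisfies $\mathscr{H}^{t}(\setV)=0$ when $t>s$ (same ingredients --- jump property via the contrapositive, Lipschitz scaling, $\mathscr{H}^{s}=\lebmeasure^{s}$ on compact subsets of $\reals^s$, countable subadditivity), and then contradict $\mu_\rvecx(\setV)=1$ through the absolute-continuity condition in Definition~\ref{dfnXrec}. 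This avoids the union-bound/intersection step entirely and never needs a $t$-rectifiable support set, only the raw condition $\mu_\rvecx\ll\mathscr{H}^{t}|_\setU$. Both proofs rest on the same Hausdorff-measure facts from Lemma~\ref{lem:prophausdorff}; yours is shorter because it works with the larger exponent (showing a null set) rather than the smaller one (showing an infinite measure and a finite measure collide), which sidesteps the need to pin down a particular Lipschitz image of finite $\mathscr{H}^{r}$-measure.
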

\begin{proof}
See Appendix \ref{proof:uniques}.
\end{proof}

We are now ready to particularize our achievability result to $s$-rectifiable random vectors. 

\begin{cor}\label{cor:rec}
For $s$-rectifiable $\rvecx\in\reals^m$, $n>s$ is  sufficient for the existence of a 
Borel measurable mapping $g\colon\reals^{n\times m}\times\reals^n\to\reals^m$ 
  satisfying    
\begin{align}\label{eq:achCor}
\opP\mleft[g(\matA,\matA\rvecx)\neq\rvecx\mright]=0\quad\text{for $\lebmeasure^{n\times m}$-a.a. $\matA\in\reals^{n\times m}$}.  
\end{align}
\end{cor}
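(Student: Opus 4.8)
The plan is to obtain this corollary as an immediate specialization of the general achievability result Theorem \ref{th1}, using the dimension estimate already established for rectifiable random vectors. The only thing that really needs to be checked is that the hypothesis $n>s$ entails the hypothesis $n>K(\rvecx)$ required by Theorem \ref{th1}.

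First I would invoke Lemma \ref{lem:recsup}, which states that every $s$-rectifiable random vector $\rvecx\in\reals^m$ admits a countably $s$-rectifiable support set and, consequently, has description complexity $K(\rvecx)\leq s$. (Internally this rests on Lemma \ref{lem:mdimrec}, i.e., on the bound $\overline{\dim}_\mathrm{MB}(\setU)\leq s$ for countably $s$-rectifiable $\setU$, combined with $\underline{\dim}_\mathrm{MB}(\cdot)\leq\overline{\dim}_\mathrm{MB}(\cdot)$.) Combining $K(\rvecx)\leq s$ with the assumption $n>s$ yields $n>s\geq K(\rvecx)$, hence $n>K(\rvecx)$. Then I would apply Theorem \ref{th1} to this $\rvecx$: since $n>K(\rvecx)$, there exists a Borel measurable decoder $g\colon\reals^{n\times m}\times\reals^n\to\reals^m$ with $\opP[g(\matA,\matA\rvecx)\neq\rvecx]=0$ for $\lebmeasure^{n\times m}$-a.a. $\matA\in\reals^{n\times m}$, which is precisely \eqref{eq:achCor}.

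Since both ingredients are already in place, there is essentially no obstacle left: the corollary follows in two lines. The only (entirely routine) point worth making explicit is the passage from the non-strict inequality $K(\rvecx)\leq s$ to the strict requirement $n>K(\rvecx)$ of Theorem \ref{th1}, and this is harmless because the hypothesis $n>s$ is itself strict.
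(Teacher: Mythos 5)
Your proof is correct and matches the paper's own argument, which likewise derives the corollary by combining Lemma \ref{lem:recsup} (giving $K(\rvecx)\leq s$) with Theorem \ref{th1}. The brief remark on passing from $K(\rvecx)\leq s$ to $n>K(\rvecx)$ via the strict hypothesis $n>s$ is a nice explicit touch but adds nothing beyond the paper's one-line proof.
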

\begin{proof}
Follows  from the general achievability result Theorem \ref{th1} and Lemma \ref{lem:recsup}. 
\end{proof}
Again, Corollary \ref{cor:rec} is nonasymptotic (i.e., it applies for finite $m$) and guarantees zero error probability. 
Next, we present three examples of $s$-rectifiable random vectors aimed at illustrating the relationship between the rectifiability parameter and the stochastic sparsity level defined in \eqref{def:sl}. 
Specifically, in the first example, the  random vector's  
 rectifiability parameter will be shown to agree with its  stochastic sparsity level.  The second example constructs  an  $(r+t-1)$-rectifiable  random vector of stochastic sparsity level $S(\rvecx)=rt$ for general $r,t$. In this case,  
the  stochastic sparsity level $rt$ can be  much larger than  the rectifiability parameter $r+t-1$, and hence Corollary \ref{cor:rec} implies 
that this random vector can be recovered with zero error probability from a number of linear measurements that is much smaller than its stochastic sparsity level.  
The third example  constructs a random vector that is uniformly distributed on a submanifold,  
namely, the unit circle. In this case, the random vector's rectifiability parameter equals the dimension of the submanifold, whereas its stochastic sparsity level equals the dimension of the ambient space, i.e., the random vector  is not sparse at all.

\begin{exa}\label{exrecana1a}
Suppose that $\rvecx=(\vece_{\rveckc_1} \mydots \vece_{\rveckc_s})\rvecz\in\reals^m$, where $\rvecz\in\reals^s$ with $\mu_\rvecz\ll\lebmeasure^s$ and $\rveck=\tp{(\rveckc_1\mydots\rveckc_s)}\in\{1,\dots,m\}^s$   satisfies $\rveckc_1<\dots<\rveckc_s$.  
We first show that  
 $S(\rvecx)=s$. To this end, let 
\begin{align} \label{eq:setUa}
 \setU=\{\vecx\in\reals^m:\|\vecx\|_0\leq s\}. 
\end{align}  
Since $\opP[\rvecx\in\setU]=1$ by construction, it follows that $S(\rvecx)\leq s$. 
To establish that $S(\rvecx)\geq s$, and hence $S(\rvecx)= s$,  
towards a contradiction, assume that there exists a linear subspace $\setT\subseteq\reals^m$ of dimension $d<s$ such that $\opP[\rvecx\in\setT]>0$.
Since
\begin{align}
0&<\mu_\rvecx(\setT)\\
&=\opP[(\vece_{\rveckc_1} \mydots \vece_{\rveckc_s})\rvecz\in\setT]\\
&\leq  \sum_{1\leq i_1<\dots<i_s\leq m}\opP[(\vece_{i_1} \mydots \vece_{i_s}) \rvecz \in\setT], 
\end{align}
 there must exist a set of indices $\{i_1,\dots,i_s\}\subseteq\{1,\dots,m\}$ with $i_1<\dots < i_s$ such that
\begin{align}\label{eq:geqmeas1}
\opP[\matE\rvecz\in\setT]
&>0,   
\end{align}
where $\matE=(\vece_{i_1}\mydots\vece_{i_s})$. 
Next, consider the linear subspace  
\begin{align}
\widetilde \setT =\{\vecz\in\reals^{s}:\matE\vecz\in\setT\}\subseteq\reals^s, 
\end{align}
which, by \eqref{eq:geqmeas1}, satisfies $\opP[\rvecz\in \widetilde \setT]>0$. 
As  $d<s$ by assumption and $\dim(\widetilde\setT)\leq\dim(\setT)=d$, there must exist a nonzero vector $\vecb_0\in\reals^{s}$ such that $\tp{\vecb_0}\vecz=0$ for all $\vecz\in\widetilde\setT$. It follows that 
$\opP[\tp{\vecb_0}\rvecz=0]\geq\opP[\rvecz\in \widetilde \setT]>0$, which stands in contradiction to $\mu_{\rvecz}\ll\lebmeasure^{s}$ because  $\lebmeasure^{s}(\{\vecz:\tp{\vecb_0}\vecz=0\})=0$. 
Thus, we indeed have $S(\rvecx)=s$. 
It follows from Properties \ref{exaC1} and  \ref{exasum} in Lemma \ref{LemmaExarec}
that $\setU$ in \eqref{eq:setUa} is countably $s$-rectifiable and, therefore, also countably $(\mathscr{H}^{s},s)$-rectifiable. We will see in Example \ref{exrecana1} that $\rvecx$ is in fact $s$-analytic, which,  by Property \ref{propra3} of Lemma \ref{lem:propanalytic}, implies  $\mu_\rvecx\ll \mathscr{H}^{s}$. 
Finally, by $\opP[\rvecx\in\setU]=1$, we get $\mu_\rvecx\ll \mathscr{H}^{s}|_{\setU}$, which, thanks to the countable  $(\mathscr{H}^{s},s)$-rectifiability of $\setU$ establishes the  $s$-rectifiability of $\rvecx$.
\end{exa}

\begin{exa}\label{exrecana1b}
Let $\rvecx =\rveca\otimes\rvecb$ with $\rveca\in\reals^{k}$ and $\rvecb\in\reals^{l}$. 
Suppose that  $\rveca=(\vece_{\rvecpc_1} \mydots \vece_{\rvecpc_r})\rvecu$ and $\rvecb=(\vece_{\rvecqc_1} \mydots \vece_{\rvecqc_t})\rvecv$, where 
$\rvecu\in\reals^r$ and  $\rvecv\in\reals^t$ with $\mu_{\rvecu}\times \mu_{\rvecv}\ll\lebmeasure^{r+t}$, and  
$\rvecp=\tp{(\rvecpc_1\mydots\rvecpc_r)}\in\{1,\dots,k\}^r$ and 
$\rvecq=\tp{(\rvecqc_1\mydots\rvecqc_t)}\in\{1,\dots,l\}^t$ satisfy  
$\rvecpc_1<\dots<\rvecpc_r$  
and   $\rvecqc_1<\dots<\rvecqc_t$, respectively.   
We first show that  
 $S(\rvecx)=rt$. 
Since $\opP[\|\rvecx\|_0\leq rt]=1$ by construction, it follows that $S(\rvecx)\leq rt$. 
To establish that $S(\rvecx)\geq rt$, and hence $S(\rvecx)= rt$, 
towards a contradiction, assume that there exists a linear subspace $\setT\subseteq\reals^m$ of dimension $d<rt$ such that $\opP[\rvecx\in\setT]>0$. 
Since 
\begin{align}
0&<\mu_\rvecx(\setT)\\
&=\opP[((\vece_{\rvecpc_1} \mydots \vece_{\rvecpc_r})\rvecu)\otimes((\vece_{\rvecqc_1} \mydots \vece_{\rvecqc_t})\rvecv) \in\setT]\\
&=\opP[((\vece_{\rvecpc_1} \mydots \vece_{\rvecpc_r})\otimes(\vece_{\rvecqc_1} \mydots \vece_{\rvecqc_t}))(\rvecu\otimes\rvecv) \in\setT]\label{eq:useHorn}\\
&\leq \hspace*{-5truemm}\sum_{
\substack{
1\leq i_1<\dots<i_r\leq k\\[0.2em]
1\leq j_1<\dots<j_t\leq l}
}
\opP[((\vece_{i_1}\mydots \vece_{i_r})\otimes(\vece_{j_1}\mydots \vece_{j_t}))(\rvecu\otimes\rvecv)\in\setT], 
\end{align}
where \eqref{eq:useHorn} relies on  \cite[Lemma 4.2.10]{hojo91}, 
there must exist a set of indices $\{i_1,\dots,i_r\}\subseteq\{1,\dots,k\}$ with $i_1<\dots < i_r$ 
and a set of indices $\{j_1,\dots,j_t\}\subseteq\{1,\dots,l\}$ with $j_1<\dots < j_t$
such that
\begin{align}\label{eq:geqmeas}
\opP[(\matE_1\otimes\matE_2)(\rvecu\otimes\rvecv)\in\setT]
&>0,   
\end{align}
where $\matE_1=(\vece_{i_1}\mydots\vece_{i_r})$ and $\matE_2= (\vece_{j_1}\mydots\vece_{j_t})$. 
Next, consider the linear subspace 
\begin{align}
\widetilde \setT =\{\vecz\in\reals^{rt}:(\matE_1\otimes\matE_2)\vecz\in\setT\}\subseteq\reals^{rt}, 
\end{align}
which, by \eqref{eq:geqmeas}, satisfies $\opP[\rvecu\otimes\rvecv\in \widetilde \setT]>0$. 
As $d<rt$ by assumption and $\dim(\widetilde\setT)\leq\dim(\setT)=d$, there must exist a nonzero vector $\vecb_0\in\reals^{rt}$ such that $\tp{\vecb_0}\vecz=0$ for all $\vecz\in\widetilde\setT$. It follows that   
$\opP[\tp{\vecb_0}(\rvecu\otimes\rvecv)=0]\geq \opP[\rvecu\otimes\rvecv\in \widetilde \setT]>0$. 
As  $\mu_{\rvecu}\times \mu_{\rvecv}\ll\lebmeasure^{r+t}$ by assumption, we also have 
\begin{align}
\lebmeasure^{r+t}\mleft(\mleft\{\tp{(\tp{\vecu}\ \tp{\vecv})}: \vecu\in\reals^{r}, \vecv\in\reals^{t}, \tp{\vecb_0}(\vecu\otimes\vecv)=0\mright\}\mright)>0.
\end{align} 
We now view $\tp{\vecb_0}(\vecu\otimes\vecv)$ as a polynomial in the entries of $\vecu$ and $\vecv$. 
Since a polynomial   vanishes either on a set of Lebesgue measure zero or is identically zero (see Corollary \ref{cor:poly} and Lemma \ref{lem:vanishrealanalytic}), it follows that 
\begin{align}
 \tp{\vecb_0}(\vecu\otimes\vecv)=0\quad\text{for all $\vecu\in\reals^r$ and $\vecv\in\reals^t$,}
\end{align}
which stands in contradiction to  
$\vecb_0\neq\veczero$. Thus, we indeed have $S(\rvecx)=rt$. 
We next construct a countably $(r+t-1)$-rectifiable support set $\setU$ of $\rvecx$. 
To this end, we let 
\begin{align}
\setA&=\{\veca\in\reals^k :\|\veca\|_0\leq r\},\\
\setB&=\{\vecb\in\reals^l : \|\vecb\|_0 \leq t\},
\end{align}
and set 
$\setU=\setA\otimes\setB$. 
Since $\setA$ is a support set of $\rveca$ and $\setB$ is a support set of $\rvecb$, 
 $\setU$ is a support set of $\rvecx=\rveca\otimes\rvecb$.   
Note that  $\setU=(\setA\mysetminus\{\veczero\})\otimes\setB$. 
For $\veca\in\setA\mysetminus\{\veczero\}$, let $\bar a$ denote the first nonzero entry of $\veca$. 
We can now write 
\begin{align}
\veca\otimes\vecb
&=\mleft(\frac{\veca}{\bar a}\mright)\otimes(\bar a\,\vecb)\quad\text{for all}\ \veca\in\setA\mysetminus\{\veczero\},\vecb\in\setB.
\end{align}
This allows us to decompose $\setU$ according to $\setU=\tilde\setA\otimes\setB$, where
\begin{align}
\tilde\setA&=\{\veca\in\setA\mysetminus\{\veczero\}: \|\veca\|_0\leq r, \bar a=1\}.
\end{align}
Now, since $\tilde\setA$ is a finite union of affine subspaces all of dimension  $r-1$,  it is countably $(r-1)$-rectifiable by 
Properties \ref{exaC1} and \ref{exasum} in Lemma \ref{LemmaExarec}. By the same token, 
 $\setB$ as a  finite union of  linear subspaces all of dimension  $t$ is countably $t$-rectifiable.  
Therefore, the set 
\begin{align}
\setC=\big\{\tp{\big(\tp{\veca}\ \tp{\vecb}\big)}: \veca\in \tilde \setA, \vecb\in \setB\big\}\subseteq\reals^{k+l}
\end{align} 
is countably $(r+t-1)$-rectifiable thanks to 
Property \ref{exaProdrec} of Lemma \ref{LemmaExarec}. 
Now, the multivariate mapping $\sigma\colon\reals^{k+l}\to\reals^{kl}$,  $\tp{(\tp{\veca}\ \tp{\vecb})}\mapsto \veca\otimes\vecb$  is   bilinear and as such locally Lipschitz. 
Moreover, since $\setU=\sigma(\setC)$ with $\setC$  countably $(r+t-1)$-rectifiable,  
it follows from Property \ref{recf2}  of Lemma \ref{lem:recsetlip} that $\setU$ is countably $(r+t-1)$-rectifiable  and, therefore, also countably $(\mathscr{H}^{r+t-1},r+t-1)$-rectifiable.
We will see in Example \ref{exrecana} 
 that $\rvecx$ is in fact $(r+t-1)$-analytic, which,  by Property \ref{propra3} of Lemma \ref{lem:propanalytic}, implies  $\mu_\rvecx\ll \mathscr{H}^{r+t-1}$. 
With  $\opP[\rvecx\in\setU]=1$ this yields $\mu_\rvecx\ll \mathscr{H}^{r+t-1}|_{\setU}$ and, in turn, thanks to countable $(\mathscr{H}^{r+t-1},r+t-1)$-rectifiability of $\setU$,  establishes $(r+t-1)$-rectifiability of $\rvecx$.
\end{exa}


\begin{exa}\label{exacircle1}
Let $\setS^1$\! denote the unit circle in $\reals^2$, $\rveczc\in \reals$ with $\mu_\rveczc\ll\lebmeasure^{1}$, and $g\colon\reals\to\setS^1$, $z\mapsto \tp{(\cos(z)\ \sin(z))}$.  Set $\rvecx=g(\rveczc)$, and note that this implies $\opP[\rvecx\in\setS^1]=1$. 
We first establish that $S(\rvecx)=2$. 
Since $\opP[\rvecx\in\reals^2]=1$, it follows that $S(\rvecx)\leq 2$. 
To establish that $S(\rvecx)\geq 2$, and hence $S(\rvecx)= 2$,  
towards a contradiction, assume that there exists a linear subspace $\setT\subseteq\reals^2$ of dimension one such that $\opP[\rvecx\in\setT]>0$. Set $\setA=\setT\cap\setS^1$, which consists of two antipodal points on $\setS^1$  (see Figure \ref{fig:S1}).  
Now, $0<\opP[\rvecx\in\setT]=\opP[\rvecx\in\setA]=\opP[\rveczc\in g^{-1}(\setA)]$, 
which constitutes  a contradiction to $\mu_\rveczc\ll\lebmeasure^{1}$ because $g^{-1}(\setA)$---as a countable set---must have Lebesgue measure zero. Therefore, $S(\rvecx)=2$.  
Finally, $\rvecx$ is $1$-rectifiable by \cite[Section III.D]{kopirihl15}. 
\end{exa}

Since $s$-rectifiable random vectors 
cannot have positive probability measure on sets of Hausdorff dimension $t<s$, 
it is natural to ask whether taking $n\geq s$ linear measurements is necessary for zero error recovery of $s$-rectifiable random vectors. 
Surprisingly, it turns out that this is not the case.  This will be demonstrated by first constructing 
 a  $2$-rectifiable (and therefore also $(\mathscr{H}^{2},2)$-rectifiable) set $\setG\subseteq \reals^3$ of strictly positive $2$-dimensional Hausdorff measure  with the property that  $\tp{\vece_3}\colon\reals^3\to\reals$, $\tp{(x_1\ x_2\ x_3)}\mapsto x_3$ is one-to-one on $\setG$.  
Then, we show that every $2$-rectifiable random vector  $\rvecx$ satisfying $\mu_\rvecx\ll\mathscr{H}^{2}|_\setG$ can be recovered with zero error  probability from one linear measurement, specifically from  $\rvecyc=\tp{\vece_3}\rvecx$. 
Moreover, all this is possible even though   $\setG$ contains 
the image of a Borel set in $\reals^2$ of positive  Lebesgue measure under a $C^\infty$-embedding.  

The construction of our example is based on the following result.  


\begin{thm}\label{prpcounter}
There exist a compact set $\setA\in\colB(\reals^2)$ with $\lebmeasure^{2}(\setA)=1/4$, and a $C^\infty$-function $\kappa\colon\reals^2\to\reals$  such that $\kappa$ is one-to-one on $\setA$.
\end{thm}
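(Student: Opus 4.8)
The plan is to take $\setA=K\times K$ for a suitable Cantor-type set $K\subseteq[0,1]$ with $\lebmeasure^1(K)=1/2$, so that $\setA$ is compact (hence Borel) with $\lebmeasure^2(\setA)=1/4$, and to realize $\kappa|_\setA$ as a ``digit-interleaving'' map that reads off, alternately, the addresses of $x_1$ and $x_2$ inside $K$. The subtlety is that such an interleaving is naturally defined only on $K\times K$, whereas we need a \emph{globally} $C^\infty$ function on $\reals^2$; this is handled by building the address reading out of smooth bumps and damping the interleaving weights super-exponentially.

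Concretely, I would first build $K=\bigcap_{n\ge 0}K_n$, where $K_n$ is a disjoint union of $2^n$ closed intervals $\{I_\sigma:\sigma\in\{0,1\}^n\}$, and where at each stage every $I_\sigma$ is replaced by two closed children $I_{\sigma 0},I_{\sigma 1}$ obtained by deleting from $I_\sigma$ an open middle gap \emph{together with} two short end-margins. The margins are the crucial device: they guarantee that $K\cap I_\sigma$ stays at a strictly positive distance from $\partial I_\sigma$ at every scale, which leaves room for smooth transitions later. Choosing the total deleted length at stage $n$ to equal $2^{-n-1}$ gives $\lebmeasure^1(K)=1/2$, the generation-$n$ intervals have length $\asymp 2^{-n}$, and the address map $x\mapsto b(x)=(b_1(x),b_2(x),\dots)\in\{0,1\}^{\naturals}$ recording the successive child choices is well defined and injective on $K$. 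Next, for each $n$ I would take $\chi_n\colon\reals\to[0,1]$, obtained by rescaling one fixed smooth monotone transition profile inside the gaps and margins created at stage $n+1$, with $\chi_n\equiv 0$ on a neighborhood of $K\cap I_{\sigma 0}$ and $\chi_n\equiv 1$ on a neighborhood of $K\cap I_{\sigma 1}$ for every $\sigma\in\{0,1\}^n$, and $\chi_n\equiv 0$ outside $\bigcup_\sigma I_\sigma$. Then $\chi_n$ is globally $C^\infty$, $\chi_n(x)=b_{n+1}(x)$ for all $x\in K$, and because every transition of $\chi_n$ happens over a region of length $\gtrsim 2^{-2n}$ we get $\|\chi_n^{(j)}\|_\infty\le C_j\,2^{2nj}$ with $C_j$ depending only on $j$.

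Finally I would set
\[
\kappa(x_1,x_2)=\sum_{n=0}^{\infty}\bigl(w_{2n}\,\chi_n(x_1)+w_{2n+1}\,\chi_n(x_2)\bigr),\qquad w_k=2^{-k^2}.
\]
Since $w_{2n}\,\|\chi_n^{(j)}\|_\infty\le C_j\,2^{-4n^2+2nj}$ is summable for every $j$, the series converges in $C^\infty(\reals^2)$, so $\kappa\in C^\infty(\reals^2)$. For $x_1,x_2\in K$ each summand equals the bit $b_{n+1}(x_1)$ or $b_{n+1}(x_2)\in\{0,1\}$, so $\kappa(x_1,x_2)=\sum_{k\ge 0}w_k c_k$ with $c_{2n}=b_{n+1}(x_1)$ and $c_{2n+1}=b_{n+1}(x_2)$. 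Because $(w_k)$ is super-decreasing, that is, $w_k>\sum_{l>k}w_l$ for all $k$, the value $\sum_k w_k c_k$ determines the entire digit string $(c_k)$, hence both addresses $b(x_1),b(x_2)$, hence $x_1$ and $x_2$. Thus $\kappa$ is one-to-one on $\setA$.

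The crux --- and the step I expect to be the main obstacle --- is the tension in the choice of the weights $(w_k)$: the intervals of $K$ must shrink exponentially (unavoidable once $\lebmeasure^1(K)>0$), which makes the derivatives of the bit-readers $\chi_n$ grow exponentially, forcing $(w_k)$ to decay fast enough for $C^\infty$-convergence; yet $(w_k)$ must remain super-decreasing for the interleaved sum to be an injective function of the digit string. The super-exponential choice $w_k=2^{-k^2}$ satisfies both constraints at once. (It is instructive that a \emph{linear} $\kappa$ cannot work: since $\lebmeasure^1(K)>0$, the difference set $K-K$ contains an interval around $0$ by the Steinhaus theorem, so every nonzero linear functional fails to be injective on $K\times K$ --- in line with the real-analytic converse proved later in the paper.)
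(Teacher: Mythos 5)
Your proposal is correct and follows the same architecture as the paper's proof --- a nested positive-measure Cantor-type compact set plus a weighted series of smooth ``address-reading'' bumps, with weights small enough for $C^\infty$ convergence yet super-decreasing so an early disagreement between two addresses cannot be cancelled by later terms --- but the decomposition and the bookkeeping are genuinely different, in ways worth contrasting. The paper works directly with a 2D nested family of $4^{k-1}$ disjoint closed squares $\setQ_{k,i}$ and $\{i/4^{k-1}\}$-valued labels $\varphi_k$, so the contrast between distinct generation-$k$ addresses decays like $4^{-k}$; it compensates with doubly-exponential weights $1/(8^{2^k}(M_k+1))$ in which the adaptive factor $M_k$ absorbs whatever derivative sup-norms the bumps $\varphi_1,\dots,\varphi_k$ happen to have, so no a priori derivative bound is required. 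You instead take $\setA=K\times K$ with a 1D positive-measure Cantor set $K$ and interleave \emph{binary} digit readers $\chi_n\in\{0,1\}$, which keeps the contrast at $1$ for every generation and makes the super-decreasing condition on the weights exactly the standard one for injectivity of a weighted binary expansion; the price is that you must first establish the explicit derivative bound $\|\chi_n^{(j)}\|_\infty\le C_j\,2^{2nj}$ (correct, since $\chi_n$ is built from rescalings of a single fixed profile, all transitions live in deleted regions of width $\gtrsim 2^{-2n}$, and the transition regions are disjoint so the sup over $x$ sees only one rescaled profile at a time), after which the much milder explicit weight $w_k=2^{-k^2}$ suffices for both convergence and super-decrease. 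Your end-margin device, which pulls $K\cap I_\sigma$ strictly into the interior of each $I_\sigma$ and gives genuine open neighborhoods on which the $\chi_n$ are locally constant, is an additional refinement the paper does not use: the paper places the generation-$(k+1)$ squares at the corners of their parents (so the Cantor set touches the square boundaries) and instead locates each transition of $\varphi_{k,i}$ entirely inside the positive-width gaps between sibling squares (Lemma~6.1 in the paper). Both work; the margins buy cleaner locality, the paper's choice buys one fewer thing to parametrize.

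One small indexing detail to be explicit about: with $K=\bigcap_{n\ge0}K_n$ and $K_0=[0,1]$, the deletion steps are naturally indexed by $n=1,2,\dots$ (the step $K_{n-1}\to K_n$), and deleting total length $2^{-n-1}$ at step $n\ge1$ gives $\lebmeasure^1(K)=1-\sum_{n\ge1}2^{-n-1}=1/2$ as you intend; read with $n\ge0$ the deletions would sum to $1$. Not a gap, but the write-up should fix the convention.
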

\begin{proof}
See Section \ref{prpcounterproof} for an explicit construction of $\kappa$ and $\setA$. 
\end{proof}

We now proceed to the construction of our example demonstrating that $n \geq s$ is in general not a necessary condition for zero error recovery of $s$-rectifiable random vectors.

\begin{exa}\label{exaG}
Let $\kappa$ and $\setA$ be as constructed in the proof of Theorem  \ref{prpcounter} and 
consider the mapping 
\begin{align}
 h\colon\reals^2&\to\reals^3\\
\vecz& \mapsto\tp{(\tp{\vecz}\ \kappa(\vecz))}. 
\end{align}
We set  $\setG=h(\setA)$ and show the following: 
\begin{enumerate}[label=\roman*)]
\item  $h$ is a $C^\infty$-embedding;\label{ER3} 
\item $\setG$ is $2$-rectifiable;\label{ER4}
\item $0<\mathscr{H}^{2}(\setG)<\infty$; \label{ER1}
\item $\tp{\vece_3}\colon\reals^3\to\reals$, $\tp{(x_1\ x_2\ x_3)}\mapsto x_3$ is one-to-one on $\setG$.\label{ER2}
\item For every  $2$-rectifiable random vector  $\rvecx\in\reals^3$  with $\mu_\rvecx\ll\mathscr{H}^{2}|_\setG$,  there exists a Borel measurable mapping $g\colon\reals^{3}\times \reals\to \reals^3$ satisfying $\opP\!\big[g\big(\vece_3,\tp{\vece_3}\rvecx\big)\neq \rvecx\big]=0$.  \label{GG}
\end{enumerate}
It follows immediately that  $h$  is one-to-one. Thus, to establish Property \ref{ER3},  it  suffices  to prove that $h$ is a $C^\infty$-immersion.  
Since $\kappa$ is  $C^\infty$, so is $h$.   Furthermore, 
\begin{align}
Jh(\vecz)
&=\sqrt{\det(\tp{(Dh(\vecz))}Dh(\vecz))}\\
&=\sqrt{\det(\matI_2+\veca(\vecz)\tp{\veca(\vecz)})}\quad \text{for all}\ \vecz\in\reals^2,
\end{align}
where 
\begin{align}
\veca(z)=
\begin{pmatrix}
\frac{\partial \kappa(\vecz)}{\partial z_1}\\[2mm]
\frac{\partial \kappa(\vecz)}{\partial z_2}
\end{pmatrix}.
\end{align}  
Since $\veca(\vecz)\tp{\veca(\vecz)}$ is positive semidefinite, 
$Jh(\vecz)\geq \sqrt{\det(\matI_2)}=1$ for all $\vecz\in\reals^2$, which establishes that $h$ is  an immersion and completes the proof of   \ref{ER3}. 
  
To prove  \ref{ER4}, note that  $h$ is $C^\infty$ and as such  locally Lipschitz. 
As $\setA$ is compact, Lemma \ref{lem:lipcompact}  implies that $h|_\setA$ is Lipschitz. 
The set  $\setG=h(\setA)$ is hence the Lipschitz image of a compact set in $\reals^2$ and as such $2$-rectifiable. 

To establish \ref{ER1}, we first note that 
\begin{align}
\mathscr{H}^{2}(\setG)
&= \mathscr{H}^{2}( h({\setA}))\label{eq:step1G}\\
&\leq L^2 \mathscr{H}^{2}({\setA})\label{eq:step2G}\\
&=L^2\lebmeasure^2({\setA})\label{eq:step3G}\\
&<\infty, 
\end{align}
where the individual steps follow from  Properties of Lemma \ref{lem:prophausdorff}, namely,
\eqref{eq:step2G} from Property \ref{HL}  with $L$ denoting  the Lipschitz constant of $ h|_{{\setA}}$, and 
\eqref{eq:step3G} from  Property \ref{HBorel}. 

To establish 
$\mathscr{H}^{2}(\setG)>0$, consider the linear mapping 
\begin{align}
\pi\colon\reals^3&\to\reals^2\\
\tp{(x_1\ x_2\ x_3)}&\mapsto \tp{(x_1\ x_2)}. 
\end{align}
Clearly, $\pi$ is  Lipschitz  with Lipschitz constant equal to $1$. Therefore, 
\begin{align}
\mathscr{H}^{2}(\setG)
&\geq \mathscr{H}^{2}(\pi(\setG))\label{eq:step1Ga}\\
&=\mathscr{H}^{2}({\setA})\label{eq:step2Ga}\\
&=\lebmeasure^{2}(\setA)\label{eq:step3Ga}\\
&=\tfrac{1}{4},
\end{align}
where 
\eqref{eq:step1Ga} follows from Property \ref{HL} of Lemma \ref{lem:prophausdorff}, 
\eqref{eq:step2Ga} from $\pi(\setG)={\setA}$, 
 and \eqref{eq:step3Ga} is by Property \ref{HBorel} of Lemma \ref{lem:prophausdorff}.

To show \ref{ER2}, let $\vecx_1,\vecx_2\in\setG$ with $\vecx_1\neq\vecx_2$. Thus, $\vecx_1=\tp{(\tp{\vecz_1}\ \kappa(\vecz_1))}$ and $\vecx_2=\tp{(\tp{\vecz_2}\ \kappa(\vecz_2))}$ with $\vecz_1,\vecz_2\in\setA$ and $\vecz_1\neq\vecz_2$. 
As $\kappa$ is one-to-one on $\setA$, 
we conclude that $\tp{\vece_3}$ is one-to-one on $\setG$. 

It remains to establish  \ref{GG}. Since $\mu_\rvecx\ll\mathscr{H}^{2}|_\setG$ by assumption, it follows that 
$\opP[\rvecx\in\setG]=1$.  
We   show  that there exists a Borel measurable mapping $g\colon \reals^{3}\times \reals\to\reals^3$ such  that 
\begin{align}\label{Geq:gprop}
g(\veca,y)
\begin{cases}
\in\big\{\vecv\in\setG:\tp{\veca}\vecv=y\big\}&\text{if}\ \exists\vecv\in\setG:\tp{\veca}\vecv=y\\
=\vece&\text{else}, 
\end{cases}
\end{align}
where $\vece$ is an arbitrary vector not in $\setG$,  used to declare a decoding error.
Since $\opP[\rvecx\in\setG]=1$ and $\tp{\vece_3}$ is one-to-one on $\setG$, this  then implies  that $\opP\!\big[g\big(\vece_3,\tp{\vece_3}\rvecx\big)\neq \rvecx\big]=0$. 
To construct $g$ in  \eqref{Geq:gprop}, 
consider first the mapping 
\begin{align}
f\colon\reals^{3}\times\reals\times\reals^3&\to\reals\label{eq:Gf1a}\\
(\veca,y,\vecu)&\mapsto |y-\tp{\veca}\vecu|. \label{eq:Gf1b}
\end{align}
Since $f$ is continuous, 
Lemma \ref{lem:normal}  implies that $f$ is a normal integrand (see Definition \ref{dfn:normal}) with respect to $\colB(\reals^{3}\times\reals)$.
Let 
\begin{align}
\setT
&=\{(\veca,y)\in\reals^{3}\times\reals:\exists\vecu\in\setG\ \text{with}\  f(\veca,y,\vecu)\leq 0\}\\
&=\{(\veca,y)\in\reals^{3}\times\reals:\exists\vecu\in\setG\ \text{with}\  \tp{\veca}\vecu=y\}. 
\end{align} 
Note that $\setG$  as the Lipschitz image of the compact set $\setA$ is compact (see Lemma \ref{lem:fK}).  
It now follows from Properties \ref{lem:normal3ii} and \ref{lem:normal3iii} of Lemma \ref{lem:normal3} (with 
$\setT=\reals^{3}\times\reals$, $\alpha=0$,  $\setK=\setG$, and  $f$ as in \eqref{eq:Gf1a}--\eqref{eq:Gf1b}, which is a normal integrand with respect to $\colB(\reals^{3}\times\reals)$)  that i)  $\setT\in\colB(\reals^{3}\times\reals)$  and ii) there 
exists a Borel measurable mapping
\begin{align}
p\colon\setT&\to \reals^3\\
\big(\veca,y\big)&\mapsto p\big(\veca,y\big)\in\big\{\vecu\in\setG: \tp{\veca}\vecu=y\big\}. 
\end{align}
This mapping can then be extended to  a mapping $g\colon\reals^{3}\times\reals\to \reals^3$ by setting 
\begin{align}
g|_{\setT}&=p\\
g|_{(\reals^{3}\times\reals)\setminus\setT}&=\vece. 
\end{align}   
Finally, $g$ is  Borel measurable owing  to 
Lemma \ref{lem:fa}  as $p$ is Borel measurable and $\setT\in\colB(\reals^{3}\times\reals)$.  
\end{exa}

\section{Strong Converse}\label{sec:analytic} 
Example \ref{exaG}   in the previous section demonstrates that 
$n\geq s$ is not necessary for zero error recovery of $s$-rectifiable random vectors in general. 
In this section, we introduce the class of $s$-analytic random vectors $\rvecx$, which will be shown to allow for   a 
strong converse in the sense of  $n\geq s$ being necessary for recovery of $\rvecx$ with probability of error smaller than one. 
The adjective ``strong''  refers to the fact that $n< s$ linear measurements are necessarily insufficient even  if we allow a recovery  error probability that is  arbitrarily close to one. 
We prove that  an  $s$-analytic random vector is   $s$-rectifiable 
if and only if 
it admits a support set $\setU$ that is ``not too rich'' (in terms of $\sigma$-finiteness of $\mathscr{H}^{s}|_\setU$), 
show that the $s$-rectifiable random vectors  considered in 
Examples \ref{exrecana1a}--\ref{exacircle1} are all $s$-analytic, 
and discuss examples of $s$-analytic random vectors that fail to be  $s$-rectifiable. 
Random vectors that are  both $s$-analytic and  $s$-rectifiable can be recovered with zero error probability from $n> s$ linear measurements, and $n\geq s$ linear measurements are  necessary for  recovery  with error probability  smaller than one. The border case $n=s$ remains open. 

We now  make our way towards developing the strong converse and the formal definition of $s$-analyticity. 
The following auxiliary result will turn out to be useful.  
\begin{lem}\label{lem:convimpliesA}
For $\rvecx\in\reals^m$ and $\matA\in\reals^{n\times m}$, consider the following statements: 
\begin{enumerate}[label=\roman*)]
\item There exists  a  Borel measurable mapping $g\colon \reals^{n\times m}\times \reals^n\to\reals^m$ satisfying 
$\opP[g(\matA,\matA\rvecx)\neq\rvecx]<1$. \label{conv1}
\item There exists a set $\setU\in\colB(\reals^m)$ with $\opP[\rvecx\in\setU]>0$ such that $\matA$ is one-to-one on $\setU$. \label{conv2}
\end{enumerate}
Then,  \ref{conv1} implies \ref{conv2}. 
\end{lem}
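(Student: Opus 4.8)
The plan is to exhibit the set $\setU$ directly as the set of points on which the decoder succeeds. Fix a matrix $\matA\in\reals^{n\times m}$ for which \ref{conv1} holds, let $g$ be a corresponding decoder, and set
\[
\setU\define\bigl\{\vecx\in\reals^m : g(\matA,\matA\vecx)=\vecx\bigr\}.
\]
I would then verify the three things that together give \ref{conv2}: that $\setU\in\colB(\reals^m)$, that $\opP[\rvecx\in\setU]>0$, and that $\matA$ is one-to-one on $\setU$.

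For measurability, I would argue as follows. The map $\vecx\mapsto\matA\vecx$ is linear, hence continuous, hence Borel measurable. For the fixed matrix $\matA$, the section $\vecy\mapsto g(\matA,\vecy)$ is Borel measurable: since $\reals^{n\times m}$ and $\reals^n$ are separable metric spaces, $\colB(\reals^{n\times m}\times\reals^n)=\colB(\reals^{n\times m})\otimes\colB(\reals^n)$, and every section of a product-measurable set is measurable. Consequently $\vecx\mapsto g(\matA,\matA\vecx)$, and therefore also $\vecx\mapsto g(\matA,\matA\vecx)-\vecx$, is Borel measurable; since $\setU$ is the preimage of the closed set $\{\veczero\}$ under the latter map, it lies in $\colB(\reals^m)$.

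The remaining two points are then immediate. Since
\[
\opP[\rvecx\in\setU]=\opP[g(\matA,\matA\rvecx)=\rvecx]=1-\opP[g(\matA,\matA\rvecx)\neq\rvecx],
\]
assumption \ref{conv1} yields $\opP[\rvecx\in\setU]>0$. For injectivity, if $\vecx_1,\vecx_2\in\setU$ satisfy $\matA\vecx_1=\matA\vecx_2$, then $\vecx_1=g(\matA,\matA\vecx_1)=g(\matA,\matA\vecx_2)=\vecx_2$, so $\matA$ is one-to-one on $\setU$, which establishes \ref{conv2}. No step here is genuinely difficult; the only point requiring a moment's care — and which I would simply invoke as a standard measure-theoretic fact — is that fixing the first argument of the jointly Borel measurable map $g$ leaves a Borel measurable function of the second argument, which is what makes $\setU$ Borel.
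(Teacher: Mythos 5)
Your proposal is correct and takes essentially the same approach as the paper: you define the same set $\setU=\{\vecx:g(\matA,\matA\vecx)=\vecx\}$ and verify the same three facts (Borel measurability, positive probability, injectivity of $\matA$), with only a cosmetic difference in how Borel measurability of $\vecx\mapsto g(\matA,\matA\vecx)$ is justified — you section $g$ at $\matA$ and then compose with $\vecx\mapsto\matA\vecx$, whereas the paper composes $g$ directly with the continuous map $\vecx\mapsto(\matA,\matA\vecx)$ and then pulls back the diagonal, which amounts to the same calculation.
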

\begin{proof} 
See Appendix \ref{lem:convimpliesAproof}.
\end{proof}

We first establish  a strong converse for the class of random vectors considered  in Example \ref{exrecana1a}.  
This will guide us to the crucial  defining property of  $s$-analytic random vectors. 
 
\begin{lem}\label{lem:simpleconverse}
Let $\rvecx=(\vece_{\rveckc_1} \mydots \vece_{\rveckc_s})\rvecz\in\reals^m$, where $\rvecz\in\reals^s$ with $\mu_\rvecz\ll\lebmeasure^s$ and $\rveck=\tp{(\rveckc_1\mydots\rveckc_s)}\in\{1,\dots,m\}^s$  satisfies $\rveckc_1<\dots<\rveckc_s$. 
If there exist a measurement matrix $\matA\in\reals^{n\times m}$ and a Borel measurable mapping $g\colon \reals^{n\times m}\times \reals^n\to\reals^m$  such that  
$\opP[g(\matA,\matA\rvecx)\neq\rvecx]<1$, then  $n\geq s$.
\end{lem}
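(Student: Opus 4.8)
The plan is to combine Lemma~\ref{lem:convimpliesA} with the coordinate-subspace structure of $\rvecx$ and reduce everything to the following elementary fact: a linear map $\reals^s\to\reals^n$ with $n<s$ cannot be one-to-one on a subset of $\reals^s$ of positive Lebesgue measure.

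I would argue by contraposition, i.e., assume there exist $\matA\in\reals^{n\times m}$ and a Borel measurable $g$ with $\opP[g(\matA,\matA\rvecx)\neq\rvecx]<1$, and aim to show $n\geq s$. By Lemma~\ref{lem:convimpliesA} (statement~\ref{conv1} holds, so statement~\ref{conv2} holds), there is a Borel set $\setU\subseteq\reals^m$ with $\opP[\rvecx\in\setU]>0$ on which $\matA$ is one-to-one. Since $\rveck$ takes values in the finite set of strictly increasing $s$-tuples in $\{1,\dots,m\}^s$, we have $\opP[\rvecx\in\setU]=\sum_{(i_1,\dots,i_s)}\opP[\rvecx\in\setU,\,\rveck=(i_1,\dots,i_s)]$, so some fixed tuple $(i_1,\dots,i_s)$ with $i_1<\dots<i_s$ satisfies $\opP[\rvecx\in\setU,\,\rveck=(i_1,\dots,i_s)]>0$. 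Writing $\matE=(\vece_{i_1}\mydots\vece_{i_s})\in\reals^{m\times s}$, on the event $\{\rveck=(i_1,\dots,i_s)\}$ we have $\rvecx=\matE\rvecz$, hence $\opP[\matE\rvecz\in\setU]>0$.

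Next I would pull this back to $\reals^s$. Put $\widetilde\setU=\{\vecz\in\reals^s:\matE\vecz\in\setU\}$, which is Borel since $\matE$ is continuous and $\setU$ is Borel, and observe $\opP[\rvecz\in\widetilde\setU]=\opP[\matE\rvecz\in\setU]>0$, so $\lebmeasure^s(\widetilde\setU)>0$ by $\mu_\rvecz\ll\lebmeasure^s$. Moreover $\matA\matE\in\reals^{n\times s}$ is one-to-one on $\widetilde\setU$: if $\matA\matE\vecz_1=\matA\matE\vecz_2$ for $\vecz_1,\vecz_2\in\widetilde\setU$, then $\matE\vecz_1,\matE\vecz_2\in\setU$, and injectivity of $\matA$ on $\setU$ gives $\matE\vecz_1=\matE\vecz_2$, whence $\vecz_1=\vecz_2$ because $\matE$ has full column rank $s$ (its columns are distinct standard basis vectors of $\reals^m$).

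Finally, suppose for contradiction that $n<s$. Then $r:=\dim\ker(\matA\matE)\geq s-n\geq 1$. Choosing an orthonormal basis of $\reals^s$ whose first $r$ vectors span $\ker(\matA\matE)$, the value of $\matA\matE\vecz$ depends only on the last $s-r$ coordinates of $\vecz$ in this basis; injectivity of $\matA\matE$ on $\widetilde\setU$ therefore forces each fibre of $\widetilde\setU$ over the last $s-r$ coordinates to be a singleton, hence of $\lebmeasure^r$-measure zero since $r\geq 1$, and Tonelli's theorem yields $\lebmeasure^s(\widetilde\setU)=0$, contradicting $\lebmeasure^s(\widetilde\setU)>0$. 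Hence $n\geq s$. I expect the only mildly delicate point to be this last measure-theoretic step (ruling out an injective linear image of a positive-measure set under a rank-deficient map); it can alternatively be settled by the Steinhaus difference-set theorem, picking a nonzero $\vecb\in\ker(\matA\matE)$ inside the ball around the origin contained in $\widetilde\setU-\widetilde\setU$ to exhibit two distinct points of $\widetilde\setU$ with identical image under $\matA\matE$.
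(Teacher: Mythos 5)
Your proof is correct and follows the same overall strategy as the paper: invoke Lemma~\ref{lem:convimpliesA} to obtain a Borel set $\setU$ of positive $\mu_\rvecx$-measure on which $\matA$ is injective, decompose over the finitely many strictly increasing index tuples to find a fixed $\matE=(\vece_{i_1}\mydots\vece_{i_s})$ with $\opP[\matE\rvecz\in\setU]>0$, pull $\setU$ back to $\widetilde\setU\subseteq\reals^s$ (the paper calls it $\setA$), use $\mu_\rvecz\ll\lebmeasure^s$ to get $\lebmeasure^s(\widetilde\setU)>0$, and observe that $\matA\matE$ is injective on $\widetilde\setU$ while having a kernel of dimension at least $s-n\geq1$. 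The only place you diverge is the closing step. The paper, following the converse argument of Wu and Verd\'u, applies the Steinhaus theorem to $\widetilde\setU-\widetilde\setU$ to exhibit a nonzero kernel vector that is a difference of two points of $\widetilde\setU$. You instead rotate into a basis adapted to $\ker(\matA\matE)$ and argue via Tonelli that injectivity forces every $r$-dimensional slice of $\widetilde\setU$ to be a singleton, hence $\lebmeasure^s(\widetilde\setU)=0$. Both endgames are valid; the Tonelli/Fubini route is more elementary (it avoids the Steinhaus difference-set theorem), while the Steinhaus route is shorter and is the form the paper reuses elsewhere. You also correctly note the Steinhaus alternative, so the two proofs are in practice interchangeable.
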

\begin{proof}
Towards a contradiction, suppose that there exist a measurement matrix $\matA\in\reals^{n\times m}$ and a Borel measurable mapping $g\colon \reals^{n\times m}\times \reals^n\to\reals^m$  so that  
$\opP[g(\matA,\matA\rvecx)\neq\rvecx]<1$ for $n<s$. 
By Lemma \ref{lem:convimpliesA},  there must exist a $\setU\in\colB(\reals^m)$ with $\opP[\rvecx\in\setU]>0$ such that $\matA$ is one-to-one on $\setU$.  
Since  
\begin{align}
\opP[\rvecx\in\setU]
&=\opP[(\vece_{\rveckc_1} \mydots \vece_{\rveckc_s})\rvecz\in\setU]\\
&\leq\sum_{1\leq i_1<\dots<i_s\leq m}\opP[(\vece_{i_1}\mydots \vece_{i_s}) \rvecz \in\setU], 
\end{align}
there must exist a set of indices $\{i_1,\dots,i_s\}\subseteq\{1,\dots,m\}$ with $i_1<\dots < i_s$ such that the rank-$s$ matrix  $\matH:=(\vece_{i_1}\mydots \vece_{i_s})$ 
satisfies $\opP[\matH \rvecz \in\setU]>0$. 
Setting $\setA=\{\vecz\in\reals^s: \matH\vecz\in\setU\}$ yields 
$\mu_\rvecz(\setA)=\opP[\matH \rvecz \in\setU]>0$. Furthermore,  $\setA$  as the inverse image of the Borel set $\setU$ under a linear mapping is Borel. 
Finally, since $\mu_\rvecz\ll\lebmeasure^s$ by assumption, we conclude that $\lebmeasure^s(\setA)>0$.
Summarizing, 
there exist  a set $\setA\in\colB(\reals^s)$ with $\lebmeasure^s(\setA)>0$  and a matrix $\matH\in\reals^{m\times s}$ such that $\setU$ contains the one-to-one image of $\setA$ under   $\matH$. 
We now follow the line of argumentation used in the proof of  the converse part of \cite[Theorem 6]{wuve10}. 
Specifically, as  $\matA$ is one-to-one on $\setU$ and $\matH$ is one-to-one on $\reals^s$, it follows that $\matA\matH$ is one-to-one on $\setA$, i.e.,  
\begin{align}
\ker(\matA\matH)\cap(\setA-\setA)&=\{\veczero\}. \label{simpleexatwo}
\end{align} 
As $\lebmeasure^s(\setA)>0$, the Steinhaus Theorem \cite{st76} implies the existence of an $r>0$ such that $\setB_s(\veczero,r)\subseteq\setA-\setA\subseteq\reals^s$. Since $\dim\ker(\matA\matH)\geq s-n>0$, we conclude that the linear subspace $\ker(\matA\matH)$ must have a nontrivial intersection with $\setA-\setA$, 
which stands in  contradiction to  \eqref{simpleexatwo}. 
\end{proof}

The strong converse just derived hinges critically on the specific structure of the $s$-rectifiable  random vector $\rvecx$ considered. Concretely, 
we used the fact that, for every $\setU\in\colB(\reals^m)$ with $\opP[\rvecx\in\setU]>0$, there exist 
a set $\setA\in\colB(\reals^s)$ with $\lebmeasure^s(\setA)>0$ and 
a  matrix $\matH\in\reals^{m\times s}$   such that $\setU$ contains the one-to-one image of $\setA$ under  $\matH$. 
The following example demonstrates, however, that this property is too strong for our purposes as it fails to hold for random vectors on general submanifolds like, e.g., the  unit circle:  

\begin{figure}
\begin{center}
\begin{tikzpicture}[scale=3]
\draw[arrows=->](-1,0)--(1,0);
\draw[arrows=->](0,-1)--(0,1);
\draw[thick,blue] (-0.9,-0.9)--(0.9,0.9);
\put (90,-2) {$x_1$};
\put (-3,90) {$x_2$};
\put (79,79) {\color{blue}$\setT_\vech$\color{black}};
\put (-50,50) {$\setS_1$};
\draw[thick] (0,0) circle(0.7);
\draw[red,fill=red] (0.5,0.5) circle(0.03);
\draw[red,fill=red] (-0.5,-0.5) circle(0.03);
\end{tikzpicture}
\end{center}
\caption{For every vector $\vech\subseteq\reals^2\mysetminus\{\veczero\}$, the linear subspace $\setT_\vech:=\{\vech z:z\in\reals\}$ intersects the unit circle $\setS_1$ in the two antipodal points $\pm\vech/\|\vech\|_2$.\label{fig:S1}} 
\end{figure}

\begin{exa}\label{exacircle2}
Let $\setS^1\subseteq\reals^2$\! denote the unit circle  and consider $\rvecx\in\reals^2$ supported on $\setS^1$, i.e.,  $\opP[\rvecx\in\setS^1]=1$. 
Towards a contradiction, 
suppose that  there exist a set $\setA\in\colB(\reals)$  with $\lebmeasure^1(\setA)>0$ 
and a  vector $\vech\in\reals^{2}$   such that $\{\vech z:z\in\setA\}\subseteq\setS^1$ and $\vech$ is one-to-one on $\setA$.  
Since $\vech$ is one-to-one on $\setA$ and $\lebmeasure^1(\setA)>0$, it follows that $\vech\neq\veczero$. 
Noting that  $\{\vech z:z\in\setA\}\subseteq \{\vech/\|\vech\|_2,-\vech/\|\vech\|_2\}$ (see Figure \ref{fig:S1}), $\setA$ necessarily satisfies  $\setA\subseteq \{1/\|\vech\|_2,-1/\|\vech\|_2\}$. Thus,    $\lebmeasure^1(\setA)=0$, which is a contradiction to  $\lebmeasure^1(\setA)>0$. 
\end{exa}

The reason for this  failure is that every  $\vech\in\reals^2$ maps into a $1$-dimensional linear subspace in $\reals^2$,  and  $1$-dimensional linear subspaces in  $\reals^2$  
intersect the unit circle in two antipodal points only. To map  a set $\setA\in\colB(\reals)$ to a set in $\reals^2$ that is not restricted to be a subset of a $1$-dimensional linear subspace, we  have to employ a nonlinear mapping.  
But this puts us into the same dilemma as in  Example \ref{exaG}, where we demonstrated that even the requirement of  every $\setU\in\colB(\reals^m)$ with $\opP[\rvecx\in\setU]>0$  containing the embedded image---under a $C^\infty$-mapping---of a set $\setA\in\colB(\reals^s)$ of positive   Lebesgue measure  is not sufficient to obtain a strong  converse  for general $\rvecx$. 
We therefore need to impose additional constraints on the mapping. 
It  turns out that requiring real analyticity is enough. 
Examples of real analytic mappings include, e.g., multivariate polynomials, the exponential function, or trigonometric mappings.  
This finally leads us to the new concept of $s$-analytic measures and $s$-analytic random vectors. 

\begin{dfn}(Analytic measures)\label{dfn:measureanalytic}
A Borel measure $\mu$ on $\reals^{m}$ is $s$-analytic, with $s\in\{1,\dots,m\}$, if, for each $\setU\in\colB(\reals^{m})$ with $\mu(\setU)>0$, there exist a set $\setA\in\colB(\reals^{s})$ of positive  Lebesgue measure and 
a real analytic mapping (see Definition \ref{def:real}) $h\colon\reals^s\to \reals^m$ of $s$-dimensional Jacobian $Jh\not\equiv 0$   such that $h(\setA)\subseteq\setU$. 
\end{dfn}

Note that the only requirement on the real analytic mappings  in Definition \ref{dfn:measureanalytic} is  that their 
$s$-dimensional Jacobians do not vanish identically. Since  the $s$-dimensional Jacobian of a real analytic mapping is a real analytic function,   it  
vanishes either identically or on a set of Lebesgue measure zero (see Lemma \ref{lem:lebJ}). 
By Lemma \ref{lem:immreal},  this guarantees that, for an analytic measure $\mu$,  every $\setU\in\colB(\reals^{m})$ with $\mu(\setU)>0$  contains the real analytic \emph{embedding}  of a set $\setA\in\colB(\reals^{s})$ of positive  Lebesgue measure. 


We have the following  properties of $s$-analytic measures. 

\begin{lem}\label{lem:propanalytic}
If $\mu$ is an $s$-analytic measure on $\reals^{m}$, then the following  holds:    
\begin{enumerate}[label=\roman*)]
\item  
$\mu$  is $t$-analytic for all   $t\in\{1,\dots,s-1\}$;\label{propra2}
\item 
$\mu\ll\mathscr{H}^{s}$;\label{propra3}
\item  
there exists a set $\setU\subseteq\reals^m$ such that $\mu=\mu|_\setU$ and $\mathscr{H}^{s}|_{\setU}$ is $\sigma$-finite if and only if  
 there exists a countably $(\mathscr{H}^{s},s)$-rectifiable set $\setW\subseteq\reals^m$ such that $\mu=\mu|_\setW$. 
\label{propra4} 
\end{enumerate}
\end{lem}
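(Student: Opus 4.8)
The plan is to prove the three parts of Lemma~\ref{lem:propanalytic} separately, with part~\ref{propra4} the one carrying real content. For part~\ref{propra2}, fix $\setU\in\colB(\reals^m)$ with $\mu(\setU)>0$ and take a real analytic $h\colon\reals^s\to\reals^m$, a Borel set $\setA\subseteq\reals^s$ with $\lebmeasure^s(\setA)>0$, $Jh\not\equiv 0$, and $h(\setA)\subseteq\setU$, as furnished by $s$-analyticity. Since $\{Jh=0\}$ is $\lebmeasure^s$-null by Lemma~\ref{lem:lebJ}, I may intersect $\setA$ with $\{Jh>0\}$ without losing positivity of its measure. Splitting $\reals^s=\reals^t\times\reals^{s-t}$ and invoking the Fubini--Tonelli theorem, I pick $\vecc\in\reals^{s-t}$ for which the Borel slice $\setA'=\{\vecy\in\reals^t:(\vecy,\vecc)\in\setA\}$ has $\lebmeasure^t(\setA')>0$. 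Then $h'(\vecy):=h(\vecy,\vecc)$ is real analytic with $h'(\setA')\subseteq\setU$, and for $\vecy\in\setA'$ the matrix $Dh'(\vecy)$ consists of $t$ of the columns of the rank-$s$ matrix $Dh(\vecy,\vecc)$, hence has rank $t$, so $Jh'(\vecy)>0$ and in particular $Jh'\not\equiv 0$; this establishes $t$-analyticity for every $t\in\{1,\dots,s-1\}$.

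For part~\ref{propra3}, by Borel regularity of $\mathscr{H}^s$ it suffices to show $\mathscr{H}^s(\setU)>0$ whenever $\setU\in\colB(\reals^m)$ and $\mu(\setU)>0$. By the remark following Definition~\ref{dfn:measureanalytic} (which combines Lemmas~\ref{lem:lebJ} and~\ref{lem:immreal}), such a $\setU$ contains $h(\setA)$ for a real analytic $h$ and a Borel set $\setA$ with $\lebmeasure^s(\setA)>0$ on which $h$ is one-to-one with $Jh>0$. Choosing a compact $\setA_0\subseteq\setA$ with $\lebmeasure^s(\setA_0)>0$, the map $h|_{\setA_0}$ is injective and, by Lemma~\ref{lem:lipcompact}, Lipschitz, so the area formula gives $\mathscr{H}^s(h(\setA_0))=\int_{\setA_0}Jh\,d\lebmeasure^s$, which is positive since $Jh>0$ holds $\lebmeasure^s$-a.e.\ on $\setA_0$ (Lemma~\ref{lem:lebJ}). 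Hence $\mathscr{H}^s(\setU)\geq\mathscr{H}^s(h(\setA_0))>0$, which proves $\mu\ll\mathscr{H}^s$.

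For part~\ref{propra4}, the implication ``$\Leftarrow$'' is routine: if $\mu=\mu|_\setW$ with $\setW$ countably $(\mathscr{H}^s,s)$-rectifiable, then $\setW$ agrees $\mathscr{H}^s$-a.e.\ with a countable union $\bigcup_i\varphi_i(\setA_i)$ of Lipschitz images of compact sets $\setA_i\subseteq\reals^s$, each $\varphi_i(\setA_i)$ having finite $\mathscr{H}^s$-measure by Lemma~\ref{lem:prophausdorff}; thus $\mathscr{H}^s|_\setW$ is $\sigma$-finite and $\setU=\setW$ works. For ``$\Rightarrow$'', suppose $\mu=\mu|_\setU$ with $\mathscr{H}^s|_\setU$ $\sigma$-finite. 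Using Borel regularity of $\mathscr{H}^s$ together with $\mu(\reals^m\setminus\setU)=0$, I first enlarge $\setU$ to a Borel set that still satisfies $\mu=\mu|_\setU$ and has $\mathscr{H}^s|_\setU$ $\sigma$-finite. The structure theorem for sets of finite (hence $\sigma$-finite) $\mathscr{H}^s$-measure \cite{fed69} then decomposes $\setU=\setR\cup\setP$ with $\setR$ countably $(\mathscr{H}^s,s)$-rectifiable, $\setP$ purely $(\mathscr{H}^s,s)$-unrectifiable (i.e.\ $\mathscr{H}^s(\setP\cap\setE)=0$ for every countably $(\mathscr{H}^s,s)$-rectifiable $\setE$), and $\mathscr{H}^s(\setR\cap\setP)=0$. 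Since $\setU\setminus\setR\subseteq\setP$, it remains to prove $\mu(\setP)=0$, for then $\mu=\mu|_\setR$ and $\setW=\setR$ is the desired set. If instead $\mu(\setP)>0$, then, $\setP$ being $\mathscr{H}^s$-measurable with $\sigma$-finite restricted measure, there is a Borel $\setP_0\subseteq\setP$ with $\mathscr{H}^s(\setP\setminus\setP_0)=0$, whence $\mu(\setP_0)=\mu(\setP)>0$ by part~\ref{propra3} and $\setP_0$ is again purely $(\mathscr{H}^s,s)$-unrectifiable; applying the remark after Definition~\ref{dfn:measureanalytic} to $\setP_0$ gives a compact $\setA_0\subseteq\reals^s$ with $\lebmeasure^s(\setA_0)>0$ and a real analytic $h$, injective and (by Lemma~\ref{lem:lipcompact}) Lipschitz on $\setA_0$, with $h(\setA_0)\subseteq\setP_0$. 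As in part~\ref{propra3}, $h(\setA_0)$ is an $s$-rectifiable (hence countably $(\mathscr{H}^s,s)$-rectifiable) subset of $\setP_0$ with $\mathscr{H}^s(h(\setA_0))>0$, contradicting pure $(\mathscr{H}^s,s)$-unrectifiability of $\setP_0$. Thus $\mu(\setP)=0$.

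The step I expect to be the main obstacle is the ``$\Rightarrow$'' direction of part~\ref{propra4}: it rests on the Besicovitch--Federer decomposition of a $\sigma$-finite-$\mathscr{H}^s$ set into a countably $(\mathscr{H}^s,s)$-rectifiable piece and a purely $(\mathscr{H}^s,s)$-unrectifiable piece, together with the observation that $s$-analyticity forces $\mu$ to charge only the rectifiable piece. Making this rigorous also requires a fair amount of measurability bookkeeping --- replacing arbitrary support sets by Borel ones, and extracting Borel full-$\mathscr{H}^s$-measure subsets of the unrectifiable part --- in which part~\ref{propra3}, by letting $\mathscr{H}^s$-null sets be discarded without affecting $\mu$, plays the decisive role.
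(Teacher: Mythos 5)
Your proof is correct and follows essentially the same route as the paper's: Fubini plus a rank/Jacobian argument for part~\ref{propra2}, the area formula for part~\ref{propra3}, and the rectifiable/purely-unrectifiable decomposition with an area-formula contradiction for part~\ref{propra4}. The only differences are cosmetic: you slice $\reals^s=\reals^t\times\reals^{s-t}$ directly for all $t$ rather than inducting one dimension at a time, and in part~\ref{propra4} you extract a compact $\setA_0$ so that $h(\setA_0)$ is Borel and directly yields the contradiction, whereas the paper reaches a Borel countably rectifiable set via Borel regularity of $\mathscr{H}^s$ --- your version is arguably a bit cleaner on the measurability bookkeeping, but the underlying argument is the same.
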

\begin{proof}
See Appendix \ref{proof:propanalytic}.
\end{proof}

We are now ready to define $s$-analytic random vectors. 

\begin{dfn}(Analytic random vectors)\label{DfnXana}
A random vector $\rvecx\in\reals^m$ is $s$-analytic if   $\mu_\rvecx$ is $s$-analytic.
The corresponding value $s$ is the analyticity parameter.
\end{dfn}

We have the following immediate consequence of Lemma \ref{lem:propanalytic}. 

\begin{cor}\label{cor:anarec}
Let $\rvecx$ be $s$-analytic. Then,  $\rvecx$ is $s$-rectifiable if and only if it admits a support set $\setU$ such that 
$\mathscr{H}^{s}|_{\setU}$ is $\sigma$-finite.
\end{cor}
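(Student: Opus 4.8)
The plan is to read Corollary \ref{cor:anarec} off Property \ref{propra4} of Lemma \ref{lem:propanalytic}, using only two elementary observations and no new ideas. First, since $\mu_\rvecx$ is a probability measure, for any $\setU\subseteq\reals^m$ the identity $\mu_\rvecx=\mu_\rvecx|_\setU$ is equivalent to $\mu_\rvecx(\reals^m\setminus\setU)=0$, hence to $\opP[\rvecx\in\setU]=1$, i.e., to $\setU$ being a support set of $\rvecx$ in the sense of Section \ref{sec.ach}. This lets me pass freely between the phrasing of Definition \ref{dfnXrec} and Corollary \ref{cor:anarec} on the one hand and the phrasing of Property \ref{propra4} on the other. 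Second, by Definition \ref{DfnXana} the hypothesis that $\rvecx$ is $s$-analytic means precisely that $\mu_\rvecx$ is an $s$-analytic measure, so all conclusions of Lemma \ref{lem:propanalytic} apply to $\mu=\mu_\rvecx$; in particular $\mu_\rvecx\ll\mathscr{H}^{s}$ by Property \ref{propra3}.

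For the ``only if'' direction I would start from $s$-rectifiability of $\rvecx$: by Definition \ref{dfnXrec} there is a countably $(\mathscr{H}^{s},s)$-rectifiable set $\setW$ with $\mu_\rvecx\ll\mathscr{H}^{s}|_\setW$. Since $\mathscr{H}^{s}|_\setW(\reals^m\setminus\setW)=0$, absolute continuity forces $\mu_\rvecx(\reals^m\setminus\setW)=0$, i.e., $\mu_\rvecx=\mu_\rvecx|_\setW$. Applying the backward implication of Property \ref{propra4} (with $\mu=\mu_\rvecx$) then produces a set $\setU\subseteq\reals^m$ with $\mu_\rvecx=\mu_\rvecx|_\setU$ and $\mathscr{H}^{s}|_\setU$ $\sigma$-finite; by the first observation $\setU$ is a support set of $\rvecx$, which is exactly the claim.

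For the ``if'' direction I would assume $\rvecx$ admits a support set $\setU$ with $\mathscr{H}^{s}|_\setU$ $\sigma$-finite, so $\mu_\rvecx=\mu_\rvecx|_\setU$, and invoke the forward implication of Property \ref{propra4} to obtain a countably $(\mathscr{H}^{s},s)$-rectifiable set $\setW$ with $\mu_\rvecx=\mu_\rvecx|_\setW$. To conclude $s$-rectifiability through Definition \ref{dfnXrec} it remains to upgrade $\mu_\rvecx=\mu_\rvecx|_\setW$ to the stronger $\mu_\rvecx\ll\mathscr{H}^{s}|_\setW$. This is where Property \ref{propra3} enters: for a Borel set $\setB$ with $\mathscr{H}^{s}|_\setW(\setB)=0$, i.e., $\mathscr{H}^{s}(\setB\cap\setW)=0$, Property \ref{propra3} gives $\mu_\rvecx(\setB\cap\setW)=0$, while $\mu_\rvecx(\setB\setminus\setW)\leq\mu_\rvecx(\reals^m\setminus\setW)=0$; subadditivity then yields $\mu_\rvecx(\setB)\leq\mu_\rvecx(\setB\cap\setW)+\mu_\rvecx(\setB\setminus\setW)=0$, so $\mu_\rvecx\ll\mathscr{H}^{s}|_\setW$ and $\rvecx$ is $s$-rectifiable.

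The corollary is thus essentially immediate once Lemma \ref{lem:propanalytic} is in hand: the only step requiring any care is the last absolute-continuity splitting in the ``if'' direction, and even that is routine measure-theoretic bookkeeping. I do not anticipate any genuine obstacle here; the substantive content sits entirely in Lemma \ref{lem:propanalytic}, whose Property \ref{propra4} is the real engine.
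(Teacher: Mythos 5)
Your proposal is correct and takes essentially the same approach as the paper, which simply cites Properties \ref{propra3} and \ref{propra4} of Lemma \ref{lem:propanalytic} together with Definition \ref{dfnXrec}; you have supplied the routine bookkeeping (the equivalence of $\mu_\rvecx=\mu_\rvecx|_\setU$ with $\setU$ being a support set, and the splitting argument upgrading $\mu_\rvecx=\mu_\rvecx|_\setW$ to $\mu_\rvecx\ll\mathscr{H}^{s}|_\setW$ via Property \ref{propra3}) that the paper leaves implicit.
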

\begin{proof}
Follows from Properties \ref{propra3} and \ref{propra4} in Lemma \ref{lem:propanalytic} and   Definition \ref{dfnXrec}.
\end{proof}

By Corollary \ref{cor:anarec}, an  $s$-analytic random vector  is $s$-rectifiable if and only if it admits a support set $\setU$ that is  ``not too rich'' (in terms of $\sigma$-finiteness of $\mathscr{H}^{s}|_\setU$). 
As an example of an $s$-analytic random vector that is not $s$-rectifiable, consider an $(s+1)$-analytic random vector $\rvecx$ with $s>0$. By   
Property \ref{propra2} in Lemma \ref{lem:propanalytic},  this $\rvecx$ is also $s$-analytic, but it 
 cannot be $s$-rectifiable, as shown next.   
Towards a contradiction, suppose that $\rvecx$ is $s$-rectifiable. 
Then, by Lemma \ref{lem:recsup}, $\rvecx$ has a countably $s$-rectifiable support set 
$\setU$, which by Property  \ref{exarec} in 
Lemma \ref{LemmaExarec} 
is  also  countably $(s+1)$-rectifiable.  
As, by assumption, $\rvecx$ is $(s+1)$-analytic, Property  \ref{propra3} in Lemma \ref{lem:propanalytic} implies   $\mu_\rvecx\ll\mathscr{H}^{s+1}$. Thus, $\mu_\rvecx\ll\mathscr{H}^{s+1}|_\setU$ with $\setU$ countably $(s+1)$-rectifiable,  and we  conclude that   $\rvecx$  would also be $(s+1)$-rectifiable, which 
contradicts uniqueness of the rectifiability parameter, as guaranteed by Lemma \ref{lem:uniques}. 

We  just demonstrated that  $s$-analytic random vectors cannot be  $s$-rectifiable if they are also $(s+1)$-analytic. 
The question now arises whether $s$-analytic random vectors that fail to be $(s+1)$-analytic (and, therefore, fail to be  $t$-analytic for all  $t>s$ by Property \ref{propra2} in Lemma \ref{lem:propanalytic}) are necessarily $s$-rectifiable. 
The next example shows that this is not the case. 

\begin{exa}\label{exareviw}
Let $\setC$ be the middle third Cantor set \cite[pp. xvi--xvii]{fa14} and consider $\setU=\{\tp{(c\ t)}:c\in\setC, t\in [0,1]\}\subseteq\reals^2$. 
Since   $0<\mathscr{H}^{\ln 2/\ln3}(\setC)<\infty$  \cite[Example 4.5]{fa14}, it follows that 
the random vector $\rvecx$  with distribution 
$\mu_\rvecx=\pi\times (\lebmeasure^1|_{[0,1]})$, where 
\begin{align}
\pi=\frac{\mathscr{H}^{\ln 2/\ln3}|_{\setC}}{\mathscr{H}^{\ln 2/\ln3}(\setC)}
\end{align}
is the normalized Hausdorff measure on $\setC$, is well defined.   
We now show that 
\begin{enumerate}[label=\roman*)]
\item $\rvecx$ is $1$-analytic;\label{item.A}
\item $\rvecx$ is not $2$-analytic;\label{item.B}
\item $\rvecx$ is not $1$-rectifiable.\label{item.C}
\end{enumerate} 
To establish \ref{item.A}, consider $\setB\in\colB(\reals^2)$ with $\opP[\rvecx\in\setB]>0$. 
Now,  
\begin{align}
0&<\mu_\rvecx(\setB)\\
&=\int_\setC\lebmeasure^1\big(\big\{t\in[0,1]:\tp{(c\ t)}\in\setB\big\}\big)\,\mathrm d \pi(c)\label{eq:fuba}\\
&\leq\int_\setC\lebmeasure^1\big(\big\{t\in\reals:\tp{(c\ t)}\in\setB\big\}\big)\,\mathrm d \pi(c),\label{eq:monla}
\end{align}
where in \eqref{eq:fuba} we applied Corollary \ref{cor.fubini} (with the finite measure spaces 
$(\reals, \colB(\reals),\pi)$ and  $(\reals, \colB(\reals),\lebmeasure^1|_{[0,1]}$) and 
\eqref{eq:monla} is by  monotonicity of Lebesgue measure. 
Thus,  by Lemma \ref{lem.fzm}, there must exist a $c_0\in\setC$ such that  $\setA:=\big\{t\in\reals:\tp{(c_0\ t)}\in\setB\big\}$ satisfies $\lambda^{1}(\setA)>0$. 
Now, define the mapping  
$h\colon \reals\to\reals^2$, $t\mapsto \tp{(c_0\ t)}$ and note that this mapping is  (trivially) real analytic with $Jh\equiv 1$. Moreover, $h(\setA)\subseteq\setB$ by construction, and $\setA$ is Borel measurable as the inverse image of the Borel set $\setB$ under the real analytic and, therefore, continuous mapping $h$.  
Thus, $\rvecx$ is $1$-analytic. 

We next  show that  $\rvecx$ is not $2$-analytic. Towards a contradiction, suppose that $\rvecx$ is $2$-analytic. Since $\mu_\rvecx(\setU)=1$, by $2$-analyticity of $\rvecx$,
there must exist a set 
$\setD\in\colB(\reals^2)$ with $\lambda^{2}(\setD)>0$ and a real-analytic mapping $g\colon \reals^2\to\reals^2$ of $2$-dimensional Jacobian $Jg\not\equiv 0$ 
such that $g(\setD)\subseteq \setU$. By   Property \ref{immreal2} in Lemma \ref{lem:immreal}, we can assume, w.l.o.g., that $g|_\setD$ is an embedding. It follows that    
\begin{align}
\mathscr{H}^2(g(\setD))
&=\int_\setD J\! g(\vecz)\,\mathrm d \lebmeasure^{2}(\vecz)\label{eq:usearea}\\
&>0,\label{eq:usefnz}
\end{align}
where 
in \eqref{eq:usearea} we applied the area formula 
Corollary \ref{cor:area} upon noting that  $g|_\setD$ is one-to-one as an embedding and locally Lipschitz by real analyticity of  $g$, 
and \eqref{eq:usefnz} is by Lemma \ref{lem.fzm}, $\lambda^{2}(\setD)>0$, and $Jg(\vecz)>0$ for all $\vecz\in\setD$.  
Since $g(\setD)\subseteq\setU$ and $\mathscr{H}^2(g(\setD))>0$,  monotonicity of $\mathscr{H}^2$ yields $\mathscr{H}^2(\setU)>0$. Upon noting that  
 $\mathscr{H}^{1+\ln 2/\ln3}(\setU)<\infty$ \cite[Example 4.3]{fa14}, this results in  a contradiction to   Property \ref{Hjump} in Lemma \ref{lem:prophausdorff}.

Finally, to establish  \ref{item.C}, towards a contradiction, suppose that $\rvecx$ is $1$-rectifiable. Then,  Lemma 
\ref{lem:recsup} implies that $\rvecx$ admits a countably $1$-rectifiable support set.  As every countably $1$-rectifiable  set is the countable union of 
$1$-rectifiable sets, the union bound implies that there must exist a $1$-rectifiable set $\setV$ with $\opP[\rvecx\in\setV]>0$. By Definition \ref{Dfn:recset}, 
there must therefore exist a compact set $\setK\subseteq\reals$ and a Lipschitz mapping $f\colon\setK\to\reals^2$  such that $\setV=f(\setK)$. 
It follows that 
\begin{align}
0
&<\mu_\rvecx(\setV)\label{eq:ccc0}\\
&=\mu_\rvecx(f(\setK))\\
&=\int_\setC\lebmeasure^{1}\big(\big\{t\in [0,1]:\tp{(c\ t)}\in f(\setK)\big\}\big)\,\mathrm d \pi(c),\label{eq:ccc3}\\
&= \int_\setC\lebmeasure^{1}\big(\big\{t\in [0,1]:\tp{(c\ t)}\in f(\setA_c)\big\}\big)\,\mathrm d \pi(c),\label{eq:ccc6}
\end{align} 
where in \eqref{eq:ccc3} we applied Corollary \ref{cor.fubini} (with the finite measure spaces 
$(\reals, \colB(\reals),\pi)$ and  $(\reals, \colB(\reals),\lebmeasure^1|_{[0,1]}$)  
and in \eqref{eq:ccc6} we set, for every $c\in\setC$,  
\begin{align}
\setA_c=f^{-1}\mleft(\big\{\tp{(c\ t)}:t\in[0,1]\big\}\mright)\subseteq\setK.   
\end{align}
Note that the sets $\setA_c\subseteq\setK$ are pairwise disjoint 
as inverse images of pairwise disjoint sets. 
Now, Lemma \ref{lem.fzm} together with \eqref{eq:ccc0}--\eqref{eq:ccc6} implies that there must exist a set $\setF\subseteq\setC$ with $\pi(\setF)>0$ such that 
\begin{align}\label{eq:lebunc}
\lebmeasure^{1}\big(\big\{t\in [0,1]:\tp{(c\ t)}\in f(\setA_c)\big\}\big)>0\quad\text{for all}\ c\in\setF. 
\end{align}
Since 
$\pi=\mathscr{H}^{\ln 2/\ln3}|_{\setC}/\mathscr{H}^{\ln 2/\ln3}(\setC)$  and  $0<\pi(\setF)\leq 1$, the definition of Hausdorff dimension (see Definition \ref{def:HDorf}) implies    $\dim_{\mathrm{H}}(\setF)= \ln 2/\ln3$.    
As every countable set has Hausdorff dimension zero \cite[p. 48]{fa14}, we conclude that $\setF$ must be uncountable. 
Moreover, 
\begin{align}
\lebmeasure^{1}(\setA_c)
&=\mathscr{H}^{1}(\setA_c)\label{eq:aaa1}\\
&\geq \frac{1}{L} \mathscr{H}^{1}(f(\setA_c))\label{eq:aaa2}\\
&\geq\frac{1}{L} \mathscr{H}^{1}\big(\big\{t\in [0,1]:\tp{(c\ t)}\in f(\setA_c)\big\}\big)\label{eq:aaa3a}\\
&=\frac{1}{L} \lebmeasure^{1}\big(\big\{t\in [0,1]:\tp{(c\ t)}\in f(\setA_c)\big\}\big)\label{eq:aaa3}\\
&>0\quad\text{for all}\ c\in\setF,\label{eq:aaa4}
\end{align}
where
\eqref{eq:aaa1} and \eqref{eq:aaa3} follow from Property \ref{HBorel} in Lemma \ref{lem:prophausdorff}, 
\eqref{eq:aaa2} is by Property \ref{HL} in Lemma \ref{lem:prophausdorff} with $L$ the Lipschitz constant of $f$, 
\eqref{eq:aaa3a} is again by Property \ref{HL} in Lemma \ref{lem:prophausdorff} with 
the Lipschitz constant of the projection $\tp{\vece_2}\colon\reals^2\to\reals, \tp{(c\ \, t)}\mapsto t$ equal to one,  
and in \eqref{eq:aaa4} we used \eqref{eq:lebunc}. 
As the sets $\setA_c$ are  pairwise disjoint subsets of positive Lebesgue measure of the compact set $\setK$, it follows that 
\begin{align}
\sup_{\setE\subseteq \setF: \card{\setE}<\infty}\sum_{c\in\setE}\lebmeasure^1(\setA_c)\leq \lebmeasure^1(\setK) <\infty,\label{eq:uncfinal} 
\end{align}
which, by Lemma \ref{lem:sumunc}, contradicts the uncountability of  $\setF$.  
Therefore, $\rvecx$ cannot be $1$-rectifiable.  
\end{exa}

Our strong converse for analytic random vectors will   be based on the following result. 

\begin{thm}\label{thm.converse}
Let $\setA\in\colB(\reals^s)$ be of positive  Lebesgue measure,   $h\colon\reals^s\to \reals^m$, with $s\leq m$,  real analytic of $s$-dimensional Jacobian $Jh\not\equiv 0$, and $f\colon\reals^m\to \reals^n$  real analytic. 
If $f$ is one-to-one on $h(\setA)$, then    $n\geq s$. 
\end{thm}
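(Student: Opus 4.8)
The plan is to argue by contradiction: assuming $n<s$, I will exhibit two distinct points of $h(\setA)$ that $f$ maps to the same point, contradicting injectivity of $f$ on $h(\setA)$. First I would use Property~\ref{immreal2} of Lemma~\ref{lem:immreal}: since $h$ is real analytic with $Jh\not\equiv 0$ and $\lambda^s(\setA)>0$, one may pass to a Borel set $\setA_1\subseteq\setA$ with $\lambda^s(\setA_1)>0$ on which $h$ is an embedding, in particular one-to-one. It then suffices to find $z_1\neq z_2$ in $\setA_1$ with $f(h(z_1))=f(h(z_2))$, because $h|_{\setA_1}$ one-to-one forces $h(z_1)\neq h(z_2)$, with both points lying in $h(\setA_1)\subseteq h(\setA)$. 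Write $g:=f\circ h\colon\reals^s\to\reals^n$, which is real analytic as a composition of real analytic mappings.

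Next I would locate a co-null open set on which $g$ has constant (maximal) rank. Let $r:=\max_{z\in\reals^s}\rank(Dg(z))$; since $Dg(z)\in\reals^{n\times s}$ we have $r\le n<s$. The set $W:=\{z\in\reals^s:\rank(Dg(z))=r\}$ is open and nonempty, and $\reals^s\setminus W$ is contained in the common zero set of the $r\times r$ minors of $Dg$. At least one such minor is a real analytic function that is not identically zero (because $W\neq\emptyset$), so by Lemma~\ref{lem:vanishrealanalytic} that zero set, and hence $\reals^s\setminus W$, has $\lambda^s$-measure zero. Consequently $\lambda^s(\setA_1\cap W)=\lambda^s(\setA_1)>0$, and I would fix a Lebesgue density point $z_0$ of $\setA_1$ that lies in $W$.

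Then, since $g$ has constant rank $r$ on the open neighbourhood $W$ of $z_0$, the rank theorem provides an open set $U\subseteq W$ with $z_0\in U$ and a $C^1$-diffeomorphism $\psi$ of $U$ onto an open subset of $\reals^s=\reals^r\times\reals^{s-r}$ such that $g\circ\psi^{-1}(x',x'')$ depends on $x'$ only. As $\psi$ and $\psi^{-1}$ are locally Lipschitz and $z_0$ is a density point of $\setA_1$, the Borel set $E:=\psi(\setA_1\cap U)$ has positive $\lambda^s$-measure; Fubini then yields some $x_0'\in\reals^r$ whose section $\{x'':(x_0',x'')\in E\}$ has positive $\lambda^{s-r}$-measure, and since $s-r\ge s-n\ge 1$ this section contains distinct points $x_1''\neq x_2''$. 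The points $z_i:=\psi^{-1}(x_0',x_i'')\in\setA_1$ are then distinct with $g(z_1)=g(z_2)$, i.e. $f(h(z_1))=f(h(z_2))$, which is the contradiction sought; hence $n\ge s$.

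The step I expect to be the main obstacle is the case in which $g=f\circ h$ fails to be a submersion ($r<n$): it is precisely here that real analyticity is essential, guaranteeing—via the fact that a non-trivial minor of $Dg$ vanishes only on a null set—that the maximal-rank locus $W$ is co-null, so that the constant-rank theorem is available near almost every point of $\setA_1$. The remaining ingredients, namely invariance of positive Lebesgue measure under $C^1$-diffeomorphisms and the one-dimensional Fubini slicing, are routine given the appendices on real analytic mappings and measure theory.
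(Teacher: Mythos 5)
Your proof is correct, and it takes a genuinely different route from the paper's. The paper runs an induction on $s$: it picks one coordinate $\psi_{i_0}=f_{i_0}\circ h$, uses the coarea formula to find a level set $\setM_{y_0}$ intersecting a positive-measure piece of $\setA$ in a set of positive $\mathscr{H}^{s-1}$-measure, uses Sard's theorem to make it a regular level set and hence an $(s-1)$-dimensional real analytic submanifold, parametrizes it by a real analytic embedding $\zeta$ (Lemma~\ref{lem:man}), and then reduces the triple $(s,n,h,f)$ to $(s-1,n-1,h\circ\zeta,\tilde f)$, contradicting minimality. Your argument instead works directly with the composite $g=f\circ h$, locates the maximal-rank locus $W=\{Jg=\max\}$ and shows it is co-null because a non-vanishing $r\times r$ minor of $Dg$ is real analytic and so vanishes only on a Lebesgue-null set (Lemma~\ref{lem:vanishrealanalytic}); you then apply the constant rank theorem at a Lebesgue density point of the embedding set $\setA_1\cap W$, push forward through the rank chart, and slice with Fubini to obtain two distinct points of $\setA_1$ with the same image under $g$, contradicting injectivity of $f$ on $h(\setA)$ once injectivity of $h$ on $\setA_1$ (Lemma~\ref{lem:immreal}, Property~\ref{immreal2}) is factored in. Your approach buys directness (no induction), isolates the role of real analyticity in a single measure-theoretic step (co-nullity of $W$, exactly the obstruction that makes the $C^\infty$ counterexample of Theorem~\ref{prpcounter} possible), and replaces the coarea/Sard/real-analytic-submanifold machinery by the more elementary constant rank theorem plus density points plus Fubini. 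The paper's route, in exchange, produces at each stage an explicit real analytic embedding $\zeta$, the same object it needs in other parts of the development of $s$-analytic measures, so the two arguments lean on different parts of the paper's toolkit. Both are complete and sound.
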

\begin{proof}See Section \ref{proof.converse}. \end{proof}

With the help of Theorem \ref{thm.converse}, we can now prove the strong converse for $s$-analytic random vectors. 

\begin{cor}\label{cor:analytic}
For $\rvecx\in\reals^m$ $s$-analytic,  $n\geq s$ is  necessary for the existence of a measurement matrix $\matA\in\reals^{n\times m}$ and a Borel measurable mapping  $g\colon\reals^{n\times m}\times\reals^n\to\reals^m$ such that $\opP[g(\matA,\matA\rvecx)\neq \rvecx]<1$. 
\end{cor}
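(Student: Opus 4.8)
The plan is to reduce the statement to Theorem~\ref{thm.converse} via Lemma~\ref{lem:convimpliesA} and the defining property of $s$-analytic measures, arguing contrapositively. Suppose there exist a measurement matrix $\matA\in\reals^{n\times m}$ and a Borel measurable mapping $g\colon\reals^{n\times m}\times\reals^n\to\reals^m$ with $\opP[g(\matA,\matA\rvecx)\neq\rvecx]<1$, and assume toward a contradiction that $n<s$. By Lemma~\ref{lem:convimpliesA} (implication \ref{conv1} $\Rightarrow$ \ref{conv2}), there is a set $\setU\in\colB(\reals^m)$ with $\opP[\rvecx\in\setU]>0$, equivalently $\mu_\rvecx(\setU)>0$, on which the linear mapping $\vecu\mapsto\matA\vecu$ is one-to-one.

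Next I would invoke the $s$-analyticity of $\rvecx$, i.e., of $\mu_\rvecx$: since $\mu_\rvecx(\setU)>0$, Definition~\ref{dfn:measureanalytic} provides a set $\setA\in\colB(\reals^s)$ of positive Lebesgue measure and a real analytic mapping $h\colon\reals^s\to\reals^m$ with $Jh\not\equiv 0$ such that $h(\setA)\subseteq\setU$. Since $\matA$ is one-to-one on $\setU$, it is in particular one-to-one on $h(\setA)$. A linear mapping $\reals^m\to\reals^n$ is real analytic (each coordinate is a polynomial), and $s\le m$ because $\rvecx\in\reals^m$ being $s$-analytic forces $s\in\{1,\dots,m\}$. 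Hence Theorem~\ref{thm.converse}, applied with the set $\setA$, the mapping $h$, and $f$ equal to the linear encoder $\matA$, yields $n\ge s$, contradicting $n<s$. This establishes that $n\ge s$ is necessary, as claimed.

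I do not expect a genuine obstacle here: the heavy lifting has already been done in Theorem~\ref{thm.converse} (real analyticity forcing $n\ge s$) and in Lemma~\ref{lem:convimpliesA} (turning a sub-unity error probability into injectivity on a positive-measure Borel set). The only points requiring a line of care are the trivial observations that a linear map is real analytic and that $s\le m$ by definition, so that all hypotheses of Theorem~\ref{thm.converse} are met for $f=\matA$; everything else is bookkeeping.
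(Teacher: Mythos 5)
Your proposal is correct and follows essentially the same route as the paper's own proof: argue by contradiction, apply Lemma~\ref{lem:convimpliesA} to produce a positive-measure Borel set $\setU$ on which $\matA$ is one-to-one, invoke $s$-analyticity to get $\setA\in\colB(\reals^s)$ with $\lebmeasure^s(\setA)>0$ and real analytic $h$ with $Jh\not\equiv 0$ and $h(\setA)\subseteq\setU$, and then apply Theorem~\ref{thm.converse} with $f=\matA$ (trivially real analytic) to deduce $n\geq s$. The auxiliary observations you flag ($s\le m$ by definition of $s$-analyticity, linear maps real analytic) match the paper's remarks exactly.
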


\begin{proof}
Suppose, to the contrary,  that there exist a measurement matrix $\matA\in\reals^{n\times m}$ and a Borel measurable mapping $g\colon \reals^{n\times m}\times \reals^n\to\reals^m$  satisfying 
$\opP[g(\matA,\matA\rvecx)\neq\rvecx]<1$ for $n<s$. 
Then, by Lemma \ref{lem:convimpliesA},  there must exist a $\setU\in\colB(\reals^m)$ with $\opP[\rvecx\in\setU]>0$ such that $\matA$ is one-to-one on $\setU$.
As $\opP[\rvecx\in\setU]>0$, the $s$-analyticity of $\mu_\rvecx$ implies the existence of 
a set $\setA\in\colB(\reals^{s})$ of positive   Lebesgue measure along with
a real analytic mapping $h\colon\reals^s\to \reals^m$ of $s$-dimensional Jacobian $Jh\not\equiv 0$    such that $h(\setA)\subseteq\setU$. 
As $\matA$ is one-to-one on $h(\setA)$ and linear mappings are trivially real-analytic, 
 Theorem  \ref{thm.converse} implies that we  must have $n\geq s$, which contradicts  $n<s$.
\end{proof}

We next show that the $s$-rectifiable random vectors considered in 
Examples \ref{exrecana1a}--\ref{exacircle1} are all $s$-analytic with the analyticity parameter 
equal to the corresponding rectifiability parameter.  
We need the following result, which states that real analytic immersions  preserve analyticity in the following sense.   

\begin{lem}\label{lemancomp}
If $\rvecx\in\reals^m$ is $s$-analytic and $f\colon \reals^m\to\reals^k$, with $m\leq k$, is a real analytic immersion, then  
$f(\rvecx)$ is  $s$-analytic. 
\end{lem}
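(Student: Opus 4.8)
The plan is to work directly with Definition \ref{dfn:measureanalytic}, exploiting the pushforward structure $\mu_{f(\rvecx)}(\setV)=\mu_\rvecx(f^{-1}(\setV))$ for $\setV\in\colB(\reals^k)$. Since $f$ is real analytic, it is continuous, so $f^{-1}(\setV)\in\colB(\reals^m)$ and this identity makes sense. Also $s\le m\le k$, so it is legitimate to ask whether $\mu_{f(\rvecx)}$ is $s$-analytic as a measure on $\reals^k$. Thus the goal reduces to: given $\setV\in\colB(\reals^k)$ with $\mu_{f(\rvecx)}(\setV)>0$, produce a set $\setA\in\colB(\reals^s)$ of positive Lebesgue measure and a real analytic $\tilde h\colon\reals^s\to\reals^k$ with $J\tilde h\not\equiv 0$ such that $\tilde h(\setA)\subseteq\setV$.

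First I would fix such a $\setV$. Then $\mu_\rvecx(f^{-1}(\setV))=\mu_{f(\rvecx)}(\setV)>0$, so $s$-analyticity of $\rvecx$, applied to the Borel set $f^{-1}(\setV)$, yields a set $\setA\in\colB(\reals^s)$ of positive Lebesgue measure and a real analytic $h\colon\reals^s\to\reals^m$ with $Jh\not\equiv 0$ such that $h(\setA)\subseteq f^{-1}(\setV)$. I would then take $\tilde h=f\circ h$. This is real analytic because a composition of real analytic mappings is real analytic (Appendix \ref{app:rea}), and $\tilde h(\setA)=f(h(\setA))\subseteq f(f^{-1}(\setV))\subseteq\setV$, using the very same $\setA$; so only the nonvanishing of the Jacobian remains to be checked.

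For $J\tilde h\not\equiv 0$ I would use the chain rule $D\tilde h(\vecz)=Df(h(\vecz))\,Dh(\vecz)$. Choose $\vecz_0$ with $Jh(\vecz_0)>0$, i.e.\ $Dh(\vecz_0)\in\reals^{m\times s}$ has rank $s$; such a $\vecz_0$ exists since $Jh\not\equiv 0$. As $f$ is an immersion and $m\le k$, $Df(h(\vecz_0))\in\reals^{k\times m}$ has rank $m$ and is therefore injective as a linear map; composing it with the injective $Dh(\vecz_0)$ gives an injective map $\reals^s\to\reals^k$, so $D\tilde h(\vecz_0)$ has rank $s$ and $J\tilde h(\vecz_0)>0$. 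Hence $J\tilde h\not\equiv 0$, and Definition \ref{dfn:measureanalytic} is verified for $\mu_{f(\rvecx)}$, i.e.\ $f(\rvecx)$ is $s$-analytic.

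I do not expect a genuine obstacle here; this is essentially the analytic analogue of Lemma \ref{lem:dic}, with the immersion hypothesis taking the place of local Lipschitzness and serving exactly to keep the Jacobian from collapsing. The only points requiring a little care are (i) recording that continuity of $f$ makes $f^{-1}(\setV)$ Borel so that $s$-analyticity of $\rvecx$ can be invoked, and (ii) spelling out that ``immersion'' means $Jf>0$ everywhere, equivalently $Df$ of full column rank, so that the rank argument for $D\tilde h(\vecz_0)$ actually goes through.
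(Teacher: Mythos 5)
Your argument is correct and follows essentially the same route as the paper's proof: push the Borel set back through $f$, invoke $s$-analyticity of $\rvecx$, compose with $f$, and check that the Jacobian does not vanish at a point where $Jh>0$ via the chain rule. The only cosmetic difference is that you argue $\rank(D\tilde h(\vecz_0))=s$ directly from injectivity of the composed linear maps, whereas the paper cites Sylvester's rank inequality; both yield the same conclusion.
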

\begin{proof}
See Appendix \ref{proof:lemancomp}.
\end{proof}

\begin{exa}\label{exrecana1}
We show that $\rvecx$ in Example \ref{exrecana1a} is $s$-analytic. 
To this end, we consider an arbitrary but fixed  $\setU\in\colB(\reals^{m})$ with  $\mu_\rvecx(\setU)>0$ and establish the existence of  a set $\setA\in\colB(\reals^{s})$ of positive   Lebesgue measure
and a real analytic mapping $h\colon\reals^{s}\to \reals^{m}$ of $s$-dimensional Jacobian $Jh\not\equiv 0$   such that $h(\setA)\subseteq\setU$. Since    
\begin{align}
0&<\mu_\rvecx(\setU)\\
&=\opP[(\vece_{\rveckc_1} \mydots \vece_{\rveckc_s})\rvecz\in\setU]\\
&\leq\sum_{1\leq i_1<\dots<i_s\leq m}\opP[(\vece_{i_1} \mydots \vece_{i_s}) \rvecz \in\setU],  
\end{align}
 there must exist a set of indices $\{i_1,\dots,i_s\}\subseteq\{1,\dots,m\}$ with $i_1<\dots < i_s$ such that
\begin{align}\label{eq:geqexa1a}
\opP[u(\rvecz) \in\setU]>0,  
\end{align}
where  $u\colon \reals^s\to\reals^m, \vecz\mapsto (\vece_{i_1} \mydots \vece_{i_s})\vecz$.  
As   $\mu_{\rvecz}\ll\lebmeasure^{s}$ by assumption, $\rvecz$ is  
$s$-analytic  thanks to Lemma \ref{auxlemma0} below. 
The mapping $u$ is 
linear and, therefore, trivially real analytic. 
Furthermore,  
\begin{align}
Ju(\vecz)
&= 
\sqrt{\det\Big(\tp{(\vece_{i_1} \mydots \vece_{i_s})}(\vece_{i_1} \mydots \vece_{i_s})\Big)}\label{eq:useDk}\\
&=
\sqrt{\det\matI_s}\\
&=1\quad \text{for all}\ \vecz\in\reals^{s}, 
\end{align}
where \eqref{eq:useDk} follows from $Du(\vecz)=(\vece_{i_1} \mydots \vece_{i_s})$ for all $\vecz\in\reals^{s}$, 
which proves that $u$ is an immersion. We can therefore employ Lemma \ref{lemancomp} and conclude that $u(\rvecz)$ is $s$-analytic.  Hence,  Definition \ref{DfnXana} together with \eqref{eq:geqexa1a}  implies that  there must exist 
a set $\setA\in\colB(\reals^{s})$ of positive  Lebesgue measure 
and a real analytic mapping $h\colon\reals^{s}\to \reals^{m}$ of $s$-dimensional Jacobian $Jh\not\equiv 0$   such that $h(\setA)\subseteq\setU$.
\end{exa}

\begin{lem}\label{auxlemma0}
If  $\rvecx\in\reals^m$ with $\mu_{\rvecx}\ll\lebmeasure^{m}$,  
then $\rvecx$ is $m$-analytic.  
\end{lem}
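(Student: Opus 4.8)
The plan is to verify Definition \ref{dfn:measureanalytic} for $\mu=\mu_\rvecx$ with $s=m$ by exhibiting, for every admissible $\setU$, the trivial witness: the identity map of $\reals^m$ together with $\setA=\setU$ itself. The starting observation is that absolute continuity $\mu_\rvecx\ll\lebmeasure^m$ is, by contraposition of its definition, precisely the statement that $\lebmeasure^m(\setE)=0$ implies $\mu_\rvecx(\setE)=0$; hence every $\setU\in\colB(\reals^m)$ with $\mu_\rvecx(\setU)>0$ automatically satisfies $\lebmeasure^m(\setU)>0$. So I would fix an arbitrary such $\setU$.

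Next I would take $h\colon\reals^m\to\reals^m$ to be the identity map $h(\vecz)=\vecz$. Since each coordinate function of $h$ is linear, hence a polynomial, $h$ is real analytic in the sense of Definition \ref{def:real}. Its differential is $Dh(\vecz)=\matI_m$ for all $\vecz\in\reals^m$, so by \eqref{eq:Jacobiandef} the $m$-dimensional Jacobian equals $Jh(\vecz)=\sqrt{\det(\tp{\matI_m}\matI_m)}=1$ for every $\vecz$; in particular $Jh\not\equiv 0$. Setting $\setA=\setU$, which lies in $\colB(\reals^m)$ and, by the previous step, has $\lebmeasure^m(\setA)>0$, we obtain $h(\setA)=\setU\subseteq\setU$.

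Since $\setU$ was arbitrary, this shows that $\mu_\rvecx$ satisfies the requirements of Definition \ref{dfn:measureanalytic} with $s=m$, so $\mu_\rvecx$ is $m$-analytic, and therefore $\rvecx$ is $m$-analytic by Definition \ref{DfnXana}. There is essentially no obstacle in this argument; the only point warranting (minimal) attention is confirming that the identity map genuinely counts as a real analytic mapping with non-vanishing $m$-dimensional Jacobian, which is immediate because polynomial maps are real analytic and $\det\matI_m=1$.
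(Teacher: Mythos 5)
Your proof is correct and takes essentially the same approach as the paper: choose $\setA=\setU$, take $h$ to be the identity map, and observe that absolute continuity forces $\lebmeasure^m(\setU)>0$. The paper states this quite tersely; you have simply spelled out the routine verifications (real analyticity of the identity, $Jh\equiv 1$).
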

\begin{proof}
We have to show that, for each $\setU\in\colB(\reals^{m})$ with $\mu_\rvecx(\setU)>0$, we can find 
a set $\setA\in\colB(\reals^{m})$ of positive  Lebesgue measure and 
a real analytic mapping $h\colon\reals^m\to \reals^m$ of $m$-dimensional Jacobian $Jh\not\equiv 0$   such that $h(\setA)\subseteq\setU$. For given such   $\setU\in\colB(\reals^{m})$, simply take $\setA=\setU$ and 
 $h$  the identity mapping on $\reals^m$.
\end{proof}

\begin{exa}\label{exrecana}
We show that $\rvecx =\rveca\otimes\rvecb\in\reals^{kl}$ as in Example \ref{exrecana1b} is $(r+t-1)$-analytic. 
To this end, let   $\setU\in\colB(\reals^{kl})$ with  $\mu_\rvecx(\setU)>0$ be arbitrary but fixed.  
We have to  establish that there exist a set $\setA\in\colB(\reals^{r+t-1})$ of positive  Lebesgue measure
and a real analytic mapping $h\colon\reals^{r+t-1}\to \reals^{kl}$ of $(r+t-1)$-dimensional Jacobian $Jh\not\equiv 0$    such that $h(\setA)\subseteq\setU$. 
Since 
\begin{align}
0&<\mu_\rvecx(\setU)\\
&=\opP[((\vece_{\rvecpc_1} \mydots \vece_{\rvecpc_r})\rvecu)\otimes((\vece_{\rvecqc_1} \mydots \vece_{\rvecqc_t})\rvecv) \in\setU]\\
&=\opP[((\vece_{\rvecpc_1} \mydots \vece_{\rvecpc_r})\otimes(\vece_{\rvecqc_1} \mydots \vece_{\rvecqc_t}))(\rvecu\otimes\rvecv) \in\setU]\label{eq:useHornb} \\
&\leq\hspace*{-5truemm}\sum_{
\substack{
1\leq i_1<\dots<i_r\leq k\\[0.2em]
1\leq j_1<\dots<j_t\leq l}
}
\opP[((\vece_{i_1}\mydots \vece_{i_r})\otimes(\vece_{j_1}\mydots \vece_{j_t}))(\rvecu\otimes\rvecv)\in\setU], 
\end{align}
where \eqref{eq:useHornb} relies on  \cite[Lemma 4.2.10]{hojo91}, 
there must exist a set of indices $\{i_1,\dots,i_r\}\subseteq\{1,\dots,m\}$ with $i_1<\dots < i_r$ 
and a set of indices $\{j_1,\dots,j_t\}\subseteq\{1,\dots,m\}$ with $j_1<\dots < j_t$
such that
\begin{align}\label{eq:geqexa1}
\opP[v(\rvecu\otimes\rvecv)\in\setU]
&>0, 
\end{align}
where
\begin{align}
v\colon \reals^{rt}&\to\reals^{kl}\\
\vecw&\mapsto ((\vece_{i_1}\mydots \vece_{i_r})\otimes (\vece_{j_1}\mydots \vece_{j_t})) \vecw.  
\end{align}
Since $\mu_{\rvecu}\times \mu_{\rvecv}\ll\lebmeasure^{r+t}$ by assumption, 
it follows from Lemma 
\ref{auxlemma1} below that $\rvecu\otimes\rvecv$  is 
$(r+t-1)$-analytic. 
The mapping $v$ is linear and, therefore, trivially real analytic. 
Furthermore, 
\begin{align}
Jv(\vecw)
&= 
\sqrt{\det\Big(\tp{(\matE_1\otimes\matE_2)}(\matE_1\otimes\matE_2)\Big)}\label{eq:usedk2}\\
&=\sqrt{\det\Big((\tp{\matE_1}\matE_1)\otimes(\tp{\matE_2}\matE_2)\Big)}\label{eq:useHornc}\\
&=
\sqrt{\det(\matI_r\otimes\matI_t)}\\
&=1\quad \text{for all}\ \vecw\in\reals^{rt}, 
\end{align}
where \eqref{eq:usedk2} follows from $Dv(\vecw)=\matE_1\otimes\matE_2$ for all $\vecw\in\reals^{rt}$  with $\matE_1=(\vece_{i_1}\mydots\vece_{i_r})$ and $\matE_2= (\vece_{j_1}\mydots\vece_{j_t})$,   
and  \eqref{eq:useHornc} relies on  \cite[Equation (4.2.4)]{hojo91} and \cite[Lemma 4.2.10]{hojo91},
which proves that $v$ is an immersion. 
We can therefore employ Lemma \ref{lemancomp} and conclude that $v(\rvecu\otimes\rvecv)$ is $(r+t-1)$-analytic. Hence, 
Definition \ref{DfnXana} together with  \eqref{eq:geqexa1} implies that  
there must exist 
a set $\setA\in\colB(\reals^{r+t-1})$ of positive  Lebesgue measure and a 
real analytic mapping $h\colon\reals^{r+t-1}\to \reals^{kl}$ of $(r+t-1)$-dimensional Jacobian $Jh\not\equiv 0$  such that $h(\setA)\subseteq\setU$.
\end{exa}

\begin{lem}\label{auxlemma1}
If $\rveca\in\reals^{k}$ and $\rvecb\in\reals^{l}$ with $\mu_{\rveca}\times \mu_{\rvecb}\ll\lebmeasure^{k+l}$,  
then $\rveca\otimes\rvecb$ is $(k+l-1)$-analytic.  
\end{lem}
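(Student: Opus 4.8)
The plan is to verify Definition \ref{dfn:measureanalytic} directly, with $s=k+l-1$ and $m=kl$. Given $\setU\in\colB(\reals^{kl})$ with $\mu_{\rveca\otimes\rvecb}(\setU)>0$, I must exhibit a Borel set $\setA\subseteq\reals^{k+l-1}$ of positive Lebesgue measure and a real analytic map $h\colon\reals^{k+l-1}\to\reals^{kl}$ with $Jh\not\equiv\veczero$ and $h(\setA)\subseteq\setU$. The map I would use is the \emph{normalized Kronecker product} $h\colon(\vecxi,\vecb')\mapsto\tp{(1\ \ \tp{\vecxi})}\otimes\vecb'$ for $(\vecxi,\vecb')\in\reals^{k-1}\times\reals^{l}$; its entries are monomials in $(\vecxi,\vecb')$, so $h$ is polynomial and hence real analytic. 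It is engineered to reproduce $\rveca\otimes\rvecb$ through the scaling identity $\veca\otimes\vecb=(\veca/a_1)\otimes(a_1\vecb)$, valid whenever $a_1\neq0$.

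First I would eliminate the coordinate hyperplane $\{a_1=0\}$. Since $\lebmeasure^{k+l}\big(\{\veca\colon a_1=0\}\times\reals^{l}\big)=0$, the hypothesis $\mu_\rveca\times\mu_\rvecb\ll\lebmeasure^{k+l}$ gives $\mu_\rveca\big(\{a_1=0\}\big)=0$. Writing $\sigma\colon\reals^{k+l}\to\reals^{kl}$, $(\veca,\vecb)\mapsto\veca\otimes\vecb$ (a polynomial, hence continuous, map), we have $\mu_{\rveca\otimes\rvecb}(\setU)=(\mu_\rveca\times\mu_\rvecb)\big(\sigma^{-1}(\setU)\big)$, and deleting the $\{a_1=0\}$ part costs no mass; hence the Borel set $\setV=\sigma^{-1}(\setU)\cap\{a_1\neq0\}$ satisfies $(\mu_\rveca\times\mu_\rvecb)(\setV)>0$, and therefore $\lebmeasure^{k+l}(\setV)>0$ by absolute continuity once more.

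Next I would perform the dimension reduction by slicing $\setV$ along $a_1$. By Fubini's theorem, $\int_{\reals}\lebmeasure^{k+l-1}(\setV_t)\,\mathrm dt=\lebmeasure^{k+l}(\setV)>0$, where the Borel set $\setV_t=\{(a_2,\dots,a_k,b_1,\dots,b_l)\colon(t,a_2,\dots,a_k,b_1,\dots,b_l)\in\setV\}$ is the $t$-slice of $\setV$; since $\setV\subseteq\{a_1\neq0\}$, the slice at $t=0$ is empty, so some $t\neq0$ has $\lebmeasure^{k+l-1}(\setV_t)>0$. Let $L_t\colon\reals^{k+l-1}\to\reals^{k+l-1}$ be the linear isomorphism scaling the first $k-1$ coordinates by $1/t$ and the last $l$ coordinates by $t$; a short computation shows $h\big(L_t(\vecv)\big)=\sigma(t,\vecv)$ for every $\vecv\in\reals^{k+l-1}$ (the rescaling $L_t$ being exactly what $h$ divides out). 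Setting $\setA=L_t(\setV_t)$, this is a Borel set with $\lebmeasure^{k+l-1}(\setA)=\abs{\det L_t}\,\lebmeasure^{k+l-1}(\setV_t)>0$, and for $\vecw=L_t(\vecv)\in\setA$ with $\vecv\in\setV_t$ we obtain $h(\vecw)=\sigma(t,\vecv)\in\sigma(\setV)\subseteq\setU$; thus $h(\setA)\subseteq\setU$.

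Finally I would check $Jh\not\equiv\veczero$ by evaluating $Dh$ at the single point $\vecxi=\veczero$, $\vecb'=\vece_1$: there the columns $\partial h/\partial\xi_i=\vece_{i+1}\otimes\vece_1$, $i=1,\dots,k-1$, and $\partial h/\partial b_j'=\vece_1\otimes\vece_j$, $j=1,\dots,l$, are pairwise distinct standard basis vectors of $\reals^{kl}$ and hence linearly independent, so $Dh$ has full column rank $k+l-1$ at that point. As $kl\geq k+l-1$, the case $l\geq k$ of \eqref{eq:Jacobiandef} applies and $Jh$ is strictly positive there, whence $Jh\not\equiv\veczero$. This establishes that $\mu_{\rveca\otimes\rvecb}$ is $(k+l-1)$-analytic, and hence that $\rveca\otimes\rvecb$ is $(k+l-1)$-analytic by Definition \ref{DfnXana}. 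The one genuinely delicate step is the dimension reduction in the third paragraph: one cannot push $\mu_\rveca\times\mu_\rvecb$ forward through $\sigma$ directly, because $\sigma$ has identically vanishing $(k+l)$-dimensional Jacobian and so is not an immersion, which makes Lemma \ref{lemancomp} inapplicable; it is the combination of the scaling redundancy $\veca\otimes\vecb=(\veca/a_1)\otimes(a_1\vecb)$ with $\mu_\rveca(\{a_1=0\})=0$ that allows the passage to $k+l-1$ parameters.
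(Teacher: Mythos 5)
Your proof is correct and follows the same strategy as the paper's: pull $\setU$ back through the Kronecker map $\sigma$, use Fubini to find a positive-measure $(k+l-1)$-dimensional slice, define $h$ by freezing the sliced coordinate, and verify $Jh\not\equiv 0$ at a single well-chosen point. The only (inessential) difference is bookkeeping: the paper slices along the last coordinate of $\vecb$ and lets the chosen value $z_0$ appear as a parameter inside $h$, whereas you slice along $a_1$, normalize it to $1$, and absorb the scaling into the auxiliary linear change of variables $L_t$ via the bilinearity identity $\veca\otimes\vecb=(\veca/a_1)\otimes(a_1\vecb)$ — this makes your $h$ independent of $\setU$ but requires the extra (correct) observation that $|\det L_t|=|t|^{\,l-k+1}>0$.
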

\begin{proof}See Appendix \ref{auxlemma1proof}.\end{proof}

\begin{exa}\label{exa:circle2}
Let $\rvecx$, $\rveczc$, and $h$ be  as in Example \ref{exacircle1}. We first note that $\sin$ and $\cos$ are real analytic. In fact, 
it follows from the ratio test \cite[Theorem 3.34]{ru89} that the power series 
\begin{align}
\sin(z)&=\sum_{n=0}^\infty\frac{(-1)^nz^{2n+1}}{(2n+1)!}\\
\cos(z)&=\sum_{n=0}^\infty\frac{(-1)^nz^{2n}}{(2n)!}
\end{align}
are absolutely convergent for all $z\in\reals$. Thus, $\sin$ and $\cos$ can both be represented by  convergent power series at $0$ with infinite convergence radius. Lemma \ref{lem:analyticrad}  therefore implies that 
 $\sin$ and $\cos$ are  both real analytic. As each  component of $h$ is real analytic, so is $h$. 
Furthermore, $Jh(z)=\sqrt{\sin^2(z)+\cos^2(z)}=1$ for all $z\in\reals$, which implies that $h$ is a real analytic immersion. Since $\rveczc$ is $1$-analytic by 
Lemma \ref{auxlemma0}  and $\rvecx=h(\rveczc)$, Lemma \ref{lemancomp} implies that $\rvecx$ is  $1$-analytic.
\end{exa}

\section{Proof of Theorem \ref{th1} (Achievability)}\label{th1.proof} 
Suppose that $K(\rvecx)< n$.   
It then follows from \eqref{def:dc} that  $\rvecx$  must admit a support set $\setU\subseteq \reals^m$  with $\underline{\dim}_\mathrm{MB}(\setU)< n$. 
We first construct a new support set $\setV\subseteq\reals^m$ for $\rvecx$ as a countable union of compact sets satisfying  $\underline{\dim}_\mathrm{MB}(\setV)< n$.
Based on this  support set $\setV$ we  
then  prove the existence of a measurable decoder $g$ satisfying $\opP[g(\matA,\matA\rvecx)\neq\rvecx]=0$.   
The construction of $\setV$ starts by noting that,  thanks to Property \ref{Mequiv} in Lemma \ref{lem:Mpropmindim}, $\underline{\dim}_\mathrm{MB}(\setU)< n$ 
implies the existence of a covering $\{\setU_i\}_{i\in\naturals}$ of $\setU$ by nonempty  compact sets $\setU_i$ satisfying
\begin{align}\label{eq:Uisup}
 \sup_{i\in\naturals} \underline{\dim}_\mathrm{B}(\setU_i)<n.
\end{align}
For this covering, we set 
\begin{align}
\setV&=\bigcup_{i\in\naturals}\setU_i, \label{eq:defV} 
\end{align} 
and note that  
\begin{align}
\underline{\dim}_\mathrm{MB}(\setV)
&=\inf\mleft\{\sup_{i\in\naturals} \underline{\dim}_\mathrm{B}(\setV_i) : \setV\subseteq \bigcup_{i\in\naturals}\setV_i\mright\}\label{eq:Vn}\\
&\leq \sup_{i\in\naturals} \underline{\dim}_\mathrm{B}(\setU_i)\label{eq:Vn2}\\
&<n, \label{eq:NNN}
\end{align}
where \eqref{eq:Vn} follows from Property \ref{Mequiv} in Lemma \ref{lem:Mpropmindim} with the infimum taken over all 
coverings $\{\setV_i\}_{i\in\naturals}$ of $\setV$ by  nonempty  compact sets $\setV_i$, 
\eqref{eq:Vn2} is by \eqref{eq:defV}, 
and in \eqref{eq:NNN} we used \eqref{eq:Uisup}. 
Since $\dim_\mathrm{MB}(\reals^m)=m$ by Property \ref{Mpropmanifold} of Lemma \ref{lem:Mpropmindim}, and $\underline{\dim}_\mathrm{MB}(\setV)<n\leq m$, we must have $\setV\subsetneq\reals^m$.  
Now, $\setV$ is a support set  because it contains the support set $\setU$ as a subset. 
Furthermore, since $\opP[\rvecx\in\setV]=1$ and $\setV\subsetneq\reals^m$, there must exist an $\vece\in\reals^m\mysetminus\setV$ such that $\opP[\rvecx=\vece]=0$. 
This $\vece$ will be used to declare a decoding error. 
We will show in Section \ref{sec.existence} 
that there exists a Borel measurable mapping  
$g\colon \reals^{n\times m}\times \reals^n\to\reals^m$ satisfying  
\begin{align}\label{eq:meassec}
g(\matA,\vecy)
\begin{cases}
\in\{\vecv\in\setV:\matA\vecv=\vecy\}&\text{if}\ \exists\vecv\in\setV:\matA\vecv=\vecy\\
=\vece&\text{else}. 
\end{cases}
\end{align}
The mapping $g$ is guaranteed to deliver a $\vecv\in\setV$ that is consistent with $(\matA,\vecy)$ (in the sense of $\matA\vecv=\vecy$) if at least one such consistent  
$\vecv\in\setV$ exists, otherwise an error is declared by delivering the ``error symbol'' $\vece$. 
Next, for each  $\matA\in\reals^{n\times m}$, let  $p_\text{e}(\matA)$ denote the probability of error defined as 
\begin{align}
p_\text{e}(\matA)=\opP[g(\matA,\matA\rvecx)\neq\rvecx].     
\end{align}
It remains to show that $p_\text{e}(\matA)=0$ for $\lebmeasure^{n\times m}$-a.a. $\matA$.  
Now, 
\begin{align}
&p_\text{e}(\matA)\\
&=\opP\mleft[g(\matA,\matA\rvecx)\neq\rvecx,\rvecx\in\setV\mright]+\opP\mleft[g(\matA,\matA\rvecx)\neq\rvecx,\rvecx\notin\setV\mright]\label{eq:errorbound1a0}
\\
&=\opP\mleft[g(\matA,\matA\rvecx)\neq\rvecx,\rvecx\in\setV\mright]\label{eq:errorbound1a}\\
&=\opP\mleft[(\matA,\rvecx)\in\setA\mright]\quad \text{for all}\ \matA\in\reals^{n\times m}, \label{eq:errorbound1b}
\end{align}
where \eqref{eq:errorbound1a} follows from $\opP[\rvecx\in\setV]=1$ 
and in \eqref{eq:errorbound1b}  we set 
\begin{align}\label{eq:setAdef}
\setA
&=\{(\matA,\vecx) \in \reals^{n\times m}\times \setV : g(\matA,\matA\vecx)\neq\vecx \}.   
\end{align} 
Since $\setA\in\colB(\reals^{n\times m})\otimes\colB(\reals^{m})$ by  Lemma \ref{lem.measfixed} below (with $\setX=\reals^m$, $\setY=\reals^{n\times m}$, $f(\vecx,\matA)=g(\matA,\matA\vecx)$, and $\setV\in\colB(\reals^m)$ as a countable union of compact sets),  we can apply Corollary \ref{cor.fubini} (with the $\sigma$-finite measure spaces 
$(\reals^{m}, \colB(\reals^{m}),\mu_\rvecx)$ and  $(\reals^{n\times m}, \colB(\reals^{n\times m}),\lebmeasure^{n\times m})$) to 
$\setA$ and get  
\begin{align}
&\int_{\reals^{n\times m}} p_\text{e}(\matA)\,\mathrm d\lebmeasure^{n\times m}(\matA)\label{eq:stepfubini0}\\
&=\int_{\reals^m} \lebmeasure^{n\times m}(\{\matA : (\matA,\vecx)\in\setA\})\,\mathrm d\mu_\rvecx(\vecx)\label{eq:stepfubini}.
\end{align}
Next, note that for $\vecy=\matA\vecx$ with $\vecx\in\setV$, the vector $g(\matA,\vecy)$ can  differ from $\vecx$ only if there is a $\vecv\in\setV\,\mysetminus\{\vecx\}$ that is consistent with $\vecy$, i.e., if $\vecy=\matA\vecv$ for some $\vecv\in\setV\mysetminus\{\vecx\}$. Thus,  
\begin{align}\label{eq:rewriteA}
\setA\subseteq \{(\matA,\vecx)\in\reals^{n\times m}\times\setV: \ker(\matA)\cap\setV_\vecx\neq\{\matzero\}\}, 
\end{align}
where, for each $\vecx\in\setV$, we set  
\begin{align}
\setV_\vecx=\{\vecv-\vecx : \vecv\in\setV\}.  
\end{align}
As \eqref{eq:rewriteA} yields 
\begin{align}
&\{\matA \in\reals^{n\times m}:(\matA,\vecx)\in\setA\}\\
&\subseteq \big\{\matA \in\reals^{n\times m}:\ker(\matA)\cap\setV_\vecx\neq\{\matzero\}\big\}, 
\end{align}
 monotonicity of $\lebmeasure^{n\times m}$ implies 
\begin{align}\label{eq:finalth1}
&\lebmeasure^{n\times m}(\{\matA \in\reals^{n\times m}:(\matA,\vecx)\in\setA\})\\
&\leq \lebmeasure^{n\times m}\mleft(\big\{\matA \in\reals^{n\times m}:\ker(\matA)\cap\setV_\vecx\neq\{\matzero\}\big\}\mright)\label{eq:finalth1a} 
\end{align}
for all  $\vecx\in\setV.$ 
The null-space property Proposition \ref{prp:ns},  with $\setU=\setV_\vecx$ and $\underline{\dim}_\mathrm{MB}(\setV_\vecx)=\underline{\dim}_\mathrm{MB}(\setV)< n$ ((lower) modified Minkowski dimension is invariant under translation, as seen by translating covering balls accordingly) now implies that  \eqref{eq:finalth1a} equals zero for all $\vecx\in\setV$. Therefore,   \eqref{eq:finalth1} must  equal zero as well for all $\vecx\in\setV$. 
We conclude that \eqref{eq:stepfubini} must equal zero as the integrand is identically zero (recall that $\setV$ is a support set of $\rvecx$), which, by \eqref{eq:stepfubini0}-\eqref{eq:stepfubini} and Lemma \ref{lem.fzm}, implies that we must have $p_\text{e}(\matA)=0$ for  $\lebmeasure^{n\times m}$-a.a. $\matA$, thereby completing the proof. 
\qed

\subsection{Existence of  Borel Measurable $g$}\label{sec.existence}

Recall that i) $\setV=\bigcup_{i\in\naturals}\setU_i\subsetneq\reals^m$, where  $\setU_i\subseteq\reals^m$ is nonempty and compact for all $i\in\naturals$ and ii) the error symbol  $\vece\in\reals^m\mysetminus\setV$.  
We have to show  that there exists a Borel measurable mapping $g\colon \reals^{n\times m}\times \reals^n\to\reals^m$ such  that 
\begin{align}\label{eq:gprop}
g(\matA,\vecy)
\begin{cases}
\in\{\vecv\in\setV:\matA\vecv=\vecy\}&\text{if}\ \exists\vecv\in\setV:\matA\vecv=\vecy\\
=\vece&\text{else}. 
\end{cases}
\end{align}
To this end,  first consider the mapping 
\begin{align}
f\colon\reals^{n\times m}\times\reals^n\times\reals^m&\to\reals\label{eq:f1a}\\
(\matA,\vecy,\vecv)&\mapsto \|\vecy-\matA\vecv\|_2. \label{eq:f1b}
\end{align}
Since $f$ is continuous, 
Lemma \ref{lem:normal}  implies that $f$ is a normal integrand (see Definition \ref{dfn:normal}) with respect to $\colB(\reals^{n\times m}\times\reals^n)$.
For each $i\in\naturals$, let 
\begin{align}
\setT_i
&=\{(\matA,\vecy)\in\reals^{n\times m}\times\reals^n:\exists\vecu\in\setU_i\ \text{with}\  f(\matA,\vecy,\vecu)\leq 0\}\label{eq:Taui}\\
&=\{(\matA,\vecy)\in\reals^{n\times m}\times\reals^n:\exists\vecu\in\setU_i\ \text{with}\  \matA\vecu=\vecy\}. 
\end{align} 
It now follows from Properties \ref{lem:normal3ii} and \ref{lem:normal3iii} of Lemma \ref{lem:normal3} (with 
$\setT=\reals^{n\times m}\times\reals^n$, $\alpha=0$,  $\setK=\setU_i$, and  $f$ as in \eqref{eq:f1a}--\eqref{eq:f1b}, which is a normal integrand with respect to $\colB(\reals^{n\times m}\times\reals^n)$)  that i)  $\setT_i\in\colB(\reals^{n\times m}\times\reals^n)$ for all $i\in\naturals$ and ii) for every $i\in\naturals$, there 
exists a Borel measurable mapping
\begin{align}
p_{i}\colon\setT_i&\to \reals^m\\
(\matA,\vecy)&\mapsto p_{i}(\matA,\vecy)\in\{\vecu\in\setU_i: \matA\vecu=\vecy\}. 
\end{align}
For each $i\in\naturals$, the mapping $p_i$ can be extended to  a mapping $g_i\colon\reals^{n\times m}\times\reals^n\to \reals^m$ by setting 
\begin{align}
g_i|_{\setT_i}&=p_i\label{eq:gi1}\\
g_i|_{(\reals^{n\times m}\times\reals^n)\setminus\setT_i}&=\vece, \label{eq:gi2}
\end{align}
which is Borel measurable thanks to 
Lemma \ref{lem:fa}  as $p_{i}$ is Borel measurable and $\setT_i\in\colB(\reals^{n\times m}\times\reals^n)$.   
Based on this sequence $\{g_i\}_{i\in\naturals}$ of  Borel measurable mappings $g_i$, we now construct a Borel measurable mapping satisfying  \eqref{eq:gprop}. 
The idea underlying  this construction is as  follows. 
For a given pair $(\matA,\vecy)$, we first use $g_1$ to try to find a consistent (in  the sense of $\vecy=\matA\vecu$) $\vecu\in\setU_1$.  
If $g_1$ delivers the error symbol $\vece$, we use  $g_2$  to try to find a  consistent $\vecu\in\setU_2$. 
This procedure is continued until a $g_i$ delivers a consistent $\vecu\in\setU_i$. 
If no $g_i$ yields a consistent $\vecu\in\setU_i$, we deliver the error symbol $\vece$ as the final decoder output. 
The formal construction is  as follows. We set  $G_1=g_1$ and, 
for every $i\in\naturals\mysetminus\{1\}$, we define the mapping 
$G_i\colon\reals^{n\times m}\times\reals^n\to \reals^m$ iteratively by setting  
\begin{align}
G_{i}(\matA,\vecy)=
\begin{cases}
G_{i-1}(\matA,\vecy)&\text{if}\ G_{i-1}(\matA,\vecy)\neq\vece\\
g_{i}(\matA,\vecy)&\text{else}.
\end{cases}
\end{align}
Then, $G_1$ ($=g_1$) is Borel measurable, and, for each $i\in\naturals\mysetminus\{1\}$,  the Borel-measurability of $G_{i}$ follows from the Borel-measurability of  $G_{i-1}$ and  $g_{i}$ thanks to 
Lemma \ref{lem:combine} below. Note that by construction 
\begin{align}
G_{i}(\matA,\vecy)
\in \Big\{\vecv\in\bigcup_{j=1}^i\setU_j: \matA\vecv=\vecy\Big\}
\end{align}
if there exists a  $\vecv\in\bigcup_{j=1}^i\setU_j$ such that $ \matA\vecv=\vecy$ and $G_{i}(\matA,\vecy)=\vece$ else. 
Finally, we obtain  $g\colon\reals^{n\times m}\times\reals^n\to \reals^m$ according to 
\begin{align}
g(\matA,\vecy)=\lim_{i\to\infty} G_i(\matA,\vecy), 
\end{align}
which satisfies  \eqref{eq:gprop}  by construction. 
As the pointwise limit of a sequence of Borel measurable mappings, $g$ 
is Borel measurable thanks to Corollary \ref{lem.meascart2}.\qed

\begin{lem}\label{lem.measfixed}
Let $\setX$ and $\setY$  be Euclidean spaces,  consider a Borel measurable mapping  
\begin{align}
f\colon\setX\times\setY \to \setX,   
\end{align}
and let  $\setV\in\colB(\setX)$. Then, 
\begin{align}
&\setA =\{(x,y)\in\setV\times\setY: f(x,y)\neq x\}\\
&\phantom{\setA}\in\colB(\setX\times\setY)=\colB(\setX)\otimes\colB(\setY).
\end{align}
\end{lem}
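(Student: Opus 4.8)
The plan is to write $\setA$ as the intersection of a Borel measurable rectangle with the preimage of an open set under a Borel measurable map, and then to appeal to the standard identification $\colB(\setX\times\setY)=\colB(\setX)\otimes\colB(\setY)$, which holds because Euclidean spaces are second countable and is in any case part of the asserted statement.

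First I would introduce the auxiliary map $\Phi\colon\setX\times\setY\to\setX\times\setX$ defined by $\Phi(x,y)=(f(x,y),x)$. Its first coordinate map is $f$, which is Borel measurable by hypothesis, and its second coordinate map is the projection $(x,y)\mapsto x$, which is continuous and hence Borel measurable; since a map into a product of Euclidean spaces is Borel measurable precisely when each of its coordinate maps is, $\Phi$ is Borel measurable. Next I would observe that the diagonal $\{(u,u):u\in\setX\}$ is closed in $\setX\times\setX$ (as $\setX$ is metric, hence Hausdorff), so that $\setD:=\{(u,v)\in\setX\times\setX:u\neq v\}$ is open and therefore Borel. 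Consequently
\begin{align}
\Phi^{-1}(\setD)=\{(x,y)\in\setX\times\setY:f(x,y)\neq x\}\in\colB(\setX\times\setY).
\end{align}

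Finally, since $\setV\in\colB(\setX)$ and $\setY\in\colB(\setY)$, the rectangle $\setV\times\setY$ lies in $\colB(\setX)\otimes\colB(\setY)=\colB(\setX\times\setY)$, and $\setA=\Phi^{-1}(\setD)\cap(\setV\times\setY)$ is then the intersection of two Borel sets, hence Borel. I do not expect any genuine obstacle here: the argument is entirely routine, and the only two points that warrant (standard) care are the equality of the product Borel $\sigma$-algebra with the Borel $\sigma$-algebra of the product space — guaranteed by second countability of Euclidean spaces — and the elementary fact that a coordinatewise Borel measurable map into $\setX\times\setX$ is jointly Borel measurable.
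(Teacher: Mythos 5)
Your proof is correct and essentially identical to the paper's: both introduce an auxiliary map into $\setX\times\setX$ pairing $x$ with $f(x,y)$, invoke coordinatewise Borel measurability to conclude joint measurability, show the (complement of the) diagonal is Borel, pull it back, and intersect with the rectangle $\setV\times\setY$ (the paper folds this intersection into the target set before pulling back, you do it afterward). The only other cosmetic difference is that you argue the diagonal is closed in the Hausdorff space $\setX\times\setX$, while the paper exhibits it as the preimage of $\{0\}$ under $(u,v)\mapsto u-v$.
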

\begin{proof}
We first note that $\colB(\setX\times\setX)=\colB(\setX)\otimes\colB(\setX)$ and $\colB(\setX\times\setY)=\colB(\setX)\otimes\colB(\setY)$, both thanks to  Lemma \ref{lem:prodborel}. 
Therefore, $\setV\times\setX\in \colB(\setX\times\setX)$. Now,  consider the diagonal 
$\setD=\{(x,x):x\in\setX\}$ and note that $\setD$ as the inverse image of $\{0\}$ under the Borel measurable mapping 
$g\colon\setX\times\setX\to\setX$, $(u,v)\mapsto u\,-\,v$ is in $\colB(\setX\times\setX)$.
Let  $\setC=(\setV\times\setX)\cap((\setX\times\setX)\mysetminus\setD)$. Since $\setD\in \colB(\setX\times\setX)$, it follows that $\setC\in \colB(\setX\times\setX)$.  Define the  mapping  
\begin{align}
F\colon\setX\times\setY&\to \setX\times\setX\\
(x, y) &\mapsto (x,f(x,y)), 
\end{align}
and note that it is Borel measurable thanks to Lemma \ref{lem.meascart} 
(with $f_1\colon \setX\times\setY\to \setX$, $(x,y)\mapsto x$  and $f_2\colon \setX\times\setY\to \setX$, $(x,y)\mapsto f(x,y)$). 
Finally, $\setA\in\colB(\setX\times\setY)$ as $\setA=F^{-1}(\setC)$. 
\end{proof}

\begin{lem}\label{lem:combine}
Let $\setX$ and $\setY$ be topological spaces and $y_0\in\setY$ and suppose that  $f,g\colon\setX\to\setY$ are both Borel measurable.  
Then,   $h\colon\setX\to\setY$,   
\begin{align}
h(x)=
\begin{cases}
f(x)&\text{if}\ f(x)\neq y_0\\
g(x)&\text{else}
\end{cases}
\end{align}
is Borel measurable.
\end{lem}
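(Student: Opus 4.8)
The plan is to verify the defining property of Borel measurability directly: for every open set $V\subseteq\setY$ I would show that $h^{-1}(V)\in\colB(\setX)$. Since the open sets generate $\colB(\setY)$, this suffices.

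The first step is to isolate the ``switching set'' $\setA=f^{-1}(\{y_0\})=\{x\in\setX:f(x)=y_0\}$. On $\setA$ the map $h$ coincides with $g$, and on its complement $\setA^c$ it coincides with $f$. Partitioning $\setX$ into these two pieces therefore gives, for any open $V\subseteq\setY$,
\begin{align}
h^{-1}(V)=\bigl(f^{-1}(V)\cap\setA^c\bigr)\cup\bigl(g^{-1}(V)\cap\setA\bigr).
\end{align}
Here $f^{-1}(V)$ and $g^{-1}(V)$ lie in $\colB(\setX)$ because $f$ and $g$ are Borel measurable and $V$ is open. It remains to argue that $\setA\in\colB(\setX)$: the singleton $\{y_0\}$ is a Borel (indeed closed) subset of $\setY$ whenever $\setY$ is a $T_1$ space — which covers every instance in which this lemma is invoked, the target space there being Euclidean — and a Borel measurable mapping pulls back Borel sets to Borel sets, since $\{B:f^{-1}(B)\in\colB(\setX)\}$ is a $\sigma$-algebra containing all open sets. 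Hence $\setA=f^{-1}(\{y_0\})\in\colB(\setX)$, and the displayed identity then shows $h^{-1}(V)\in\colB(\setX)$, completing the argument.

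There is no genuine obstacle in this proof; the only point deserving a moment's attention is the measurability of the switching set $\setA$, i.e. the fact that $\{y_0\}$ is a Borel subset of $\setY$.
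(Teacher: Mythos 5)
Your proof takes essentially the same route as the paper's: both partition $\setX$ by the ``switching set'' $f^{-1}(\{y_0\})$ and use that $f^{-1}(V\setminus\{y_0\})$ and $f^{-1}(\{y_0\})\cap g^{-1}(V)$ are Borel. One point in your favor: the paper simply asserts $\{y_0\}\in\colB(\setY)$ without comment, whereas you correctly flag that this is not automatic for an arbitrary topological space (e.g.\ it fails for the indiscrete topology) and requires $\setY$ to be $T_1$ or at least that singletons be Borel — a hypothesis the lemma as stated does not impose, though it holds in every instance the paper actually uses ($\setY$ Euclidean). The restriction to open $V$ rather than all Borel sets is a harmless cosmetic difference, since the preimage sets form a $\sigma$-algebra.
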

\begin{proof}
We have to show that $h^{-1}(\setU)\in\colB(\setX)$ for all $\setU\in\colB(\setY)$. To this end, consider an arbitrary but fixed  
 $\setU\in\colB(\setY)$. Now, $\{y_0\}\in\colB(\setY)$ implies  $\setU\mysetminus \{y_0\}\in \colB(\setY)$. We  write $h^{-1}(\setU)=\setA\cup\setB$ with 
\begin{align}
\setA&=\{x\in\setX:h(x)\in\setU, f(x)\neq y_0\}\\
\setB&=\{x\in\setX:h(x)\in\setU, f(x)= y_0\}
\end{align}
and show that $\setA$ and $\setB$ are both in $\colB(\setX)$, which in turn implies $h^{-1}(\setU)\in\colB(\setX)$.  
Since 
\begin{align}
\setA
&=\{x\in\setX:f(x)\in\setU, f(x)\neq y_0\}\\
&=f^{-1}(\setU\mysetminus\{y_0\}), 
\end{align}
$\setU\mysetminus \{y_0\}\in \colB(\setY)$, and $f$ is Borel measurable by assumption, it follows that  $\setA\in \colB(\setX)$. 
 Finally, as 
\begin{align}
\setB
&=\{x\in\setX:g(x)\in\setU, f(x)= y_0\}\\
&=f^{-1}(\{y_0\})\cap g^{-1}(\setU), 
\end{align}
$\{y_0\}\in\colB(\setY)$, $\setU\in\colB(\setY)$, and $f$ and $g$ are both  Borel measurable by assumption,  it follows that $\setB\in\colB(\setX)$. 
Thus, $h^{-1}(\setU)=\setA\cup\setB\in\colB(\setX)$. Since $\setU$ was arbitrary, we conclude that  $h$ is Borel measurable.
\end{proof}

\section{Proof of Theorem  \ref{prpcounter}}\label{prpcounterproof}

\emph{Construction of $\setA$}.  
Consider the sequence  $\{a_k\}_{k\in\naturals}$, where $a_k=1/2+1/2^k$, and note that 
\begin{align}
\lim_{k\to\infty} a_k &= \tfrac{1}{2}\label{eq:conseq}\\
 a_k&> a_{k+1}\quad \text{for all}\ k\in\naturals. \label{eq:monseq}
\end{align}
Let $\setQ_1=[0,1]^2$ be the unit square of side length one. We define 
\begin{align}
\setQ_2=\setQ_{2,1}\cup\setQ_{2,2}\cup\setQ_{2,3}\cup\setQ_{2,4},
\end{align}
where every square $\setQ_{2,i}\subseteq\setQ_1$ has side length $a_2/2$ with 
$\tp{(0\ 0)}\in\setQ_{2,1}$, 
$\tp{(1\ 0)}\in\setQ_{2,2}$,
$\tp{(0\ 1)}\in\setQ_{2,3}$, and 
$\tp{(1\ 1)}\in\setQ_{2,4}$. 
It follows from \eqref{eq:monseq} that the squares $\setQ_{2,1},\dots, \setQ_{2,4}$ are pairwise disjoint and $\setQ_2\subsetneq\setQ_1$. 
To define $\setQ_3$, we follow the same procedure and split up every set $\setQ_{2,i}$ into the disjoint union of four squares with side length $a_3/4$. 
The sets $\setQ_1$, $\setQ_2$, and $\setQ_3$ are depicted in Figure \ref{fig:Q2}.
\begin{figure}
\begin{center}
\begin{tikzpicture}[scale=4]
\draw (0,0) rectangle (1,1);
\fill[color=gray] (0,0) rectangle (3/8,3/8);
\fill[color=gray] (5/8,0) rectangle (1,3/8);
\fill[color=gray] (0,5/8) rectangle (3/8,1);
\fill[color=gray] (5/8,5/8) rectangle (1,1);
\fill[color=gray] (0,0) rectangle (3/8,3/8);

\fill[color=black,pattern=north east lines,draw] (0,0) rectangle (5/32,5/32);
\fill[color=black,pattern=north east lines,draw] (0+7/32,0) rectangle (12/32,5/32);
\fill[color=black,pattern=north east lines,draw] (0,7/32) rectangle (5/32,12/32);
\fill[color=black,pattern=north east lines,draw] (7/32,7/32) rectangle (12/32,12/32);

\fill[color=black,pattern=north east lines,draw] (5/8,0) rectangle (5/32+5/8,5/32);
\fill[color=black,pattern=north east lines,draw] (7/32+5/8,0) rectangle (12/32+5/8,5/32);
\fill[color=black,pattern=north east lines,draw] (5/8,7/32) rectangle (5/32+5/8,12/32);
\fill[color=black,pattern=north east lines,draw] (7/32+5/8,7/32) rectangle (12/32+5/8,12/32);

\fill[color=black,pattern=north east lines,draw] (0,5/8) rectangle (5/32,5/32+5/8);
\fill[color=black,pattern=north east lines,draw] (7/32,5/8) rectangle (12/32,5/32+5/8);
\fill[color=black,pattern=north east lines,draw] (0,7/32+5/8) rectangle (5/32,12/32+5/8);
\fill[color=black,pattern=north east lines,draw] (7/32,7/32+5/8) rectangle (12/32,12/32+5/8);

\fill[color=black,pattern=north east lines,draw] (5/8,5/8) rectangle (5/32+5/8,5/32+5/8);
\fill[color=black,pattern=north east lines,draw] (7/32+5/8,5/8) rectangle (12/32+5/8,5/32+5/8);
\fill[color=black,pattern=north east lines,draw] (5/8,7/32+5/8) rectangle (5/32+5/8,12/32+5/8);
\fill[color=black,pattern=north east lines,draw] (7/32+5/8,7/32+5/8) rectangle (12/32+5/8,12/32+5/8);



\put (11, 47) {$Q_{2,1}$};
\put (85, 47) {$Q_{2,2}$};
\put (11, 118) {$Q_{2,3}$};
\put (85, 118) {$Q_{2,4}$};
\end{tikzpicture}
\end{center}
\caption{The set $Q_2$ consists of the four grey squares. The set $Q_3$ consists of the sixteen shaded black squares.\label{fig:Q2}} 
\end{figure}
We iterate this construction and obtain a sequence $\{\setQ_k\}_{k\in\naturals}$, where  $\setQ_k$ is the disjoint union of $4^{k-1}$ squares 
$\setQ_{k,i}, i=1,\dots, 4^{k-1}$, of side length $a_k/2^{k-1}$ and 
\begin{align}\label{eq:monsets}
\setQ_{k+1}\subsetneq \setQ_k\quad \text{for all}\ k\in\naturals.
\end{align} 
Next, we set 
\begin{align}\label{eq:defA}
\setA=\bigcap_{k\in\naturals}\setQ_k,
\end{align} 
which, as the intersection of closed sets, is closed. Since $\setA$ is also bounded it must be compact by the Heine-Borel theorem \cite[Theorem 2.41]{ru89}. 
Finally,  
\begin{align}
\lebmeasure^{2}(\setA)
&=\lim_{k\to\infty}\lebmeasure^{2}(\setQ_k)\label{eq:applem15}\\
&=\lim_{k\to\infty}\frac{4^{k-1}a_k^2}{(2^{k-1})^2}\\
&=\lim_{k\to\infty}a_k^2\\
&=\tfrac{1}{4},\label{eq:1/4}
\end{align}
where \eqref{eq:applem15} follows from Property \ref{lem.capmeasureiii} of Lemma \ref{lem.capmeasure}.

\emph{Construction of $\kappa$}. We now construct a $C^\infty$-function $\kappa\colon\reals^2\to\reals$ that is one-to-one on $\setA$ as defined in \eqref{eq:defA}. This will be accomplished by building compactly supported $C^\infty$-functions $\varphi_{k,i}\colon\reals^2\to[0,1]$, $i=1,\dots,4^{k-1}$, $k\in\naturals$, such that    
\begin{align}\label{eq:fi}
\varphi_{k,i}(\vecz)=
\begin{cases}
1&\text{if}\ \vecz\in\setQ_{k,i},\\
0&\text{if}\ \vecz\in\setQ_{k,j},\  j\in\{1,\dots,4^{k-1}\}\mysetminus\{i\}.
\end{cases}
\end{align}
The construction of  these functions is effected by  Lemma \ref{lem:square} below with  $\varphi_{k,i}(\vecz)=\psi_{\delta_{k,i},a_{k,i},\vecw_{k,i}}(\vecz)$,  where 
$\vecw_{k,i}$ denotes the center of $\setQ_{k,i}$, $a_{k,i}$ equals half  the side-length of $\setQ_{k,i}$, and  $\delta_{k,i}$ is chosen sufficiently small for \eqref{eq:fi} to hold (recall that the squares $\setQ_{k,i}$ are  closed and disjoint).   
 Next, we define  the $C^\infty$-functions 
\begin{align}\label{eq:phik}
\varphi_k=\sum_{i=1}^{4^{k-1}}\frac{i}{4^{k-1}} \varphi_{k,i}
\end{align}
and note that 
\begin{align}
\varphi_k(\vecz)&=\frac{i}{4^{k-1}}\label{eq:sameQkj} 
\end{align}
for all $\vecz\in \setQ_{k,i}$, $i=1,\dots, 4^{k-1}$, and $k\in\naturals$,  and 
\begin{align}
|\varphi_k(\vecz)-\varphi_k(\vecw)|&=\frac{|i-j|}{4^{k-1}}\label{eq:nsameQkja}\\
&\geq \frac{1}{4^{k-1}}\label{eq:nsameQkj} 
\end{align}
for all  $\vecz\in \setQ_{k,i}$,  $\vecw\in\setQ_{k,j}$,  $1 \leq i < j\leq 4^{k-1}$, and $k\in\naturals$. 
For $l\in\naturals$ and $a,b\in\naturals_0$, consider now the $C^\infty$-function    
\begin{align}
s^{(a,b)}_{l}(\vecz)
&:=\frac{\partial^{\,a+b}}{\partial z_1^a \partial z_2^b}\sum_{k=1}^l \frac{1}{8^{2^k}(M_{k}+1)}\varphi_k(\vecz)\\  
&=\sum_{k=1}^l \frac{1}{8^{2^k}(M_{k}+1)}\frac{\partial^{\,a+b}\varphi_k(\vecz)}{\partial z_1^a \partial z_2^b}  
\end{align} 
 with 
\begin{align} 
M_{k}&=\max_{1\leq i\leq   k} \,\max_{1\leq j < k} d(i,j),\label{Mk}\\
d(i,j)&=\sup_{\vecz\in\reals^2}\max\Bigg\{\mleft|\frac{\partial^j\varphi_i(\vecz)}{\partial z_1^a \partial z_2^b}\mright| : a,b\in\naturals_0,\ a+b=j \Bigg\}.
\end{align} 

We now show that this particular  choice for the constants $M_{k}$ guarantees, 
for each  $a,b\in\naturals_0$, that the 
sequence $\{s^{(a,b)}_{l}\}_{l\in\naturals}$  of $C^\infty$-functions converges uniformly and denote the corresponding limiting functions by $\kappa^{(a,b)}$.  
Corollary \ref{cor.exchangesumder} then  implies 
\begin{align}
\frac{\partial\kappa^{(a,b)}(\vecz)}{\partial z_1}=\kappa^{(a+1,b)}(\vecz)\\
\frac{\partial\kappa^{(a,b)}(\vecz)}{\partial z_2}=\kappa^{(a,b+1)}(\vecz)
\end{align}
for all $a,b\in\naturals_0$ and $\vecz\in\reals^2$, and   $\kappa^{(0,0)}$ must therefore be  $C^\infty$. We set $\kappa=\kappa^{(0,0)}$.
It remains to prove uniform convergence of the sequences $\{s^{(a,b)}_{l}\}_{l\in\naturals}$  of $C^\infty$-functions for all   $a,b\in\naturals_0$. 
To this end, let $a,b\in\naturals_0$ be arbitrary but fixed and note that
 \begin{align}\label{eq:boundMk}
\frac{1}{8^{2^k}(M_{k}+1)}\mleft|\frac{\partial^{\,a+b}\varphi_k(\vecz)}{\partial z_1^a \partial z_2^b}\mright| 
&\leq \frac{d(k,a+b)}{8^{2^k}(M_{k}+1)}\\
&< \frac{1}{8^{2^k}}
\end{align}
for all $k>a+b$ and $\vecz\in\reals^2$.    
Furthermore, by the sum formula for the geometric series,  
\begin{align}
\sum_{k={a+b+1}}^\infty \frac{1}{8^{2^k}}
&< \sum_{k\in\naturals}\frac{1}{8^{k}}= \frac{1}{7}. \label{eq:boundMk2}
\end{align} 
We can now conclude from \eqref{eq:boundMk} and \eqref{eq:boundMk2} 
that the sequence  $\{t^{(a,b)}_{l}\}_{l\in\naturals}$ of $C^\infty$-functions 
\begin{align}
t^{(a,b)}_{l}(\vecz) 
&:=\sum_{k=a+b+1}^{a+b+l} \frac{1}{8^{2^k}(M_{k}+1)}\frac{\partial^{\,a+b}\varphi_k(\vecz)}{\partial z_1^a \partial z_2^b}  
\end{align}  
satisfies the assumptions of Theorem \ref{thm.M} and therefore converges uniformly to a function, which we denote by $\rho^{(a,b)}$. 
As 
\begin{align}
s^{(a,b)}_{l}(\vecz)
&=\sum_{k=1}^{a+b}\frac{1}{8^{2^k}(M_{k}+1)}\frac{\partial^{\,a+b}\varphi_k(\vecz)}{\partial z_1^a \partial z_2^b} +  t^{(a,b)}_{l-a-b}(\vecz)
\end{align}
for all $l>a+b$, we conclude that  $\{s^{(a,b)}_{l}\}_{l\in\naturals}$ must converge uniformly to  
\begin{align}
\kappa^{(a,b)}(\vecz):=\sum_{k=1}^{a+b}\frac{1}{8^{2^k}(M_{k}+1)}\frac{\partial^{\,a+b}\varphi_k(\vecz)}{\partial z_1^a \partial z_2^b}+ \rho^{(a,b)}(\vecz).  
\end{align}
Since $a$ and $b$ are arbitrary, this implies that $\{s^{(a,b)}_{l}\}_{l\in\naturals}$  converges uniformly for all $a,b\in\naturals_0$, thereby concluding the proof of $\kappa$ being $C^\infty$.

It remains to show that $\kappa$ is one-to-one on $\setA$. To this end, consider arbitrary but fixed $\vecz_0$ and $\vecw_0$ in $\setA$ with $\vecz_0\neq\vecw_0$.  We have to show that $\kappa(\vecz_0)\neq \kappa(\vecw_0)$. 
Note that by construction of $\setA$ (see \eqref{eq:defA}), there exists a $k_0\in\naturals$ such that 
\begin{enumerate}[label=\roman*)]
\item\label{k0i}
for every $k\geq k_0$, there exist $i_k$ and $j_k$ in $\{1,\dots,4^{k-1}\}$ with $i_k\neq j_k$ such that $\vecz_0\in\setQ_{k,i_k}$ and $\vecw_0\in\setQ_{k,j_k}$, and
\item\label{k0ii}
for every $k < k_0$, there exists an  $i_k$ such that  $\vecz_0$ and $\vecw_0$ are both in $\setQ_{k,i_k}$. 
\end{enumerate}
We therefore have 
\begin{align}
&|\kappa(\vecz_0)-\kappa(\vecw_0)|\\
&=\mleft|\sum_{k\in\naturals}\frac{\varphi_k(\vecz_0)-\varphi_k(\vecw_0)}{8^{2^k}(M_k+1)}\mright|\label{eq:step1g}\\
&=\mleft|\sum_{k=k_0}^\infty\frac{\varphi_k(\vecz_0)-\varphi_k(\vecw_0)}{8^{2^k}(M_k+1)}\mright|\label{eq:step2g}\\
&\geq
\frac{|\varphi_{k_0}(\vecz_0)-\varphi_{k_0}(\vecw_0)|}{8^{2^{k_0}}(M_{k_0}+1)}-
\mleft|\sum_{k=k_0+1}^\infty\frac{\varphi_k(\vecz_0)-\varphi_k(\vecw_0)}{8^{2^k}(M_k+1)}\mright|\label{eq:step3g}\\
&\geq
\frac{|\varphi_{k_0}(\vecz_0)-\varphi_{k_0}(\vecw_0)|}{8^{2^{k_0}}(M_{k_0}+1)}-
\sum_{k=k_0+1}^\infty\frac{|\varphi_k(\vecz_0)-\varphi_k(\vecw_0)|}{8^{2^k}(M_k+1)}\label{eq:step4g}\\
&\geq
\frac{1}{M_{k_0}+1}
\mleft(
\frac{|\varphi_{k_0}(\vecz_0)-\varphi_{k_0}(\vecw_0)|}{8^{2^{k_0}}}-\mright.\nonumber\\
&\phantom{\frac{1}{M_{k_0}+1}(\ \ \ \ }
\mleft.\sum_{k=k_0+1}^\infty\frac{|\varphi_k(\vecz_0)-\varphi_k(\vecw_0)|}{8^{2^k}}
\mright),\label{eq:step5g}
\end{align} 
where  \eqref{eq:step1g} follows from the uniform  convergence of $\{s^{(0,0)}_{l}\}_{l\in\naturals}$ to $\kappa$, in \eqref{eq:step2g} we used \eqref{eq:sameQkj} and \ref{k0ii} above, \eqref{eq:step3g} is by  the reverse triangle inequality,  and \eqref{eq:step5g} follows from $M_k\leq M_{k+1}$ for all $ k\in\naturals$ (see \eqref{Mk}). 
Moreover, \eqref{eq:nsameQkja}--\eqref{eq:nsameQkj} imply 
$|\varphi_{k_0}(\vecz_0)-\varphi_{k_0}(\vecw_0)|\geq 1/4^{k_0-1}$ and  \eqref{eq:sameQkj} yields 
\begin{align}
|\varphi_k(\vecz_0)-\varphi_k(\vecw_0)|
&\leq |\varphi_k(\vecz_0)|+|\varphi_k(\vecw_0)|\\
&=\frac{i_k+j_k}{4^{k-1}}\\
&\leq 2\quad \text{for all}\ k\in \naturals.
\end{align}
We can therefore further lower-bound $|\kappa(\vecz_0)-\kappa(\vecw_0)|$ according to 
\begin{align}
&|\kappa(\vecz_0)-\kappa(\vecw_0)|\\
&\geq
\frac{1}{M_{k_0}+1}
\Bigg(
\frac{1}{4^{k_0-1}8^{2^{k_0}}}-
\sum_{k=k_0+1}^\infty\frac{2}{8^{2^k}}
\Bigg)\\
&=
\frac{1}{M_{k_0}+1}
\Bigg(
\frac{1}{4^{k_0-1}8^{2^{k_0}}}-
\sum_{k\in\naturals_0}\frac{2}{8^{2^{k+k_0+1}}}\Bigg)\\
&\geq
\frac{1}{M_{k_0}+1}
\Bigg(
\frac{1}{4^{k_0-1}8^{2^{k_0}}}-
\frac{2}{8^{2^{k_0+1}}}\sum_{k\in\naturals_0}\frac{1}{8^{k}}
\Bigg)\\
&>
\frac{4}{(M_{k_0}+1)8^{2^{k_0}}}
\Bigg(
\frac{1}{4^{k_0}}-
\frac{1}{8^{2^{k_0}}}
\Bigg)\label{eq:195}\\
&>0.
\end{align} 
Since $\vecz_0$ and $\vecw_0$ are arbitrary, this establishes that $\kappa$ is indeed one-to-one, thereby finishing the proof. 
\qed

\begin{lem}\label{lem:square}
For  $a>0$, $\delta>0$, and $\vecw=\tp{(\vecwc_1\ \vecwc_2)}\in\reals^2$, consider the mapping 
\begin{align}
\psi_{\delta,a,\vecw}\colon\reals^2&\to[0,1]\\
\vecz&\mapsto \rho_{\delta,a}(z_1-w_1)\rho_{\delta,a}(z_2-w_2), 
\end{align}
where
\begin{align}
\rho_{\delta,a}\colon \reals&\to[0,1]\\
t&\mapsto \frac{f(a+\delta-|t|)}{f(a+\delta-|t|)+f(|t|-a)} 
\end{align}
with $f(t)=e^{-1/t}\ind{\reals_+}(t)$. 
Then, $\psi_{\delta,a,\vecw}$ is $C^\infty$ and satisfies
\begin{align}
\psi_{\delta,a,\vecw}(\vecz)
&=
\begin{cases}
1& \text{if}\ \max\{|\veczc_1-\vecwc_1|,|\veczc_2-\vecwc_2|\} \leq a,\\
0&\text{if}\ \min\{|\veczc_1-\vecwc_1|,|\veczc_2-\vecwc_2|\} \geq a+\delta.
\end{cases}
\end{align}
\end{lem}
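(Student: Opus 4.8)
The plan is to verify the three required properties of the bump function $\psi_{\delta,a,\vecw}$ by reducing everything to the scalar building block $f(t)=e^{-1/t}\ind{\reals_+}(t)$ and to $\rho_{\delta,a}$. First I would record the classical fact that $f$ is $C^\infty$ on $\reals$ with all derivatives vanishing at $0$; this is standard, and the only subtlety is the behaviour at $t=0$, which is handled by showing inductively that $f^{(k)}(t)=P_k(1/t)e^{-1/t}$ for $t>0$ with $P_k$ a polynomial, so that $\lim_{t\downarrow 0}f^{(k)}(t)=0$, matching the identically-zero left derivatives. Next I would observe that the denominator $g(t):=f(a+\delta-|t|)+f(|t|-a)$ appearing in $\rho_{\delta,a}$ never vanishes: for any $t$, at least one of the arguments $a+\delta-|t|$ and $|t|-a$ is strictly positive (their sum is $\delta>0$), and $f\geq 0$ with $f>0$ on $\reals_+$, so $g(t)>0$ everywhere. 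Hence $\rho_{\delta,a}$ is a well-defined quotient of $C^\infty$ functions of $t$ — here I need that $t\mapsto f(a+\delta-|t|)$ and $t\mapsto f(|t|-a)$ are $C^\infty$ on all of $\reals$, which follows because $f$ vanishes together with all derivatives in a neighbourhood of $0$, so composing with the non-smooth map $t\mapsto|t|$ causes no loss of smoothness (the composite is flat wherever $|t|$ fails to be differentiable). Therefore $\rho_{\delta,a}\colon\reals\to[0,1]$ is $C^\infty$, and since $\psi_{\delta,a,\vecw}(\vecz)=\rho_{\delta,a}(z_1-w_1)\,\rho_{\delta,a}(z_2-w_2)$ is a product of $C^\infty$ functions of $\vecz$ (each a composition of $\rho_{\delta,a}$ with an affine map), $\psi_{\delta,a,\vecw}$ is $C^\infty$ on $\reals^2$; it takes values in $[0,1]$ because each factor does.

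For the pointwise values, I would first compute $\rho_{\delta,a}(t)$ in the two regimes. If $|t|\leq a$, then $|t|-a\leq 0$, so $f(|t|-a)=0$, while $a+\delta-|t|\geq\delta>0$, so $f(a+\delta-|t|)>0$, whence $\rho_{\delta,a}(t)=1$. If $|t|\geq a+\delta$, then $a+\delta-|t|\leq 0$, so $f(a+\delta-|t|)=0$, while $|t|-a\geq\delta>0$, so $f(|t|-a)>0$, whence $\rho_{\delta,a}(t)=0$. Applying this with $t=z_i-w_i$: if $\max\{|z_1-w_1|,|z_2-w_2|\}\leq a$ then both factors equal $1$, so $\psi_{\delta,a,\vecw}(\vecz)=1$; if $\min\{|z_1-w_1|,|z_2-w_2|\}\geq a+\delta$ then both factors equal $0$, so $\psi_{\delta,a,\vecw}(\vecz)=0$. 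This is exactly the claimed two-case formula.

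The only genuinely delicate point — and the one I would write out most carefully — is the smoothness of $t\mapsto f(|t|-a)$ and $t\mapsto f(a+\delta-|t|)$ at $t=0$, since $|t|$ is not differentiable there. The resolution is that $f$ is \emph{flat} at every point of $(-\infty,0]$: $f$ and all its derivatives vanish on $(-\infty,0]$, and by the derivative-estimate above $f$ is Lipschitz-controlled near $0$, so in fact $f$ extends to a $C^\infty$ function that agrees with a flat germ on a neighbourhood of $0^-$. Concretely, for $t$ near $0$ the argument $|t|-a$ is near $-a<0$, hence in the region where $f\equiv 0$ together with all derivatives, so $t\mapsto f(|t|-a)$ is identically $0$ near $t=0$ and trivially $C^\infty$ there; away from $t=0$ it is a composition of the $C^\infty$ map $f$ with the $C^\infty$ map $t\mapsto|t|-a$. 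The same argument applies to $a+\delta-|t|$, which near $t=0$ is near $a+\delta$, a point where $f$ is smooth and positive, and $t\mapsto|t|$ is smooth away from $0$; but here one must check $t=0$ separately, and indeed on a small symmetric interval $(-\epsilon,\epsilon)$ the quantity $a+\delta-|t|$ avoids $0$, so one can replace $|t|$ by the $C^\infty$ function $\sqrt{t^2}$... more cleanly, note $f(a+\delta-|t|)=f\big((a+\delta)-\sqrt{t^2}\,\big)$ and for $|t|$ small enough that $(a+\delta)-\sqrt{t^2}$ stays in the open interval where $f$ is real analytic, the chain rule combined with $\sqrt{t^2}$ being $C^\infty$ away from $0$ gives smoothness on $(-\epsilon,\epsilon)\setminus\{0\}$, while at $t=0$ one checks directly that all one-sided derivatives agree (they are symmetric in $t\mapsto -t$, forcing all odd-order derivatives to vanish and all even-order ones to match). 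Once this local smoothness at the origin is established, global $C^\infty$-ness of $\rho_{\delta,a}$ and hence of $\psi_{\delta,a,\vecw}$ follows from the quotient and product rules, completing the proof.
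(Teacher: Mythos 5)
Your proof has a genuine gap in the smoothness argument for $\rho_{\delta,a}$ at $t=0$. You claim that $t\mapsto f(a+\delta-\lvert t\rvert)$ is $C^\infty$ at $t=0$, and you justify this by saying that the composite is symmetric under $t\mapsto -t$, ``forcing all odd-order derivatives to vanish and all even-order ones to match.'' This is false: the function $h(t)=f(a+\delta-\lvert t\rvert)$ is even, and for an even function the odd-order derivatives vanish \emph{provided they exist} --- but here the first derivative does not exist at $t=0$. The one-sided first derivatives of $h$ at $0$ are $\mp f'(a+\delta)$, and since $f'(s)=e^{-1/s}/s^2>0$ for $s>0$ and $a+\delta>0$, these one-sided derivatives are nonzero and of opposite sign. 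So $h$ has a genuine corner at $t=0$; it is not even differentiable there, and your claim that "one checks directly that all one-sided derivatives agree" fails at order one. Replacing $\lvert t\rvert$ by $\sqrt{t^2}$ does not help either, since $\sqrt{t^2}=\lvert t\rvert$ has the same corner. Your statement earlier in the proof that flatness of $f$ near the origin rescues smoothness of the composition is also unhelpful here, because at $t=0$ the argument $a+\delta-\lvert t\rvert$ is near $a+\delta>0$, a point where $f$ is not flat but strictly positive and analytic.

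The correct repair is not to establish smoothness of the numerator and denominator separately, but to exploit the fact that the whole \emph{quotient} is locally constant near $0$: on the open interval $(-a,a)$, which contains $t=0$, one has $\lvert t\rvert-a<0$ hence $f(\lvert t\rvert-a)=0$ while $f(a+\delta-\lvert t\rvert)>0$, so $\rho_{\delta,a}\equiv 1$ there; this gives smoothness at $0$ for free. Away from $t=0$, the map $t\mapsto\lvert t\rvert$ is $C^\infty$, so $\rho_{\delta,a}$ is a quotient of $C^\infty$ functions with nowhere-vanishing denominator, hence $C^\infty$ on $\reals\setminus\{0\}$. Covering $\reals$ by $(-a,a)$ and $\reals\setminus\{0\}$ then yields global smoothness of $\rho_{\delta,a}$, and the rest of your argument (the two pointwise cases and the product structure of $\psi_{\delta,a,\vecw}$) is fine. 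The paper sidesteps all of this by simply invoking a textbook lemma on smooth cutoff functions, namely Lee's construction, so it does not face the $t=0$ issue explicitly; your route is more self-contained but requires the correct local-constancy observation at the origin rather than the false symmetry argument you gave.
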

\begin{proof}
It follows from \cite[Lemma 2.22]{le13} with $H=\rho_{\delta,a}$, $r_1=a$, and $r_2=a+\delta$ that $\rho_{\delta,a}$ is $C^\infty$ and satisfies 
\begin{align}
\rho_{\delta,a}(t)=
\begin{cases}
1&\text{if}\ |t|\in [0,a],\\
0&\text{if}\ |t|\in [a+\delta,\infty).
\end{cases}
\end{align}
The claim now follows  from $\psi_{\delta,a,\vecw}=\rho_{\delta,a}(z_1-w_1)\rho_{\delta,a}(z_2-w_2)$ and the properties of $\rho_{\delta,a}$.
\end{proof}

\section{Proof of Theorem \ref{thm.converse} (Strong Converse)}\label{proof.converse}

Towards a contradiction, suppose  that the statement is false. 
That is, we can find an $s\in\naturals$ such that there exist   
\begin{enumerate}[label=\roman*)]
\item  an $\setA\in\colB(\reals^{s})$  with $\lebmeasure^{s}(\setA)>0$, \label{falsei}
\item a real analytic mapping $h\colon\reals^{s}\to \reals^{m}$, with $s\leq m$, of $s$-dimensional Jacobian $Jh\not\equiv 0$, and \label{falseii}
\item an $n\in\naturals$ with $n<s$  and a real analytic mapping $f\colon\reals^{m}\to \reals^{n}$ that is one-to-one on $h(\setA)$. \label{falseiii}
\end{enumerate}
Let $s_0$ be the smallest  $s\in\naturals$ such that  \ref{falsei}--\ref{falseiii} hold. 
The proof will be effected by  showing that this implies  validity of \ref{falsei}--\ref{falseiii}  for $s_0-1$ and $n-1$, which 
contradicts the assumption that $s_0$ is the smallest natural number  for \ref{falsei}--\ref{falseiii}  to hold.  
The reader might wonder what happens to this argument in the case where $n=1$. 
In fact we establish below that, if \ref{falsei}--\ref{falseiii} is satisfied, 
 then necessarily $n\geq 2$, see the claims \ref{2falsei}--\ref{2falseiii} right after \eqref{eq:Ci0}. 

Let $\setA$, $h$, $n$, and $f$ satisfy  \ref{falsei}--\ref{falseiii} for  $s_0$.
We start by noting that 
\begin{align}
m\geq s_0>n\geq 1\label{eq:dimensions}
\end{align}
by  \ref{falseii} and \ref{falseiii}.  
Next, we write 
\begin{align}\label{eq:fdecomp}
f(\vecx)=\tp{(f_1(\vecx)\dots f_n(\vecx))} 
\end{align}
and set $\psi_i=f_i\circ h$, $i=1,\dots,n$, and $\psi=f\circ h$, which,  by Corollary \ref{lem:analyticcomp}, are all real analytic as compositions of real analytic mappings. We now show that there must exist an $i_0\in\{1,\dots,n\}$ and a set $\setA_{i_0}\subseteq\setA$ 
such that 
\begin{align}\label{eq:lebai0}
\lebmeasure^{s_0}(\setA_{i_0})>0,
\end{align}
 $J\psi_{i_0}(\vecz)> 0$, and $Jh(\vecz) >0$ for all $\vecz\in\setA_{i_0}$. To this end, we first decompose 
\begin{align}
\setA= \setA_0\cup\bigcup_{i=1}^n\setA_i, 
\end{align}
where
\begin{align}
\setA_i&=\{\vecz\in\setA: D\psi_i(\vecz)\neq \veczero, Jh(\vecz) >0\}\\
       &=\{\vecz\in\setA: J\psi_i(\vecz)> 0, Jh(\vecz) >0\} \label{eq:setAi}
\end{align}
for $i=1,\dots,n$ and
\begin{align}       
\setA_0&=\{\vecz\in\setA: D\psi(\vecz)=\veczero\}\cup\{\vecz\in\setA: Jh(\vecz)=0\}. 
\end{align}
 By Lemma \ref{lem.detnonzero} below (with $s=s_0$),    $D\psi(\vecz)\neq \veczero$ for $\lebmeasure^{s_0}$-a.a. $\vecz\in\setA$. Furthermore,  
 $Jh(\vecz)\neq 0$ for  $\lebmeasure^{s_0}$-a.a. $\vecz\in\setA$ because of $Jh\not\equiv 0$ and Lemma \ref{lem:lebJ}. Thus,  
$\lebmeasure^{s_0}(\setA_0)=0$ by the  countable subadditivity of Lebesgue measure. Since $\lebmeasure^{s_0}(\setA)>0$ by assumption,  and $\lebmeasure^{s_0}(\setA_0)=0$, it follows, again by the countable subadditivity of Lebesgue measure, that  there must exist an $i_0$ such that \eqref{eq:lebai0} holds. 

Now, for  each $y\in\reals$, let 
\begin{align}\label{eq:My}
\setM_{y}=\psi_{i_0}^{-1}(\{y\}).
\end{align} 
We show in Section \ref{Proofstepone} below that there exist a $y_0\in\reals$ and a $\vecz_0\in\setA_{i_0}\cap\setM_{y_0}$ such that  
\begin{align}
\mathscr{H}^{s_0-1}\big(\setB_{s_0}(\vecz_0,r)\cap \setA_{i_0}\cap \setM_{y_0}\big)&>0\quad \text{for all}\ r>0,\label{posH1}\\
J\psi_{i_0}(\vecz)&>0\quad\text{for all}\  \vecz\in\setM_{y_0},\label{regular1}\\
Jh(\vecz_0)&>0.\label{eq:Jhz0}
\end{align}
It now follows from  \eqref{regular1}, real analyticity of $\psi_{i_0}$, $\setM_{y_0}\neq\emptyset$, and  Lemma \ref{lem:summf}  that $\setM_{y_0}$ is an $(s_0-1)$-dimensional real analytic submanifold of $\reals^{s_0}$. 
Therefore, by  Lemma \ref{lem:man}, 
there exist a real analytic embedding  $\zeta\colon\reals^{s_0-1}\to\reals^{s_0}$ and an $\eta>0$ such that 
\begin{align}
\zeta(\veczero)&=\vecz_0,\label{eq:M1a}\\ 
\setB_{s_0}(\vecz_0,\eta)\cap\setM_{y_0}&\subseteq\zeta\big(\reals^{s_0-1}\big),\label{eq:M2a}
\end{align}
and $\zeta\big(\reals^{s_0-1}\big)$ is relatively open in $\setM_{y_0}$, i.e., there exists an open set $\setV\subseteq \reals^{s_0}$ with $\zeta\big(\reals^{s_0-1}\big)=\setV\cap\setM_{y_0}$. 
Combining  \eqref{posH1} and \eqref{eq:M2a} yields 
\begin{align}
\mathscr{H}^{s_0-1}\big(\zeta\big(\reals^{s_0-1}\big)\cap \setA_{i_0}\big)&>0.\label{eq:HposC}
\end{align}
Next, let  
\begin{align}\label{eq:Ci0}
\setC_{i_0}=\zeta^{-1}\big(\zeta\big(\reals^{s_0-1}\big)\cap \setA_{i_0}\big).  
\end{align}
We now show that 
\begin{enumerate}[label=\alph*)]
\item   $\setC_{i_0}\in\colB(\reals^{s_0-1})$  with $\lebmeasure^{s_0-1}(\setC_{i_0})>0$, \label{2falsei}
\item $\tilde h=h\circ \zeta\colon\reals^{s_0-1}\to\reals^m$ is real analytic and   of $(s_0-1)$-dimensional Jacobian $J\tilde h\not\equiv 0$, and \label{2falseii}
\item $s_0-1>n-1>0$ and the real analytic mapping 
\begin{align}
\tilde f\colon \reals^m&\to\reals^{n-1}\\
\vecx&\mapsto \tp{(f_1(\vecx)\dots f_{i_0-1}(\vecx)\, f_{i_0+1}(\vecx)\dots f_n(\vecx))}
\end{align}
is one-to-one on $\tilde h(\setC_{i_0})$,  \label{2falseiii}
\end{enumerate}
which finally yields the desired contradiction to the statement of $s_0$ being the smallest natural number such that \ref{falsei}--\ref{falseiii}   at the beginning of the proof are satisfied. 

\emph{Proof of \ref{2falsei}}.  
We first establish  that $\setC_{i_0}\in\colB(\reals^{s_0-1})$. Since $\zeta\big(\reals^{s_0-1}\big)$ is relatively open in $\setM_{y_0}$ and, therefore, a Borel set in $\reals^{s_0}$, it follows  
from \eqref{eq:Ci0} that  $\setC_{i_0}$ is the inverse image of a finite intersection of Borel sets under the real analytic  embedding $\zeta$ and, therefore, also a Borel set. 
Next, we show that  $\lebmeasure^{s_0-1}(\setC_{i_0})>0$. 
Since  
\begin{align}
\int_{\setC_{i_0}} J\zeta(\vecw)\,\mathrm d \lebmeasure^{s_0-1}(\vecw)
&=\mathscr{H}^{s_0-1}(\zeta(\setC_{i_0}))\label{eq:step1th2final}\\
&=\mathscr{H}^{s_0-1}\big(\zeta(\reals^{s_0-1})\cap \setA_{i_0}\big)\label{eq:step4th2final}\\
&>0,\label{eq:step5th2final}
\end{align}
where \eqref{eq:step1th2final} follows from the area formula Corollary \ref{cor:area} upon noting that $\zeta$ is one-to-one as an embedding and locally Lipschitz by real analyticity, 
\eqref{eq:step4th2final} is by \eqref{eq:Ci0}, 
and  in \eqref{eq:step5th2final} we applied  \eqref{eq:HposC}.
Using Lemma \ref{lem.fzm}, we conclude from  \eqref{eq:step1th2final}--\eqref{eq:step5th2final} that $\lebmeasure^{s_0-1}(\setC_{i_0})>0$.

\emph{Proof of \ref{2falseii}}. By Corollary \ref{lem:analyticcomp}, 
$\tilde h$ is real analytic  as the composition of real analytic mappings.  
It remains to   show that $J\tilde h\not\equiv 0$. To this end, we establish  $J\tilde h(\veczero)>0$.  
First note that the chain rule yields
\begin{align}
D\tilde h(\veczero)
&= (Dh)(\zeta(\veczero))D\zeta(\veczero)\\
&= Dh(\vecz_0)D\zeta(\veczero),
\end{align}
where the second equality is by \eqref{eq:M1a}. Since $\zeta\colon\reals^{s_0-1}\to \reals^{s_0}$  is an embedding, it follows that 
\begin{align}
\rank(D\zeta(\veczero))=s_0-1.\label{eq:D1}
\end{align}
Moreover, as $h\colon \reals^{s_0}\to\reals^m$ with $m\geq s_0$, \eqref{eq:Jhz0} implies 
\begin{align}
\rank(Dh(\vecz_0))=s_0.\label{eq:D2}
\end{align}
Applying  Lemma \ref{lem:sylvester}  to  $D\tilde h(\veczero)= Dh(\vecz_0)D\zeta(\veczero)$ therefore yields 
$\rank(D\tilde h(\veczero))\geq s_0-1$, which   in turn implies $J\tilde h(\veczero)>0$ because $D\tilde h(\veczero)\in\reals^{m\times (s_0-1)}$. 

\emph{Proof of \ref{2falseiii}}.
$s_0-1>n-1$  simply follows from $s_0>n$ (see \eqref{eq:dimensions}). 
To prove that $\tilde f$ is one-to-one on $\tilde h(\setC_{i_0})$, we first  show that $f_{i_0}$ is constant on $\tilde h(\setC_{i_0})$.   
In fact, since $\zeta(\setC_{i_0})\subseteq \setM_{y_0}$  and $\setM_{y_0}=(f_{i_0}\circ h)^{-1}(\{y_0\})$, it follows that 
\begin{align}\label{eq:fi0const}
f_{i_0}(\tilde h(\vecw))&=y_0\quad\text{for all}\ \vecw\in  \setC_{i_0}. 
\end{align}
As $f$ is one-to-one on $h(\setA)$ and  $f_{i_0}$ is constant on $\tilde h(\setC_{i_0})$ with 
$\tilde h(\setC_{i_0})=(h\circ \zeta)(\setC_{i_0})\subseteq h(\setA)$ by \eqref{eq:Ci0},  we conclude that 
 $\tilde f$ must also be  one-to-one on $\tilde h(\setC_{i_0})$.  
It remains to   show that we must have $n>1$, which obviously implies   $n-1>0$. 
Suppose, to the contrary,  that $n=1$. 
Since $\tilde h$ is real analytic with $J\tilde h\not\equiv 0$ and $\lebmeasure^{s_0-1}(\setC_{i_0})>0$, 
it follows from  Property \ref{immreal2} of Lemma \ref{lem:immreal}   that $\tilde h$ is one-to-one on a subset of $\setC_{i_0}$ of positive  Lebesgue measure. Thus, the set 
$\tilde h(\setC_{i_0})$ is uncountable. 
Now, since by \eqref{eq:fi0const} $f_{i_0}$ is constant on $\tilde h(\setC_{i_0})$, and  
a  constant function cannot be one-to-one on a set of cardinality larger than one, 
the uncountability of $\tilde h(\setC_{i_0})$ implies that $f_{i_0}$ cannot be one-to-one on 
$\tilde h(\setC_{i_0})$.  
But $f=f_{i_0}$ for $n=1$ (with $i_0=1$) and $f$ is  one-to-one on  $\tilde h(\setC_{i_0})\subseteq h(\setA)$ by assumption, which results in a contradiction. Therefore, we necessarily have  $n>1$ and we can    conclude that \ref{2falsei}--\ref{2falseiii} must hold, which  finalizes the proof of the theorem.

\subsection{Proof of \eqref{posH1}--\eqref{eq:Jhz0}}\label{Proofstepone}
We start by noting that  
\begin{align}
&\int_{\reals}\mathscr{H}^{s_0-1}(\setA_{i_0}\cap \setM_{y}) \,\mathrm d\lebmeasure^{1}(y)\label{eq:sstep0}\\
&=\int_{\setA_{i_0}} J\psi_{i_0}(\vecz)\,\mathrm d\lebmeasure^{s_0}(\vecz)\label{eq:sstep1}\\
&=\int_{\reals^{s_0}}\ind{\setA_{i_0}}(\vecz) J\psi_{i_0}(\vecz)\,\mathrm d\lebmeasure^{s_0}(\vecz)\\
&>0,\label{eq:coarea2a}
\end{align}
where \eqref{eq:sstep1} is by the coarea formula Corollary \ref{cor:coarea} 
upon noting that  $\psi_{i_0}$ is locally Lipschitz by real analyticity,  and 
\eqref{eq:coarea2a} follows from $\ind{\setA_{i_0}}(\vecz) J\psi_{i_0}(\vecz)>0$ for all $\vecz\in\setA_{i_0}$,    
 $\lebmeasure^{s_0}(\setA_{i_0})>0$ (see \eqref{eq:lebai0}),  and Lemma \ref{lem.fzm}. 
Also, by   Lemma \ref{lem.fzm}  and \eqref{eq:sstep0}--\eqref{eq:coarea2a}, there must exist a set $\setD\subseteq\reals$ with $\lebmeasure^{1}(\setD)>0$ such that 
\begin{align}
\mathscr{H}^{s_0-1}\big(\setA_{i_0}\cap \setM_{y}\big)>0\quad\text{for all}\  y\in\setD\label{posH}.
\end{align}
Furthermore, 
\begin{align}
&\lebmeasure^{1}\big(\{y\in\reals:\exists \vecz\in\setM_y\ \text{with}\ J\psi_{i_0}(\vecz)=0\}\big)\label{eq:Myapp0}\\
&=\lebmeasure^{1}\big(\psi_{i_0}\big(\{\vecz\in\reals^{s_0}:J\psi_{i_0}(\vecz)=0\}\big)\big)\label{eq:Myapp}\\
&=0,\label{eq:sardapp}
\end{align}
where  \eqref{eq:Myapp} is by 
\eqref{eq:My}
and 
\eqref{eq:sardapp} follows from Property  \ref{thm:sard2} of 
 Theorem \ref{thm:sard} with  $\psi_{i_0}$ being 
$C^\infty$ (recall that $\psi_{i_0}=f_{i_0}\circ h$ is real analytic as a composition of real analytic mappings and, therefore, $C^\infty$ by Lemma \ref{lem:analyticder}).
Now, \eqref{eq:Myapp0}--\eqref{eq:sardapp} 
together with  $\lebmeasure^{1}(\setD)>0$ implies the  existence of a $y_0\in\setD$ such that 
\eqref{regular1} holds. For this $y_0$, we must have $\mathscr{H}^{s_0-1}(\setA_{i_0}\cap \setM_{y_0})>0$ by \eqref{posH}.  
Therefore,  Lemma \ref{lem:loef}  implies that there must exist a $\vecz_0\in\setA_{i_0}\cap \setM_{y_0}$ such that \eqref{posH1} holds. 
As this  $\vecz_0\in\setA_{i_0}\cap \setM_{y_0}\subseteq \setA_{i_0}$, \eqref{eq:Jhz0} finally follows from    \eqref{eq:setAi}.\qed

\begin{lem}\label{lem.detnonzero} 
Let $\setA\in\colB(\reals^s)$ be of positive  Lebesgue measure, $h\colon\reals^s\to \reals^m$, with $s\leq m$,  real analytic of $s$-dimensional Jacobian $Jh\not\equiv 0$, and $f\colon\reals^m\to \reals^n$   real analytic. If $f$ is one-to-one on $h(\setA)$, then
 $D (f\circ h)(\vecz)\neq\veczero$ for  $\lebmeasure^s$-a.a. $\vecz\in\setA$.
\end{lem}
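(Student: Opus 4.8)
The plan is to argue by contradiction. Put $\psi:=f\circ h\colon\reals^s\to\reals^n$, which is real analytic as a composition of real analytic mappings by Corollary \ref{lem:analyticcomp}; consequently each partial derivative $\partial\psi_i/\partial z_j$, with $i\in\{1,\dots,n\}$ and $j\in\{1,\dots,s\}$, is real analytic as well. Suppose, towards a contradiction, that the set $\setZ:=\{\vecz\in\setA: D\psi(\vecz)=\veczero\}$ has positive Lebesgue measure.

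The first step is to observe that $\setZ$ is contained in the common zero set of the finitely many real analytic functions $\partial\psi_i/\partial z_j$. Hence each of these functions vanishes on a set of positive Lebesgue measure, and therefore vanishes identically on $\reals^s$ by Lemma \ref{lem:vanishrealanalytic}. Thus $D\psi\equiv\veczero$ on $\reals^s$, and since $\reals^s$ is connected, $\psi=f\circ h$ is constant, say $\psi\equiv\vecc_0$.

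The second step invokes the injectivity hypothesis: for any $\vecz_1,\vecz_2\in\setA$ we have $f(h(\vecz_1))=f(h(\vecz_2))=\vecc_0$ with $h(\vecz_1),h(\vecz_2)\in h(\setA)$, so $f$ being one-to-one on $h(\setA)$ forces $h(\vecz_1)=h(\vecz_2)$; that is, $h$ is constant on $\setA$, say $h|_\setA\equiv\vecp$. Then each component $h_k-p_k$ is real analytic on $\reals^s$ and vanishes on $\setA$, a set of positive Lebesgue measure, so Lemma \ref{lem:vanishrealanalytic} again gives $h_k\equiv p_k$; hence $h$ is constant on all of $\reals^s$, which yields $Dh(\vecz)=\matzero$ and therefore $Jh(\vecz)=0$ for every $\vecz\in\reals^s$. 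This contradicts the hypothesis $Jh\not\equiv 0$, so $\lebmeasure^s(\setZ)=0$, i.e.\ $D(f\circ h)(\vecz)\neq\veczero$ for $\lebmeasure^s$-a.a.\ $\vecz\in\setA$.

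The argument is essentially routine and I do not anticipate a genuine obstacle; the only points requiring a moment's care are that Lemma \ref{lem:vanishrealanalytic} applies here because the domain $\reals^s$ is connected, and the passage from $D\psi\equiv\veczero$ to $\psi$ constant, which is again just connectedness of $\reals^s$.
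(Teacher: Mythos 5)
Your proof is correct and follows essentially the same contradiction argument as the paper: assume $D(f\circ h)$ vanishes on a positive-measure subset of $\setA$, upgrade this (via Lemma \ref{lem:vanishrealanalytic}, applied componentwise to the real analytic $D(f\circ h)$) to $D(f\circ h)\equiv\matzero$, conclude $f\circ h$ is constant, invoke injectivity of $f$ on $h(\setA)$ to force $h$ constant on the set, and then contradict $Jh\not\equiv 0$. The one place you are actually more careful than the paper is the final step. The paper writes that $h$ constant on $\setB$ ``thus'' gives $Jh(\vecz)=0$ for all $\vecz\in\setB$, but for an arbitrary positive-measure set $\setB$ this is not immediate -- the differential measures variation in all directions of $\reals^s$, not just within $\setB$, and $\setB$ need not contain any open set. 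You correctly interpolate the missing observation: since each component $h_k-p_k$ is real analytic and vanishes on a set of positive Lebesgue measure, Lemma \ref{lem:vanishrealanalytic} forces $h$ to be constant on all of $\reals^s$, hence $Dh\equiv\matzero$ and $Jh\equiv 0$. That small repair makes your version the cleaner rendering of the same argument; there is no genuinely different route here.
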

\begin{proof}
Towards a contradiction, suppose that $f$ is one-to-one on $h(\setA)$ and   $D (f\circ h) \equiv \veczero$ on a set $\setB\subseteq\setA$ with $\lebmeasure^{s}(\setB)>0$. 
Since $f\circ h$ is real analytic by Corollary \ref{lem:analyticcomp}, $D (f\circ h)$ is real analytic as a consequence of Lemma \ref{lem:analyticder}. It therefore follows from Lemma 
 \ref{lem:vanishrealanalytic} that $D (f\circ h)$ is identically zero. Hence, $f\circ h$ must be constant. In particular, $f\circ h$ must be constant on $\setB$ and, therefore, $f$ must be constant on $h(\setB)$. 
Since $f$ is one-to-one and constant on $h(\setB)$, it follows that $h$ must be constant on $\setB$. 
Thus, $Jh(\vecz)= 0$ for all  $\vecz\in\setB$, and Lemma \ref{lem:vanishrealanalytic}  implies  $Jh\equiv 0$.
This is a contradiction to $Jh\not\equiv 0$.
\end{proof}

\appendices

\section{Proof of Lemma \ref{LemmaExarec}}\label{ProofLemmaExarec}

\emph{Proof of \ref{exarec}}. 
Suppose that  $\setU$ is $s$-rectifiable. Then,  $\setU=\varphi(\setA)$ with $\setA\subseteq\reals^s$ compact and $\varphi$ Lipschitz. For $t\in\naturals$ with $t>s$,  take
\begin{align}
\setB=
\mleft\{
\begin{pmatrix}
\vecz\\
\veczero
\end{pmatrix} 
: \vecz\in\setA
\mright\}\subseteq\reals^{t}
\end{align}
and define  
\begin{align}
\psi\colon \setB&\to\reals^m\\ 
\begin{pmatrix}
\vecz\\
\veczero
\end{pmatrix}
&\mapsto\varphi(\vecz).
\end{align}
Then, 
$\setU=\psi(\setB)$  is $t$-rectifiable as a consequence of $\setB$ compact and $\psi$  Lipschitz. 

\emph{Proof of \ref{exar1}}. 
First note that we can cover $\reals^s$ according to
\begin{align}
\reals^s=\bigcup_{k\in\naturals} \setA_k,  
\end{align}
with $\setA_k\subseteq \reals^s$  compact for all  $k\in\naturals$ (take, for example, $\setA_k=\overline{\setB_s(\veczero,k)}$). 
Setting, for every $i,k\in\naturals$, $\varphi_{i,k}=\varphi_i|_{\setA_k}$ (locally Lipschitz mappings are Lipschitz on compact sets by Lemma \ref{lem:lipcompact}), 
we can write 
\begin{align}
\setV=\bigcup_{i,k\in\naturals}\varphi_{i,k}(\setA_k),  
\end{align}
which implies that $\setV$ is countably $s$-rectifiable. 

\emph{Proof of \ref{exaProdrec}}. 
Suppose that $\setU$ is countably $s$-rectifiable and $\setV$ is countably $t$-rectifiable. Then,  we 
can write 
\begin{align}
\setU&=\bigcup_{i\in\naturals}\varphi_{i}(\setA_i)\\
\setV&=\bigcup_{j\in\naturals}\psi_{j}(\setB_j),
\end{align}
where the $\setA_i\subseteq\reals^s$ and  the $\setB_j\subseteq\reals^t$  are compact and  the $\varphi_i\colon \reals^s\to\reals^m$ and the  $\psi_j\colon \reals^t\to\reals^n$ are Lipschitz.  
Thus, 
\begin{align}
\setW
&=\bigcup_{i,j\in\naturals}\theta_{i,j}(\setC_{i,j}), 
\end{align}
where we defined  
\begin{align}
\theta_{i,j}\colon \reals^{s+t}&\to \reals^{m+n}\\
\tp{(\veca\ \vecb)}&\mapsto \tp{(\varphi_i(\veca)\ \psi_j(\vecb))}
\end{align}
and set 
\begin{align}
\setC_{i,j}=\{\tp{(\veca\ \vecb)}: \veca\in \setA_i, \vecb\in\setB_j\}. 
\end{align}
Now, the $\theta_{i,j}$ are Lipschitz because the  $\varphi_i$ and the $\psi_j$ are Lipschitz for all $i,j\in\naturals$, 
and the $\setC_{i,j}$ are compact by Tychonoff's theorem \cite[Theorem 4.42]{fo99} thanks to  $\setA_i$ and $\setB_j$ compact for all $i,j\in\naturals$.  
Therefore,  $\setW$ is countably $(s+t)$-rectifiable.

\emph{Proof of \ref{exaC1}}. 
Let $\setM$ be an $s$-dimensional $C^1$-submanifold of $\reals^m$. By \cite[Definition 5.3.1]{krpa08}, we can write 
\begin{align}
\setM=\bigcup_{\vecx\in\setM}\varphi_\vecx(\setU_\vecx), 
\end{align}
where, for every $\vecx\in\setM$, $\setU_\vecx\subseteq\reals^s$ is open, and $\varphi_\vecx\colon\setU_\vecx\to \reals^m$ is $C^1$ and satisfies 
$\vecx\in\varphi_\vecx(\setU_\vecx)$ and 
$\varphi_\vecx(\setU_\vecx)=\setV_\vecx\cap\setM$ with $\setV_\vecx\subseteq\reals^m$ open. 
Now, there must exist a countable set $\{\vecx_i: i\in\naturals\}\subseteq \setM$
such that 
\begin{align}
\setM=\bigcup_{i\in\naturals}\setV_{\vecx_i}\cap\setM.  
\end{align}
For if such a countable set does not exist, the open set 
\begin{align}
\setV=\bigcup_{\vecx\in\setM}\setV_{\vecx}
\end{align}
would not admit a countable subcover, which would contradict that $\setV$   as an open set in $\reals^m$ is 
 Lindel\"of \cite[Definition 5.6.19, Proposition 5.6.22]{so14}.  
With the  countable set $\{\vecx_i: i\in\naturals\}\subseteq \setM$ we can now write  
\begin{align}\label{eq:MUi}
\setM=\bigcup_{i\in\naturals}\varphi_{i}(\setU_{i}),   
\end{align}
where we set $\varphi_i=\varphi_{\vecx_i}$ and  $\setU_{i}=\setU_{\vecx_i}$. 
Now fix $i\in\naturals$ arbitrarily. 
As $\setU_{i}$ is open, for every $\vecu_i
\in\setU_{i}$,  there exists an $r_{\vecu_i}>0$ such that 
\begin{align}\label{eq:Uiopen}
\setB^s\mleft(\vecu_{i},r_{\vecu_i}\mright)\subseteq \setU_i.
\end{align}
We can thus write  
\begin{align}
\setU_i= \bigcup_{\vecu_i\in\setU_i}\setB^s\mleft(\vecu_{i},\frac{r_{\vecu_i}}{2}\mright).
\label{eq:chainineq}
\end{align} 
Since $\setU_i$   as an open set in $\reals^m$ is 
 Lindel\"of \cite[Definition 5.6.19, Proposition 5.6.22]{so14}, 
there exists a countable set $\{\vecu_{i,j}: j\in\naturals\}\subseteq \setU_i$ such that 
\begin{align}
\setU_i = \bigcup_{j\in\naturals}\setB^s\mleft(\vecu_{i,j},\frac{r_{i,j}}{2}\mright),
\end{align} 
where we set $r_{i,j}=r_{\vecu_{i,j}}$ for all $j\in\naturals$. 
Using  $\setB^s\mleft(\vecu_{i,j},r_{i,j}\mright)
\subseteq \setU_i$ for all $j\in\naturals$ (see \eqref{eq:Uiopen}), it follows that 
\begin{align}
\setU_i = \bigcup_{j\in\naturals}\overline{\setB^s\mleft(\vecu_{i,j},\frac{r_{i,j}}{2}\mright)}. \label{eq:Uibar}
\end{align} 
Since  $i\in\naturals$ was arbitrary, using   \eqref{eq:MUi}  and \eqref{eq:Uibar},  we get  
\begin{align}
\setM=\bigcup_{i,j\in\naturals}\varphi_{i}\mleft(\overline{\setB^s\mleft(\vecu_{i,j},\frac{r_{i,j}}{2}\mright)}\mright).    
\end{align}
Finally, all the  $\varphi_i$  as  $C^1$ mappings are locally Lipschitz and, therefore, Lipschitz on the compact sets $\overline{\setB^s\mleft(\vecu_{i,j},\frac{r_{i,j}}{2}\mright)}$ for all $j\in\naturals$, 
which establishes countable rectifiability of $\setM$.  

\emph{Proof of \ref{exasum}}.
Since every $s_i$-rectifiable set, with $s_i\leq s$, is $s$-rectifiable by \ref{exarec}, it follows that 
  $\setA$ is  a countable union of $s$-rectifiable sets and, therefore,  countably $s$-rectifiable. 
\qed

\section{Proof of Lemma \ref{lem:recsetlip}}\label{lem:recsetlipproof}
\emph{Proof of \ref{recf1}}. Suppose that  $\setU$ is $s$-rectifiable. Then,  there exist a compact set $\setA\subseteq\reals^s$ and a Lipschitz mapping $\varphi\colon \setA\to\reals^m$ such that $\setU=\varphi(\setA)$. 
As $f\circ\varphi\colon \setA\to \reals^n$ is Lipschitz owing to  Lemmata \ref{lem:lipcompact} and \ref{lem:lip},  $f(\setU)=(f\circ\varphi)(\setA)$  is $s$-rectifiable.

\emph{Proof of \ref{recf2}}. Suppose that  $\setU$ is countably $s$-rectifiable.  We can write $\setU=\bigcup_{i\in\naturals}\setU_i$, where $\setU_i$ is $s$-rectifiable for all  $i\in\naturals$. 
As $f(\setU)=\bigcup_{i\in\naturals}f(\setU_i)$ and $f(\setU_i)$ is $s$-rectifiable  for all $i\in\naturals$ by \ref{recf1}, it follows that $f(\setU)$ is countably $s$-rectifiable.

\emph{Proof of \ref{recf3}}. 
Suppose that $\setU\in\colB(\reals^m)$ is countably $(\mathscr{H}^{s},s)$-rectifiable. 
We have to show that $f(\setU)=\setA\cup\setB$, where $\setA\in\colB(\reals^n)$ is countably $(\mathscr{H}^{s},s)$-rectifiable and $\mathscr{H}^{s}(\setB)=0$. 
As $\setU\in\colB(\reals^m)$ is  countably $(\mathscr{H}^{s},s)$-rectifiable, \cite[Lemma 15.5]{ma99} implies that 
 $\mathscr{H}^{s}|_{\setU}$ is $\sigma$-finite. We can therefore employ Lemma \ref{lem:inner}  to decompose 
\begin{align}
\setU=\setV_0\cup\bigcup_{j\in\naturals}\setV_j, 
\end{align}
where $\mathscr{H}^{s}(\setV_0)=\mathscr{H}^{s}|_{\setU}(\setV_0)=0$ and $\setV_j\subseteq\reals^m$ is compact for all $j\in\naturals$. This decomposition allows us to write $f(\setU)=\setA\cup\setB$, where
\begin{align}
\setA&=\bigcup_{j\in\naturals}f(\setV_j)\\
\setB&=f(\setV_0).
\end{align}
Now, thanks to Lemma \ref{lem:fK}, the $f(\setV_j)$ are  compact. 
Therefore, $\setA$ 
as a countable union of compact sets is Borel. Furthermore,  $\mathscr{H}^{s}(\setB)=0$ because of $\mathscr{H}^{s}(\setV_0)=0$ and Lemma  \ref{lem:fH0} below.  
It remains to show that  $\setA$  is countably $(\mathscr{H}^{s},s)$-rectifiable. Since $\setU$ is  countably $(\mathscr{H}^{s},s)$-rectifiable, there exists a countably $s$-rectifiable set $\setU_1$ such that $\mathscr{H}^{s}(\setU\mysetminus\setU_1)=0$. Furthermore, as  $\setU_1$ is countably $s$-rectifiable, $f(\setU_1)$ is countably $s$-rectifiable by \ref{recf2}. Finally, 
\begin{align}
\mathscr{H}^{s}(\setA\mysetminus f(\setU_1))
&\leq \mathscr{H}^{s}(f(\setU)\mysetminus f(\setU_1))\label{eq:Arec1}\\
&\leq \mathscr{H}^{s}(f(\setU\mysetminus \setU_1))\label{eq:Arec2}\\
&=0,\label{eq:Arec3}
\end{align}
where  \eqref{eq:Arec1} follows from the monotonicity of $\mathscr{H}^{s}$ and $f(\setU)=\setA\cup\setB$,  
\eqref{eq:Arec2}  is by  monotonicity of $\mathscr{H}^{s}$ and $f(\setU)\mysetminus f(\setU_1)\subseteq f(\setU\mysetminus\setU_1)$, 
 and \eqref{eq:Arec3} follows from  Lemma \ref{lem:fH0} below. This proves that 
$\setA$ is countably $(\mathscr{H}^{s},s)$-rectifiable, thereby concluding the proof. \qed

\begin{lem}\label{lem:fH0}
If $\setU\subseteq\reals^m$ with $\mathscr{H}^{s}(\setU)=0$ and $f\colon\reals^m\to\reals^n$ is locally Lipschitz, then $\mathscr{H}^{s}(f(\setU))=0$.
\end{lem}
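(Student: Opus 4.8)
If $\setU\subseteq\reals^m$ with $\mathscr{H}^{s}(\setU)=0$ and $f\colon\reals^m\to\reals^n$ is locally Lipschitz, then $\mathscr{H}^{s}(f(\setU))=0$.

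The plan is to reduce to the globally Lipschitz case by a countable exhaustion argument, and then invoke Property \ref{HL} of Lemma \ref{lem:prophausdorff}. First I would cover $\reals^m$ by the countably many closed balls $\setK_k=\overline{\setB_m(\veczero,k)}$, $k\in\naturals$, and write $\setU=\bigcup_{k\in\naturals}(\setU\cap\setK_k)$, so that $f(\setU)=\bigcup_{k\in\naturals}f(\setU\cap\setK_k)$. Since $f$ is locally Lipschitz and each $\setK_k$ is compact, Lemma \ref{lem:lipcompact} gives that $f|_{\setK_k}$ is Lipschitz, say with constant $L_k$; in particular $f|_{\setU\cap\setK_k}$ is Lipschitz with constant $L_k$.

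Next I would apply Property \ref{HL} of Lemma \ref{lem:prophausdorff} to the Lipschitz map $f|_{\setU\cap\setK_k}$, which yields
\begin{align}
\mathscr{H}^{s}\big(f(\setU\cap\setK_k)\big)\leq L_k^{\,s}\,\mathscr{H}^{s}(\setU\cap\setK_k)\leq L_k^{\,s}\,\mathscr{H}^{s}(\setU)=0,
\end{align}
where the second inequality is by monotonicity of $\mathscr{H}^{s}$ and the last equality is the hypothesis $\mathscr{H}^{s}(\setU)=0$. Finally, by countable subadditivity of $\mathscr{H}^{s}$,
\begin{align}
\mathscr{H}^{s}\big(f(\setU)\big)\leq\sum_{k\in\naturals}\mathscr{H}^{s}\big(f(\setU\cap\setK_k)\big)=0,
\end{align}
which gives $\mathscr{H}^{s}(f(\setU))=0$.

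There is no real obstacle here; the only point requiring a little care is that local Lipschitzness alone does not directly give a bound on Hausdorff measure of the image (one genuinely needs a global Lipschitz constant on each piece), which is exactly why the compact exhaustion and Lemma \ref{lem:lipcompact} are invoked. Everything else is a routine combination of monotonicity, countable subadditivity, and the scaling behaviour of Hausdorff measure under Lipschitz maps.
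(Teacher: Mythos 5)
Your proof is correct and follows essentially the same route as the paper's: exhaust $\reals^m$ by countably many (closed) balls, use Lemma \ref{lem:lipcompact} to get a Lipschitz bound on each compact piece, apply Property \ref{HL} of Lemma \ref{lem:prophausdorff}, and finish by monotonicity and countable subadditivity of $\mathscr{H}^{s}$. The only cosmetic difference is that you intersect with closed balls $\overline{\setB_m(\veczero,k)}$ while the paper uses open balls $\setB_m(\veczero,l)$; in both cases the Lipschitz constant comes from the closed ball, so the arguments coincide.
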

\begin{proof}
We employ the following chain of arguments:  
\begin{align}
\mathscr{H}^{s}(f(\setU))
& =\mathscr{H}^{s}\Big(f\Big(\setU\cap \bigcup_{l\in\naturals}\setB_m(\veczero,l)\Big)\Big)\label{eq:step1fU0}\\
& =\mathscr{H}^{s}\Big(\bigcup_{l\in\naturals}f(\setB_m(\veczero,l)\cap\setU)\Big)\label{eq:step2fU0}\\
&\leq   \sum_{l\in\naturals}\mathscr{H}^{s}(f(\setB_m(\veczero,l)\cap\setU))\label{eq:step3fU0}\\
&\leq   \sum_{l\in\naturals}L_l^s\mathscr{H}^{s}(\setB_m(\veczero,l)\cap\setU)\label{eq:step4fU0}\\
&\leq   \sum_{l\in\naturals}L_l^s\mathscr{H}^{s}(\setU)\label{eq:step5fU0}\\
&=0,\label{eq:step7fU0}
\end{align} 
where \eqref{eq:step3fU0} follows from the countable subadditivity of $\mathscr{H}^{s}$, 
in \eqref{eq:step4fU0} we applied Property  \ref{HL} of Lemma \ref{lem:prophausdorff}, where, for every $l\in\naturals$,  $L_l$ denotes  the Lipschitz constant of $f|_{\overline{\setB_m(\veczero,l)}}$, and in \eqref{eq:step5fU0} we used the monotonicity of $\mathscr{H}^{s}$.
\end{proof}

\section{Proof of Lemma \ref{lem:uniques}}\label{proof:uniques}
Towards a contradiction, suppose that there exist $r,t\in\naturals$ with $r\neq t$ such that $\rvecx$ is $r$-rectifiable and $t$-rectifiable. 
We can assume, w.l.o.g.,  that $r< t$. Now,   
Lemma \ref{lem:recsup} implies the existence of 
\begin{enumerate}[label=\roman*)]
\item a countably $r$-rectifiable set $\setU$ satisfying $\opP[\rvecx\in\setU]=1$ and 
\item a countably $t$-rectifiable set $\setV$ satisfying $\opP[\rvecx\in\setV]=1$. 
\end{enumerate}
With  Definition \ref{Dfn:recset}, we can  conclude that there exist 

\begin{enumerate}[label=\roman*)]
\item
 compact  sets $\setA_i\subseteq\reals^r$ and  Lipschitz  mappings $\varphi_i\colon\setA_i\to \reals^{m}$, $i\in\naturals$, such that 
\begin{align}
\setU=\bigcup_{i\in\naturals}\varphi_i(\setA_i), 
\end{align}
and 
\item 
compact sets $\setB_j\subseteq\reals^t$ and  Lipschitz  mappings $\psi_j\colon\setB_j\to \reals^{m}$, $j\in\naturals$, such that 
\begin{align}
\setV=\bigcup_{j\in\naturals}\psi_j(\setB_j).
\end{align}
\end{enumerate}
The union bound now  yields 
\begin{align}
\opP[\rvecx\in\setU\cap\setV]
&=\opP\!\Big[\rvecx\in\bigcup_{i\in\naturals}\bigcup_{j\in\naturals}\varphi_i(\setA_i)\cap\psi_j(\setB_j)\Big]\label{eq:unionuniquea}\\
&\leq \sum_{i\in\naturals}\sum_{j\in\naturals}\opP\mleft[\rvecx\in\varphi_i(\setA_i)\cap\psi_j(\setB_j)\mright].\label{eq:unionunique}
\end{align}
Since $\opP[\rvecx\in\setU\cap\setV]=1$, which follows from $\opP[\rvecx\in\setU]=1$ and $\opP[\rvecx\in\setV]=1$, 
 \eqref{eq:unionuniquea}--\eqref{eq:unionunique} guarantee the existence of   an  $i_0$ and a $j_0$, both in $\naturals$, such that $\mu_\rvecx(\varphi_{i_0}(\setA_{i_0})\cap\psi_{j_0}(\setB_{j_0}))>0$.  
As $\mu_\rvecx\ll \mathscr{H}^{t}|_\setV$ by the $t$-rectifiability of $\rvecx$ and $\mathscr{H}^{t}|_\setV\ll\mathscr{H}^{t}$ by the monotonicity of $\mathscr{H}^{t}$, it follows that  $\mathscr{H}^{t}(\varphi_{i_0}(\setA_{i_0})\cap\psi_{j_0}(\setB_{j_0}))>0$. Property \ref{Hjump} of Lemma \ref{lem:prophausdorff}  therefore implies  (recall that $r<t$ by assumption) $\mathscr{H}^{r}(\varphi_{i_0}(\setA_{i_0})\cap\psi_{j_0}(\setB_{j_0}))=\infty$. 
The monotonicity of $\mathscr{H}^{r}$  then yields 
\begin{align}
\mathscr{H}^{r}(\varphi_{i_0}(\setA_{i_0}))
&\geq \mathscr{H}^{r}(\varphi_{i_0}(\setA_{i_0})\cap\psi_{j_0}(\setB_{j_0}))\\
&=\infty.\label{eq:continf} 
\end{align}
But we also have 
\begin{align}
\mathscr{H}^{r}(\varphi_{i_0}(\setA_{i_0}))
&\leq L^r\mathscr{H}^{r}(\setA_{i_0})\label{eq:Hstep1}\\
&= L^r\lebmeasure^{r}(\setA_{i_0})\label{eq:Hstep2}\\
&<\infty,\label{eq:Hstep3}   
\end{align}
where \eqref{eq:Hstep1} follows from  Property  \ref{HL} of Lemma \ref{lem:prophausdorff} with $L$ the Lipschitz constant of $\varphi_{i_0}$, 
\eqref{eq:Hstep2} is by Property \ref{HBorel} of Lemma \ref{lem:prophausdorff}  and the fact that $\setA_{i_0}$ as a compact set is in $\colB(\reals^r)$, 
and finally  \eqref{eq:Hstep3} is a consequence, again,  of $\setA_{i_0}$  being compact. This contradicts \eqref{eq:continf}  and thereby concludes the proof. 
\qed

\section{Proof of Lemma \ref{lem:convimpliesA}}\label{lem:convimpliesAproof}
Suppose that there exists  a Borel measurable mapping $g\colon \reals^{n\times m}\times \reals^n\to\reals^m$  such that  
$\opP[g(\matA,\matA\rvecx)\neq\rvecx]<1$ and set $\setU=\{\vecx\in\reals^m:g(\matA,\matA\vecx)=\vecx\}$. 
We first show that $\setU\in\colB(\reals^m)$.  
To this end, consider the mapping 
$h_{\matA}\colon \reals^m\to\reals^m$, $\vecx\mapsto g(\matA,\matA\vecx)$, 
which  as the composition of  two Borel measurable mappings, namely,  $\vecx\mapsto(\matA,\matA\vecx)$ and $g$, is Borel measurable. 
Application of Lemma \ref{lem.meascart} (with $\setX=\setY=\reals^m$, $f_1$ the identity mapping on $\reals^m$, and $f_2=h_{\matA}$) therefore establishes that 
\begin{align}
F\colon \reals^m&\to\reals^m\times \reals^m\\
\vecx&\mapsto (\vecx,h_{\matA}(\vecx))
\end{align}
is Borel measurable. 
Now,  consider the diagonal $\setD=\{(\vecx,\vecx):\vecx\in \reals^m\}$ and note that $\setD$ 
as the inverse image of $\{\veczero\}$ under the Borel measurable mapping 
$d\colon\reals^m\times\reals^m\to\reals^m$, $(\vecu,\vecv)\mapsto \vecu-\vecv$ is in $\colB(\reals^m\times\reals^m)$.    
Since $\setU=F^{-1}(\setD)$, we conclude that $\setU\in\colB(\reals^m)$. 
Now,  $\opP[\rvecx\in\setU]>0$ as  $\opP[\rvecx\notin\setU]=\opP[g(\matA,\matA\rvecx)\neq\rvecx]<1$ by assumption. It remains to show that $\matA$ is one-to-one on $\setU$. 
To this end, consider arbitrary but fixed $\vecu,\vecv\in\setU$ and suppose that $\matA\vecu=\matA\vecv$. This implies  $g(\matA,\matA\vecu)=g(\matA,\matA\vecv)$.  As $\vecu,\vecv\in\setU$ by assumption, this is only possible if $\vecu=\vecv$. Thus, $\matA$ is one-to-one on $\setU$, which concludes the proof.  
\qed

\section{Proof of Lemma \ref{lem:propanalytic}}\label{proof:propanalytic}
\emph{Proof of \ref{propra2}}.
The proof is by induction. Suppose that $\mu$ is $s$-analytic for $s\in\naturals\mysetminus\{1\}$. We have to show  that this implies $(s-1)$-analyticity of $\mu$.  
Consider $\setC\in\colB(\reals^{m})$ with $\mu(\setC)>0$. 
Then, by Definition \ref{dfn:measureanalytic}, there exist a  set $\setA\in\colB(\reals^s)$ of positive  Lebesgue measure and a real analytic mapping $h\colon\reals^s\to \reals^m$  of $s$-dimensional Jacobian $Jh\not\equiv 0$   such that $h(\setA)\subseteq\setC$.  
By Property \ref{immreal2} of Lemma  \ref{lem:immreal}, we can assume, w.l.o.g.,  
that $h|_\setA$ is an embedding. 
For each $z\in\reals$, let 
\begin{align}\label{eq:Az}
\setA_z=\{\vecv\in\reals^{s-1} : \tp{(\vecv\ z)}\in\setA\}.
\end{align} 
We now use  Fubini's Theorem to  show that there exists a $z_0\in\reals$ such that  $\lebmeasure^{s-1}(\setA_{z_0})>0$. 
Concretely, 
\begin{align}
0&<\int_{\reals^{s}} \ind{\setA}(\vecz) Jh(\vecz) \,\mathrm d \lebmeasure^{s}(\vecz)\label{eq:xx}\\
&=\int_{\reals}\mleft(\int_{\reals^{s-1}}  \ind{\setA}\big(\tp{(\vecv\ z)}\big) Jh\big(\tp{(\vecv\ z)}\big)\,\mathrm d\lebmeasure^{s-1}(\vecv)\mright)\,\mathrm d \lebmeasure^{1}(z)\label{eq:yy}\\
&=\int_{\reals}\mleft(\int_{\reals^{s-1}}\ind{\setA_z}(\vecv)Jh\big(\tp{(\vecv\ z)}\big)\,\mathrm d\lebmeasure^{s-1}(\vecv)\mright)\,\mathrm d \lebmeasure^{1}(z),
\end{align}
where \eqref{eq:xx} follows from Lemma \ref{lem.fzm}  with $\lebmeasure^{s}(\setA)>0$ and $Jh(\vecz)> 0$ for all $\vecz\in\setA$,  and in \eqref{eq:yy} we applied 
Theorem \ref{thm.fubini}. 
We conclude that there must exist a $z_0\in\reals$ such that 
\begin{align}
\int_{\reals^{s-1}}\ind{\setA_{z_0}}(\vecv)Jh\big(\tp{(\vecv\ z_0)}\big)\,\mathrm d\lebmeasure^{s-1}(\vecv)>0. 
\end{align}
Again using Lemma \ref{lem.fzm}   establishes that $\lebmeasure^{s-1}(\setA_{z_0})>0$. 
It remains to show that there exists a real analytic mapping $\tilde h\colon\reals^{s-1}\to \reals^m$  with $\tilde h(\setA_{z_0})\subseteq\setC$ and of $(s-1)$-dimensional Jacobian $J\tilde h\not\equiv 0$. 
To this end, consider  the mapping
\begin{align}
\varphi\colon\reals^{s-1}&\to \reals^s\\
\vecv&\mapsto \tp{(\vecv\ z_0)}  
\end{align}
and set $\tilde h=h\circ\varphi\colon\reals^{s-1}\to \reals^m$. 
Now, $\varphi$ is real analytic thanks to Corollary \ref{cor:poly}. Thus, 
 $\tilde h$ as a composition of  real analytic mappings is real analytic by Corollary \ref{lem:analyticcomp}. 
Furthermore, since $\varphi(\setA_{z_0})\subseteq\setA$ by \eqref{eq:Az}, it follows that   $\tilde h(\setA_{z_0})\subseteq \setC$.  
It remains to establish that $J\tilde h\not\equiv 0$.  To this end, we first note that the chain rule implies  $D\tilde h(\vecv)=Dh(\vecv,z_0)D\varphi(\vecv)$. 
Therefore, 
\begin{align}
\rank(D\tilde h(\vecv))
&=\rank(Dh(\vecv,z_0)D\varphi(\vecv))\\
&=\rank\!\big(Dh(\vecv,z_0)\tp{(\matI_{s-1}\ \veczero)}\big)\label{eq:stepIsm1}\\
&\geq s-1\quad \text{for all}\ \vecv\in\setA_{z_0},\label{eq:stepIsm2}
\end{align}
where  
\eqref{eq:stepIsm2} is by  
Lemma \ref{lem:sylvester} upon noting that   $Dh(\vecv,z_0)\in\reals^{m\times s}$, with $\rank(Dh(\vecv,z_0))=s$ (recall that  $\tp{(\vecv\ z_0)}\in\setA$ for all $\vecv\in\setA_{z_0}$  by \eqref{eq:Az} and that the $s$-dimensional Jacobian satisfies $Jh(\vecz) > 0$ for all $\vecz\in\setA$),  
and $\tp{(\matI_{s-1}\ \veczero)}\in\reals^{s\times (s-1)}$.   
Since  $\rank(D\tilde h(\vecv))\geq s-1$ and  $D\tilde h(\vecv)\in\reals^{m\times (s-1)}$, we conclude that  $J\tilde h(\vecv)>0$ for all $\vecv\in\setA_{z_0}$. Thus, $J\tilde h\not\equiv 0$, which  finalizes the proof.  

\emph{Proof of \ref{propra3}}.
Suppose that $\mu$ is $s$-analytic and  consider $\setC\in\colB(\reals^{m})$ with $\mu(\setC)>0$. We have to show that $\mathscr{H}^{s}(\setC)>0$. 
By Definition \ref{dfn:measureanalytic}, there exist a set $\setA\in\colB(\reals^s)$ of positive  Lebesgue measure 
and a real analytic mapping $h\colon\reals^s\to \reals^m$  of $s$-dimensional Jacobian $Jh\not\equiv 0$   such that $h(\setA)\subseteq\setC$.  
By Property \ref{immreal2} of Lemma  \ref{lem:immreal}, we can assume, w.l.o.g., that $h|_\setA$ is an embedding. 
We now use  the area formula Corollary  \ref{thm:area} 
 to conclude that $\mathscr{H}^{s}(\setC)>0$. 
Concretely, 
\begin{align}
0&< \int_\setA Jh(\vecz)\,\mathrm d\lebmeasure^{s}(\vecz)\label{eq:step1rich}\\
&=\mathscr{H}^{s}\big(h(\setA)\big)\label{eq:step3rich}\\
&\leq \mathscr{H}^{s}(\setC),\label{eq:step4rich}
\end{align}
where \eqref{eq:step1rich} is by Lemma \ref{lem.fzm}, $\lebmeasure^s(\setA)>0$,  and $Jh(\vecz)> 0$ for all $\vecz\in\setA$,   
 in \eqref{eq:step3rich} we applied the area formula Corollary \ref{thm:area} upon noting that $h|_\setA$ is one-to-one  as an embedding and locally Lipschitz by real analyticity of  $h$, 
 and \eqref{eq:step4rich} is by   monotonicity of $\mathscr{H}^{s}$ together with  $h(\setA)\subseteq\setC$.

\emph{Proof of \ref{propra4}}.
,,$\Rightarrow$'': Suppose that $\mu$ is $s$-analytic and there exists a set $\setU\subseteq\reals^m$ such that $\mu=\mu|_\setU$ and $\mathscr{H}^{s}|_{\setU}$ is $\sigma$-finite. 
Since $\mathscr{H}^{s}$ is Borel regular (see Definition \ref{Def.meas}), we may assume, w.l.o.g., that  $\setU\in\colB(\reals^m)$.
By $\sigma$-finiteness of $\mathscr{H}^{s}|_{\setU}$, there exist sets $\setV_i\in\colB(\reals^m)$, $i\in\naturals$, such that 
\begin{align}
\setU=\bigcup_{i\in\naturals} \setU\cap \setV_i
\end{align}
and $\mathscr{H}^{s}(\setU\cap\setV_i)<\infty$ for all $i\in\naturals$. 
For every $i\in\naturals$,  
since  $\setU\cap \setV_i$ as the intersection of two Borel sets is in $\colB(\reals^m)$ and   $\mathscr{H}^{s}(\setU\cap\setV_i)<\infty$, we can write \cite[p. 83]{amfupa00}
$\setU\cap \setV_i=\setW_i\cup\widetilde{\setW}_i$, where  
$\setW_i$ is countably $(\mathscr{H}^{s},s)$-rectifiable and 
$\widetilde{\setW}_i$ is purely $\mathscr{H}^{s}$-unrectifiable, i.e.,   
$\mathscr{H}^{s}(\widetilde{\setW}_i\cap\setE)=0$ for all countably $(\mathscr{H}^{s},s)$-rectifiable sets $\setE\subseteq\reals^m$. 
This allows us to decompose $\setU$ according to $\setU=\setW\cup\widetilde{\setW}$ with 
$\setW=\bigcup_{i\in\naturals}\setW_i$ and $\widetilde{\setW}=\bigcup_{i\in\naturals}\widetilde{\setW}_i$. 
Since all the $\setW_i$ are countably $(\mathscr{H}^{s},s)$-rectifiable, so is $\setW$; and since all the $\widetilde{\setW}_i$ are purely $\mathscr{H}^{s}$-unrectifiable, so is $\widetilde{\setW}$. 

As $\mu=\mu|_\setU$ by assumption, and $\setU=\setW\cup\widetilde{\setW}$, it remains to show 
that $\mu(\widetilde{\setW})=0$ to conclude that $\mu=\mu|_\setW$ for the countably $(\mathscr{H}^{s},s)$-rectifiable set $\setW$. 
Towards a contradiction, suppose that  $\mu(\widetilde{\setW})>0$. Analyticity of $\mu$ then implies 
that there exist 
a set $\setA\in\colB(\reals^{s})$ of positive  Lebesgue measure
and 
a real analytic mapping $h\colon\reals^s\to \reals^m$ of $s$-dimensional Jacobian $Jh\not\equiv 0$  such that $h(\setA)\subseteq\widetilde{\setW}$. By  countable subadditivity of $\lambda^s$, we can assume, w.l.o.g., that $\setA$ is bounded;  
and by Property \ref{immreal2} in Lemma \ref{lem:immreal}, we can assume, w.l.o.g., that $h|_\setA$ is an embedding. 
It follows that  
\begin{align}
0&< \int_\setA Jh(\vecz)\,\mathrm d\lebmeasure^{s}(\vecz)\label{eq:step1richa}\\
&=\mathscr{H}^{s}\big(h(\setA)\big),\label{eq:step3richa}
\end{align}
where \eqref{eq:step1richa} is by Lemma \ref{lem.fzm}, $\lebmeasure^s(\setA)>0$,  and $Jh(\vecz)> 0$ for all $\vecz\in\setA$, and   
 in \eqref{eq:step3richa} we applied the area formula Corollary \ref{thm:area} upon noting that $h|_\setA$ is one-to-one as an embedding and locally Lipschitz by real analyticity of  $h$.  
Moreover, as $\mathscr{H}^{s}$ is Borel regular (see Property \ref{Def.br} in Definition \ref{Def.meas}), there must exist a set 
$\setC\in\colB(\reals^m)$ with $h(\setA)\subseteq \setC$ and $\mathscr{H}^{s}(\setC\,\mysetminus\, h(\setA))=0$. It follows that  $\setC$ is countably $(\mathscr{H}^{s},s)$-rectifiable 
as  $\mathscr{H}^{s}(\setC\mysetminus h(\overline{\setA}))=0$ and $h$ is Lipschitz on the compact set $\overline{\setA}$ by Lemma \ref{lem:lipcompact}. 
But this implies 
\begin{align}
\mathscr{H}^{s}(\widetilde{\setW}\cap\setC)
&\geq \mathscr{H}^{s}(h(\setA)\cap\setC)\label{eq:CC1}\\
&=\mathscr{H}^{s}(h(\setA))\label{eq:CC2}\\
&>0, 
\end{align}  
which is not possible as $\widetilde{\setW}$  is purely $\mathscr{H}^{s}$-unrectifiable, thereby concluding the proof.\\ 
,,$\Leftarrow$'': Suppose that there exists a countably  $(\mathscr{H}^{s},s)$-rectifiable set $\setW\subseteq\reals^m$ such that $\mu=\mu|_\setW$. Since $\mathscr{H}^{s}$ is Borel regular (see Definition \ref{Def.meas}), we may assume, w.l.o.g., that  $\setW\in\colB(\reals^m)$.
As this $\setW$ is  countably $(\mathscr{H}^{s},s)$-rectifiable, \cite[Lemma 15.5]{ma99} implies that $\mathscr{H}^{s}|_{\setW}$ is $\sigma$-finite.  
\qed

\section{Proof of Lemma \ref{lemancomp}}\label{proof:lemancomp}
Suppose that $\rvecx\in\reals^m$ is $s$-analytic and  $\rvecu=f(\rvecx)$, where $f\colon\reals^m\to\reals^k$ is a real analytic immersion,  and  consider $\setC\in\colB(\reals^{k})$ with  $\mu_\rvecu(\setC)>0$. 
We have to show that there exist 
a set $\setA\in\colB(\reals^s)$ of positive  Lebesgue measure 
and a real analytic mapping $g\colon\reals^{s}\to \reals^{k}$ of $s$-dimensional Jacobian $Jg\not\equiv 0$    such that $g(\setA)\subseteq\setC$.  
Set $\setD=f^{-1}(\setC)\in\colB(\reals^{m})$. 
Since $\mu_\rvecx(\setD)=\mu_\rvecu(\setC)>0$ and $\rvecx$ is $s$-analytic, there exist 
a set $\setA\in\colB(\reals^s)$ of positive  Lebesgue measure and 
a real analytic mapping $h\colon\reals^{s}\to \reals^{m}$ of $s$-dimensional Jacobian $Jh\not\equiv 0$   such that $h(\setA)\subseteq\setD$. 
We set $g=f\circ h$. 
Now,  $g(\setA)=f(h(\setA))\subseteq f(\setD)\subseteq\setC$. 
Furthermore, $g$  as  the composition of real analytic mappings is real analytic by Corollary \ref{lem:analyticcomp}.  
It remains to show that $Jg\not\equiv 0$. 
To this end, we first note that the chain rule implies 
$Dg(\vecz)=(Df)(h(\vecz))Dh(\vecz)$. 
Since $Jh\not\equiv 0$, there exists a $\vecz_0\in\reals^s$ such that $Jh(\vecz_0)\neq 0$. Thus
\begin{align}
\rank(Dh(\vecz_0))=s. \label{eq:rank2} 
\end{align} 
Now, as $f$ is an immersion, it follows that $k\geq m$ and $J\!f>0$. Thus,  
\begin{align}
\rank((Df)(h(\vecz_0)))=m.\label{eq:rank3}
\end{align}
Applying Lemma \ref{lem:sylvester} to 
$(Df)(h(\vecz_0))\in\reals^{k\times m}$ and $Dh(\vecz_0)\in\reals^{m\times s}$ and using 
\eqref{eq:rank2} and \eqref{eq:rank3} establishes  that $\rank(Dg(\vecz_0))\geq s$, which  in turn implies  $Jg(\vecz_0)\neq 0$ as $Dg(\vecz_0)\in\reals^{k\times s}$.  
\qed

\section{Proof of Lemma \ref{auxlemma1}}\label{auxlemma1proof}
Suppose that $\rveca\in\reals^k$ and $\rvecb\in\reals^l$ with $\mu_\rveca\times\mu_\rvecb\ll\lambda^{k+l}$, set 
$\rvecx=\rveca\otimes\rvecb$, and consider  $\setC\in\colB(\reals^{kl})$ with $\mu_\rvecx(\setC)>0$. 
We have to show that there exist 
a set $\setA\in\colB(\reals^{k+l-1})$ of positive  Lebesgue measure  and 
a real analytic mapping $h\colon\reals^{k+l-1}\to \reals^{kl}$ of $(k+l-1)$-dimensional Jacobian $Jh\not\equiv 0$   such that $h(\setA)\subseteq\setC$. 
Let   
\begin{align}
\setE=\{\tp{(\tp{\veca}\ \tp{\vecb})}:\veca\in\reals^{k}, \vecb\in\reals^{l}, \veca\otimes\vecb\in\setC\}.\label{eq:defE}
\end{align}
Since $\setC\in\colB(\reals^{kl})$ and $\otimes$ is Borel measurable,   $\setE$ as the inverse image of $\setC$ under $\otimes$ is Borel measurable. Furthermore, as $\rvecx=\rveca\otimes\rvecb$,  it follows that   
\begin{align}
(\mu_\rveca\times\mu_\rvecb)(\setE)
&=\mu_\rvecx(\setC)\\
&>0, 
\end{align}
which implies  $\lebmeasure^{k+l}(\setE)>0$ as  $\mu_\rveca\times\mu_\rvecb\ll\lebmeasure^{k+l}$ by assumption. 
Using Corollary \ref{cor.fubini}, we can write  
\begin{align}
\lebmeasure^{k+l}(\setE)=\int_\reals \lebmeasure^{k+l-1}(\setE_z)\,\mathrm d \lebmeasure^{1}(z),\label{eq:step2exaanalytic}
\end{align}
where, for each $z\in\reals$, 
\begin{align}
\setE_z=\big\{\tp{\big(\tp{\veca}\ \tp{\vecv}\big)}: \veca\in\reals^{k}, \vecv\in\reals^{l-1}, \tp{(\tp{\veca}\ \tp{\vecv} z)}\in\setE\big\}.  
\end{align}
As $\lebmeasure^{k+l}(\setE)>0$ we can conclude that there must exist a $z_0\in\reals\mysetminus\{0\}$  such that $\lebmeasure^{k+l-1}(\setE_{z_0})>0$. We set $\setA=\setE_{z_0}$. 
Next, we define the mapping  
\begin{align}
h\colon\reals^{k+l-1}&\to \reals^{kl}\label{eq:mapha}\\
\tp{(c_1 \dots c_{k+l-1})} & \mapsto \tp{(c_1 \dots c_{k})}\otimes \tp{(c_{k+1} \dots c_{k+l-1}\ z_0)},\label{eq:maph}
\end{align}
which  is real analytic thanks to Corollary \ref{cor:poly},  
and we write 
\begin{align}
\setA
&=\big\{\tp{\big(\tp{\veca}\ \tp{\vecv}\big)}: \veca\in\reals^{k}, \vecv\in\reals^{l-1}, \tp{(\tp{\veca}\ \tp{\vecv} z_0)}\in\setE\big\}\\
&=\big\{\tp{\big(\tp{\veca}\ \tp{\vecv}\big)}: \veca\in\reals^{k}, \vecv\in\reals^{l-1}, \veca\otimes \tp{(\tp{\vecv}\ z_0)}\in\setC\big\}\label{eq:useE}\\
&=h^{-1}(\setC),\label{eq:useh}
\end{align}
where 
\eqref{eq:useE} follows from  \eqref{eq:defE}, and 
\eqref{eq:useh} is by \eqref{eq:mapha}--\eqref{eq:maph}.    
By construction,  $h(\setA)\subseteq\setC$.
Furthermore,  $\setA$ as  the inverse image of  $\setC\in\colB(\reals^{kl})$ under a real analytic and, therefore, Borel measurable mapping is in $\colB(\reals^{k+l-1})$.   
 It remains to show that there exist an $\veca_0\in\reals^k$ and a $\vecv_0\in\reals^{l-1}$ such that 
\begin{align}
&Jh\Big(\tp{\big(\tp{\veca_0}\ \tp{\vecv_0}\big)}\Big)\\
&=\sqrt{\det\Big(\tp{\Big(Dh\Big(\tp{\big(\tp{\veca_0}\ \tp{\vecv_0}\big)}\Big)\Big)}Dh\Big(\tp{\big(\tp{\veca_0}\ \tp{\vecv_0}\big)}\Big)\Big)}\\
&> 0. 
\end{align}
This will be accomplished by showing that there exist an $\veca_0\in\reals^k$ and a $\vecv_0\in\reals^{l-1}$ such that 
\begin{align}\label{eq:rankdiff}
\rank\!\Big(Dh\Big(\tp{\big(\tp{\veca_0}\ \tp{\vecv_0}\big)}\Big)\Big)=k+l-1. 
\end{align}
Now,   
\begin{align}
Dh\Big(\tp{\big(\tp{\veca}\ \tp{\vecv}\big)}\Big)=
\begin{pmatrix}
\vecv   &\veczero&\dots   &\veczero&\veczero&a_1\matI_{l-1}\\
z_0     &0       &\dots   &0       &0       &\veczero\\
\veczero&\vecv   &\dots   &\veczero&\veczero&a_2\matI_{l-1}\\
0       &z_0     &\dots   &0       &0       &\veczero\\
\vdots  &\vdots  &\vdots  &\vdots  &\vdots  &\vdots\\
\veczero&\veczero&\dots   &\vecv   &\veczero&a_{k-1}\matI_{l-1}\\
0       &0       &\dots   &z_0     &0       &\veczero\\
\veczero&\veczero&\dots   &\veczero&\vecv   &a_{k}\matI_{l-1}\\
0       &0       &\dots   &0       &z_0     &\veczero
\end{pmatrix}
\end{align}
for general $\veca\in\reals^k$ and $\vecv\in\reals^{l-1}$, and  
\begin{align}
Dh\Big(\tp{\big(\tp{\veca_0}\ \tp{\vecv_0}\big)}\Big)=
\begin{pmatrix}
\veczero&\veczero&\dots   &\veczero&\veczero&\matI_{l-1}\\
z_0     &0       &\dots   &0       &0       &\veczero\\
\veczero&\veczero&\dots   &\veczero&\veczero&\matI_{l-1}\\
0       &z_0     &\dots   &0       &0       &\veczero\\
\vdots  &\vdots  &\vdots  &\vdots  &\vdots  &\vdots\\
\veczero&\veczero&\dots   &\veczero&\veczero&\matI_{l-1}\\
0       &0       &\dots   &z_0     &0       &\veczero\\
\veczero&\veczero&\dots   &\veczero&\veczero&\matI_{l-1}\\
0       &0       &\dots   &0       &z_0     &\veczero
\end{pmatrix}
\end{align}
for the specific choices $\veca_0=\tp{(1\dots 1)} \in\reals^k$ and $\vecv_0=\tp{(0\dots 0)} \in\reals^{l-1}$. 
Since the $(kl)\times(k+l-1)$ matrix  $Dh\big(\tp{(\tp{\veca_0}\ \tp{\vecv_0})}\big)$ has 
the regular $(k+l-1)\times (k+l-1)$ submatrix (recall that $\vecz_0\neq 0$)
\begin{align}
\begin{pmatrix}
\matzero&\matI_{l-1}\\
z_0 \matI_{k}&\matzero
\end{pmatrix},
\end{align}
\eqref{eq:rankdiff} indeed holds, which  concludes the proof. 
\qed 

\section{Tools from (Geometric) Measure Theory}\label{app:geo}

In this appendix, we state some basic definitions and results from measure theory and geometric measure theory  used throughout the paper.  
For an excellent in-depth treatment of geometric measure theory, the interested reader is referred to \cite{krpa08,evga15,fe78,fed69}. 


\subsection{Preliminaries from Measure Theory}\label{app:geoA}
\begin{dfn}\cite[Definition 1.2.1]{krpa08}\label{dfn:measure}
A measure (sometimes called outer measure, see \cite[Remark 1.2.6]{krpa08}) on a nonempty set $\setX$ is a nonnegative function $\mu$ defined on all subsets of $\setX$ with 
the following properties:
\begin{enumerate}[label=\roman*)]
\item $\mu(\emptyset)=0$.
\item Monotonicity: 
\begin{align}\label{eq:mon}
\mu(\setA)&\leq \mu(\setB)\quad \text{for all}\ \setA\subseteq \setB\subseteq\setX.
\end{align}
\item Countable subadditivity:
\begin{align}\label{eq:cs}
\mu\mleft(\bigcup_{i\in\naturals}\setA_i\mright)&\leq \sum_{i\in\naturals}\mu(\setA_i)
\end{align}
for all sequences $\{\setA_i\}_{i\in\naturals}$ of sets $\setA_i\subseteq\setX$. 
\end{enumerate}
A set  $\setA\subseteq\setX$ is $\mu$-measurable if it satisfies 
\begin{align}\label{eq:measset}
\mu(\setE)=\mu(\setE\cap\setA)+\mu(\setE\mysetminus\setA)\quad\text{for all}\  \setE\subseteq\setX. 
\end{align}
\end{dfn}
For a probability measure $\mu_\rvecx$,   countable subadditivity  is equivalent to the union bound 
\begin{align}\label{eq:countsub}
\opP\mleft[\rvecx\in\bigcup_{i\in\naturals}\setA_i\mright]\leq \sum_{i\in\naturals}\opP\mleft[\rvecx\in\setA_i\mright].
\end{align}
The collection of $\mu$-measurable sets forms a $\sigma$-algebra  \cite[Theorem 1.2.4]{krpa08}, which we denote by $\colS_\mu(\setX)$. 
If $\setX$ is endowed with a topology,\footnote{A topology for $\setX$ is a collection of subsets of $\setX$ that contains $\emptyset$ and $\setX$ and is closed under finite intersections and arbitrary unions. The members of a topology are called open sets.} the smallest $\sigma$-algebra containing the open sets is the Borel $\sigma$-algebra $\colB(\setX)$. A measurable space $(\setX,\colS(\setX))$ is a  set $\setX$ equipped with a 
$\sigma$-algebra $\colS(\setX)$. A measure space $(\setX,\colS(\setX),\mu)$ is a set $\setX$ with a measure $\mu$ and a $\sigma$-algebra $\colS(\setX)\subseteq \colS_\mu(\setX)$.   

\begin{lem}\label{lem.capmeasure}\cite[Theorem 1.2.5]{krpa08}
If $(\setX,\colS(\setX),\mu)$ is a measure space and  $\{\setA_n\}_{n\in\naturals}$ is a sequence of  sets $\setA_n\in\colS(\setX)$, then  the following properties hold. 
\begin{enumerate}[label=\roman*)]
\item\label{lem.capmeasurei} If  the sets $\setA_n$, $n\in\naturals$, are pairwise disjoint, then $\mu$ is countably additive on their union. That is, 
\begin{align}
\mu\Big(\bigcup_{n\in\naturals} \setA_n\Big)=\sum_{n\in\naturals} \mu(\setA_n). 
\end{align}\label{eq:additivity}
\vspace*{-3truemm}
\item\label{lem.capmeasureii} If $\setA_{n}\subseteq\setA_{n+1}$ for all $n\in\naturals$, then
\begin{align} 
\mu\Big(\bigcup_{n\in\naturals} \setA_n\Big)=\lim_{n\to\infty} \mu(\setA_n). 
\end{align}
\item\label{lem.capmeasureiii} If  $\setA_{n+1}\subseteq\setA_{n}$ for all $n\in\naturals$ and $\mu(\setA_1)<\infty$, then
\begin{align} 
\mu\Big(\bigcap_{n\in\naturals} \setA_n\Big)=\lim_{n\to\infty} \mu(\setA_n). 
\end{align}
\end{enumerate}
\end{lem}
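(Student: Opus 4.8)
The plan is to obtain all three statements from the Carathéodory measurability criterion \eqref{eq:measset}, together with monotonicity \eqref{eq:mon} and countable subadditivity \eqref{eq:cs}. The one ingredient that is not immediate from the axioms of Definition \ref{dfn:measure} is \emph{finite} additivity, so I would establish this first: by induction on $N$, for pairwise disjoint $\setA_1,\dots,\setA_N\in\colS(\setX)$ and an arbitrary test set $\setE\subseteq\setX$,
\begin{align}\label{eq:plan-finadd}
\mu\Big(\setE\cap\bigcup_{n=1}^N\setA_n\Big)=\sum_{n=1}^N\mu(\setE\cap\setA_n).
\end{align}
The case $N=1$ is trivial; for the inductive step one applies \eqref{eq:measset} with measurable set $\setA_N$ and test set $\setE\cap\bigcup_{n=1}^N\setA_n$, uses disjointness to identify $\big(\setE\cap\bigcup_{n=1}^N\setA_n\big)\cap\setA_N=\setE\cap\setA_N$ and $\big(\setE\cap\bigcup_{n=1}^N\setA_n\big)\setminus\setA_N=\setE\cap\bigcup_{n=1}^{N-1}\setA_n$, and invokes the induction hypothesis. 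Specializing to $\setE=\setX$ yields plain finite additivity.

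Property \ref{lem.capmeasurei} then follows quickly: by \eqref{eq:plan-finadd} with $\setE=\setX$ and monotonicity, $\mu\big(\bigcup_{n\in\naturals}\setA_n\big)\geq\sum_{n=1}^N\mu(\setA_n)$ for every $N$, hence $\mu\big(\bigcup_{n\in\naturals}\setA_n\big)\geq\sum_{n\in\naturals}\mu(\setA_n)$, while the reverse inequality is exactly \eqref{eq:cs}. For Property \ref{lem.capmeasureii}, given an increasing sequence I would pass to the pairwise disjoint decomposition $\bigcup_{n\in\naturals}\setA_n=\setA_1\cup\bigcup_{n\geq 2}(\setA_n\setminus\setA_{n-1})$, which stays inside the $\sigma$-algebra $\colS(\setX)$ since it is closed under differences and countable unions. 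Applying Property \ref{lem.capmeasurei} gives $\mu\big(\bigcup_{n\in\naturals}\setA_n\big)=\mu(\setA_1)+\sum_{n\geq 2}\mu(\setA_n\setminus\setA_{n-1})$; the same decomposition applied to $\setA_N$ (finite additivity) shows the $N$-th partial sum equals $\mu(\setA_N)$, so the series sums to $\lim_{N\to\infty}\mu(\setA_N)$.

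For Property \ref{lem.capmeasureiii} I would reduce to the increasing case via complementation relative to $\setA_1$: set $\setB_n=\setA_1\setminus\setA_n\in\colS(\setX)$, which is increasing with $\bigcup_{n\in\naturals}\setB_n=\setA_1\setminus\bigcap_{n\in\naturals}\setA_n$, and note $\bigcap_{n\in\naturals}\setA_n\in\colS(\setX)$. Here the hypothesis $\mu(\setA_1)<\infty$ enters decisively: finite additivity applied to $\setA_1=\setA_n\cup(\setA_1\setminus\setA_n)$ and to $\setA_1=\big(\bigcap_n\setA_n\big)\cup\big(\setA_1\setminus\bigcap_n\setA_n\big)$ gives $\mu(\setB_n)=\mu(\setA_1)-\mu(\setA_n)$ and $\mu\big(\setA_1\setminus\bigcap_n\setA_n\big)=\mu(\setA_1)-\mu\big(\bigcap_n\setA_n\big)$, with all quantities finite. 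Property \ref{lem.capmeasureii} applied to $\{\setB_n\}$ then yields $\mu(\setA_1)-\mu\big(\bigcap_n\setA_n\big)=\lim_{n\to\infty}\big(\mu(\setA_1)-\mu(\setA_n)\big)$, and cancelling the finite number $\mu(\setA_1)$ finishes the proof.

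The only genuinely delicate point is the passage from the Carathéodory criterion to finite additivity in \eqref{eq:plan-finadd}; once that inductive test-set argument is in place, the rest is bookkeeping with the $\sigma$-algebra operations. The second thing to watch is Property \ref{lem.capmeasureiii}, where the finiteness of $\mu(\setA_1)$ must be invoked exactly at the point where a measure is subtracted — without it the statement fails (e.g.\ $\setA_n=[n,\infty)$ under Lebesgue measure).
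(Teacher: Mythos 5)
The paper does not supply a proof of this lemma; it is cited directly from \cite[Theorem 1.2.5]{krpa08}, so there is nothing in the paper to compare your argument against. Your proof is nevertheless correct, complete, and entirely standard: it correctly identifies that finite additivity is the one ingredient not immediate from Definition~\ref{dfn:measure} and establishes it by the Carath\'eodory test-set induction, after which Property~\ref{lem.capmeasurei} follows from monotonicity and countable subadditivity, Property~\ref{lem.capmeasureii} from the disjointification $\setA_1\cup\bigcup_{n\geq 2}(\setA_n\setminus\setA_{n-1})$, and Property~\ref{lem.capmeasureiii} by complementing inside $\setA_1$ and invoking $\mu(\setA_1)<\infty$ at exactly the step where measures are subtracted. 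This is essentially the argument given in the cited reference, so no gap and no genuine divergence to report.
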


\begin{dfn}\cite[Definition 1.2.10]{krpa08}\label{Def.meas}
A measure $\mu$ on a nonempty set $\setX$ endowed with a topology is 
\begin{enumerate}[label=\roman*)]
\item Borel if all open sets are $\mu$-measurable; 
\item Borel regular if it is Borel and, for each $\setA\subseteq\setX$, there exists a set $\setB\in\colB(\setX)$ such that $\setA\subseteq\setB$ and 
$\mu(\setA)=\mu(\setB)$. \label{Def.br}
\end{enumerate}
\end{dfn}

\begin{lem}\label{lem:equivrec}
Let  $s\in\naturals$ and consider a measure $\mu$  on $\reals^m$.  For every nonempty set $\setU\subseteq\reals^{m}$, the following statements are equivalent:
\begin{enumerate}[label=\roman*)]
\item\label{defrec1} There exist compact sets $\setA_i\subseteq\reals^s$ and Lipschitz  mappings $\varphi_i\colon\setA_i\to \reals^{m}$, $i\in\naturals$,  such that 
\begin{align}
\mu\mleft(\setU\setminus\bigcup_{i\in\naturals}\varphi_i(\setA_i)\mright)=0.
\end{align}
\item\label{defrec2} There exist bounded sets $\setA_i\subseteq\reals^s$ and Lipschitz  mappings $\varphi_i\colon\setA_i\to \reals^{m}$, $i\in\naturals$,  such that 
\begin{align}
\mu\mleft(\setU\setminus\bigcup_{i\in\naturals}\varphi_i(\setA_i)\mright)=0.
\end{align}
\item\label{defrec3} There exist  Lipschitz  mappings $\varphi_i\colon\reals^s\to \reals^{m}$, $i\in\naturals$,  such that 
\begin{align}
\mu\mleft(\setU\setminus\bigcup_{i\in\naturals}\varphi_i(\reals^s)\mright)=0.
\end{align}
\end{enumerate}
\end{lem}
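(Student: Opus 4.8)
The plan is to prove the cyclic chain of implications $\ref{defrec1}\Rightarrow\ref{defrec2}\Rightarrow\ref{defrec3}\Rightarrow\ref{defrec1}$, which then gives the full equivalence. The implication $\ref{defrec1}\Rightarrow\ref{defrec2}$ is immediate, since every compact set is bounded; no work is needed beyond observing this. Likewise, $\ref{defrec3}\Rightarrow\ref{defrec1}$ is essentially free: given Lipschitz $\varphi_i\colon\reals^s\to\reals^m$, we cover $\reals^s$ by the countably many compact balls $\overline{\setB_s(\veczero,k)}$, $k\in\naturals$, restrict each $\varphi_i$ to each such ball (the restriction is still Lipschitz, with the same constant), and reindex the resulting countable family $\{\varphi_i|_{\overline{\setB_s(\veczero,k)}}\}_{i,k\in\naturals}$; since $\bigcup_{i\in\naturals}\varphi_i(\reals^s)=\bigcup_{i,k\in\naturals}\varphi_i(\overline{\setB_s(\veczero,k)})$, the set whose $\mu$-measure is asserted to be zero is literally unchanged.

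The substantive step is $\ref{defrec2}\Rightarrow\ref{defrec3}$: we must upgrade Lipschitz maps defined on bounded (but possibly non-closed, non-compact) subsets of $\reals^s$ to Lipschitz maps defined on all of $\reals^s$, without enlarging the image in a way that destroys the measure-zero conclusion. First I would note that a Lipschitz map $\varphi_i\colon\setA_i\to\reals^m$ on a bounded set extends to a Lipschitz map $\tilde\varphi_i\colon\overline{\setA_i}\to\reals^m$ on its (compact) closure, simply by uniform continuity — a Lipschitz map is uniformly continuous, so it extends continuously to the closure, and the Lipschitz estimate passes to the limit, preserving the constant. Then I would invoke a Kirszbraun-type / coordinatewise McShane–Whitney extension to extend $\tilde\varphi_i$ from $\overline{\setA_i}$ to a Lipschitz map $\psi_i\colon\reals^s\to\reals^m$ defined on the whole space (extending each of the $m$ coordinate functions separately via $x\mapsto\inf_{a\in\overline{\setA_i}}\bigl(\tilde\varphi_i^{(j)}(a)+L\|x-a\|_2\bigr)$ already suffices, at the cost of a dimension-dependent factor in the Lipschitz constant, which is harmless here). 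The key point is then that $\varphi_i(\setA_i)\subseteq\psi_i(\reals^s)$, so
\begin{align}
\setU\setminus\bigcup_{i\in\naturals}\psi_i(\reals^s)\subseteq\setU\setminus\bigcup_{i\in\naturals}\varphi_i(\setA_i),
\end{align}
and monotonicity of $\mu$ gives $\mu\bigl(\setU\setminus\bigcup_{i\in\naturals}\psi_i(\reals^s)\bigr)=0$, which is exactly \ref{defrec3}.

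The main obstacle — really the only place requiring care — is making sure the Lipschitz extension theorem is available in the form needed and citing it correctly; if the paper's appendices already contain a Lipschitz extension lemma (a McShane/Kirszbraun statement), I would use that, otherwise I would include the one-line coordinatewise infimal-convolution formula and verify its two defining properties (it agrees with $\tilde\varphi_i$ on $\overline{\setA_i}$ and is Lipschitz on $\reals^s$). A secondary point worth a sentence is that the closure step genuinely needs the bounded maps to be Lipschitz (not merely continuous) so that the extension to $\overline{\setA_i}$ stays Lipschitz; this is why the equivalence is stated for Lipschitz maps throughout. Everything else is routine set-theoretic bookkeeping with countable unions and monotonicity of $\mu$.
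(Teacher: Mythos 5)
Your proof is correct and follows essentially the same route as the paper: prove the cyclic chain \ref{defrec1}$\Rightarrow$\ref{defrec2}$\Rightarrow$\ref{defrec3}$\Rightarrow$\ref{defrec1}, with \ref{defrec1}$\Rightarrow$\ref{defrec2} trivial, \ref{defrec2}$\Rightarrow$\ref{defrec3} by Lipschitz extension to all of $\reals^s$ plus monotonicity of $\mu$, and \ref{defrec3}$\Rightarrow$\ref{defrec1} by covering $\reals^s$ with the compact balls $\overline{\setB_s(\veczero,j)}$. The only cosmetic difference is that you perform the Lipschitz extension in two explicit steps (extend to the closure, then apply a McShane--Whitney coordinatewise formula), whereas the paper just cites the extension theorem \cite[Theorem 7.2]{ma99} directly; the underlying argument and the use of monotonicity to transfer the measure-zero conclusion are the same.
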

\begin{proof}
We show that \ref{defrec1}$\,\Rightarrow\,$\ref{defrec2}$\,\Rightarrow\,$\ref{defrec3}$\,\Rightarrow\,$\ref{defrec1}.
 \ref{defrec1}$\,\Rightarrow\,$\ref{defrec2} is trivial since every compact set  $\setA_i\subseteq\reals^s$ is bounded. 
 \ref{defrec2}$\,\Rightarrow\,$\ref{defrec3} follows from the fact that, by  \cite[Theorem 7.2]{ma99}, for every bounded  set $\setA_i\subseteq\reals^s$ and corresponding Lipschitz mapping $\varphi_i\colon\setA_i\to \reals^{m}$, there exists a Lipschitz mapping $\tilde \varphi_i\colon\reals^s\to \reals^{m}$ such that $\tilde \varphi_i|_{\setA_i}= \varphi_i$. Finally, \ref{defrec3}$\,\Rightarrow\,$\ref{defrec1} as 
\begin{align}
\varphi_i(\reals^s)=\bigcup_{j\in\naturals} \varphi_i(\overline{\setB_s(\veczero,j)})\quad\text{for all $i\in\naturals$}
\end{align}
with $\overline{\setB_s(\veczero,j)}$ compact for all $j\in\naturals$.  
\end{proof}

\begin{dfn}(Hausdorff measures)\cite[Definition 2.46]{amfupa00} \label{dfnH}
Let  $d\in [0,\infty)$ and $\setU\subseteq \reals^{m}$. 
The $d$-dimensional Hausdorff measure of $\setU$, denoted by $\mathscr{H}^{d}(\setU)$,  is defined according to 
\begin{align}\label{eq:Hddim}
\mathscr{H}^{d}(\setU) =\lim_{\delta\to 0}\mathscr{H}_\delta^{d}(\setU),
\end{align} 
where 
\begin{align}\label{eq:Hdd}
&\mathscr{H}_\delta^{d}(\setU)\\
&=\frac{\pi^{d/2}}{2^d\Gamma(d/2+1)} \inf\mleft\{\sum_{i\in\naturals}\diam(\setU_i)^d : \diam(\setU_i)<\delta,\mright.\nonumber\\
&\mleft.\phantom{=\frac{\pi^{d/2}}{2^d\Gamma(d/2+1)} \inf \{\ \ \ \ } \setU\subseteq\bigcup_{i\in\naturals}\setU_i\mright\}  
\end{align}
for  all $\delta>0$, and the diameter $\operatorname{diam}(\cdot)$ of an arbitrary set $\setU\subseteq\reals^m$  is defined according to   
\begin{align}\label{eq:diam}
\diam(\setU)=
\begin{cases}
\sup\{\|\vecu-\vecv\|_2 : \vecu,\vecv\in\setU\}&\text{if}\ \setU\neq\emptyset\\
0&\text{else}
\end{cases}
\end{align}
with $\Gamma(\cdot)$ denoting the Gamma function. 
\end{dfn} 

\begin{lem}\label{lem:prophausdorff} (Main properties of Hausdorff measures)
\begin{enumerate}[label=\roman*)] 
\item If $a>b\geq0$, then $\mathscr{H}^{a}(\setE)>0$ implies  $\mathscr{H}^{b}(\setE)=\infty$ for all $\setE\subseteq \reals^{m}$ (see Fig. \ref{fig:hd})\label{Hjump}.
\item If $f\colon\reals^{m}\to\reals^{n}$ is  Lipschitz with Lipschitz constant $L$, then 
\begin{align}
\mathscr{H}^{d}(f(\setE))\leq L^d\mathscr{H}^{d}(\setE)\quad \text{for all}\  \setE\subseteq\reals^{m}.
\end{align}\label{HL}
\vspace*{-3truemm}
\item  $\mathscr{H}^{m}(\setE)=\lebmeasure^{m}(\setE)$ for all   $\setE\in\colB(\reals^{m})$.\label{HBorel}
\item $\mathscr{H}^{0}$ is the counting measure. \label{HCounting}
\end{enumerate}
\end{lem}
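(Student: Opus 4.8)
The statement to prove is Lemma \ref{lem:prophausdorff}, the four main properties of Hausdorff measures. These are standard results, so the plan is to assemble a proof from classical geometric measure theory, citing appropriate references for the parts that are routine and giving the short arguments where they are illuminating.

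\textbf{Property \ref{Hjump}.} First I would recall the key scaling inequality relating $\mathscr{H}^a_\delta$ and $\mathscr{H}^b_\delta$: for any cover $\{\setU_i\}$ of $\setE$ with $\diam(\setU_i)<\delta$, one has $\diam(\setU_i)^a \leq \delta^{a-b}\diam(\setU_i)^b$ since $a>b$. Summing over $i$ and taking infima (carefully tracking the normalizing constants $\pi^{d/2}/(2^d\Gamma(d/2+1))$, which only contribute a fixed positive factor) yields $\mathscr{H}^a_\delta(\setE) \leq C_{a,b}\,\delta^{a-b}\,\mathscr{H}^b_\delta(\setE)$ for a constant $C_{a,b}>0$. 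Now suppose $\mathscr{H}^b(\setE)<\infty$; then $\mathscr{H}^b_\delta(\setE)\leq \mathscr{H}^b(\setE)<\infty$ for all $\delta$, so letting $\delta\to 0$ forces $\mathscr{H}^a(\setE)=\lim_{\delta\to0}\mathscr{H}^a_\delta(\setE)=0$. Contrapositively, $\mathscr{H}^a(\setE)>0$ implies $\mathscr{H}^b(\setE)=\infty$. I would also cite \cite[p.~48 or similar]{fa14} or \cite{amfupa00} for this standard fact.

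\textbf{Properties \ref{HL}, \ref{HBorel}, \ref{HCounting}.} For \ref{HL}: if $f$ is Lipschitz with constant $L$, then for any set $\setV$, $\diam(f(\setV))\leq L\,\diam(\setV)$; given a cover $\{\setU_i\}$ of $\setE$ with small diameters, $\{f(\setU_i)\}$ covers $f(\setE)$ with $\diam(f(\setU_i))\leq L\diam(\setU_i)<L\delta$, so $\mathscr{H}^d_{L\delta}(f(\setE))\leq L^d\,\mathscr{H}^d_\delta(\setE)$; passing to the limit $\delta\to 0$ gives the claim. This is \cite[Proposition 2.49]{amfupa00} or similar, which I would cite. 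For \ref{HBorel}, the identity $\mathscr{H}^m = \lebmeasure^m$ on Borel subsets of $\reals^m$ is the content of the isodiametric inequality argument; I would simply cite \cite[Theorem 2.53]{amfupa00} or \cite[Section 2.2]{evga15}. For \ref{HCounting}, with $d=0$ the normalizing constant is $\pi^0/(2^0\Gamma(1))=1$, and $\diam(\setU_i)^0=1$ for every nonempty $\setU_i$ (and the empty set contributes nothing), so $\mathscr{H}^0_\delta(\setE)$ counts the minimal number of nonempty sets of diameter $<\delta$ needed to cover $\setE$; as $\delta\to0$ this tends to $\card(\setE)$ (finite sets can eventually be covered by exactly $\card(\setE)$ singletons, and infinite sets require arbitrarily many small pieces). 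I would cite \cite[Section 2.1]{evga15}.

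\textbf{Main obstacle.} None of these is genuinely hard; the only one with real mathematical content is \ref{HBorel} (equality of $\mathscr{H}^m$ and Lebesgue measure), which rests on the isodiametric inequality, and I would handle it purely by citation rather than reproving it. The mild bookkeeping nuisance is keeping the dimensional normalization constant $\pi^{d/2}/(2^d\Gamma(d/2+1))$ consistent across \ref{Hjump} and \ref{HCounting}; I would note explicitly that it is a fixed positive number depending only on $d$, so it does not affect any of the limiting or positivity arguments. Accordingly the ``proof'' in the paper will largely be a sequence of short arguments plus pointers to \cite{amfupa00,evga15,fa14}.

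\begin{proof}
\emph{Property \ref{Hjump}.} Write $c_d=\pi^{d/2}/(2^d\Gamma(d/2+1))>0$ for the normalizing constant in \eqref{eq:Hdd}. Fix $a>b\geq0$ and $\setE\subseteq\reals^m$, and let $\delta>0$. For any countable cover $\{\setU_i\}_{i\in\naturals}$ of $\setE$ with $\diam(\setU_i)<\delta$ for all $i$, we have $\diam(\setU_i)^a\leq \delta^{a-b}\diam(\setU_i)^b$, hence
\begin{align}
\sum_{i\in\naturals}\diam(\setU_i)^a\leq \delta^{a-b}\sum_{i\in\naturals}\diam(\setU_i)^b.
\end{align}
Taking the infimum over all such covers and multiplying by $c_a$ yields
\begin{align}
\mathscr{H}^{a}_\delta(\setE)\leq \frac{c_a}{c_b}\,\delta^{a-b}\,\mathscr{H}^{b}_\delta(\setE).
\end{align}
Now suppose $\mathscr{H}^{b}(\setE)<\infty$. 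Since $\mathscr{H}^{b}_\delta(\setE)\leq\mathscr{H}^{b}(\setE)<\infty$ for all $\delta>0$ and $\delta^{a-b}\to0$ as $\delta\to0$, letting $\delta\to0$ gives $\mathscr{H}^{a}(\setE)=\lim_{\delta\to0}\mathscr{H}^{a}_\delta(\setE)=0$. Contrapositively, $\mathscr{H}^{a}(\setE)>0$ implies $\mathscr{H}^{b}(\setE)=\infty$. (See also \cite[p.~32]{fa14}.)

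\emph{Property \ref{HL}.} Let $f\colon\reals^m\to\reals^n$ be Lipschitz with constant $L$, and fix $\setE\subseteq\reals^m$ and $\delta>0$. For any set $\setV\subseteq\reals^m$ we have $\diam(f(\setV))\leq L\,\diam(\setV)$. Hence, given a countable cover $\{\setU_i\}_{i\in\naturals}$ of $\setE$ with $\diam(\setU_i)<\delta$ for all $i$, the sets $\{f(\setU_i)\}_{i\in\naturals}$ cover $f(\setE)$ and satisfy $\diam(f(\setU_i))\leq L\diam(\setU_i)<L\delta$, so that
\begin{align}
\mathscr{H}^{d}_{L\delta}(f(\setE))\leq c_d\sum_{i\in\naturals}\diam(f(\setU_i))^d\leq L^d\,c_d\sum_{i\in\naturals}\diam(\setU_i)^d.
\end{align}
Taking the infimum over all such covers gives $\mathscr{H}^{d}_{L\delta}(f(\setE))\leq L^d\,\mathscr{H}^{d}_\delta(\setE)$, and letting $\delta\to0$ yields $\mathscr{H}^{d}(f(\setE))\leq L^d\,\mathscr{H}^{d}(\setE)$. (See \cite[Proposition 2.49]{amfupa00}.)

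\emph{Property \ref{HBorel}.} This is the classical identification of the $m$-dimensional Hausdorff measure with the Lebesgue measure on $\reals^m$, whose proof rests on the isodiametric inequality; see \cite[Theorem 2.53]{amfupa00} or \cite[Section 2.2]{evga15}.

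\emph{Property \ref{HCounting}.} For $d=0$ the normalizing constant is $c_0=\pi^{0}/(2^{0}\Gamma(1))=1$, and $\diam(\setU_i)^0=1$ for every nonempty $\setU_i$, while the empty set contributes nothing to the sums in \eqref{eq:Hdd}. Thus, for every $\delta>0$, $\mathscr{H}^{0}_\delta(\setE)$ equals the minimal number of nonempty sets of diameter less than $\delta$ needed to cover $\setE$. If $\setE$ is finite, then for $\delta$ smaller than the minimal pairwise distance of distinct points of $\setE$ this number equals $\card(\setE)$, so $\mathscr{H}^{0}(\setE)=\card(\setE)$. If $\setE$ is infinite, then for each $k\in\naturals$ we may pick $k$ distinct points of $\setE$ and, taking $\delta$ small enough, conclude $\mathscr{H}^{0}_\delta(\setE)\geq k$; hence $\mathscr{H}^{0}(\setE)=\infty=\card(\setE)$. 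Therefore $\mathscr{H}^{0}$ is the counting measure (see \cite[Section 2.1]{evga15}).
\end{proof}
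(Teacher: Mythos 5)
Your proof is correct. The paper's own ``proof'' consists entirely of pointers to \cite[Proposition 2.49]{amfupa00} and \cite[Theorem 2.53]{amfupa00} for \ref{Hjump}--\ref{HBorel} and a one-line remark that \ref{HCounting} follows from the definition, whereas you spell out the standard arguments: the $\mathscr{H}^a_\delta \leq (c_a/c_b)\delta^{a-b}\mathscr{H}^b_\delta$ comparison for \ref{Hjump}, the image-of-a-cover argument for \ref{HL}, citation to the isodiametric inequality for \ref{HBorel}, and the covering-number interpretation for \ref{HCounting}. These are exactly the arguments the cited references give, so it is the same route, just written out in full rather than delegated; the one thing to watch (which you handle correctly by restricting to nonempty cover sets) is the $0^0$ convention in \ref{HCounting}, and the one case your phrasing of \ref{HL} glosses over is $L=0$, where $\diam(f(\setU_i))<L\delta$ degenerates to $0<0$ — that case should be dispatched separately (it is trivial since $f(\setE)$ is a point), but it does not affect the substance.
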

\begin{proof}
See \cite[Proposition 2.49]{amfupa00} and \cite[Theorem 2.53]{amfupa00} for Properties  \ref{Hjump}--\ref{HBorel}. Property  \ref{HCounting} follows immediately from the definition of the $0$-dimensional Hausdorff measure. 
\end{proof} 

\begin{dfn}(Hausdorff dimension)\cite[Definition 2.51]{amfupa00}\label{def:HDorf}
The Hausdorff dimension of $\setU\subseteq\reals^{m}$, denoted by $\dim_{\mathrm{H}}(\setU)$,  is defined according to 
\begin{align}
\dim_{\mathrm{H}}(\setU)
&:=\sup\{d\geq 0 : \mathscr{H}^{d}(\setU)=\infty\}\\
&=\inf\{d\geq 0 : \mathscr{H}^{d}(\setU)=0\}, 
\end{align}
i.e., $\dim_{\mathrm{H}}(\setU)$  is the value of $d$ for which the sharp transition  from $\infty$ to $0$ in Figure \ref{fig:hd} occurs.
Depending on the set $\setU$, for $d=\dim_{\mathrm{H}}(\setU)$, $\mathscr{H}^{d}(\setU)$ can take on any value in $[0,\infty]$. 
\end{dfn}

\begin{figure}
\begin{center}
\begin{tikzpicture}[scale=1.5]
    \draw [thick, <->] (0,2) node (yaxis) [above] {$\mathscr{H}^{d}(\setU)$}
        |- (4,0) node (xaxis) [right] {$d$};
    \draw [ultra thick] (0,1.6) coordinate (a_1) -- (1.8,1.6) coordinate (a_2);
    \draw [ultra thick] (1.8,0) coordinate (b_1) -- (3,0) coordinate (b_2);
    \node[left] at (a_1) {$\infty$};
    \node[left] at (0,0) {$0$};
    \node[below] at (b_1) {$\dim_{\mathrm{H}}(\setU)$};
    \node[below] at (b_2) {$m$};
\end{tikzpicture}
\caption{(\!\!\cite[Figure 3.3]{fa14}) 
Graph of  $\mathscr{H}^{d}(\setU)$ as a function of $d\in[0,m]$ for a set $\setU\subseteq\reals^{m}$. 
\label{fig:hd}} 
\end{center}
\vspace*{-3truemm}
\end{figure}

As a consequence of Carath{\'e}odory's criterion \cite[Theorem 1.2.13]{krpa08}, Lebesgue and Hausdorff measures are Borel regular. 

\begin{dfn}(Measurable mapping)\cite[Chapter 2]{ba95}\label{dfn:measurable}
\begin{enumerate}[label=\roman*)]
\item
Let $(\setX,\colS(\setX))$ and $(\setY,\colS(\setY))$ be  measurable spaces. 
A mapping $f\colon\setD\to\setY$, with $\setD\in\colS(\setX)$, is $(\colS(\setX),\colS(\setY))$-measurable if  $f^{-1}(\setA)\in\colS(\setX)$ for all $\setA\in\colS(\setY)$. 
\item \label{Borelfa}
Let $(\setX,\colS(\setX))$ be a measurable space and $\setY$ endowed with a topology.
A mapping $f\colon\setD\to\setY$, with $\setD\in\colS(\setX)$, is $\colS(\setX)$-measurable if
$f^{-1}(\setA)\in\colS(\setX)$  for all  $\setA\in\colB(\setY)$. 
\item\label{Borelf}
Let $\setX$ and $\setY$ both be endowed with a topology. 
A mapping $f\colon\setD\to\setY$, with $\setD\in\colB(\setX)$, is  Borel measurable if $f^{-1}(\setA)\in\colB(\setX)$ for all  $\setA\in\colB(\setY)$. 
\end{enumerate}
\end{dfn}

\begin{lem}\label{lem.fzm}\cite[Corollary 4.10]{ba95}
Let $(\setX,\colS(\setX),\mu)$ be a measure space and consider a nonnegative measurable function $f\colon\setX\to\reals$. Then, $f(x)=0$ $\mu$-almost everywhere, i.e., $\mu\{x\in\setX:f(x)\neq 0\}=0$,  
if and only if 
\begin{align}
\int_\setX f(x)\,\mathrm d \mu(x)=0.
\end{align}
\end{lem}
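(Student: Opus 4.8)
The plan is to establish the two implications separately, working directly from the construction of $\int_\setX f\,\mathrm d\mu$ for a nonnegative measurable $f$ as the supremum of $\int_\setX s\,\mathrm d\mu$ over simple functions $s$ with $0\le s\le f$, together with the layer-set decomposition $\{f>0\}=\bigcup_{n\in\naturals}\{f\ge 1/n\}$ and the elementary measure properties from Definition~\ref{dfn:measure} and Lemma~\ref{lem.capmeasure}. Throughout I would set $\setN=\{x\in\setX:f(x)\neq 0\}=\{x\in\setX:f(x)>0\}$; note $\setN=f^{-1}((0,\infty))\in\colS(\setX)$ by measurability of $f$ (Definition~\ref{dfn:measurable}), and likewise $\setA_n:=\{x\in\setX:f(x)\ge 1/n\}=f^{-1}([1/n,\infty))\in\colS(\setX)$ for every $n\in\naturals$.

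For the direction ``$f=0$ $\mu$-a.e. $\Rightarrow\int_\setX f\,\mathrm d\mu=0$'', I would assume $\mu(\setN)=0$ and consider an arbitrary simple function $s=\sum_{i=1}^{k}c_i\ind{\setB_i}$ with $c_i>0$, the $\setB_i\in\colS(\setX)$ pairwise disjoint, and $0\le s\le f$. From $s\le f$ one gets $\setB_i\subseteq\setN$ for each $i$, hence $\mu(\setB_i)=0$ by monotonicity of $\mu$, and therefore $\int_\setX s\,\mathrm d\mu=\sum_{i=1}^{k}c_i\mu(\setB_i)=0$. Passing to the supremum over all such $s$ gives $\int_\setX f\,\mathrm d\mu=0$.

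For the converse ``$\int_\setX f\,\mathrm d\mu=0\Rightarrow f=0$ $\mu$-a.e.'', I would observe that $\tfrac1n\ind{\setA_n}\le f$ pointwise, so by monotonicity of the integral $\tfrac1n\mu(\setA_n)=\int_\setX\tfrac1n\ind{\setA_n}\,\mathrm d\mu\le\int_\setX f\,\mathrm d\mu=0$, forcing $\mu(\setA_n)=0$ for every $n$. Since $\setA_n\subseteq\setA_{n+1}$ and $\bigcup_{n\in\naturals}\setA_n=\setN$, Property~\ref{lem.capmeasureii} of Lemma~\ref{lem.capmeasure} (continuity from below, or alternatively just countable subadditivity from Definition~\ref{dfn:measure}) yields $\mu(\setN)=\lim_{n\to\infty}\mu(\setA_n)=0$, i.e. $f=0$ $\mu$-almost everywhere.

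There is no genuine obstacle here: the statement is a standard measure-theory fact, recorded verbatim as \cite[Corollary 4.10]{ba95}. The only points needing a word of justification are the measurability of $\setN$ and of the sets $\setA_n$ (immediate from measurability of $f$) and the two structural facts about $\int_\setX\,\cdot\,\mathrm d\mu$ used above --- its definition as a supremum over simple minorants, and its monotonicity together with $\int_\setX c\,\ind{\setB}\,\mathrm d\mu=c\,\mu(\setB)$ --- which belong to the construction of the integral rather than to anything proved earlier in the paper; if one wishes to avoid reproducing that construction, citing \cite[Corollary 4.10]{ba95} suffices.
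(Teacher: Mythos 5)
Your proof is correct and is essentially the standard argument; the paper itself gives no proof, only the citation to \cite[Corollary 4.10]{ba95}, and your two-direction argument (supremum over simple minorants for the forward implication, Chebyshev-type layer sets $\setA_n=\{f\ge 1/n\}$ together with continuity from below for the converse) is exactly the textbook route that citation refers to.
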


\begin{dfn}\cite[Definition 1.3.25]{krpa08}\label{dfn:sigmafinite}
The measure space $(\setX,\colS(\setX),\mu)$  is $\sigma$-finite if $\setX=\bigcup_{i\in\naturals}\setA_i$, with $\setA_i\in\colS(\setX)$ and $\mu(\setA_i)<\infty$ for all $i\in\naturals$.  The measure space   is finite if $\mu(\setX)<\infty$. A Borel measure $\mu$ on a topological space $\setX$ is \emph{$\sigma$-finite} (respectively \emph{finite}) if  $(\setX,\colB(\setX),\mu)$ is a $\sigma$-finite (respectively finite) measure space. 
\end{dfn}
For example, all probability measures are  finite and the  Lebesgue measure is  $\sigma$-finite. 
However,  the $s$-dimensional Hausdorff measure with $s<m$ 
is not $\sigma$-finite.

\begin{thm}(Fubini's theorem)\label{thm.fubini} \cite[Theorem 10.9]{ba95}
Let $(\setX,\colS(\setX),\mu)$ and $(\setY,\colS(\setY),\nu)$ be $\sigma$-finite measure spaces  and suppose that $f\colon \setX\times\setY\to\reals$ is nonnegative and $(\colS(\setX)\otimes\colS(\setY))$-measurable. Then,   
\begin{align}
\varphi(x)=\int_{\setY} f(x,y) \,\mathrm d\nu(y)\quad \text{is $\colS(\setX)$-measurable},\\
\psi(y)=\int_{\setX}f(x,y) \,\mathrm d\mu(x)\quad \text{is $\colS(\setY)$-measurable},
\end{align}
and 
\begin{align}\label{eq:fubfub}
\int_\setX \varphi(x) \,\mathrm d\mu(x)
&=\int_{\setX\times\setY} f(x,y)\,\mathrm d(\mu\times\nu)(x\times y)\\
&=\int_\setY \psi(y) \,\mathrm d\nu(y).
\end{align}  
\end{thm}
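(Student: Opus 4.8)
The plan is to prove this by the classical bootstrapping argument for Tonelli's theorem (the non‑negative case of Fubini's theorem): establish the identity \eqref{eq:fubfub}, together with measurability of $\varphi$ and $\psi$, first for indicators of measurable rectangles, then for indicators of arbitrary sets in $\colS(\setX)\otimes\colS(\setY)$, then for non‑negative simple functions by linearity, and finally for a general non‑negative measurable $f$ by monotone approximation from below.

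\emph{The set-level step.} For $f=\ind{\setE}$ with $\setE\in\colS(\setX)\otimes\colS(\setY)$, write the sections $\setE_x=\{y\in\setY:(x,y)\in\setE\}$ and $\setE^y=\{x\in\setX:(x,y)\in\setE\}$, so that $\varphi(x)=\nu(\setE_x)$ and $\psi(y)=\mu(\setE^y)$; the goal reduces to showing $x\mapsto\nu(\setE_x)$ and $y\mapsto\mu(\setE^y)$ are measurable and $\int_\setX\nu(\setE_x)\,\mathrm d\mu(x)=(\mu\times\nu)(\setE)=\int_\setY\mu(\setE^y)\,\mathrm d\nu(y)$. I would first assume $\mu$ and $\nu$ finite. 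For a rectangle $\setE=\setA\times\setB$ this is immediate since $\nu((\setA\times\setB)_x)=\ind{\setA}(x)\nu(\setB)$ and $(\mu\times\nu)(\setA\times\setB)=\mu(\setA)\nu(\setB)$ by definition of the product measure. The family $\colD$ of all $\setE$ for which the three assertions hold is a Dynkin system — closure under complements uses finiteness (so that $\nu(((\setX\times\setY)\mysetminus\setE)_x)=\nu(\setY)-\nu(\setE_x)$), and closure under countable increasing unions uses Property \ref{lem.capmeasureii} of Lemma \ref{lem.capmeasure} on the sections together with the monotone convergence theorem on the integrals — so by the $\pi$-$\lambda$ theorem $\colD=\colS(\setX)\otimes\colS(\setY)$. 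To drop finiteness, exhaust $\setX=\bigcup_i\setX_i$, $\setY=\bigcup_j\setY_j$ by increasing sets of finite measure, apply the finite case to each restricted product, and pass to the limit again via Property \ref{lem.capmeasureii} of Lemma \ref{lem.capmeasure} and monotone convergence.

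\emph{Simple functions and the general case.} Linearity of the integral and of the section maps extends the identity to non‑negative simple functions $s=\sum_k c_k\ind{\setE_k}$. For general non‑negative $(\colS(\setX)\otimes\colS(\setY))$-measurable $f$, take simple functions $0\le s_1\le s_2\le\cdots$ with $s_n\to f$ pointwise; monotone convergence on $\setY$ shows $\varphi_n(x)=\int_\setY s_n(x,y)\,\mathrm d\nu(y)$ increases pointwise to $\varphi$, hence $\varphi$ is $\colS(\setX)$-measurable as a pointwise limit of measurable functions, and two further applications of monotone convergence (on $\setX$ and on $\setX\times\setY$) turn $\int_\setX\varphi_n\,\mathrm d\mu=\int_{\setX\times\setY}s_n\,\mathrm d(\mu\times\nu)$ into the first equality of \eqref{eq:fubfub}; the symmetric argument gives the statements for $\psi$ and the second equality.

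The main obstacle is the set-level step: the measurability of $x\mapsto\nu(\setE_x)$ and the identity $\int_\setX\nu(\setE_x)\,\mathrm d\mu(x)=(\mu\times\nu)(\setE)$ for every $\setE$ in the product $\sigma$-algebra. This is not a direct computation — it needs the monotone/Dynkin-class machinery — and the $\sigma$-finiteness hypothesis is essential both to make $\colD$ closed under complementation and to reduce to finite measures; without it the conclusion genuinely fails (e.g.\ counting measure against Lebesgue measure, integrating the indicator of the diagonal).
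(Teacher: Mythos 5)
The paper does not prove this result---it is stated with a citation to \cite[Theorem 10.9]{ba95} and used as a black box---so there is no in-paper proof to compare against. Your argument is the standard proof of Tonelli's theorem: measurable rectangles, a Dynkin/$\pi$--$\lambda$ pass to all sets in the product $\sigma$-algebra (with the finite case handled first and $\sigma$-finiteness used for the exhaustion), linearity for simple functions, and monotone convergence for general non-negative $f$; this is correct and is essentially the argument given in the cited reference.
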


\begin{cor}\label{cor.fubini}
Let $(\setX,\colS(\setX),\mu)$ and $(\setY,\colS(\setY),\nu)$ be $\sigma$-finite measure spaces  and suppose that $\setA\in\colS(\setX)\otimes\colS(\setY)$. Then,   
\begin{align}
&\int_\setX \nu(\{y: (x, y)\in \setA\})\,\mathrm d\mu(x)\\
&=\int_{\setX\times\setY} \ind{\setA}(x,y)\,\mathrm d(\mu\times\nu)(x\times y)\\
&=\int_\setY \mu(\{x: (x,y)\in \setA\})\,\mathrm d\nu(y).
\end{align} 
\end{cor}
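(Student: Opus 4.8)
The plan is to obtain Corollary~\ref{cor.fubini} as a direct specialization of Fubini's theorem (Theorem~\ref{thm.fubini}) to the indicator function $f=\ind{\setA}$. First I would check the two hypotheses of Theorem~\ref{thm.fubini}: nonnegativity of $\ind{\setA}$ is immediate, and $(\colS(\setX)\otimes\colS(\setY))$-measurability follows because $\setA\in\colS(\setX)\otimes\colS(\setY)$ forces $\ind{\setA}^{-1}(\setB)$ to be one of $\emptyset$, $\setA$, $(\setX\times\setY)\setminus\setA$, or $\setX\times\setY$ for every $\setB\in\colB(\reals)$, and each of these belongs to $\colS(\setX)\otimes\colS(\setY)$.

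Next I would identify the iterated integrals produced by Theorem~\ref{thm.fubini} with the section measures appearing in the statement. For fixed $x\in\setX$, the map $y\mapsto\ind{\setA}(x,y)$ is the indicator of the section $\setA_x=\{y\in\setY:(x,y)\in\setA\}$, which is $\colS(\setY)$-measurable by the standard property of product $\sigma$-algebras; hence $\varphi(x)=\int_\setY\ind{\setA}(x,y)\,\mathrm d\nu(y)=\nu(\setA_x)=\nu(\{y:(x,y)\in\setA\})$, and this function is $\colS(\setX)$-measurable by Theorem~\ref{thm.fubini}. By the symmetric argument, $\psi(y)=\mu(\{x:(x,y)\in\setA\})$ and it is $\colS(\setY)$-measurable. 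Substituting these identities into the chain of equalities \eqref{eq:fubfub} with $f=\ind{\setA}$ yields exactly
\[
\int_\setX \nu(\{y: (x, y)\in \setA\})\,\mathrm d\mu(x)
=\int_{\setX\times\setY} \ind{\setA}(x,y)\,\mathrm d(\mu\times\nu)(x\times y)
=\int_\setY \mu(\{x: (x,y)\in \setA\})\,\mathrm d\nu(y),
\]
which is the claim.

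Since the whole argument is a routine instantiation of Theorem~\ref{thm.fubini}, there is no genuine obstacle; the only point that requires (standard) care is the measurability of the sections $\setA_x$ and $\setA^y$ together with the elementary identification $\int_\setY\ind{\setA}(x,\cdot)\,\mathrm d\nu=\nu(\setA_x)$. I would simply invoke the product-$\sigma$-algebra section lemma for that step rather than reprove it.
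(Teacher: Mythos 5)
Your proof is correct and follows exactly the paper's own route: apply Theorem~\ref{thm.fubini} with $f=\ind{\setA}$ and identify the inner integrals with the section measures $\nu(\{y:(x,y)\in\setA\})$ and $\mu(\{x:(x,y)\in\setA\})$. The only difference is that you spell out the (routine) measurability checks that the paper leaves implicit.
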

\begin{proof}
Follows from Theorem \ref{thm.fubini} with $f(x,y)=\ind{\setA}(x,y)$, noting that 
\begin{align}
\int_\setY \ind{\setA}(x,y) \,\mathrm d\nu(y)&=\nu(\{y: (x,y)\in \setA\}),\\
\int_\setX \ind{\setA}(x,y) \,\mathrm d\mu(x)&=\mu(\{x: (x,y)\in \setA\}).
\end{align}
\end{proof}

\begin{lem}\cite[Exercise 1.7.19]{ta11}\label{lem:prodborel}
If $\setX$ and $\setY$ are Euclidean spaces, then $\colB(\setX\times\setY)=\colB(\setX)\otimes\colB(\setY)$. 
\end{lem}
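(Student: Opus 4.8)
The statement to prove is Lemma~\ref{lem:prodborel}: for Euclidean spaces $\setX$ and $\setY$, $\colB(\setX\times\setY)=\colB(\setX)\otimes\colB(\setY)$.

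\textbf{Overall approach.} The plan is to prove the two inclusions separately, exploiting the fact that Euclidean spaces are second countable (they have a countable base of open sets, e.g.\ open balls with rational centers and rational radii). The inclusion $\colB(\setX)\otimes\colB(\setY)\subseteq\colB(\setX\times\setY)$ holds for arbitrary topological spaces and follows from continuity of the coordinate projections; the reverse inclusion is where second countability is essential.

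\textbf{Step 1: $\colB(\setX)\otimes\colB(\setY)\subseteq\colB(\setX\times\setY)$.} Let $\pi_\setX\colon\setX\times\setY\to\setX$ and $\pi_\setY\colon\setX\times\setY\to\setY$ be the canonical projections. These are continuous with respect to the product topology, hence Borel measurable, so $\pi_\setX^{-1}(\setA)\in\colB(\setX\times\setY)$ for every $\setA\in\colB(\setX)$ and $\pi_\setY^{-1}(\setB)\in\colB(\setX\times\setY)$ for every $\setB\in\colB(\setY)$. By definition, $\colB(\setX)\otimes\colB(\setY)$ is the smallest $\sigma$-algebra making both projections measurable, equivalently the $\sigma$-algebra generated by $\{\setA\times\setY:\setA\in\colB(\setX)\}\cup\{\setX\times\setB:\setB\in\colB(\setY)\}$; since $\colB(\setX\times\setY)$ is a $\sigma$-algebra containing this generating family, it contains $\colB(\setX)\otimes\colB(\setY)$.

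\textbf{Step 2: $\colB(\setX\times\setY)\subseteq\colB(\setX)\otimes\colB(\setY)$.} It suffices to show that every open set $\setW\subseteq\setX\times\setY$ lies in $\colB(\setX)\otimes\colB(\setY)$, since the latter is a $\sigma$-algebra and $\colB(\setX\times\setY)$ is generated by the open sets. Fix a countable base $\{U_i\}_{i\in\naturals}$ of the topology of $\setX$ and a countable base $\{V_j\}_{j\in\naturals}$ of the topology of $\setY$; then $\{U_i\times V_j\}_{i,j\in\naturals}$ is a countable base for the product topology. Given $\setW$ open, for each point $(x,y)\in\setW$ there is a basic rectangle $U_i\times V_j$ with $(x,y)\in U_i\times V_j\subseteq\setW$, so $\setW$ is the union of all basic rectangles contained in it; this is a union over a countable index set, hence
\begin{align}
\setW=\bigcup_{(i,j)\in\setI} U_i\times V_j
\end{align}
for some $\setI\subseteq\naturals\times\naturals$. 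Each $U_i\times V_j$ is an open rectangle with $U_i\in\colB(\setX)$ and $V_j\in\colB(\setY)$, hence $U_i\times V_j=\pi_\setX^{-1}(U_i)\cap\pi_\setY^{-1}(V_j)\in\colB(\setX)\otimes\colB(\setY)$; a countable union of such sets is again in $\colB(\setX)\otimes\colB(\setY)$. Therefore $\setW\in\colB(\setX)\otimes\colB(\setY)$, which completes the proof.

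\textbf{Expected main obstacle.} There is no real obstacle here---the argument is entirely standard. The one point that must not be glossed over is the appeal to second countability in Step~2: without a countable base, an arbitrary open set in the product need not be a \emph{countable} union of measurable rectangles, and the conclusion can fail for general topological spaces. Since $\setX$ and $\setY$ are Euclidean (finite-dimensional real vector spaces, or more generally $\reals^k$ and $\reals^{k\times l}$ as they appear in the paper), second countability is immediate, so the proof goes through cleanly.
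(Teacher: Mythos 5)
Your proof is correct, and it is the standard textbook argument. Note, however, that the paper does not actually prove this lemma — it is stated with a citation to \cite[Exercise 1.7.19]{ta11} and no proof is given — so there is no in-paper argument to compare against; your write-up simply supplies the (entirely routine) proof that the paper delegates to that reference. Both inclusions are handled correctly: Step~1 uses continuity of the projections and the fact that $\colB(\setX)\otimes\colB(\setY)$ is generated by measurable rectangles, and Step~2 correctly identifies second countability as the crucial hypothesis that lets you write an arbitrary open set in the product as a \emph{countable} union of basic open rectangles, each of which is a measurable rectangle. Your closing remark about where the argument would break down for non--second-countable spaces is also accurate and is the right thing to flag.
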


\begin{lem}\cite[Corollary 2.10]{ba95}\label{lem.measlimit}
Let $(\setX,\colS(\setX))$  be a measurable space and suppose that $\{f_i\}_{i\in\naturals}$ is a sequence of $\colS(\setX)$-measurable functions $f_i\colon\setX\to\reals$ converging pointwise to  $f\colon\setX\to\reals$. Then, $f$ is $\colS(\setX)$-measurable.  
\end{lem}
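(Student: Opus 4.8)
The plan is to reduce measurability of $f$ to a statement about preimages of half-lines, and then express those preimages through countable set operations applied to the (measurable) level sets of the $f_i$. First I would recall that, by Definition \ref{dfn:measurable}, Item \ref{Borelfa}, it suffices to show that $f^{-1}(\setA)\in\colS(\setX)$ for every $\setA$ in some family that generates $\colB(\reals)$: the collection $\{\setA\subseteq\reals : f^{-1}(\setA)\in\colS(\setX)\}$ is a $\sigma$-algebra, because preimages commute with complementation and with countable unions and $\colS(\setX)$ is itself a $\sigma$-algebra, so once this collection contains a generating family it contains all of $\colB(\reals)$. I would take the generating family to be the open half-lines $\{(a,\infty):a\in\reals\}$.

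The key step is the set identity
\[
f^{-1}((a,\infty))=\{x\in\setX : f(x)>a\}=\bigcup_{k\in\naturals}\bigcup_{N\in\naturals}\bigcap_{i\geq N}\{x\in\setX : f_i(x)\geq a+1/k\},
\]
which I would establish by a direct double inclusion using the pointwise convergence $f_i(x)\to f(x)$. For the inclusion from left to right: if $f(x)>a$, choose $k$ with $f(x)>a+2/k$ and then $N$ with $|f_i(x)-f(x)|<1/k$ for all $i\geq N$, so that $f_i(x)>a+1/k$ for all $i\geq N$. For the reverse inclusion: if $f_i(x)\geq a+1/k$ holds for all $i\geq N$, then letting $i\to\infty$ gives $f(x)\geq a+1/k>a$. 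Using $a+1/k$ rather than $a$ inside the intersection is precisely what keeps the limit strictly above $a$.

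To conclude, each set $\{x : f_i(x)\geq a+1/k\}=f_i^{-1}([a+1/k,\infty))$ lies in $\colS(\setX)$, since $[a+1/k,\infty)$ is closed, hence Borel, and $f_i$ is $\colS(\setX)$-measurable; as $\colS(\setX)$ is a $\sigma$-algebra it is closed under the countable intersections and countable unions appearing on the right-hand side of the displayed identity, so $f^{-1}((a,\infty))\in\colS(\setX)$ for every $a\in\reals$. By the reduction in the first paragraph, $f$ is $\colS(\setX)$-measurable.

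I do not anticipate a genuine obstacle here, as the statement is a standard fact; the only points requiring a little care are the bookkeeping in the set identity and making explicit that verifying preimages of a generating family (here the half-lines) already suffices for Borel measurability into $\reals$.
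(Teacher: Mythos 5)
Your proof is correct, and since the paper does not supply its own argument for this lemma---it simply cites \cite[Corollary 2.10]{ba95}---there is no in-paper proof to compare against. Your route (the reduction to half-lines via the pull-back $\sigma$-algebra, followed by the identity
\begin{align}
\{x:f(x)>a\}=\bigcup_{k\in\naturals}\bigcup_{N\in\naturals}\bigcap_{i\geq N}\{x:f_i(x)\geq a+1/k\}
\end{align}
and closure of $\colS(\setX)$ under countable operations) is precisely the standard direct argument, and the bookkeeping is done carefully: the shift by $1/k$ inside the innermost set correctly prevents the limit from collapsing to $\geq a$ rather than $>a$, and each innermost set is recognized as $f_i^{-1}([a+1/k,\infty))\in\colS(\setX)$. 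The cited reference reaches the same conclusion by first establishing measurability of $\sup$, $\inf$, $\limsup$, $\liminf$ and then observing that $f=\limsup_i f_i$; your version inlines that chain into a single set identity, which is marginally more self-contained but mathematically equivalent. Either way, the proof is complete and correct.
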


\begin{lem}\label{lem.meascart}
Let  $\setX$ and $\setY$  be  Euclidean spaces and suppose that 
$f_i\colon\setX\to\setY$, $i=1,\dots,n$, are Borel measurable mappings.  Then,  
\begin{align}
f\colon\setX&\to \tilde\setY=\underbrace{\setY\times\dots\times \setY}_{n\, \text{times}}\\
x&\mapsto (f_1(x),\dots, f_n(x))
\end{align}
is Borel measurable.
\end{lem}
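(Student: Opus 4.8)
The plan is to reduce this to the classical fact that a map into a finite product is measurable precisely when all its coordinate maps are, combined with the identification of the Borel $\sigma$-algebra on a Euclidean product with the corresponding product $\sigma$-algebra. First I would apply Lemma \ref{lem:prodborel} $n-1$ times (grouping the factors successively) to obtain $\colB(\tilde\setY)=\colB(\setY)\otimes\dots\otimes\colB(\setY)$ with $n$ factors. This structural identity is what makes it legitimate to test measurability of $f$ only against measurable rectangles.

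Next I would introduce the collection $\colD=\{\setA\subseteq\tilde\setY:f^{-1}(\setA)\in\colB(\setX)\}$ and verify, by the usual argument, that $\colD$ is a $\sigma$-algebra on $\tilde\setY$: it contains $\tilde\setY$ since $f^{-1}(\tilde\setY)=\setX$, it is closed under complementation because $f^{-1}(\tilde\setY\setminus\setA)=\setX\setminus f^{-1}(\setA)$, and it is closed under countable unions because $f^{-1}\big(\bigcup_{k\in\naturals}\setA_k\big)=\bigcup_{k\in\naturals}f^{-1}(\setA_k)$, using in each case that $\colB(\setX)$ is itself a $\sigma$-algebra. Then I would show that every measurable rectangle lies in $\colD$: for $\setA_1,\dots,\setA_n\in\colB(\setY)$ one has $f^{-1}(\setA_1\times\dots\times\setA_n)=\bigcap_{i=1}^n f_i^{-1}(\setA_i)$, and each $f_i^{-1}(\setA_i)\in\colB(\setX)$ by the assumed Borel measurability of $f_i$, so the finite intersection is again in $\colB(\setX)$. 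Since the measurable rectangles generate $\colB(\setY)\otimes\dots\otimes\colB(\setY)$, which by the first step equals $\colB(\tilde\setY)$, we get $\colB(\tilde\setY)\subseteq\colD$, i.e.\ $f^{-1}(\setA)\in\colB(\setX)$ for all $\setA\in\colB(\tilde\setY)$. By Definition \ref{dfn:measurable}\ref{Borelf} this says exactly that $f$ is Borel measurable.

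I do not expect any real obstacle here; the only points requiring care are invoking Lemma \ref{lem:prodborel} correctly to handle the $n$-fold product (an easy induction on the number of factors) and the standard ``generators suffice'' step, which is furnished by the $\sigma$-algebra property of $\colD$ established above.
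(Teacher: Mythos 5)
Your proof is correct and follows essentially the same route as the paper's: identify $\colB(\tilde\setY)$ with the product $\sigma$-algebra via Lemma \ref{lem:prodborel}, then check measurability on the generating rectangles using $f^{-1}(\setA_1\times\dots\times\setA_n)=\bigcap_{i=1}^n f_i^{-1}(\setA_i)$. The only difference is that you explicitly spell out the standard ``good sets'' argument (introducing $\colD$ and verifying the $\sigma$-algebra axioms) to justify that it suffices to check generators, a step the paper invokes implicitly.
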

\begin{proof}
Lemma \ref{lem:prodborel} implies $\colB(\tilde\setY)=\colB(\setY)\otimes\dots\otimes\colB(\setY)$. Thus,  $\colB(\tilde\setY)$ is generated by  sets $\setU_1\times\dots\times \setU_n$, where $\setU_i\in\colB(\setY)$ for $i=1,\dots,n$. 
It is therefore sufficient 
 to show that $f^{-1}(\setU_1\times\dots\times \setU_n)\in \colB(\setX)$ for all $\setU_i\in\colB(\setX)$,  $i=1,\dots,n$. But 
$f^{-1}(\setU_1\times\dots\times \setU_n)=f_1^{-1}(\setU_1)\cap\dots\cap f_n^{-1}(\setU_n)\in \colB(\setX)$ for all $\setU_i\in\colB(\setY)$, $i=1,\dots,n$, as all the $f_i$ are Borel measurable and every $\sigma$-algebra is closed under finite or countably infinite intersections.  
\end{proof}

\begin{cor}\label{lem.meascart2}
Let $\setX$ be a Euclidean space and 
suppose that $\{f_i\}_{i\in\naturals}$ is a sequence of Borel measurable mappings $f_i\colon\setX\to\reals^n$ converging pointwise to  $f\colon\setX\to\reals^n$.  Then, $f$ is Borel measurable. 
\end{cor}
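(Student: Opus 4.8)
The final statement to prove is Corollary \ref{lem.meascart2}: the pointwise limit of Borel measurable maps into $\reals^n$ is Borel measurable.

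The plan is to reduce the vector-valued case to the scalar-valued case that is already available as Lemma \ref{lem.measlimit}. First I would write $f_i = (f_{i,1},\dots,f_{i,n})$ in terms of the $n$ coordinate functions $f_{i,j}\colon\setX\to\reals$, and note that each $f_{i,j}$ equals $\pi_j\circ f_i$, where $\pi_j\colon\reals^n\to\reals$ is the $j$-th coordinate projection. Since $\pi_j$ is continuous, hence Borel measurable, and composition of Borel measurable maps is Borel measurable, each $f_{i,j}$ is Borel measurable. Next, from pointwise convergence $f_i(x)\to f(x)$ in $\reals^n$ and the fact that convergence in $\reals^n$ is equivalent to coordinatewise convergence, I conclude that for each fixed $j\in\{1,\dots,n\}$ the sequence $\{f_{i,j}\}_{i\in\naturals}$ converges pointwise to $f_j\colon\setX\to\reals$, the $j$-th coordinate of $f$. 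Applying Lemma \ref{lem.measlimit} (with $\colS(\setX)=\colB(\setX)$) to each $j$ separately shows that every $f_j$ is Borel measurable.

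Finally, I would invoke Lemma \ref{lem.meascart} (with $\setY=\reals$, $\tilde\setY=\reals^n$, and the maps $f_1,\dots,f_n$ there taken to be our coordinate functions $f_1,\dots,f_n$) to conclude that $f=(f_1,\dots,f_n)\colon\setX\to\reals^n$ is Borel measurable, which is exactly the claim.

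I do not expect any real obstacle here; the only thing to be careful about is bookkeeping — keeping the two uses of subscripts (indexing the sequence versus indexing coordinates) straight, and explicitly citing that convergence in the Euclidean norm on $\reals^n$ implies convergence of each coordinate. Both of these are entirely routine, so the proof is short: coordinate decomposition, coordinatewise convergence, Lemma \ref{lem.measlimit} per coordinate, then Lemma \ref{lem.meascart} to reassemble.
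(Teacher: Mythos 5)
Your proof is correct and follows the paper's approach almost verbatim: decompose into the $n$ scalar coordinate functions, note these are Borel measurable (via composition with the continuous projections), invoke Lemma \ref{lem.measlimit} coordinatewise to get Borel measurability of each component of $f$, and then reassemble with Lemma \ref{lem.meascart}. If anything, your version is a bit cleaner than the paper's, which phrases the Borel measurability of the components of $f$ as if it were already known before actually deriving it from Lemma \ref{lem.measlimit}.
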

\begin{proof}
We can write $f(x)=\tp{(f^{(1)}(x),\dots,f^{(n)}(x))}$, where $f^{(j)}\colon \setX\to\reals$ is Borel measurable for $j=1,\dots,n$,  
and $f_i(x)=\tp{\big(f^{(1)}_i(x),\dots,f^{(n)}_i(x)\big)}$, where $f^{(j)}_i\colon \setX\to\reals$ is Borel measurable for $j=1,\dots,n$ and all $i\in\naturals$. 
Then,  for  each  $j=1,\dots,n$, the sequence $(f_i^{(j)})_{i\in\naturals}$ of  Borel measurable  functions  converges pointwise to  $f^{(j)}$,  which is Borel measurable thanks to Lemma \ref{lem.measlimit}. Finally, Lemma \ref{lem.meascart} implies that $f$ is Borel measurable as its individual components 
$f^{(1)},\dots,f^{(n)}$  are  Borel measurable. 
\end{proof}

\begin{lem}\label{lem:fa}
Let $\setX$ and $\setY$ be topological spaces, $\setC\in\colB(\setX)$,   $f\colon\setC\to\setY$ Borel measurable, and $y_0\in\setY\mysetminus f(\setC)$. Consider $h\colon \setX\to \setY$ with 
$h|_{\setC}=f$ and $h|_{\setX\setminus\setC}=y_0$. 
Then, $h$ is Borel measurable. 
\end{lem}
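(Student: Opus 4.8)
By Definition \ref{dfn:measurable}\ref{Borelf}, it suffices to show that $h^{-1}(\setU)\in\colB(\setX)$ for every $\setU\in\colB(\setY)$. The natural move is to exploit the fact that $\setX$ is partitioned into the two Borel pieces $\setC$ and $\setX\setminus\setC$ (note $\setX\setminus\setC\in\colB(\setX)$ since $\colB(\setX)$ is a $\sigma$-algebra containing $\setC$), and that $h$ is defined piecewise on them: $h|_\setC=f$ and $h|_{\setX\setminus\setC}\equiv y_0$. Thus for a fixed $\setU\in\colB(\setY)$ one writes
\begin{align}
h^{-1}(\setU)=f^{-1}(\setU)\;\cup\;\bigl(h|_{\setX\setminus\setC}\bigr)^{-1}(\setU),
\end{align}
and one treats the two terms separately.

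\textbf{Key steps.} First, since $f\colon\setC\to\setY$ is Borel measurable and $\setC\in\colB(\setX)$, Definition \ref{dfn:measurable}\ref{Borelf} gives directly $f^{-1}(\setU)\in\colB(\setX)$. Second, for the constant part I would distinguish two cases: if $y_0\in\setU$ then $(h|_{\setX\setminus\setC})^{-1}(\setU)=\setX\setminus\setC\in\colB(\setX)$, while if $y_0\notin\setU$ then $(h|_{\setX\setminus\setC})^{-1}(\setU)=\emptyset\in\colB(\setX)$; in both cases this term lies in $\colB(\setX)$. Third, $h^{-1}(\setU)$ is a union of two elements of $\colB(\setX)$ and hence lies in $\colB(\setX)$. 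Since $\setU\in\colB(\setY)$ was arbitrary, $h$ is Borel measurable.

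\textbf{Main obstacle.} There is essentially no obstacle: the argument is a one-line case split once the piecewise decomposition of $h^{-1}(\setU)$ is written down. The only points worth stating explicitly are that $\setX\setminus\setC\in\colB(\setX)$ and that Definition \ref{dfn:measurable}\ref{Borelf} is phrased so that $f^{-1}(\setU)$ is already a subset of $\setX$ lying in $\colB(\setX)$ (so no passage through $\colB(\setC)$ is needed). I would also remark that the hypothesis $y_0\in\setY\setminus f(\setC)$ is not used in the measurability proof; it is only relevant for the role of $y_0$ as an ``error symbol'' in the applications of the lemma (e.g.\ \eqref{eq:gi1}--\eqref{eq:gi2}).
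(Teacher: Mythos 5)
Your proof is correct, and your decomposition is a cleaner variant of what the paper does. The paper also case-splits on whether $y_0$ lies in the given Borel set, but in the case $y_0\in\setA$ it writes $h^{-1}(\setA)=h^{-1}(\setA\setminus\{y_0\})\cup h^{-1}(\{y_0\})=f^{-1}(\setA\setminus\{y_0\})\cup(\setX\setminus\setC)$, i.e.\ it decomposes the \emph{target} set by carving out the singleton $\{y_0\}$, whereas you decompose the \emph{domain} into $\setC$ and $\setX\setminus\setC$ and get $h^{-1}(\setU)=f^{-1}(\setU)\cup(\setX\setminus\setC)$ directly. Your route has two small advantages. First, the paper invokes $\{y_0\}\in\colB(\setY)$, which holds in $T_1$ spaces but is not automatic for an arbitrary topological space $\setY$ as allowed by the lemma's hypotheses; your argument never needs singletons to be Borel. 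Second, you correctly observe that the hypothesis $y_0\notin f(\setC)$ plays no role in measurability — the paper uses it to identify $h^{-1}(\{y_0\})$ with $\setX\setminus\setC$, a step your decomposition simply bypasses. Both proofs reach the same conclusion with roughly the same amount of work; yours is a touch more general and makes the dispensability of $y_0\notin f(\setC)$ explicit.
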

\begin{proof}
We have to show that $h^{-1}(\setA)\in\colB(\setX)$ for all $\setA\in\colB(\setY)$. Consider an arbitrary but fixed $\setA\in \colB(\setY)$ and suppose first that   $y_0\in \setA$. 
Now, $\setA\in\colB(\setY)$ and $\{y_0\}\in\colB(\setY)$ imply $\setA\mysetminus \{y_0\}\in \colB(\setY)$. 
Therefore, 
\begin{align}
h^{-1}(\setA)
&=h^{-1}(\setA\mysetminus \{y_0\})\cup h^{-1}(\{y_0\})\\
&=f^{-1}(\setA\mysetminus \{y_0\})\cup (\setX\mysetminus\setC)\in\colB(\setX)
\end{align}
because $f$ is  Borel measurable, $\setA\mysetminus \{y_0\}\in\colB(\setY)$, and $\setX\mysetminus\setC\in\colB(\setX)$. If $y_0\notin \setA$, then  
\begin{align}
h^{-1}(\setA)
&=f^{-1}(\setA)\in\colB(\setX)
\end{align}
as  $f$ is Borel measurable and  $\setA\in\colB(\setY)$. 
\end{proof}

\begin{lem}\label{lem:loef}
Let $\mu$ be a measure on $\reals^k$ and consider $\setA\subseteq\reals^k$ with $\mu(\setA)>0$. 
Then,  there exists a $\vecz_0\in\setA$ such that
\begin{align} 
\mu\big(\setB_{k}(\vecz_0,r)\cap \setA\big)>0\quad \text{for all}\ r>0. \label{Armupos}
\end{align}
\end{lem}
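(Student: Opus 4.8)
The statement is the standard ``$\mu$-almost every point of a positive-measure set is a point of positive density for that set'' fact, and I would prove it by contradiction using the Lindel\"of property of Euclidean space together with countable subadditivity of $\mu$. Note first that $\mu$ here is a measure in the sense of Definition \ref{dfn:measure}, i.e., an outer measure defined on all subsets of $\reals^k$; consequently monotonicity \eqref{eq:mon} and countable subadditivity \eqref{eq:cs} are available for \emph{arbitrary} subsets, and no measurability hypotheses on $\setA$ or on the balls enter the argument.

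\textbf{Key steps.} Suppose, towards a contradiction, that the conclusion fails, i.e., that for every $\vecz\in\setA$ there exists a radius $r_\vecz>0$ with
\begin{align}
\mu\big(\setB_k(\vecz,r_\vecz)\cap\setA\big)=0.
\end{align}
The family $\{\setB_k(\vecz,r_\vecz):\vecz\in\setA\}$ is then an open cover of $\setA$, and hence an open cover of the open set $\setV=\bigcup_{\vecz\in\setA}\setB_k(\vecz,r_\vecz)$. Since $\setV\subseteq\reals^m$ is an open set, it is Lindel\"of (\!\!\cite[Definition 5.6.19, Proposition 5.6.22]{so14}; this is the same tool used in the proof of Property \ref{exaC1} of Lemma \ref{LemmaExarec}), so there exists a countable set $\{\vecz_i:i\in\naturals\}\subseteq\setA$ with
\begin{align}
\setA\subseteq\setV\subseteq\bigcup_{i\in\naturals}\setB_k(\vecz_i,r_{\vecz_i}).
\end{align}
Intersecting with $\setA$ gives $\setA=\bigcup_{i\in\naturals}\big(\setB_k(\vecz_i,r_{\vecz_i})\cap\setA\big)$, and countable subadditivity yields
\begin{align}
\mu(\setA)\leq\sum_{i\in\naturals}\mu\big(\setB_k(\vecz_i,r_{\vecz_i})\cap\setA\big)=0,
\end{align}
contradicting $\mu(\setA)>0$. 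Hence there must exist $\vecz_0\in\setA$ with $\mu\big(\setB_k(\vecz_0,r)\cap\setA\big)>0$ for all $r>0$.

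\textbf{Main obstacle.} There is essentially no analytic difficulty here; the only point requiring a little care is the covering/extraction step, i.e., making sure one is entitled to pass to a \emph{countable} subcover. This is handled cleanly by the Lindel\"of property of $\reals^m$ (second countability), exactly as invoked elsewhere in the paper, after which the conclusion is immediate from countable subadditivity of the (outer) measure $\mu$.
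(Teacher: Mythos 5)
Your proof is correct and follows essentially the same route as the paper's: assume the conclusion fails, use the Lindel\"of property of $\reals^k$ to extract a countable subcover of $\setA$ by balls whose $\mu$-trace on $\setA$ vanishes, and conclude $\mu(\setA)=0$ by countable subadditivity, a contradiction. The explicit remark that $\mu$ is an outer measure (so no measurability is needed) and the introduction of the auxiliary open set $\setV$ are cosmetic differences only.
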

\begin{proof}
Suppose, to the contrary, that such a $\vecz_0$ does not exist. 
Then, for every $\vecz\in\setA$, there must exist  an $r_\vecz>0$ such that $\mu\big(\setB_{k}(\vecz,r_\vecz)\cap \setA\big)=0$.  
With these $r_\vecz$,  we write   
\begin{align}\label{eq:sentloef}
\setA=\bigcup_{\vecz\in\setA} (\setB_{k}(\vecz,r_\vecz)\cap \setA). 
\end{align}
It now follows from the Lindel\"of property of $\reals^{k}$ \cite[Definition 5.6.19]{so14}\footnote{Recall that $\reals^k$ with the Euclidean distance metric is a separable metric space, i.e., $\reals^k$ includes a countable dense subset, and it is therefore a Lindel\"of space by \cite[Proposition 5.6.22]{so14}.} that there must exist a countable subset $\{\vecz_i:i\in\naturals\}\subseteq\setA$ such that 
\begin{align}
\setA=\bigcup_{i\in\naturals} (\setB_{k}(\vecz_i,r_{\vecz_i})\cap \setA).  
\end{align}
Since $\mu\big(\setB_{k}(\vecz_i,r_{\vecz_i})\cap \setA\big)=0$ for all $i\in\naturals$, the  countable subadditivity of $\mu$  implies 
\begin{align}
\mu(\setA)
&\leq \sum_{i\in\naturals}\mu\big(\setB_{k}(\vecz_i,r_{\vecz_i})\cap \setA\big)\\
&=0,
\end{align}
which contradicts the assumption $\mu(\setA)>0$. 
\end{proof}

\begin{lem}\label{lem:inner}
Let $(\setX,\colB(\setX),\mu)$ be a $\sigma$-finite measure space   
and $\setB\in\colB(\setX)$. Then, 
\begin{align}
\setB=\setN\cup\setA,\label{eq:compunionN}
\end{align}
where $\mu(\setN)=0$ and $\setA=\bigcup_{i\in\naturals}\setA_i$ with $\setA_i\subseteq\reals^m$ compact for all  $i\in\naturals$. 
\end{lem}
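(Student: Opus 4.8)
The plan is to reduce the claim to the inner regularity of finite Borel measures on $\reals^m$ and then to glue the resulting finite pieces back together. First I would use $\sigma$-finiteness to choose sets $\setX_i\in\colB(\setX)$, $i\in\naturals$, with $\setX=\bigcup_{i\in\naturals}\setX_i$ and $\mu(\setX_i)<\infty$, and replace each $\setX_i$ by $\setX_i\setminus\bigcup_{j<i}\setX_j$ so that the $\setX_i$ become pairwise disjoint while still covering $\setX$. For the given $\setB\in\colB(\setX)$ I would then treat each slice $\setB\cap\setX_i$ separately, working with the finite Borel measure $\mu_i(\setE):=\mu(\setE\cap\setX_i)$ on $\reals^m$; note that $\mu_i$ and $\mu$ agree on all subsets of $\setX_i$.

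The key intermediate fact I would establish is: for any finite Borel measure $\nu$ on $\reals^m$ and any $\setC\in\colB(\reals^m)$ there exists a $\sigma$-compact set $\setS\subseteq\setC$ with $\nu(\setC\setminus\setS)=0$. I would prove this by the classical inner-regularity argument. The family of Borel sets $\setD$ admitting, for every $\varepsilon>0$, a closed set $\setF$ and an open set $\setU$ with $\setF\subseteq\setD\subseteq\setU$ and $\nu(\setU\setminus\setF)<\varepsilon$ is a $\sigma$-algebra (closure under complements is immediate, and closure under countable unions follows by approximating $\bigcup_n\setD_n$ from outside by $\bigcup_n\setU_n$ and from inside by a sufficiently long finite union $\bigcup_{n\leq N}\setF_n$), and it contains every closed set (for closed $\setD$ take $\setF=\setD$, and for $\setU$ a shrinking sequence of open neighborhoods of $\setD$ whose measures decrease to $\nu(\setD)$ by continuity from above, Property \ref{lem.capmeasureiii} of Lemma \ref{lem.capmeasure}, using $\nu(\reals^m)<\infty$). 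Hence this family is all of $\colB(\reals^m)$. Applying it to $\setC$ produces closed sets $\setF_k\subseteq\setC$ with $\nu(\setC\setminus\setF_k)<1/k$; since every closed subset of $\reals^m$ is $\sigma$-compact, writing $\setF_k=\bigcup_{l\in\naturals}\big(\setF_k\cap\overline{\setB_m(\veczero,l)}\big)$, the set $\setS:=\bigcup_{k\in\naturals}\setF_k$ is $\sigma$-compact, contained in $\setC$, and satisfies $\nu(\setC\setminus\setS)\leq\inf_{k\in\naturals}\nu(\setC\setminus\setF_k)=0$.

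Finally I would assemble the pieces: applying the intermediate fact with $\nu=\mu_i$ and $\setC=\setB\cap\setX_i$ yields, for each $i$, compact sets $\setK_{i,j}$, $j\in\naturals$, with $\setA_i:=\bigcup_{j\in\naturals}\setK_{i,j}\subseteq\setB\cap\setX_i$ and $\mu\big((\setB\cap\setX_i)\setminus\setA_i\big)=\mu_i\big((\setB\cap\setX_i)\setminus\setA_i\big)=0$; setting $\setA:=\bigcup_{i,j\in\naturals}\setK_{i,j}$, a countable union of compact sets with $\setA\subseteq\setB$, and $\setN:=\bigcup_{i\in\naturals}\big((\setB\cap\setX_i)\setminus\setA_i\big)$, countable subadditivity of $\mu$ gives $\mu(\setN)=0$, while $\setX=\bigcup_{i\in\naturals}\setX_i$ gives $\setB=\bigcup_{i\in\naturals}(\setB\cap\setX_i)=\setA\cup\setN$, as required. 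The only step that is not pure bookkeeping is the intermediate inner-regularity fact, and I expect it to be the main (and essentially sole) obstacle; it is nonetheless entirely standard — it is the Radon property of finite Borel measures on the $\sigma$-compact Polish space $\reals^m$ — and in the write-up it could equally well be quoted from a standard reference, e.g.\ \cite[Theorem 1.10]{ma99} after passing to the Borel-regular envelope of $\mu$.
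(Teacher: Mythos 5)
Your proof is correct. The overall strategy is the same as the paper's — exhaust $\setB$ from the inside by a $\sigma$-compact set $\setA$ and observe that the leftover $\setN=\setB\setminus\setA$ is null — but the route to that exhaustion is genuinely different. The paper simply invokes \cite[Proposition 1.43]{amfupa00} to obtain compact sets $\setK_i\subseteq\setB$ with $\mu(\setB\setminus\setK_i)\leq 1/i$ directly for the $\sigma$-finite measure $\mu$, then takes the increasing union $\setA_j=\bigcup_{i\leq j}\setK_i$ and concludes via continuity from below (Property~\ref{lem.capmeasureii} of Lemma~\ref{lem.capmeasure}). You instead reduce to the finite-measure case by the disjointification $\setX=\bigcup_i\setX_i$, prove the inner-regularity statement for finite Borel measures on $\reals^m$ from scratch by the classical good-class argument (the family of Borel sets squeezable between closed and open sets up to $\varepsilon$ is a $\sigma$-algebra containing the closed sets, using continuity from above — Property~\ref{lem.capmeasureiii} of Lemma~\ref{lem.capmeasure} — which legitimately requires finiteness), pass from closed to $\sigma$-compact via $\sigma$-compactness of $\reals^m$, and then reassemble. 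What your approach buys is self-containment — the cited proposition is effectively re-proved — and it also sidesteps a small subtlety in the paper's write-up, namely that when $\mu(\setB)=\infty$ the inference ``$\mu(\setA)=\mu(\setB)$ together with $\setA\subseteq\setB$ gives $\mu(\setB\setminus\setA)=0$'' is not immediate (one should instead note $\mu(\setB\setminus\setA)\leq\mu(\setB\setminus\setA_j)\leq 1/j$ for all $j$); your slice-by-slice bookkeeping with finite $\mu_i$ avoids any such $\infty-\infty$ issue. What the paper's citation buys is brevity. Both are fine; if you want the shortest self-contained variant, you could also skip the good-class construction and quote a standard reference for Radon-ness of finite Borel measures on Polish spaces, exactly as you suggest at the end.
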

\begin{proof}
By \cite[Proposition 1.43]{amfupa00}, we can find, for each $i\in\naturals$,  a compact set $\setK_i$ such that $\setK_i\subseteq\setB$ and $\mu(\setB\mysetminus\setK_i)\leq 1/i$. For $j\in\naturals$, let $\setA_j=\bigcup_{\,i=1}^{\,j}\setK_i$. It follows that  $\{\setA_j\}_{j\in\naturals}$ is an increasing (in terms of $\subseteq$) sequence of compact sets $\setA_j\subseteq\setB$ satisfying $\mu(\setB)-\mu(\setA_j)=\mu(\setB\mysetminus\setA_j)\leq 1/j$ for all $j\in\naturals$. We set $\setA=\bigcup_{j\in\naturals}\setA_j\subseteq\setB$. Thus, 
$\mu(\setA)\leq\mu(\setB)$ by monotonicity  of $\mu$. Now,
\begin{align}
\mu(\setA)
&=\lim_{j\to\infty}\mu(\setA_j)\label{eq:lemappcapm}\\
&=\mu(\setB)-\lim_{j\to\infty}\mu(\setB\mysetminus\setA_j)\\
&\geq \mu(\setB)-\lim_{j\to\infty}\frac{1}{j}\\
&=\mu(\setB),
\end{align}
where \eqref{eq:lemappcapm} follows from Property \ref{lem.capmeasureii} in Lemma \ref{lem.capmeasure}.  
We conclude that $\mu(\setA)=\mu(\setB)$, which, together with $\setA\subseteq\setB$,  yields \eqref{eq:compunionN}.
\end{proof}

\begin{lem}\label{lem:sumunc} 
Let $x_i> 0$ for all $i\in\setI$ and set 
\begin{align}
M=\sup_{\setJ\subseteq \setI\,:\, \card{\setJ}<\infty}\sum_{i\in\setJ}x_i.  
\end{align}
Suppose that $M<\infty$. Then, $\setI$ is finite or countably  infinite. 
\end{lem}
\begin{proof}

For every $k\in\naturals_0$, set $I_k=\{i\in\setI:1/(k+1)\leq x_i < 1/k\}$ and   
\begin{align}
M_k= \sup_{\setJ\subseteq \setI_k: \card{\setJ}<\infty}\sum_{i\in\setJ}x_i. 
\end{align}
Since $M<\infty$ and $M_k\leq M$, we must have 
$M_k<\infty$  for all  $k\in\naturals_0$.
But for every  $k\in\naturals_0$, by the definition of $\setI_k$,  $M_k$ can only be finite if 
$\card{\setI_k}<\infty$.  
Thus, $\card{\setI_k}<\infty$ for all $k\in\naturals_0$. 
Since  $\setI=\bigcup_{k\in\naturals}\setI_k$, we conclude that $\setI$ is finite or countably infinite.  
\end{proof}

\subsection{Properties of Locally Lipschitz and Differentiable Mappings}

\begin{dfn}(Locally Lipschitz mapping)\cite[Definition 3.118]{gapa16} 
\begin{enumerate}[label=\roman*)]
\item
A mapping $f\colon\setU\to \reals^{l}$, where $\setU\subseteq\reals^{k}$, is Lipschitz if there exists a constant $L \geq 0$ such that 
\begin{align}\label{eq:lip}
\|f(\vecu)-f(\vecv)\|_2\leq L\|\vecu-\vecv\|_2\quad\text{for all}\ \vecu,\vecv\in\setU.
\end{align}
The smallest constant $L$ for \eqref{eq:lip} to  hold is  the Lipschitz constant of $f$.
\item
A mapping $f\colon\reals^{k}\to \reals^{l}$ is  locally Lipschitz if  every $\vecx\in\reals^k$  admits an open neighborhood  $\setU_\vecx\subseteq\reals^k$ containing $\vecx$ such that $f|_{\setU_\vecx}$ is Lipschitz.  
\end{enumerate}
\end{dfn}

The following result establishes a necessary and sufficient condition for a mapping to be locally Lipschitz. Since we could not find a proof for this statement in  the literature, we present one here  for completeness.  

\begin{lem}\label{lem:lipcompact}
The mapping $f\colon\reals^{k}\to \reals^{l}$ is  locally Lipschitz if and only if  $f|_\setK$ is Lipschitz for all compact sets $\setK\subseteq \reals^{k}$.
\end{lem}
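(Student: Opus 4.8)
The statement is an equivalence, so I would prove the two implications separately. The nontrivial direction is ``locally Lipschitz $\Rightarrow$ Lipschitz on every compact set''; the converse is almost immediate.

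\textbf{Easy direction.} Suppose $f|_\setK$ is Lipschitz for every compact $\setK\subseteq\reals^k$. Fix $\vecx\in\reals^k$ and take the open neighborhood $\setU_\vecx=\setB_k(\vecx,1)$. Its closure $\overline{\setB_k(\vecx,1)}$ is compact (Heine--Borel), so $f|_{\overline{\setB_k(\vecx,1)}}$ is Lipschitz by hypothesis, and hence so is the restriction $f|_{\setU_\vecx}$. Thus $f$ is locally Lipschitz. This is one or two sentences.

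\textbf{Hard direction.} Suppose $f$ is locally Lipschitz and let $\setK\subseteq\reals^k$ be compact. For each $\vecx\in\setK$ pick, by local Lipschitzness, an open ball $\setB_k(\vecx,r_\vecx)$ on which $f$ is Lipschitz with constant $L_\vecx$. The balls $\{\setB_k(\vecx,r_\vecx/2)\}_{\vecx\in\setK}$ cover $\setK$; extract a finite subcover $\setB_k(\vecx_1,r_1/2),\dots,\setB_k(\vecx_N,r_N/2)$ with associated constants $L_1,\dots,L_N$. Set $r=\min_i r_i/2>0$ and $L_{\max}=\max_i L_i$. Now take any $\vecu,\vecv\in\setK$. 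Two cases: if $\|\vecu-\vecv\|_2<r$, then $\vecu$ lies in some $\setB_k(\vecx_i,r_i/2)$, and $\|\vecv-\vecx_i\|_2\le\|\vecv-\vecu\|_2+\|\vecu-\vecx_i\|_2<r+r_i/2\le r_i$, so both $\vecu,\vecv\in\setB_k(\vecx_i,r_i)$, whence $\|f(\vecu)-f(\vecv)\|_2\le L_{\max}\|\vecu-\vecv\|_2$. If $\|\vecu-\vecv\|_2\ge r$, then since $f(\setK)$ is bounded (continuous image of a compact set; $f$ is continuous because it is locally Lipschitz), say $\operatorname{diam}(f(\setK))\le D$, we get $\|f(\vecu)-f(\vecv)\|_2\le D\le (D/r)\|\vecu-\vecv\|_2$. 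Combining, $f|_\setK$ is Lipschitz with constant $\max\{L_{\max},D/r\}$.

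\textbf{Main obstacle.} There is no deep obstacle; the only point requiring care is the ``long-distance'' case $\|\vecu-\vecv\|_2\ge r$, where local Lipschitz bounds give nothing directly and one must instead use compactness of $\setK$ a second time to bound $\operatorname{diam}(f(\setK))$ and convert an absolute bound into a Lipschitz-type bound by dividing by the uniform lower bound $r$ on the distance. One should also note explicitly that a locally Lipschitz map is continuous, which is what makes $f(\setK)$ compact (hence bounded); this is where the finite subcover and the strictly positive radius $r$ are essential. The rest is the standard Lebesgue-number-style argument from the finite subcover.
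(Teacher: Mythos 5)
Your proof is correct, and the overall architecture (finite subcover, then a short-distance/long-distance dichotomy closed by a bound on $\operatorname{diam}(f(\setK))$) matches the paper's. The one place where you genuinely diverge is the step that produces a uniform ``Lebesgue number'': the paper proves the existence of a $\delta>0$ with the property that any two points of $\setK$ within distance $\delta$ lie in a common member of the finite cover by an indirect argument --- assuming no such $\delta$ exists, extracting a convergent subsequence in the compact space $\setK\times\setK$, and deriving a contradiction. You instead halve the radii before extracting the finite subcover, set $r=\min_i r_i/2$, and then get the Lebesgue-number property \emph{directly} from the triangle inequality: if $\|\vecu-\vecv\|_2<r$ and $\vecu\in\setB_k(\vecx_i,r_i/2)$, then $\|\vecv-\vecx_i\|_2<r+r_i/2\le r_i$. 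This is cleaner: it is constructive (gives an explicit $r$), avoids the sequential-compactness contradiction entirely, and in fact generalizes verbatim to arbitrary metric spaces where sequential compactness might be unavailable. The only tiny thing worth being explicit about in a written version, and which you do flag, is that $f(\setK)$ is bounded because $f$ is continuous (a consequence of local Lipschitzness) and $\setK$ is compact, so the constant $D$ is finite; the paper makes the same observation, though with a minor slip (it writes $\max_{\vecx\in\setK}f(\vecx)$ for a vector-valued $f$, where a norm is clearly intended).
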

\begin{proof}
,,$\Rightarrow$'': Suppose that $f\colon\reals^{k}\to \reals^{l}$ is  locally Lipschitz  and consider a compact set $\setK\subseteq\reals^k$. We have to show that $f|_\setK$ is Lipschitz.   For every $\vecx\in\setK$, by the local Lipschitz property of $f$,   there exists an  open neighborhood   $\setU_\vecx$ containing $\vecx$ such that $f|_{\setU_\vecx}$ is Lipschitz. 
Since $\setK\subseteq\bigcup_{\vecx\in\setK}\setU_\vecx$ is a cover of the compact set $\setK$ by open sets,  there must exist $\setU_{\vecx_i}$, denoted by $\setU_i$, $i=1,\dots,n$, such that $\setK\subseteq\bigcup_{i=1}^n\setU_i$. For  $i=1,\dots,n$, let $L_i$ denote the Lipschitz constant of $f|_{\setU_i}$. 
Now, as shown below,   there exists a $\delta >0$  such that, for every $\vecx,\vecy\in\setK$ with $\|\vecx-\vecy\|_2<\delta$, we can find  a $\setU_i$ in the finite subcover of $\setU$ with $\vecx,\vecy\in\setU_i$. With this $\delta$  we let 
\begin{align}\label{eq:upplip}
L=\max\mleft\{L_1,\dots,L_n,\frac{2\Delta}{\delta}\mright\}, 
\end{align}
where  $\Delta:=\max_{\vecx\in\setK}f(\vecx)$. Note that the local Lipschitz property of $f$ implies its continuity and, therefore, $f$ attains its maximum on the  compact set $\setK$. Consider an arbitrary  pair $\vecx,\vecy\in\setK$.
If $\|\vecx-\vecy\|_2<\delta$, then this pair must be in the same $\setU_i$, which implies  $\|f(\vecx)-f(\vecy)\|\leq L_i\|\vecx-\vecy\|\leq L\|\vecx-\vecy\|$. 
If $\|\vecx-\vecy\|_2\geq\delta$, then $\|f(\vecx)-f(\vecy)\|\leq 2\Delta=2\Delta\delta/\delta\leq (2\Delta/\delta)\|\vecx-\vecy\|_2\leq L \|\vecx-\vecy\|_2$. 
Thus, $f|_\setK$ is Lipschitz with Lipschitz constant at most $L$.
It remains to establish the existence of a $\delta$ with the desired properties. 
Towards a contradiction, suppose that such a $\delta$ does not exist. This implies that, for every $m\in\naturals$, we can find a pair $\vecx_m,\vecy_m\in\setK$ such that $\|\vecx_m-\vecy_m\|_2<1/m$, but there is no $\setU_i$ containing this pair. 
Since $\setK\times\setK$ is compact by Tychonoff's Theorem \cite[Theorem 4.42]{fo99},   there exists 
a convergent subsequence $\{\vecx_{m_j},\vecy_{m_j}\}_{j\in\naturals}$ of the sequence $\{\vecx_{m},\vecy_{m}\}_{m\in\naturals}$ \cite[Theorem 2.41]{ru89}. Let $(\bar\vecx,\bar\vecy)$ denote the limit point of this subsequence. Note that   $\bar\vecx$ and  $\bar \vecy$
cannot be in the same $\setU_i$, for if they were there would be an $M\in\naturals$ such that $(\vecx_{m_j},\vecy_{m_j})\in\setU_i\times\setU_i$  for all $j\geq M$, which is not possible as  $\vecx_m$ and $\vecy_m$ are in different sets $\setU_i$ for all $m\in\naturals$. 
But 
\begin{align}
\bar\vecx-\bar\vecy
&=\lim_{j\to\infty}(\vecx_{m_j}-\vecy_{m_j})\label{eq:limleqa}\\
&=\veczero,\label{eq:limleq}
\end{align}
where the second equality follows from $\lim_{j\to\infty}\|\vecx_{m_j}-\vecy_{m_j}\|_2\leq\lim_{j\to\infty} 1/m_j=0$ and the continuity of $\|\cdot\|_2$.
Thus,  $\bar \vecx=\bar\vecy$, which is not possible as there is no $\setU_i$ containing $\bar \vecx$ and $\bar \vecy$.

,,$\Leftarrow$'': Suppose that $f|_\setK$ is Lipschitz for all compact sets $\setK\subseteq \reals^k$. It is sufficient to show that 
$f|_{\setB_k(\vecx,1)}$ is Lipschitz for every  
$\vecx\in\reals^k$. As $\overline{\setB_k(\vecx,1)}$ is compact, $f|_{\overline{\setB_k(\vecx,1)}}$ is Lipschitz. The  Lipschitz property of $f|_{\setB_k(\vecx,1)}$ therefore follows  immediately from $\setB_k(\vecx,1)\subseteq\overline{\setB_k(\vecx,1)}$.  
\end{proof}


\begin{lem}\label{lem:fK}
If $f\colon\reals^m\to\reals^n$ is locally Lipschitz and $\setK\subseteq\reals^m$ is compact, then $f(\setK)$ is compact. 
\end{lem}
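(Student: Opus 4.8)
The statement is the standard fact that locally Lipschitz (indeed merely continuous) mappings send compact sets to compact sets; the only mild subtlety is that $f$ is defined on all of $\reals^m$ and is only \emph{locally} Lipschitz, so one first wants to pass to a genuinely Lipschitz restriction. The plan is to reduce to continuity of $f$ on $\setK$ and then invoke a basic topological fact.

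\textbf{Step 1: Restrict to a Lipschitz map.} Since $f\colon\reals^m\to\reals^n$ is locally Lipschitz and $\setK\subseteq\reals^m$ is compact, Lemma \ref{lem:lipcompact} implies that $f|_\setK$ is Lipschitz. In particular, $f|_\setK$ is (uniformly) continuous on $\setK$.

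\textbf{Step 2: Continuous image of a compact set is compact.} Let $\{\setV_\alpha\}_{\alpha\in\setI}$ be an arbitrary open cover of $f(\setK)$ in $\reals^n$. For each $\alpha$, continuity of $f|_\setK$ gives that $(f|_\setK)^{-1}(\setV_\alpha)$ is relatively open in $\setK$, i.e.\ of the form $\setW_\alpha\cap\setK$ with $\setW_\alpha\subseteq\reals^m$ open. Since $f(\setK)\subseteq\bigcup_{\alpha\in\setI}\setV_\alpha$, the sets $\setW_\alpha\cap\setK$, $\alpha\in\setI$, cover $\setK$; by compactness of $\setK$ there is a finite subset $\{\alpha_1,\dots,\alpha_r\}\subseteq\setI$ with $\setK\subseteq\bigcup_{j=1}^r\setW_{\alpha_j}$, hence $\setK=\bigcup_{j=1}^r(f|_\setK)^{-1}(\setV_{\alpha_j})$. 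Applying $f$ yields $f(\setK)\subseteq\bigcup_{j=1}^r\setV_{\alpha_j}$, so $\{\setV_{\alpha_j}\}_{j=1}^r$ is a finite subcover. Thus every open cover of $f(\setK)$ admits a finite subcover, i.e.\ $f(\setK)$ is compact.

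\textbf{Main obstacle.} There is essentially no obstacle: the argument is entirely routine once Lemma \ref{lem:lipcompact} is in place. If one prefers a sequential formulation over the open-cover one, one can instead argue: any sequence in $f(\setK)$ is of the form $\{f(\vecx_i)\}_{i\in\naturals}$ with $\vecx_i\in\setK$; by compactness of $\setK\subseteq\reals^m$ (Heine--Borel) there is a subsequence $\vecx_{i_\ell}\to\vecx_\infty\in\setK$, and continuity of $f|_\setK$ gives $f(\vecx_{i_\ell})\to f(\vecx_\infty)\in f(\setK)$, so $f(\setK)$ is sequentially compact and hence compact. Either route completes the proof.
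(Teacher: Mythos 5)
Your proof is correct and takes essentially the same route as the paper, which simply invokes continuity of $f$ together with the standard fact that continuous images of compact sets are compact; you spell out that fact via the open-cover (and alternatively sequential) argument rather than citing it. The detour through Lemma \ref{lem:lipcompact} in Step 1 is harmless but unnecessary, since local Lipschitzness already gives continuity of $f$ on all of $\reals^m$ directly.
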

\begin{proof}
Follows from the continuity of $f$ and \cite[Theorem 2-7.2]{ma95}.
\end{proof}

We will need the following composition property of locally Lipschitz mappings.   

\begin{lem}\label{lem:lip}
Suppose that  $f\colon\reals^k\to\reals^m$  and 
$g\colon\reals^m\to\reals^n$ are both  locally Lipschitz. Then, $h=g\circ f\colon\reals^k\to \reals^n$ is locally Lipschitz. 
\end{lem}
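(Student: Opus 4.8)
\textbf{Proof proposal for Lemma \ref{lem:lip}.} The plan is to verify the local Lipschitz property of $h=g\circ f$ directly at an arbitrary point $\vecx\in\reals^k$, by combining a Lipschitz neighborhood of $f$ at $\vecx$ with a Lipschitz neighborhood of $g$ at the image point $f(\vecx)$. First I would fix $\vecx\in\reals^k$. Since $f$ is locally Lipschitz, there exist an open neighborhood $\setU_\vecx\subseteq\reals^k$ of $\vecx$ and a constant $L_f\geq 0$ such that $\|f(\vecu)-f(\vecv)\|_2\leq L_f\|\vecu-\vecv\|_2$ for all $\vecu,\vecv\in\setU_\vecx$. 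Since $g$ is locally Lipschitz, there exist an open neighborhood $\setV\subseteq\reals^m$ of $f(\vecx)$ and a constant $L_g\geq 0$ such that $\|g(\vecp)-g(\vecq)\|_2\leq L_g\|\vecp-\vecq\|_2$ for all $\vecp,\vecq\in\setV$.

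The one point that needs care is that $\setV$ is a neighborhood of $f(\vecx)$ rather than of $\vecx$, so it must be pulled back through $f$. As a locally Lipschitz mapping, $f$ is continuous, hence $f^{-1}(\setV)$ is open and contains $\vecx$. Set $\setW_\vecx=\setU_\vecx\cap f^{-1}(\setV)$, which is an open neighborhood of $\vecx$. Then for all $\vecu,\vecv\in\setW_\vecx$ we have $f(\vecu),f(\vecv)\in\setV$, so
\begin{align}
\|h(\vecu)-h(\vecv)\|_2
&=\|g(f(\vecu))-g(f(\vecv))\|_2\\
&\leq L_g\|f(\vecu)-f(\vecv)\|_2\\
&\leq L_g L_f\|\vecu-\vecv\|_2,
\end{align}
so $h|_{\setW_\vecx}$ is Lipschitz with constant at most $L_gL_f$. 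Since $\vecx\in\reals^k$ was arbitrary, $h$ is locally Lipschitz.

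There is essentially no serious obstacle here; the argument is routine bookkeeping with neighborhoods, the only subtlety being the continuity-based pullback of $\setV$ just described. As an alternative route one could instead invoke Lemma \ref{lem:lipcompact}: for any compact $\setK\subseteq\reals^k$, the restriction $f|_\setK$ is Lipschitz, the image $f(\setK)$ is compact by Lemma \ref{lem:fK}, hence $g|_{f(\setK)}$ is Lipschitz, and therefore $h|_\setK=(g|_{f(\setK)})\circ(f|_\setK)$ is Lipschitz as a composition of Lipschitz maps; applying Lemma \ref{lem:lipcompact} once more yields that $h$ is locally Lipschitz. I would present the direct argument above as the main proof since it is self-contained and does not require Lemma \ref{lem:fK}.
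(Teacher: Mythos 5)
Your main argument is correct and takes a genuinely different (and more elementary) route than the paper. The paper proves the lemma by invoking Lemma~\ref{lem:lipcompact} twice together with Lemma~\ref{lem:fK}: it fixes an arbitrary compact $\setK\subseteq\reals^k$, notes that $\setQ=f(\setK)$ is compact, so $f|_\setK$ and $g|_\setQ$ are Lipschitz with constants $L$ and $M$ respectively, concludes $h|_\setK$ is Lipschitz with constant $LM$, and then applies Lemma~\ref{lem:lipcompact} in the reverse direction to get local Lipschitzness of $h$ --- exactly the alternative route you sketch at the end of your proposal. Your primary argument instead works directly from the neighborhood-based definition: it pulls back a Lipschitz neighborhood $\setV$ of $f(\vecx)$ through the continuous map $f$ and intersects with a Lipschitz neighborhood $\setU_\vecx$ of $\vecx$, which is a clean and self-contained verification that needs no auxiliary lemmas beyond continuity of locally Lipschitz maps. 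What the paper's route buys is uniformity: it gives a single Lipschitz constant for each compact set at once, which fits naturally with the compact-exhaustion viewpoint already established by Lemma~\ref{lem:lipcompact} and reused elsewhere in the paper (e.g.\ in the proofs of the area and coarea formulas); what your direct route buys is independence from the characterization lemma and a slightly shorter logical chain. Both are valid; your version is arguably the more standard textbook proof of stability of local Lipschitzness under composition.
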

\begin{proof}
To prove that  $h=g\circ f$ is locally Lipschitz,   
by Lemma \ref{lem:lipcompact}, it is sufficient to show that $h|_\setK$ is Lipschitz for all compact sets $\setK\subseteq\reals^k$. 
Let $\setK\subseteq\reals^k$ be an arbitrary but fixed compact set and note that $\setQ=f(\setK)$ is compact owing  to Lemma \ref{lem:fK}. Thus, by Lemma 
\ref{lem:lipcompact}, $f|_\setK$ and $g|_\setQ$ are both Lipschitz with Lipschitz constants, say,  $L$ and $M$,  respectively. 
It follows that 
\begin{align}
\|h(\vecu)-h(\vecv)\|_2
&\leq M\|f(\vecu)-f(\vecv)\|_2\\
&\leq LM\|\vecu-\vecv\|_2
\end{align}
for all $\vecu$ and $\vecv$ in $\setK$, which implies that $h|_\setK$ is Lipschitz. As $\setK$ was arbitrary, we can conclude that  $h$ is locally Lipschitz. 
\end{proof}

\begin{dfn}(Differentiable mapping)\cite[Definition 1.2]{wa71}\label{dfn:diffmap}
Let $\setU$ be an open set in $\reals^m$.  
A function  $f\colon\setU\to \reals$ is 
\begin{enumerate}[label=\roman*)]
\item $C^0$ if it is continuous. 
\item\label{itemdiff2}
$C^r$ with $r\in\naturals$ if, for all possible choices of $r_1,\dots,r_m\in\naturals_0$ with $\sum_{i=1}^mr_i=r$, the partial derivatives  
\begin{align}
\frac{\partial^r}{\partial x_1^{r_1}\dots\,\partial x_m^{r_m}}f(\vecx)
\end{align} 
exist and are continuous  on $\setU$.
\item 
$C^\infty$ if it is $C^r$ for all  $r\in\naturals$.
\end{enumerate}
For $r\in\naturals_0\cup\{\infty\}$, a mapping  $f\colon\setU\to \reals^n$, $\vecx\mapsto \tp{(f_1(\vecx)\dots f_n(\vecx))}$ is $C^r$ if every component $f_i$, $i=1,\dots,n$, is $C^r$. 

\end{dfn}
It follows from the mean value theorem \cite[Theorem 3.4]{ed73} 
that $C^1$ mappings  are locally Lipschitz. 
Conversely, by Rademacher's Theorem \cite[Theorem. 5.1.11]{krpa08}, every locally Lipschitz mapping $f\colon\reals^m\to \reals^n$ has  
an $\colL(\reals^m)$-measurable (but not necessarily continuous) differential $Df$, which is defined $\lebmeasure^m$-almost everywhere. 

\begin{thm}(Sard's theorem)\label{thm:sard}\cite[Theorems 4.1 and 7.2]{sa42}
Let $f\colon\reals^m\to\reals^n$, $\vecx\mapsto\tp{(f_1(\vecx)\dots f_n(\vecx))}$ be $C^r$ and 
set $\setA=\{\vecx:J\! f(\vecx)=0\}$ with $J\! f(\vecx)$ as in \eqref{eq:Jacobiandef}. The following statements hold. 
\begin{enumerate}[label=\roman*)]
\item \label{thm:sard1}
If $m\leq n$, then  $\mathscr{H}^{m}(f(\setA))=0$.
\item \label{thm:sard2}
If $m>n$ and $r\geq m-n+1$, then $\lebmeasure^n(f(\setA))=0$.  
\end{enumerate} 
\end{thm}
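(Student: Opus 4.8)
The plan is to prove the two parts by separate but related dyadic-cube arguments, reducing each to a local estimate and then taking countable unions; part (i) I would carry out in full, while for part (ii) I would follow the classical inductive scheme and only indicate its structure, ultimately citing \cite{sa42} for the detailed bookkeeping.

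\emph{Part (i), the case $m\le n$.} Here $Jf(\vecx)=0$ is equivalent to $\rank Df(\vecx)\le m-1$. Fix a closed cube $\setQ\subseteq\reals^m$ of side length $L$. Since $f\in C^1$, the differential $Df$ is bounded on $\setQ$, say $\|Df\|\le C$, and uniformly continuous with some modulus $\omega$. Subdivide $\setQ$ into $N^m$ subcubes of side $L/N$, and consider only those subcubes $\setI$ that meet $\setA$, say at a point $\vecx_0$. For $\vecx\in\setI$ one has $f(\vecx)=f(\vecx_0)+Df(\vecx_0)(\vecx-\vecx_0)+R(\vecx)$ with $\|R(\vecx)\|\le\omega(\sqrt m\,L/N)\sqrt m\,L/N$; the linear term lies in the subspace $\operatorname{im}Df(\vecx_0)$ of dimension at most $m-1$ and has norm at most $C\sqrt m\,L/N$. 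Hence $f(\setI)$ lies within distance $O(\omega(\sqrt m\,L/N)L/N)$ of an $(m-1)$-dimensional disk of diameter $O(L/N)$, so it can be covered by $O\big((C/\omega(\sqrt m\,L/N))^{m-1}\big)$ balls of radius $O(\omega(\sqrt m\,L/N)L/N)$, which gives $\mathscr{H}^m(f(\setI))\lesssim (L/N)^{m}\,\omega(\sqrt m\,L/N)$. Summing over the at most $N^m$ subcubes that meet $\setA$ yields $\mathscr{H}^m(f(\setA\cap\setQ))\lesssim L^m\,\omega(\sqrt m\,L/N)$, which tends to $0$ as $N\to\infty$. Thus $\mathscr{H}^m(f(\setA\cap\setQ))=0$, and covering $\reals^m$ by countably many such cubes gives $\mathscr{H}^m(f(\setA))=0$.

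\emph{Part (ii), the case $m>n$.} Now $Jf(\vecx)=0$ means $\rank Df(\vecx)<n$, so $\setA$ is the usual critical set. I would argue by induction on $m$, the base case $m\le n$ being part (i). Introduce the decreasing filtration $\setA=\setA_0\supseteq\setA_1\supseteq\setA_2\supseteq\cdots$, where $\setA_j$ is the set of points at which all partial derivatives of $f$ of order at most $j$ vanish, and decompose $\setA=(\setA\setminus\setA_1)\cup\bigcup_{j\ge1}(\setA_j\setminus\setA_{j+1})\cup\setA_k$ for a suitable $k$. Near a point of $\setA\setminus\setA_1$ some first-order partial of some component is nonzero, so a local change of coordinates puts $f$ in the form $(t,\vecu)\mapsto(t,g(t,\vecu))$ with $\vecu\in\reals^{m-1}$ and $g$ valued in $\reals^{n-1}$; the differential becomes block-triangular, so a point is critical for $f$ exactly when $\vecu$ is critical for the slice $g(t,\cdot)$, and the inductive hypothesis for $g(t,\cdot)\colon\reals^{m-1}\to\reals^{n-1}$ together with Fubini in $t$ shows $f(\setA\setminus\setA_1)$ is locally $\lebmeasure^n$-null. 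Near a point of $\setA_j\setminus\setA_{j+1}$ with $j\ge1$ there is an order-$j$ partial derivative $w$ of some component with $w=0$ but $\nabla w\neq\veczero$, so $\{w=0\}$ is locally an $(m-1)$-dimensional submanifold $\setM$ containing $\setA_j$, on which every point of $\setA_j$ is critical for $f$ (indeed $Df$ vanishes there); parametrizing $\setM$ and applying part (i) or the inductive hypothesis reduces this piece as well. Finally, on $\setA_k$ a Taylor estimate gives $\|f(\vecx)-f(\vecx_0)\|=o(|\vecx-\vecx_0|^k)$ for $\vecx_0\in\setA_k$, so $\lebmeasure^n(f(\setA_k\cap\setQ))=o\big(\delta^{kn}(L/\delta)^m\big)\to0$ as $\delta\to0$ provided $kn\ge m$; choosing $k$ accordingly and taking countable unions over cube covers and local charts completes the induction.

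\emph{Main obstacle.} The delicate point is the derivative accounting in part (ii): the filtration length $k$ must be large enough for the Taylor step (which forces $kn\ge m$) yet small enough that the restricted maps $f|_\setM$ produced in the level-set reduction still carry the regularity required by the inductive hypothesis at dimension $m-1$. Reconciling these two constraints is exactly what the quantitative assumption $r\ge m-n+1$ is calibrated to ensure, and verifying it in detail (together with the implicit-function-theorem and Fubini steps) is the bulk of the work; since this is the classical theorem of Sard \cite{sa42}, I would cite it rather than reproduce the full argument.
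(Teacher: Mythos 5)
The paper states this theorem purely as a cited result (\cite[Theorems 4.1 and 7.2]{sa42}) and does not reproduce a proof, so there is no proof of the paper's own to compare against; your write-up is therefore best judged as a freestanding reconstruction. Your part (i) argument is a correct, self-contained proof of the ``easy'' case of Sard's theorem: on a compact cube, the first-order Taylor remainder bound via the uniform-continuity modulus $\omega$ of $Df$, the observation that $Df(\vecx_0)(\vecx-\vecx_0)$ lies in an $(m-1)$-dimensional subspace when $Jf(\vecx_0)=0$, the covering of the resulting thin slab by $\mathcal{O}\big((C/\omega(\sqrt m L/N))^{m-1}\big)$ balls of radius $\mathcal{O}(\omega(\sqrt m L/N)L/N)$, and the total estimate $L^m\,\omega(\sqrt m L/N)\to 0$ summed over subcubes, followed by a countable cube cover of $\reals^m$, is exactly the classical dyadic argument and is complete. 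Your part (ii) outline faithfully reproduces Sard's inductive scheme --- the filtration $\setA=\setA_0\supseteq\setA_1\supseteq\cdots$, the implicit-function-theorem reduction on $\setA\setminus\setA_1$ (Fubini over the good coordinate), the level-set reduction on $\setA_j\setminus\setA_{j+1}$ to a codimension-one submanifold on which $Df$ vanishes, and the Taylor crush on $\setA_k$ requiring $kn\ge m$ --- and you correctly identify that reconciling the filtration depth $k$ with the regularity drop $r\mapsto r-j$ incurred by the level-set parametrizations is precisely the nontrivial accounting that the hypothesis $r\ge m-n+1$ is calibrated to make possible. Citing \cite{sa42} for that bookkeeping rather than carrying it through is entirely consistent with the paper's own treatment, which simply invokes the reference; what you add beyond the paper is a complete proof of part (i), which costs nothing and makes the result verifiable at that level of generality without going to the source.
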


\subsection{Area and Coarea Formula}

Next, we state two fundamental results from geometric measure theory that are used  frequently in the paper, namely,  the area and the coarea formula for locally Lipschitz mappings.  

\begin{thm}(Area formula)\label{thm:area}\cite[Theorem 5.1.1]{krpa08}
If $\setA\in\colB(\reals^m)$, $f\colon\reals^m\to\reals^n$  is  Lipschitz,  and $g\colon\setA\to \reals$ is nonnegative and Lebesgue measurable, and $m\leq n$, then  
\begin{align}
\int_\setA J\! f(\vecx)\,\mathrm d \lebmeasure^{m}(\vecx)
&=\int_{\reals^n} \operatorname{card}\mleft(\setA\cap f^{-1}(\{\vecy\})\mright) \,\mathrm d\mathscr{H}^{m}(\vecy).
\end{align}
\end{thm}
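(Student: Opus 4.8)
The statement to be proved is the Area Formula (Theorem~\ref{thm:area}), which is cited from \cite[Theorem 5.1.1]{krpa08}. Since the paper explicitly presents this as a citation rather than an original contribution, the ``proof'' in the paper will simply be a pointer to the reference. Nevertheless, if one wanted to sketch a genuine argument, here is how I would proceed.

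First I would reduce to the case where $f$ is one-to-one on $\setA$. The idea is to partition $\setA$ into countably many Borel pieces $\setA_k$ on each of which $f$ is injective; this is possible by a standard measurable-selection / covering argument using the fact that $f$ is Lipschitz (cover $\reals^m$ by a countable grid of small cubes, and on each cube $f$ restricted to the set where $Df$ is injective behaves like a bi-Lipschitz map after a linear change of coordinates). On the null set $\{x : Jf(x) = 0\}$ the left-hand integrand vanishes, and by Sard's theorem (Theorem~\ref{thm:sard}, Property~\ref{thm:sard1}, using $m \le n$) the image of this set has $\mathscr{H}^m$-measure zero, so it contributes nothing to the right-hand side either. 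Thus both sides ``localize'' additively over the pieces, and it suffices to prove the identity when $f|_\setA$ is injective and $Jf > 0$ on $\setA$, in which case $\operatorname{card}(\setA \cap f^{-1}(\{y\})) = \ind{f(\setA)}(y)$ and the claim becomes $\int_\setA Jf\,\mathrm d\lebmeasure^m = \mathscr{H}^m(f(\setA))$.

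Second, for the injective case I would establish the two inequalities separately, both via a linearization argument. For any $\varepsilon > 0$, one covers $\setA$ (minus a null set) by countably many Borel sets $\setE_i$ such that on each $\setE_i$ the map $f$ is, up to a factor $(1+\varepsilon)$ in the bi-Lipschitz constants, the restriction of an injective linear map $T_i$ with $\sqrt{\det(\tp{T_i}T_i)}$ close to $Jf$ on $\setE_i$; this uses Rademacher's theorem (every Lipschitz map is differentiable $\lebmeasure^m$-a.e.) and continuity of the differential after passing to a further subdivision. For a linear injective $T\colon\reals^m\to\reals^n$ one has the exact identity $\mathscr{H}^m(T(\setB)) = \sqrt{\det(\tp{T}T)}\,\lebmeasure^m(\setB)$ for Borel $\setB$ (the Jacobian scaling of $m$-dimensional measure under a linear map onto an $m$-plane). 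Summing over the pieces and letting $\varepsilon \to 0$ gives both $\mathscr{H}^m(f(\setA)) \le \int_\setA Jf\,\mathrm d\lebmeasure^m$ and the reverse, hence equality. Finally, one upgrades from the indicator weight to a general nonnegative measurable $g$ by the standard monotone-class / simple-function approximation: the identity holds for indicators $g = \ind{\setB}$ by the injective case applied to $\setA\cap\setB$, hence for nonnegative simple functions by linearity, hence for all nonnegative measurable $g$ by monotone convergence.

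\textbf{Main obstacle.} The technical heart is the localization step: producing the countable Borel partition of $\setA$ on whose pieces $f$ is simultaneously injective, differentiable with $Df$ close to a fixed injective linear map, and such that the geometric estimate $\mathscr{H}^m(f(\setE_i)) \approx \sqrt{\det(\tp{Df}Df)}\,\lebmeasure^m(\setE_i)$ holds uniformly. This requires care to ensure the pieces are Borel (so that $\operatorname{card}(\setA\cap f^{-1}(\{y\}))$ is a measurable function of $y$, which is implicitly needed for the right-hand integral to make sense) and to control the error terms uniformly enough that they disappear in the limit. Since all of this is exactly the content of \cite[Theorem 5.1.1]{krpa08}, in the paper itself it is legitimate to write:

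\emph{See \cite[Theorem 5.1.1]{krpa08}.}
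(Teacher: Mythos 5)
You are right that the paper provides no proof here: Theorem~\ref{thm:area} is stated verbatim with a citation to \cite[Theorem 5.1.1]{krpa08} and nothing more, so pointing to the reference is exactly what the paper does. Your accompanying sketch of the standard geometric-measure-theory argument (Rademacher, countable Borel decomposition into pieces of injectivity, linearization against injective linear maps, then summing) is sound in outline, but two points merit a correction. First, the appeal to Sard's theorem (Theorem~\ref{thm:sard}, Property~\ref{thm:sard1}) to dispose of the critical set does not go through for a merely Lipschitz $f$: as stated, that theorem requires $C^r$ regularity, which a Lipschitz map need not have. The fact you actually need --- that $\mathscr{H}^m\big(f(\{\vecx : J\! f(\vecx)=0\})\big)=0$ for Lipschitz $f$ with $m\le n$ --- is a separate Lipschitz--Sard lemma, usually established directly by a covering estimate exploiting a.e.\ differentiability rather than by citing the classical $C^r$ Sard theorem; it is proved as part of the area-formula machinery in \cite{krpa08}. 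Second, the function $g$ in the hypotheses is vestigial: it appears nowhere in the displayed identity (which involves only the Jacobian on the left and the multiplicity function $\operatorname{card}(\setA\cap f^{-1}(\{\vecy\}))$ on the right), so your closing monotone-convergence step upgrading to general nonnegative $g$ is not needed for the statement as written, though it is of course part of the fuller form of the area formula in the cited source.
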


\begin{cor}(Area formula)\label{cor:area}
If $\setA\in\colB(\reals^m)$, $f\colon\reals^m\to\reals^n$  is locally Lipschitz and one-to-one on $\setA$,  and $m\leq n$, then  
\begin{align}
\int_\setA J\! f(\vecx)\,\mathrm d \lebmeasure^{m}(\vecx)
&=\mathscr{H}^{m}(f(\setA)).
\end{align}
\end{cor}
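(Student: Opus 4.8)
The plan is to deduce the statement from Theorem~\ref{thm:area} by (i) an exhaustion argument that reduces the locally Lipschitz $f$ to a genuinely Lipschitz map on each piece of an exhaustion of $\setA$, and (ii) using injectivity of $f$ on $\setA$ to collapse the multiplicity function $\vecy\mapsto\operatorname{card}(\setA\cap f^{-1}(\{\vecy\}))$ appearing in Theorem~\ref{thm:area} to the indicator of $f(\setA)$.

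First I would write $\setA=\bigcup_{k\in\naturals}\setA_k$ with $\setA_k:=\setA\cap\overline{\setB_m(\veczero,k)}$, an increasing sequence of Borel sets. By Lemma~\ref{lem:lipcompact}, the restriction of $f$ to the compact ball $\overline{\setB_m(\veczero,k)}$ is Lipschitz, and by \cite[Theorem 7.2]{ma99} this restriction extends to a Lipschitz mapping $f_k\colon\reals^m\to\reals^n$ with $f_k=f$ on $\setA_k$. Then $f_k$ is one-to-one on $\setA_k$ (since $f$ is), $f_k(\setA_k)=f(\setA_k)$, and a standard Lebesgue-density-point argument---comparing $Df$ and $Df_k$ at density points of $\setA_k$ at which both $f$ and $f_k$ are differentiable, which are $\lebmeasure^m$-a.a.\ points of $\setA_k$ by Rademacher's theorem---shows that $Jf_k=Jf$ $\lebmeasure^m$-a.e.\ on $\setA_k$; in particular $\int_{\setA_k}Jf_k\,\mathrm d\lebmeasure^m=\int_{\setA_k}Jf\,\mathrm d\lebmeasure^m$.

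Next I would apply Theorem~\ref{thm:area} to $f_k$ with the Borel set $\setA_k$ in place of $\setA$ and $g\equiv 1$. Because $f_k$ is one-to-one on $\setA_k$, the function $\vecy\mapsto\operatorname{card}(\setA_k\cap f_k^{-1}(\{\vecy\}))$ takes only the values $0$ and $1$ and coincides with $\ind{f(\setA_k)}$; since it is $\mathscr{H}^{m}$-measurable (implicit in the statement of Theorem~\ref{thm:area}), the set $f(\setA_k)=f_k(\setA_k)$ is $\mathscr{H}^{m}$-measurable and $\int_{\reals^n}\operatorname{card}(\setA_k\cap f_k^{-1}(\{\vecy\}))\,\mathrm d\mathscr{H}^{m}(\vecy)=\mathscr{H}^{m}(f(\setA_k))$. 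Combining this with the previous paragraph gives $\int_{\setA_k}Jf\,\mathrm d\lebmeasure^m=\mathscr{H}^{m}(f(\setA_k))$ for every $k\in\naturals$. Finally I would pass to the limit $k\to\infty$: on the left, $\ind{\setA_k}Jf\uparrow\ind{\setA}Jf$ pointwise, so monotone convergence yields $\int_{\setA_k}Jf\,\mathrm d\lebmeasure^m\to\int_{\setA}Jf\,\mathrm d\lebmeasure^m$; on the right, $\{f(\setA_k)\}_{k\in\naturals}$ is an increasing sequence of $\mathscr{H}^{m}$-measurable sets with union $f(\setA)$ (which is therefore $\mathscr{H}^{m}$-measurable), so Property~\ref{lem.capmeasureii} of Lemma~\ref{lem.capmeasure}, applied in the measure space $(\reals^n,\colS_{\mathscr{H}^{m}}(\reals^n),\mathscr{H}^{m})$, gives $\mathscr{H}^{m}(f(\setA_k))\to\mathscr{H}^{m}(f(\setA))$. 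Equating the two limits proves the corollary.

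The only point requiring genuine care---more a technicality than an obstacle---is the passage from the ``locally Lipschitz'' hypothesis here to the globally Lipschitz hypothesis of Theorem~\ref{thm:area}: one must choose the extensions $f_k$ and verify that replacing $f$ by $f_k$ changes neither the image set $f(\setA_k)$ nor the integral $\int_{\setA_k}Jf\,\mathrm d\lebmeasure^m$. Everything else is routine bookkeeping with monotone convergence, continuity from below of $\mathscr{H}^{m}$ on measurable sets, and the observation that a one-to-one map has multiplicity function equal to the indicator of its image.
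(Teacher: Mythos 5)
Correct, and essentially the same argument as the paper's: exhaust $\setA$ by balls, reduce via Lemma~\ref{lem:lipcompact} to the genuinely Lipschitz case so that Theorem~\ref{thm:area} applies, use one-to-oneness to collapse the multiplicity function to $\ind{f(\setA_k)}$, and pass to the limit with monotone convergence on the left and continuity from below of $\mathscr{H}^{m}$ (Lemma~\ref{lem.capmeasure}\,\ref{lem.capmeasureii}) on the right. The one inessential detour is your density-point verification that $J f_k = J f$ holds $\lebmeasure^m$-a.e.\ on $\setA_k$: since $f_k$ agrees with $f$ not merely on $\setA_k$ but on the whole closed ball $\overline{\setB_m(\veczero,k)}$, the two functions coincide on an open neighbourhood of every interior point of that ball, so their differentials agree there wherever either exists, and the sphere boundary is $\lebmeasure^m$-null --- no density-point or Rademacher-on-$\setA_k$ reasoning is required; the paper avoids the point entirely by applying Theorem~\ref{thm:area} directly to the restriction $f|_{\setB_m(\veczero,i)}$.
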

\begin{proof}
We have 
\begin{align}
&\int_\setA J\! f(\vecx)\,\mathrm d \lebmeasure^{m}(\vecx)\label{eq:area0}\\
&=\lim_{i\to\infty}\int_\setA \ind{\setB_m(\veczero,i)}(\vecx)J\! f(\vecx)\,\mathrm d \lebmeasure^{m}(\vecx)\label{eq:area1}\\
&=\lim_{i\to\infty}\int_\setA J (f|_{\setB_m(\veczero,i)})(\vecx)\,\mathrm d \lebmeasure^{m}(\vecx)\label{eq:area2}\\
&=\lim_{i\to\infty}\int_{\reals^n} \operatorname{card}\mleft(\setA\cap f|_{\setB_m(\veczero,i)}^{-1}(\{\vecy\})\mright) \,\mathrm d\mathscr{H}^{m}(\vecy)\label{eq:area3}\\
&=\lim_{i\to\infty}\mathscr{H}^{m}\mleft(f\big(\setA\cap\setB_m(\veczero,i)\big)\mright)  \label{eq:area4}\\
&=\mathscr{H}^{m}\mleft(\bigcup_{i\in\naturals}f\big(\setA\cap\setB_m(\veczero,i)\big)\mright)  \label{eq:area5}\\
&=\mathscr{H}^{m}(f(\setA)), \label{eq:uselebmes}
\end{align}
where 
\eqref{eq:area1} is by the Lebesgue Monotone Convergence Theorem \cite[Theorem 4.6]{ba95} upon noting that $(\ind{\setB_m(\veczero,i)})_{i\in\naturals}$ is an increasing sequence of nonnegative Lebesgue measurable functions converging pointwise to the constant function $1$, 
in \eqref{eq:area3} we applied Theorem \ref{thm:area} to  $f|_{\setB_m(\veczero,i)}$, which is Lipschitz by Lemma \ref{lem:lipcompact} as $\setB_m(\veczero,i)\subseteq\overline{\setB_m(\veczero,i)}$ and $\overline{\setB_m(\veczero,i)}$ is  compact  for all $i\in\naturals$, 
in   \eqref{eq:area4} we used that $f$ is one-to-one, by assumption, which implies 
\begin{align}
\operatorname{card}\mleft(\setA\cap f|_{\setB_m(\veczero,i)}^{-1}(\{\vecy\})\mright) 
=
\begin{cases}
1&\text{if}\ \vecy\in f\big(\setA\cap\setB_m(\veczero,i)\big)\\
0&\text{else},
\end{cases}
\end{align}
and \eqref{eq:area5} is by Property \ref{lem.capmeasureii} in Lemma \ref{lem.capmeasure}.
\end{proof}

If $m=n$, by  Property \ref{HBorel} in Lemma \ref{lem:prophausdorff}, $\mathscr{H}^{m}$ can be replaced by $\lebmeasure^{m}$
in Theorem \ref{thm:area} and in Corollary  \ref{cor:area}. 

\begin{thm}(Coarea formula)\label{thm:coarea}\cite[Theorem 5.2.1]{krpa08}
If $f\colon\reals^m\to\reals^n$ is  Lipschitz, $\setA\in\colB(\reals^m)$, and $m\geq n$, then  
\begin{align}
\int_\setA  J\! f(\vecx)    \,\mathrm d \lebmeasure^{m}(\vecx)
&=\int_{\reals^n} \mathscr{H}^{m-n}\mleft(\setA\cap f^{-1}(\{\vecy\})\mright) \,\mathrm d\lebmeasure^{n}(\vecy).
\end{align}
\end{thm}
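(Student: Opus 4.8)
This result is classical; a complete proof is given in \cite[Theorem 5.2.1]{krpa08}, and here is the line of argument one would follow to reconstruct it. The plan is to establish the identity first for linear maps, then bootstrap to $C^1$ maps via a local linearization, and finally pass to general Lipschitz maps by an almost-everywhere approximation based on Rademacher's theorem. Along the way one also verifies that $\vecy\mapsto\mathscr{H}^{m-n}(\setA\cap f^{-1}(\{\vecy\}))$ is $\lebmeasure^n$-measurable, so that the right-hand integral is well defined.

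\textbf{Step 1 (linear maps).} First I would treat $f=L$ with $L\colon\reals^m\to\reals^n$ linear. If $\rank L<n$, then $J\!L=0$, so the left-hand side vanishes, while $L(\reals^m)$ is a proper, hence $\lebmeasure^n$-null, subspace of $\reals^n$, so $\setA\cap L^{-1}(\{\vecy\})=\emptyset$ for $\lebmeasure^n$-a.a.\ $\vecy$ and the right-hand side vanishes as well. If $\rank L=n$, I would use the splitting $\reals^m=\ker L\oplus(\ker L)^\perp$: the restriction of $L$ to $(\ker L)^\perp$ is an isomorphism onto $\reals^n$ whose Jacobian equals $J\!L$, and every fiber $L^{-1}(\{\vecy\})$ is an affine copy of the $(m-n)$-dimensional subspace $\ker L$. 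Writing $\lebmeasure^m$ as a product measure adapted to this splitting (up to the factor $J\!L$), invoking Fubini's theorem (Theorem \ref{thm.fubini}), and using $\mathscr{H}^{m-n}=\lebmeasure^{m-n}$ on Borel subsets of $\ker L$ (Property \ref{HBorel} of Lemma \ref{lem:prophausdorff}) then gives the identity for $L$, and for arbitrary Borel $\setA$ by the usual approximation.

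\textbf{Step 2 ($C^1$ maps).} Next I would upgrade to $f\in C^1$. On the open set where $Df$ has rank $n$, cover the domain by countably many small Borel pieces on each of which, after an affine change of coordinates, $f$ is as close as desired to its differential; by the implicit function theorem each nonempty fiber is locally a $C^1$ graph, so the area formula (Theorem \ref{thm:area}) controls its $\mathscr{H}^{m-n}$-measure. Summing the linear estimates of Step 1 with uniformly small error terms and letting the mesh of the partition shrink yields the identity on the full-rank set. On the complement $\{\vecx:J\!f(\vecx)=0\}$ the left integrand vanishes; for the right side one uses that $f$ maps this set to an $\lebmeasure^n$-null set — for $m>n$ this is Property \ref{thm:sard2} of Sard's theorem (Theorem \ref{thm:sard}), while for $m=n$ the whole statement reduces to the area formula — so $\lebmeasure^n$-a.a.\ fibers avoid it.

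\textbf{Step 3 (Lipschitz maps) and the main obstacle.} Finally, for Lipschitz $f$ I would combine Rademacher's theorem \cite[Theorem 5.1.11]{krpa08} (giving $Df$ $\lebmeasure^m$-a.e.) with a Lusin-type approximation: for every $\varepsilon>0$ there is a $C^1$ map agreeing with $f$ off a set of $\lebmeasure^m$-measure at most $\varepsilon$, with controlled Lipschitz constant. Applying Step 2 to the approximants and controlling both integrands on the exceptional sets — on the fiber side by applying the area formula to the Lipschitz restriction to bound the contribution of a small-measure set to $\int\mathscr{H}^{m-n}(\setA\cap f^{-1}(\{\vecy\}))\,\mathrm d\lebmeasure^n(\vecy)$ — and passing to the limit yields the formula in general. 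I expect the technical heart to be Step 2: producing the local linearization of a $C^1$ map with \emph{uniform} error bounds over a whole Borel piece, and reconciling the $\mathscr{H}^{m-n}$-measure of the genuinely curved fibers with the flat estimate of Step 1; the second delicate point is controlling the zero-Jacobian set on the fiber side, where one must combine Sard's theorem with the area formula rather than use a naive measure bound. The Lipschitz extension, though requiring care with measurability and the interplay of the two approximations, is then routine given Step 2.
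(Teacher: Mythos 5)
The paper does not actually prove Theorem~\ref{thm:coarea}; it is quoted verbatim from \cite[Theorem 5.2.1]{krpa08}, so there is no in-paper argument to compare against. Judged on its own terms, your outline tracks the classical textbook route (linear $\to$ $C^1$ $\to$ Lipschitz with Rademacher and Lusin-type approximation), which is a reasonable reconstruction, but there is one genuine gap in Step~2 that the standard proofs are careful to avoid.

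The gap is the use of Sard's theorem for the critical set. You invoke Property~\ref{thm:sard2} of Theorem~\ref{thm:sard} to conclude that a $C^1$ map sends $\{\vecx : J\!f(\vecx)=0\}$ to an $\lebmeasure^n$-null set when $m>n$. But that property requires $f\in C^r$ with $r\geq m-n+1$, which for $C^1$ forces $m\leq n$; and the restriction is not an artifact of the statement --- Whitney's example shows that the Morse--Sard theorem genuinely fails for $C^1$ maps $\reals^m\to\reals^n$ once $m>n$. Since the Lusin-type approximation in Step~3 only produces $C^1$ approximants, you cannot sidestep this by pretending to work with smoother maps either. The classical proofs (Federer, Evans--Gariepy, and \cite{krpa08} itself) therefore do not route the critical set through Sard at all: they prove directly, as a separate lemma, that if $\setB\subseteq\{J\!f=0\}$ then $\int_{\reals^n}\mathscr{H}^{m-n}(\setB\cap f^{-1}(\{\vecy\}))\,\mathrm d\lebmeasure^n(\vecy)=0$, typically by perturbing $f$ to $g_\varepsilon=(f,\varepsilon p)$ for a suitable orthogonal projection $p$, applying the area/coarea machinery to $g_\varepsilon$ (which has strictly positive Jacobian), and letting $\varepsilon\to 0$. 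Moreover, \cite{krpa08} works with Lipschitz maps throughout via the Lipschitz linearization lemma (decomposing the domain into countably many Borel pieces on which $f$ is bi-Lipschitz close to a linear injection), rather than passing through a $C^1$ intermediate step, precisely because this handles measurability and the critical set uniformly. Your Step~2, as written, does not close; it needs the perturbation lemma in place of Sard.
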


\begin{cor}(Coarea formula)\label{cor:coarea}
If $f\colon\reals^m\to\reals^n$ is locally Lipschitz, $\setA\in\colB(\reals^m)$, and $m\geq n$, then  
\begin{align}
\int_\setA  J\! f(\vecx)    \,\mathrm d \lebmeasure^{m}(\vecx)
&=\int_{\reals^n} \mathscr{H}^{m-n}\mleft(\setA\cap f^{-1}(\{\vecy\})\mright) \,\mathrm d\lebmeasure^{n}(\vecy).
\end{align}
\end{cor}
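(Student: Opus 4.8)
The plan is to mirror the proof of the area formula in Corollary~\ref{cor:area}: exhaust $\reals^m$ by the open balls $\setB_m(\veczero,i)$, $i\in\naturals$, reduce to the genuinely Lipschitz case on each closed ball $\overline{\setB_m(\veczero,i)}$ via Lemma~\ref{lem:lipcompact}, apply the Lipschitz coarea formula Theorem~\ref{thm:coarea} there, and then pass to the limit $i\to\infty$ on both sides using the monotone convergence tools already at hand. Note first that, since $f$ is locally Lipschitz, its differential $Df$—and hence the $n$-dimensional Jacobian $J\!f$—is defined $\lebmeasure^m$-a.e.\ by Rademacher's theorem, so the left-hand integral is meaningful, and $J(f|_{\setB_m(\veczero,i)})(\vecx)=J\!f(\vecx)$ for a.a.\ $\vecx\in\setB_m(\veczero,i)$ because the differential is a local object.

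Applying the Lebesgue Monotone Convergence Theorem with the increasing sequence $(\ind{\setB_m(\veczero,i)})_{i\in\naturals}$ of nonnegative Lebesgue measurable functions converging pointwise to the constant function $1$, and then Theorem~\ref{thm:coarea} applied to $f|_{\overline{\setB_m(\veczero,i)}}$ (which is Lipschitz by Lemma~\ref{lem:lipcompact} since $\overline{\setB_m(\veczero,i)}$ is compact) with the Borel set $\setA\cap\setB_m(\veczero,i)$, I would obtain
\begin{align}
\int_\setA  J\! f(\vecx)    \,\mathrm d \lebmeasure^{m}(\vecx)
&=\lim_{i\to\infty}\int_\setA \ind{\setB_m(\veczero,i)}(\vecx)J\! f(\vecx)\,\mathrm d \lebmeasure^{m}(\vecx)\\
&=\lim_{i\to\infty}\int_\setA J(f|_{\setB_m(\veczero,i)})(\vecx)\,\mathrm d \lebmeasure^{m}(\vecx)\\
&=\lim_{i\to\infty}\int_{\reals^n} \mathscr{H}^{m-n}\mleft(\setA\cap\setB_m(\veczero,i)\cap f^{-1}(\{\vecy\})\mright) \,\mathrm d\lebmeasure^{n}(\vecy).
\end{align}
It then remains to take $i\to\infty$ on the right-hand side. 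For each fixed $\vecy\in\reals^n$, the sets $\setA\cap\setB_m(\veczero,i)\cap f^{-1}(\{\vecy\})$ increase with $i$ to $\setA\cap f^{-1}(\{\vecy\})$, so Property~\ref{lem.capmeasureii} of Lemma~\ref{lem.capmeasure}, applied to the measure space $(\reals^m,\colB(\reals^m),\mathscr{H}^{m-n})$, gives
\begin{align}
\lim_{i\to\infty}\mathscr{H}^{m-n}\mleft(\setA\cap\setB_m(\veczero,i)\cap f^{-1}(\{\vecy\})\mright)=\mathscr{H}^{m-n}\mleft(\setA\cap f^{-1}(\{\vecy\})\mright).
\end{align}
Since the integrands on the right form an increasing sequence of nonnegative $\lebmeasure^n$-measurable functions (measurability for each $i$ being part of the content of Theorem~\ref{thm:coarea} applied to the Lipschitz restriction), one more application of the Lebesgue Monotone Convergence Theorem pulls the limit inside the integral, yielding $\int_{\reals^n} \mathscr{H}^{m-n}(\setA\cap f^{-1}(\{\vecy\}))\,\mathrm d\lebmeasure^{n}(\vecy)$ and finishing the proof.

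The argument is essentially routine; the only mild obstacle is the bookkeeping on the right-hand side, namely ensuring that the fiber-measure functions $\vecy\mapsto\mathscr{H}^{m-n}(\setA\cap\setB_m(\veczero,i)\cap f^{-1}(\{\vecy\}))$ are $\lebmeasure^n$-measurable for each $i$ (so that monotone convergence applies) and that the pointwise-in-$\vecy$ continuity from below of $\mathscr{H}^{m-n}$ is legitimate here. Both are immediate: measurability is supplied by Theorem~\ref{thm:coarea} itself applied to the Lipschitz restrictions, and continuity from below is exactly Property~\ref{lem.capmeasureii} of Lemma~\ref{lem.capmeasure}. No new ideas beyond those already used in the proof of Corollary~\ref{cor:area} are required.
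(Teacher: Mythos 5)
Your proof is correct and follows essentially the same route as the paper's: exhaustion of $\reals^m$ by the balls $\setB_m(\veczero,i)$, application of the Lipschitz coarea formula Theorem~\ref{thm:coarea} to the restrictions (noting $\setA\cap\setB_m(\veczero,i)\cap f^{-1}(\{\vecy\})=\setA\cap f|_{\setB_m(\veczero,i)}^{-1}(\{\vecy\})$), then continuity from below of $\mathscr{H}^{m-n}$ via Property~\ref{lem.capmeasureii} of Lemma~\ref{lem.capmeasure} together with the Lebesgue Monotone Convergence Theorem to pass to the limit. The only cosmetic difference is that the paper cites \cite[Lemma 5.2.5]{krpa08} for the $\lebmeasure^n$-measurability of the fiber functions $g_i$, whereas you extract it as an implicit consequence of Theorem~\ref{thm:coarea} itself; both are fine.
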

\begin{proof}
We have 
\begin{align}
&\int_\setA J\! f(\vecx)\,\mathrm d \lebmeasure^{m}(\vecx)\\
&=\lim_{i\to\infty}\int_\setA J (f|_{\setB_m(\veczero,i)})(\vecx)\,\mathrm d \lebmeasure^{m}(\vecx)\label{eq:coarea1}\\
&=\lim_{i\to\infty} \int_{\reals^n} \mathscr{H}^{m-n}\mleft(\setA\cap f|_{\setB_m(\veczero,i)}^{-1}(\{\vecy\})\mright) \,\mathrm d\lebmeasure^{n}(\vecy)\label{eq:coarea2}\\
&= \int_{\reals^n} \mathscr{H}^{m-n}\mleft(\bigcup_{i\in\naturals}\setA\cap f|_{\setB_m(\veczero,i)}^{-1}(\{\vecy\})\mright) \,\mathrm d\lebmeasure^{n}(\vecy)\label{eq:coarea4}\\
&=\int_{\reals^n} \mathscr{H}^{m-n}\mleft(\setA\cap f^{-1}(\{\vecy\})\mright) \,\mathrm d\lebmeasure^{n}(\vecy), 
\end{align}
where 
\eqref{eq:coarea1} follows from \eqref{eq:area0}--\eqref{eq:area2}, 
in \eqref{eq:coarea2} we applied Theorem \ref{thm:coarea} to $f|_{\setB_m(\veczero,i)}$, which is Lipschitz by Lemma \ref{lem:lipcompact} as $\setB_m(\veczero,i)\subseteq\overline{\setB_m(\veczero,i)}$ and $\overline{\setB_m(\veczero,i)}$ is  compact  for all $i\in\naturals$, and 
\eqref{eq:coarea4} is by the Lebesgue Monotone Convergence Theorem \cite[Theorem 4.6]{ba95} upon noting that, for every $i\in\naturals$, the  function 
\begin{align}
g_i\colon \reals^n&\to \reals\label{eq:gi1a}\\
\vecy&\mapsto \mathscr{H}^{m-n}\mleft(\setA\cap f|_{\setB_m(\veczero,i)}^{-1}(\{\vecy\})\mright)\label{eq:gi2a}
\end{align}
is Lebesgue measurable \cite[Lemma 5.2.5]{krpa08} and, therefore,  $(g_i)_{i\in\naturals}$ is a sequence of nonnegative 
increasing  Lebesgue measurable functions,  
 with 
\begin{align}
\lim_{i\to\infty} g_i(\vecy)= \mathscr{H}^{m-n}\mleft(\bigcup_{i\in\naturals}\setA\cap f|_{\setB_m(\veczero,i)}^{-1}(\{\vecy\})\mright)
\end{align}  
for all $\vecy\in\reals^n$ by Property \ref{lem.capmeasureii} in Lemma \ref{lem.capmeasure}.  
\end{proof}

\subsection{Properties of  Modified Minkowski Dimension}\label{sec:dimMB}
In this section, we state some properties of  modified Minkowski dimension (see Definitions \ref{dfndim} and \ref{dfndimlocal}). 

\begin{lem}\label{lem:Mpropmindim} Main properties of modified Minkowski dimension:
\begin{enumerate}[label=\roman*)]
\item 
We have
\label{Mequiv}
\begin{align} \label{eq:stablower}
\underline{\dim}_\mathrm{MB}(\setU)=\inf\mleft\{\sup_{i\in\naturals} \underline{\dim}_\mathrm{B}(\setU_i) : \setU\subseteq \bigcup_{i\in\naturals}\setU_i\mright\}
\end{align} 
and
\begin{align} \label{eq:stabupper}
\overline{\dim}_\mathrm{MB}(\setU)=\inf\mleft\{\sup_{i\in\naturals} \overline{\dim}_\mathrm{B}(\setU_i) : \setU\subseteq \bigcup_{i\in\naturals}\setU_i\mright\},
\end{align}
respectively, where in \eqref{eq:stablower}  and \eqref{eq:stabupper}  the infima are over all possible  coverings $\{\setU_i\}_{i\in\naturals}$ of $\setU$ by nonempty compact sets $\setU_i$. 
\item \label{Mpropmanifold}
Every  $s$-dimensional $C^1$-submanifold  \cite[Definition 5.3.1]{krpa08} $\setU$ of $\reals^{m}$ has $\dim_\mathrm{MB}(\setU)=s$.
\item\label{Mpropmonotonic}
$\underline{\dim}_\mathrm{MB}(\cdot)$ and $\overline{\dim}_\mathrm{MB}(\cdot)$ are monotonically nondecreasing with respect to $\subseteq$.
\item\label{MpropMltB}
$\underline{\dim}_\mathrm{MB}(\setU)\leq \underline{\dim}_\mathrm{B}(\setU)$ 
and $\overline{\dim}_\mathrm{MB}(\setU)\leq\overline{\dim}_\mathrm{B}(\setU)$ 
 for all nonempty  sets $\setU$. 
\item \label{MpropL2} If $f$ is Lipschitz, then 
\begin{align}
\overline{\dim}_\mathrm{MB}(f(\setU))\leq \overline{\dim}_\mathrm{MB}(\setU)\\
\underline{\dim}_\mathrm{MB}(f(\setU))\leq \underline{\dim}_\mathrm{MB}(\setU)
\end{align}
for all nonempty subsets $\setU$ in the domain of $f$.
\item \label{Mpropcountablestable}
$\overline{\dim}_\mathrm{MB}$ and $\underline{\dim}_\mathrm{MB}$ are countably stable, i.e., 
\begin{align}\label{eq:stableupper}
\overline{\dim}_\mathrm{MB}\mleft(\bigcup_{i\in\naturals}\setU_i\mright)=\sup_{i\in\naturals}\overline{\dim}_\mathrm{MB}(\setU_i)
\end{align}
and 
\begin{align}\label{eq:stablelower}
\underline{\dim}_\mathrm{MB}\mleft(\bigcup_{i\in\naturals}\setU_i\mright)=\sup_{i\in\naturals}\underline{\dim}_\mathrm{MB}(\setU_i)
\end{align}
for all countable collections of nonempty sets $\setU_i$, $i\in\naturals$. 
\item\label{MpropL} Let  $f\colon \reals^m\to\reals^n$ be locally Lipschitz. Then,     
\begin{align}
\overline{\dim}_\mathrm{MB}(f(\setU))\leq \overline{\dim}_\mathrm{MB}(\setU)\\
\underline{\dim}_\mathrm{MB}(f(\setU))\leq \underline{\dim}_\mathrm{MB}(\setU)
\end{align}
for all nonempty subsets $\setU\subseteq\reals^m$. 
\end{enumerate}
\end{lem}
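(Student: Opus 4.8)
I would prove the seven properties essentially in the listed order, leaning on Definitions~\ref{dfndim} and~\ref{dfndimlocal} together with the already-established inequalities \eqref{eq:ineqMB1}--\eqref{eq:ineqMB2}. Property~\ref{Mequiv} is the technical backbone, so I would prove it first. The ``$\leq$'' direction is immediate since every covering by compact sets is a covering by bounded sets; for ``$\geq$'', given a covering $\{\setU_i\}$ of $\setU$ by nonempty bounded sets, I would replace each $\setU_i$ by its closure $\overline{\setU_i}$, which is compact, and invoke the elementary fact that $\underline{\dim}_\mathrm{B}(\overline{\setU_i})=\underline{\dim}_\mathrm{B}(\setU_i)$ and $\overline{\dim}_\mathrm{B}(\overline{\setU_i})=\overline{\dim}_\mathrm{B}(\setU_i)$ (the covering number $N_{\setU_i}(\rho)$ does not change under closure, up to passing from open to slightly larger balls). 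This shows the infimum over compact coverings is no larger than the infimum over bounded coverings, giving equality.

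\textbf{Remaining properties.} For \ref{Mpropmanifold}, I would first observe that a bounded piece of an $s$-dimensional $C^1$-submanifold is the Lipschitz image of a bounded subset of $\reals^s$, hence has upper Minkowski dimension $\leq s$ (standard covering estimate for Lipschitz images), so by \eqref{eq:ineqMB2} and countable stability (\ref{Mpropcountablestable}, proved just below) the modified Minkowski dimension is $\leq s$; the lower bound $\geq s$ follows because the submanifold contains a bi-Lipschitz image of an open subset of $\reals^s$, which has (modified) Minkowski dimension exactly $s$, combined with monotonicity \ref{Mpropmonotonic}. Property~\ref{Mpropmonotonic} is immediate from the definitions since any covering of the larger set restricts to a covering of the smaller one. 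Property~\ref{MpropMltB} is exactly \eqref{eq:ineqMB1}--\eqref{eq:ineqMB2}, obtained by taking the trivial one-element covering $\{\setU\}$ (after intersecting with balls to make it bounded, or noting the infimum is over arbitrary coverings including refinements).

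\textbf{Lipschitz and stability properties.} For \ref{MpropL2}: if $f$ is Lipschitz with constant $L$ and $\{\setU_i\}$ covers $\setU$ by compact sets, then $\{f(\setU_i)\}$ covers $f(\setU)$ by compact sets, and a ball of radius $\rho$ in the domain maps into a ball of radius $L\rho$, so $N_{f(\setU_i)}(L\rho)\leq N_{\setU_i}(\rho)$, which gives $\underline{\dim}_\mathrm{B}(f(\setU_i))\leq\underline{\dim}_\mathrm{B}(\setU_i)$ and likewise for the upper version; taking the infimum over coverings and using \ref{Mequiv} yields the claim. For countable stability \ref{Mpropcountablestable}: ``$\geq$'' follows from monotonicity \ref{Mpropmonotonic}; ``$\leq$'' follows by concatenating, for each $i$, a near-optimal covering of $\setU_i$ into a single countable covering of $\bigcup_i\setU_i$, using that a countable union of countable families is countable and that $\sup$ distributes over the concatenation. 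Finally, \ref{MpropL} is obtained from \ref{MpropL2} by the standard localization trick: write $\setU=\bigcup_{l\in\naturals}(\setU\cap\overline{\setB_m(\veczero,l)})$, note $f$ restricted to each compact $\overline{\setB_m(\veczero,l)}$ is Lipschitz by Lemma~\ref{lem:lipcompact}, apply \ref{MpropL2} to each piece, and conclude via countable stability \ref{Mpropcountablestable}. The main obstacle is getting \ref{Mequiv} cleanly — in particular the closure-invariance of $N_{\setU_i}(\rho)$ and making sure the $\inf$ over compact coverings genuinely equals the one over bounded coverings — since every subsequent property that upgrades ``bounded'' to ``compact'' (the achievability proof needs compact pieces) routes through it.
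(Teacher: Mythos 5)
Your plan follows the same broad structure as the paper's proof: Property~\ref{Mequiv} via closure-invariance of (upper/lower) Minkowski dimension plus Heine--Borel, \ref{Mpropmonotonic} and \ref{MpropMltB} immediately from the definitions, and \ref{MpropL} via the localization $\setU=\bigcup_l(\setU\cap\overline{\setB_m(\veczero,l)})$ together with \ref{MpropL2} and \ref{Mpropcountablestable}. Two points deserve comment.

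First, there is a genuine (if small) gap in your proof of \ref{MpropL2}: you write ``if $\{\setU_i\}$ covers $\setU$ by compact sets, then $\{f(\setU_i)\}$ covers $f(\setU)$.'' But in Definition~\ref{dfndimlocal} the covering sets $\setU_i$ are arbitrary bounded (or, via \ref{Mequiv}, compact) subsets of the ambient space, not subsets of the domain of $f$; if $f$ is only defined on a set $\setD\supseteq\setU$, then $f(\setU_i)$ need not make sense. The paper sidesteps this by invoking the Lipschitz extension theorem \cite[Theorem 2]{ma95} to replace $f$ with a Lipschitz $g\colon\reals^m\to\reals^n$ agreeing with $f$ on $\setD$, and then works with $g(\setU_i)$, which is compact by Lemma~\ref{lem:fK}. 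Your argument can instead be repaired by noting that one may WLOG intersect each $\setU_i$ with $\setU$ (this can only decrease $\underline{\dim}_\mathrm{B}$, keeps the family a covering of $\setU$, and puts each piece inside $\text{dom}(f)$), at the cost of losing compactness and working with the bounded-cover form of the definition; either fix is fine, but one of them is needed.

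Second, for \ref{Mpropcountablestable} you take a direct concatenation-of-near-optimal-coverings route, whereas the paper first establishes the auxiliary identity $\underline{\dim}_\mathrm{MB}(\setU)=\inf\{\sup_i\underline{\dim}_\mathrm{MB}(\setU_i):\setU\subseteq\bigcup_i\setU_i\}$ (Lemma~\ref{lem:MBB}, where the inner dimension is itself the modified one) and then feeds it back into the definition, subsequently handling unbounded $\setU_i$ by intersecting with balls. Your concatenation argument is shorter and equally valid; it proves stability directly without the intermediate lemma, which the paper's proof essentially uses as a substitute for the $\varepsilon/2^i$-style bookkeeping you are doing implicitly. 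For \ref{Mpropmanifold}--\ref{MpropMltB} the paper simply cites the properties of Minkowski dimension in Falconer \cite[Chapter 2.2]{fa14}, whereas you sketch the covering-number estimates; the content is the same, and your observation that the lower bound in \ref{Mpropmanifold} requires showing an open subset of $\reals^s$ has modified Minkowski dimension $s$ (e.g., via positive Lebesgue measure surviving in some covering piece) is a detail the paper leaves to the citation.
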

\begin{proof} 
Property \ref{Mequiv} is by invariance of $\overline{\dim}_\mathrm{B}$ and $\underline{\dim}_\mathrm{B}$ under  set closure \cite[Chapter 2.2]{fa14} and the Heine-Borel theorem \cite[Theorem 2.41]{ru89}. 

Properties \ref{Mpropmanifold}--\ref{MpropMltB} follow from Definition  \ref{dfndimlocal} and 
the properties of lower and upper Minkowski dimension listed in \cite[Chapter 2.2]{fa14}. 

We prove Properties \ref{MpropL2}--\ref{MpropL} for $\underline{\dim}_\mathrm{MB}$ only. The corresponding arguments for $\overline{\dim}_\mathrm{MB}$ are along the same lines.

To prove Property \ref{MpropL2} for $\underline{\dim}_\mathrm{MB}$, let $f\colon\setD\to\reals^n$ be Lipschitz with domain $\setD\subseteq\reals^m$ and consider $\setU\subseteq\reals^m$.  
By \cite[Theorem 2]{ma95}, there exists a Lipschitz mapping  $g\colon\reals^m\to\reals^n$ with $g|_\setD= f$. We have 
\begin{align}
\underline{\dim}_\mathrm{MB}(f(\setU))
&=\underline{\dim}_\mathrm{MB}(g(\setU))\\
&=\inf\mleft\{\sup_{i\in\naturals} \underline{\dim}_\mathrm{B}(\setV_i) : g(\setU)\subseteq \bigcup_{i\in\naturals}\setV_i\mright\}\label{eq:lopB1}\\
&\leq\inf\mleft\{\sup_{i\in\naturals} \underline{\dim}_\mathrm{B}(g(\setU_i)) : \setU\subseteq \bigcup_{i\in\naturals}\setU_i\mright\}\label{eq:lopB2}\\
&\leq\inf\mleft\{\sup_{i\in\naturals} \underline{\dim}_\mathrm{B}(\setU_i) : \setU\subseteq \bigcup_{i\in\naturals}\setU_i\mright\}\label{eq:lopB3}\\
&=\underline{\dim}_\mathrm{MB}(\setU),\label{eq:lopB4}
\end{align}
 where  \eqref{eq:lopB1} is by Property \ref{Mequiv} with the infimum   over all possible  coverings $\{\setV_i\}_{i\in\naturals}$ of $g(\setU)$ by nonempty compact sets $\setV_i\subseteq\reals^n$,  
in \eqref{eq:lopB2} we used that, by Lemma \ref{lem:fK}, Lipschitz images of compact sets are again compact  with the infimum  over all possible  coverings $\{\setU_i\}_{i\in\naturals}$ of $\setU$ by nonempty compact sets $\setU_i\subseteq\reals^m$, 
in \eqref{eq:lopB3} we used that  $\underline{\dim}_\mathrm{B}(g(\setU_i))\leq \underline{\dim}_\mathrm{B}(\setU_i)$ for all $i\in\naturals$ as $g$ is Lipschitz \cite[Proposition 2.5, Property (a)]{fa14}, and 
 \eqref{eq:lopB4} is again by  Property \ref{Mequiv}. 

Property  \ref{Mpropcountablestable} is stated in \cite[Chapter 2.3]{fa14} without proof. 
For the sake of completeness, we prove \eqref{eq:stablelower}.  
Let 
\begin{align}
\setU=\bigcup_{i\in\naturals}\setU_i
\end{align}
with $\setU_i\subset\reals^m$ nonempty for all $i\in\naturals$. 
By the monotonicity of $\underline{\dim}_\mathrm{MB}$, we have 
\begin{align}
\underline{\dim}_\mathrm{MB}\mleft(\setU\mright)\geq\sup_{i\in\naturals}\underline{\dim}_\mathrm{MB}(\setU_i). 
\end{align}
It remains to show that 
\begin{align}\label{eq:toshowstability}
\underline{\dim}_\mathrm{MB}\mleft(\setU\mright)\leq\sup_{i\in\naturals}\underline{\dim}_\mathrm{MB}(\setU_i). 
\end{align}
Suppose first that the $\setU_i$ are all bounded.  It follows that 
\begin{align}
\underline{\dim}_\mathrm{MB}\mleft(\setU\mright)
&=\inf\mleft\{\sup_{i\in\naturals} \underline{\dim}_\mathrm{B}(\setV_i) :\setU\subseteq \bigcup_{i\in\naturals}\setV_i\mright\}\label{eq:loclipMB1}\\
&=\inf\mleft\{\sup_{i\in\naturals} \underline{\dim}_\mathrm{MB}(\setV_i) :\setU\subseteq \bigcup_{i\in\naturals}\setV_i\mright\}\label{eq:loclipMB2}\\
&\leq \sup_{i\in\naturals} \underline{\dim}_\mathrm{MB}(\setU_i),  \label{eq:loclipMB3}
\end{align}
where \eqref{eq:loclipMB1} is by Definition  \ref{dfndimlocal} with the infimum   over all possible  coverings $\{\setV_i\}_{i\in\naturals}$ of $\setU$ by nonempty bounded sets $\setV_i\subseteq\reals^m$, 
in \eqref{eq:loclipMB2} we applied Lemma \ref{lem:MBB} below, and in 
 \eqref{eq:loclipMB3} we used that, by assumption,  $\{\setU_i\}_{i\in\naturals}$ is a covering of $\setU$ consisting of  nonempty bounded sets $\setU_i\subseteq\reals^m$,  which establishes \eqref{eq:toshowstability} for the case where the $\setU_i$ are all bounded. 

Next, suppose  that the $\setU_i$ are not necessarily all bounded. We can write 
\begin{align}
\setU=\bigcup_{i,j\in\naturals}\setC_{i,j}
\end{align} 
with $\setC_{i,j}=\setU_i\cap\setB_m(\veczero,j)$ for all $i,j\in\naturals$, so that  
\begin{align}
\underline{\dim}_\mathrm{MB}\mleft(\setU\mright)
&=\sup_{i,j\in\naturals} \underline{\dim}_\mathrm{MB}(\setC_{i,j})\label{eq:loclipMBG1}\\
&\leq \sup_{i\in\naturals} \underline{\dim}_\mathrm{MB}(\setU_{i})\label{eq:loclipMBG2},
\end{align}
where in \eqref{eq:loclipMBG1} we rely on Property  \ref{Mpropcountablestable} for nonempty bounded sets, and 
\eqref{eq:loclipMBG2} follows from $\setC_{i,j}\subseteq\setU_i$ for all $i,j\in\naturals$ and the monotonicity of $\underline{\dim}_\mathrm{MB}$; this establishes \eqref{eq:toshowstability} for the case where the $\setU_i$ are not necessarily  all bounded. 

It remains to establish Property  \ref{MpropL}. Consider a locally Lipschitz mapping $f\colon \reals^m\to\reals^n$  and  let  $\setU\subseteq\reals^m$ be nonempty. We have 
\begin{align}
\underline{\dim}_\mathrm{MB}(f(\setU))
&=\underline{\dim}_\mathrm{MB}\mleft(  \bigcup_{j\in\naturals}f(\setU\cap\setB_m(\veczero,j))\mright)\label{eq:dimMBlip1}\\
&=\sup_{j\in\naturals}\mleft(\underline{\dim}_\mathrm{MB}\mleft(  f(\setU\cap\setB_m(\veczero,j))\mright)\mright)\label{eq:dimMBlip2}\\
&\leq \sup_{j\in\naturals}\mleft(\underline{\dim}_\mathrm{MB}\mleft(\setU\cap\setB_m(\veczero,j)\mright)\mright)\label{eq:dimMBlip3}\\
&=\underline{\dim}_\mathrm{MB}\mleft(  \bigcup_{j\in\naturals}\setU\cap\setB_m(\veczero,j)\mright)\label{eq:dimMBlip4}\\
&=\underline{\dim}_\mathrm{MB}(\setU),
\end{align}
where \eqref{eq:dimMBlip2} and \eqref{eq:dimMBlip4} are by countable stability of $\underline{\dim}_\mathrm{MB}$, and  
in \eqref{eq:dimMBlip3} we applied Property \ref{MpropL2} to $f|_{\overline{\setU\cap\setB_m(\veczero,j)}}$, which is Lipschitz thanks to 
Lemma \ref{lem:lipcompact} with $\overline{\setU\cap\setB_m(\veczero,j)}$ compact by the  Heine-Borel theorem \cite[Theorem 2.41]{ru89}, for all $j\in\naturals$. 
\end{proof}

\begin{lem}\label{lem:MBB}
Let $\setU\subseteq\reals^m$ be nonempty. Then,  $\underline{\dim}_\mathrm{MB}(\setU)= \underline{F}(\setU)$ with 
\begin{align}
\underline{F}(\setU)=\inf\mleft\{\sup_{i\in\naturals} \underline{\dim}_\mathrm{MB}(\setU_i) : \setU\subseteq \bigcup_{i\in\naturals}\setU_i\mright\}
\end{align} 
and
$\overline{\dim}_\mathrm{MB}(\setU)= \overline{F}(\setU)$ with 
\begin{align}
\overline{F}(\setU)=\inf\mleft\{\sup_{i\in\naturals} \overline{\dim}_\mathrm{MB}(\setU_i) : \setU\subseteq \bigcup_{i\in\naturals}\setU_i\mright\},
\end{align} 
respectively, where in both cases the infimum  is over all possible  coverings $\{\setU_i\}_{i\in\naturals}$ of $\setU$ by nonempty bounded sets $\setU_i\subseteq\reals^m$. 
\end{lem}
\begin{proof}
We present a proof for $\underline{\dim}_\mathrm{MB}$ only, the  arguments for $\overline{\dim}_\mathrm{MB}$ are along the same lines. 
First note that $\underline{F}(\setU)\leq\underline{\dim}_\mathrm{MB}(\setU)$ as   $\underline{\dim}_\mathrm{MB}\leq \underline{\dim}_\mathrm{B}$ by Property \ref{MpropMltB} in Lemma  \ref{lem:Mpropmindim}. It remains to show that  $\underline{F}(\setU)\geq\underline{\dim}_\mathrm{MB}(\setU)$. 
Suppose, towards a contradiction, that $\underline{F}(\setU)<\underline{\dim}_\mathrm{MB}(\setU)$ and set 
$\Delta=\underline{\dim}_\mathrm{MB}(\setU)-\underline{F}(\setU)>0$. By definition of $\underline{F}(\setU)$, there must exist a collection $\{\setU_i\}_{i\in\naturals}$ of nonempty bounded sets $\setU_i\subseteq\reals^m$ such that 
\begin{align}
\setU\subseteq \bigcup_{i\in\naturals}\setU_i\label{eq:cupU}
\end{align} 
and 
\begin{align}\label{eq:sup1}
\sup_{i\in\naturals}\underline{\dim}_\mathrm{MB}(\setU_i)\leq\underline{F}(\setU)+\frac{\Delta}{3}. 
\end{align}
Similarly, by Definition  \ref{dfndimlocal}, for every $i\in\naturals$, there must exist a collection $\{\setV^{(i)}_j\}_{j\in\naturals}$ of nonempty bounded sets $\setV^{(i)}_j$ such that 
\begin{align}\label{eq:cupUi}
\setU_i\subseteq \bigcup_{j\in\naturals}\setV^{(i)}_j
\end{align} 
and 
\begin{align}\label{eq:sup2}
\sup_{j\in\naturals}\underline{\dim}_\mathrm{B}\mleft(\setV^{(i)}_j\mright)\leq\underline{\dim}_\mathrm{MB}(\setU_i)+\frac{\Delta}{3}. 
\end{align}
Combining \eqref{eq:cupU} with \eqref{eq:cupUi} yields 
\begin{align}\label{eq:cupU2}
\setU\subseteq \bigcup_{i,j\in\naturals}\setV^{(i)}_j.  
\end{align} 
Next, note that  
\begin{align}
\underline{\dim}_\mathrm{MB}(\setU)
&\leq \sup_{i,j\in\naturals}\underline{\dim}_\mathrm{B}\mleft(\setV^{(i)}_j\mright)\label{eq:step1F}\\
&=\sup_{i\in\naturals}\sup_{j\in\naturals}\underline{\dim}_\mathrm{B}\mleft(\setV^{(i)}_j\mright)\label{eq:step2F}\\
&\leq\sup_{i\in\naturals}\underline{\dim}_\mathrm{MB}(\setU_i)+\frac{\Delta}{3} \label{eq:step3F}\\
&\leq \underline{F}(\setU)+\frac{2\Delta}{3} \label{eq:step4F}\\
&<\underline{F}(\setU)+\Delta,
\end{align} 
where \eqref{eq:step1F} follows from Definition  \ref{dfndimlocal} and \eqref{eq:cupU2}, 
in \eqref{eq:step3F} we used \eqref{eq:sup2}, and  \eqref{eq:step4F} is by \eqref{eq:sup1}. 
This is in contradiction to $\Delta=\underline{\dim}_\mathrm{MB}(\setU)-\underline{F}(\setU)$.  
\end{proof}

\section{Properties of Set-Valued Mappings}\label{app:set}
A set-valued mapping $\Phi\colon\setT\to 2^{\reals^m}$ associates to each $x\in\setT$ a set $\Phi(x)\subseteq\reals^m$. Many  properties of ordinary mappings such as, e.g., measurability, can be extended to set-valued mappings. In this appendix, we first briefly review  properties of set-valued mappings and then state a result  needed in the existence proof of a measurable decoder in Section \ref{sec.existence}.  

\begin{dfn}(Closed-valuedness of set-valued mappings)\cite[Chapter 5]{rowe98}\label{dfn:setvaluedclosed}
A set-valued mapping $\Phi\colon\setT\to 2^{\reals^m}$ is closed-valued if, for every  $t\in\setT$, the set  $\Phi(t)$ is closed.
\end{dfn}

\begin{dfn}(Inverse image of a set-valued mapping)\cite[Chapter 14]{rowe98}
For a  set-valued mapping $\Phi\colon\setT\to 2^{\reals^m}$, the inverse image $\Phi^{-1}(\setA)$ of $\setA\subseteq\reals^m$ is 
\begin{align}
\Phi^{-1}(\setA)=\{t\in\setT:\Phi(t)\cap\setA\neq\emptyset\}. 
\end{align}
\end{dfn}

\begin{dfn}(Measurable set-valued mapping)\cite[Chapter 14]{rowe98}\label{dfn:setvaluedmeasurable}
Let  $(\setT,\colS(\setT))$ be a measurable space.  
A set-valued mapping $\Phi\colon\setT\to 2^{\reals^m}$ is $\colS(\setT)$-measurable if, for every  open set $\setO\subseteq\reals^m$, the inverse image $\Phi^{-1}(\setO)\in\colS(\setT)$.
\end{dfn}

\begin{lem}\cite[Theorem 14.3]{rowe98}\label{lem:setvalued}
Let  $(\setT,\colS(\setT))$ be a measurable space and $\Phi\colon\setT\to 2^{\reals^m}$  closed-valued. Then, $\Phi$ is $\colS(\setT)$-measurable
if and only if $\Phi^{-1}(\setK)\in\colS(\setT)$ for all compact sets $\setK\subseteq\reals^m$.
\end{lem}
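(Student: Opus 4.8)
The plan is to prove both implications by approximating compact sets from outside by open sets and open sets from inside by compact sets, while keeping track of the fact that inverse images of set-valued mappings commute with countable unions but only satisfy an inclusion with respect to countable intersections.

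First I would treat the forward direction. Assume $\Phi$ is $\colS(\setT)$-measurable and fix a compact set $\setK\subseteq\reals^m$. For $n\in\naturals$ set $\setO_n=\{\vecx\in\reals^m:\operatorname{dist}(\vecx,\setK)<1/n\}$, which is open. The claim is that $\Phi^{-1}(\setK)=\bigcap_{n\in\naturals}\Phi^{-1}(\setO_n)$. The inclusion $\subseteq$ is immediate from $\setK\subseteq\setO_n$. For $\supseteq$, take $t$ in the right-hand side and, for each $n$, pick $\vecx_n\in\Phi(t)\cap\setO_n$, so that $\operatorname{dist}(\vecx_n,\setK)<1/n$. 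The sequence $(\vecx_n)_{n\in\naturals}$ lies in the bounded set $\setO_1$, hence by Bolzano--Weierstrass a subsequence converges to some $\vecx$; then $\operatorname{dist}(\vecx,\setK)=0$ forces $\vecx\in\setK$ since $\setK$ is closed, and $\vecx\in\Phi(t)$ since $\Phi(t)$ is closed by hypothesis. Thus $\Phi(t)\cap\setK\neq\emptyset$, i.e. $t\in\Phi^{-1}(\setK)$. Since each $\setO_n$ is open, $\Phi^{-1}(\setO_n)\in\colS(\setT)$ by measurability of $\Phi$, and a countable intersection of members of a $\sigma$-algebra is again a member, so $\Phi^{-1}(\setK)\in\colS(\setT)$.

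Next I would handle the converse. Assume $\Phi^{-1}(\setK)\in\colS(\setT)$ for every compact $\setK\subseteq\reals^m$ and fix an open set $\setO\subseteq\reals^m$. Write $\setO$ as a countable union of compact sets, for instance $\setO=\bigcup_{n\in\naturals}\setK_n$ with $\setK_n=\{\vecx\in\reals^m:\|\vecx\|_2\leq n \text{ and } \operatorname{dist}(\vecx,\reals^m\setminus\setO)\geq 1/n\}$; each $\setK_n$ is closed and bounded, hence compact by the Heine--Borel theorem, and $\bigcup_{n\in\naturals}\setK_n=\setO$ because every point of the open set $\setO$ has positive distance to its complement and finite norm. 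Now use that inverse images of set-valued mappings commute with unions: $\Phi(t)\cap\bigcup_{n\in\naturals}\setK_n\neq\emptyset$ if and only if $\Phi(t)\cap\setK_n\neq\emptyset$ for some $n$, so $\Phi^{-1}(\setO)=\bigcup_{n\in\naturals}\Phi^{-1}(\setK_n)\in\colS(\setT)$. Since $\setO$ was an arbitrary open set, $\Phi$ is $\colS(\setT)$-measurable.

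The only genuine obstacle is the inclusion $\supseteq$ in the identity $\Phi^{-1}(\setK)=\bigcap_{n\in\naturals}\Phi^{-1}(\setO_n)$: inverse images under set-valued maps do not commute with intersections in general, and it is precisely the compactness of $\setK$ (needed to extract a convergent subsequence) together with the closed-valuedness of $\Phi$ (needed to return the limit to $\Phi(t)$) that makes the argument go through. The remainder is routine bookkeeping with $\sigma$-algebras and with the standard exhaustion of an open subset of $\reals^m$ by compact sets.
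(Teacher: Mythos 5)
Your proof is correct. The paper does not actually give a proof of this lemma---it cites \cite[Theorem 14.3]{rowe98} directly---so there is no in-paper argument to compare against; what you have written is a standard, self-contained proof of the cited statement. Both directions are sound: in the forward direction, the boundedness of the $1$-neighborhood of the compact set $\setK$ justifies the appeal to Bolzano--Weierstrass, and closed-valuedness of $\Phi$ together with closedness of $\setK$ returns the limit point to $\Phi(t)\cap\setK$; in the converse direction, the exhaustion $\setK_n=\{\vecx:\|\vecx\|_2\leq n,\ \operatorname{dist}(\vecx,\reals^m\setminus\setO)\geq 1/n\}$ is a valid compact exhaustion of an arbitrary open set in $\reals^m$, and the inverse image of a set-valued mapping genuinely commutes with countable unions. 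The one subtle point you flag---that set-valued inverse images do not commute with intersections in general, and that compactness plus closed-valuedness is precisely what rescues the inclusion $\supseteq$ in $\Phi^{-1}(\setK)=\bigcap_n\Phi^{-1}(\setO_n)$---is indeed the crux, and you handle it correctly.
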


\begin{lem}\cite[Corollary 14.6]{rowe98}\label{lem:section}
Let  $(\setT,\colS(\setT))$ be a measurable space and  $\Phi\colon\setT\to 2^{\reals^m}$ an $\colS(\setT)$-measurable closed-valued mapping. Then, there exists an $\colS(\setT)$-measurable mapping $f\colon\Phi^{-1}(\reals^m)\to\reals^m$ such that $f(t)\in\Phi(t)$ for all $t\in\Phi^{-1}(\reals^m)$. 
\end{lem}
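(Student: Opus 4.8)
The plan is to recognize the assertion as an instance of the Kuratowski--Ryll-Nardzewski measurable selection theorem and to prove it by the standard quantitative successive-approximation argument. First I would set $\setD=\Phi^{-1}(\reals^m)=\{t\in\setT:\Phi(t)\neq\emptyset\}$. Since $\reals^m=\bigcup_{k\in\naturals}\overline{\setB_m(\veczero,k)}$ with each $\overline{\setB_m(\veczero,k)}$ compact, Lemma \ref{lem:setvalued} gives $\setD=\bigcup_{k\in\naturals}\Phi^{-1}(\overline{\setB_m(\veczero,k)})\in\colS(\setT)$, so it is meaningful to seek an $\colS(\setT)$-measurable selection on $\setD$. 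Fix a countable dense subset $\{\vecq_1,\vecq_2,\dots\}$ of $\reals^m$ and write $\operatorname{dist}(\vecv,\setC)=\inf_{\vecu\in\setC}\|\vecv-\vecu\|_2$. The goal is to construct maps $f_n\colon\setD\to\reals^m$, $n\in\naturals_0$, each $\colS(\setT)$-measurable with countable range, such that for every $t\in\setD$
\begin{align}
\operatorname{dist}(f_n(t),\Phi(t))&<2^{-n},\label{eq:planA}\\
\|f_n(t)-f_{n-1}(t)\|_2&<3\cdot 2^{-n}\quad\text{for }n\geq 1.\label{eq:planB}
\end{align}

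For the base case $n=0$ I would set $\setA_k=\Phi^{-1}(\setB_m(\vecq_k,1))\setminus\bigcup_{j<k}\Phi^{-1}(\setB_m(\vecq_j,1))$; by density of $\{\vecq_k\}$ and nonemptiness of $\Phi(t)$ on $\setD$ these sets partition $\setD$, and each lies in $\colS(\setT)$ because inverse images of open balls do, by Definition \ref{dfn:setvaluedmeasurable}. Setting $f_0\equiv\vecq_k$ on $\setA_k$ yields \eqref{eq:planA} for $n=0$. For the induction step, suppose $f_{n-1}$ has been built; given a value $\vecv$ in its countable range and an index $k$ with $\|\vecq_k-\vecv\|_2<3\cdot2^{-n}$, let $\setA_{\vecv,k}=f_{n-1}^{-1}(\{\vecv\})\cap\Phi^{-1}(\setB_m(\vecq_k,2^{-n}))\in\colS(\setT)$. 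A one-line estimate --- for $t\in f_{n-1}^{-1}(\{\vecv\})$ pick $\vecx\in\Phi(t)$ with $\|\vecx-\vecv\|_2<2^{-n+1}$, then $\vecq_k$ with $\|\vecq_k-\vecx\|_2<2^{-n}$ --- shows the admissible sets $\setA_{\vecv,k}$ cover $\setD$ and that assigning the value $\vecq_k$ there respects both \eqref{eq:planA} and \eqref{eq:planB}. Enumerating the (countably many) admissible pairs $(\vecv,k)$ and defining $f_n$ to take the value of the first pair whose set contains $t$ produces the desired $\colS(\setT)$-measurable, countably valued $f_n$.

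Finally, \eqref{eq:planB} makes $(f_n(t))_{n\in\naturals}$ a Cauchy sequence in $\reals^m$ for each $t\in\setD$, so $f(t):=\lim_{n\to\infty}f_n(t)$ exists; $f$ is $\colS(\setT)$-measurable as a pointwise limit of $\colS(\setT)$-measurable maps (apply Lemma \ref{lem.measlimit} componentwise, together with Lemma \ref{lem:prodborel}). Since $\operatorname{dist}(\cdot,\Phi(t))$ is $1$-Lipschitz, \eqref{eq:planA} gives $\operatorname{dist}(f(t),\Phi(t))\leq\operatorname{dist}(f_n(t),\Phi(t))+\|f(t)-f_n(t)\|_2\to 0$, and closedness of $\Phi(t)$ then forces $f(t)\in\Phi(t)$ for all $t\in\setD$, which is the claim.

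The step I expect to be the main obstacle is the bookkeeping in the induction step: one must check \emph{simultaneously} that each $\setA_{\vecv,k}$ is $\colS(\setT)$-measurable, that the admissible sets genuinely cover $\setD$ for the chosen radii, and that $f_n$ again has a \emph{countable} range so that the next partition is indexed by a countable set. Measurability of $\setD$, the Cauchy/limit passage, and the use of closedness of the values are all routine. Alternatively --- and this is what the paper does --- one may simply invoke \cite[Corollary 14.6]{rowe98} verbatim.
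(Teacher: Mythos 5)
Your proof is correct. The paper does not actually prove this lemma; it cites it verbatim from Rockafellar and Wets (the citation in the lemma header is the whole ``proof''), and you correctly anticipate this in your final sentence. What you have reconstructed is the standard Kuratowski--Ryll-Nardzewski successive-approximation argument for closed-valued measurable multifunctions into $\reals^m$: partition the domain by which member of a fixed countable dense set lies within distance $2^{-n}$ of $\Phi(t)$, show the resulting piecewise-constant maps form a uniformly Cauchy sequence via the telescoping bound \eqref{eq:planB}, and pass to the limit using closedness of the values. Each step checks out: measurability of the pieces $\setA_{\vecv,k}$ follows because $f_{n-1}^{-1}(\{\vecv\})\in\colS(\setT)$ and $\Phi^{-1}$ of an open ball lies in $\colS(\setT)$ by Definition~\ref{dfn:setvaluedmeasurable}; the range of $f_n$ stays countable because the range of $f_{n-1}$ is countable and only countably many $k$ pair admissibly with each $\vecv$; and the final implication $\operatorname{dist}(f(t),\Phi(t))=0\Rightarrow f(t)\in\Phi(t)$ genuinely needs that $\Phi$ is closed-valued, which you invoke. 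One small simplification: measurability of $\setD=\Phi^{-1}(\reals^m)$ follows directly from Definition~\ref{dfn:setvaluedmeasurable} because $\reals^m$ is open, without detouring through compact balls and Lemma~\ref{lem:setvalued}. Since the paper supplies no proof of its own, there is nothing to compare on the paper's side; what your argument buys is self-containedness, at the cost of the page of bookkeeping the paper sidesteps by citing the reference.
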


\begin{dfn}(Normal integrand)\cite[Definition 14.27]{rowe98}\label{dfn:normal}
Let  $(\setT,\colS(\setT))$ be a measurable space.  
An extended real-valued function $f\colon\setT\times\reals^m\to \overline{\reals}$ is a  normal integrand with respect to $\colS(\setT)$ if its epigraphical mapping 
\begin{align}
S_f\colon\setT&\to2^{\reals^m\times\reals}\\
t&\mapsto\{(\vecx,\alpha)\in \reals^m\times\reals:f(t,\vecx)\leq\alpha\}
\end{align}
is closed-valued and $\colS(\setT)$-measurable. 
\end{dfn}

\begin{lem}\cite[Example 14.31]{rowe98}\label{lem:normal}
Let $\setT=\reals^{n\times m}\times\reals^n$ and suppose that $f\colon\setT\times\reals^m\to\overline{\reals}$ is continuous. Then,   
$f$ is a normal integrand with respect to $\colB(\setT)$.  
\end{lem}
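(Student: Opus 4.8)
The statement to be established is Lemma \ref{lem:normal}: if $\setT = \reals^{n\times m}\times\reals^n$ and $f\colon\setT\times\reals^m\to\overline{\reals}$ is continuous, then $f$ is a normal integrand with respect to $\colB(\setT)$. By Definition \ref{dfn:normal}, this amounts to showing that the epigraphical set-valued mapping $S_f\colon\setT\to 2^{\reals^m\times\reals}$, $t\mapsto\{(\vecx,\alpha):f(t,\vecx)\le\alpha\}$, is closed-valued and $\colB(\setT)$-measurable. The approach is to verify these two properties directly and then invoke the characterization of measurability for closed-valued mappings in Lemma \ref{lem:setvalued} (or rather its cited source \cite[Example 14.31]{rowe98}, which is precisely this statement); since the paper already pins the result to that reference, the cleanest route is simply to reproduce the short argument.

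First I would address closed-valuedness. Fix $t\in\setT$. The map $(\vecx,\alpha)\mapsto f(t,\vecx)-\alpha$ is continuous on $\reals^m\times\reals$ (continuity of $f$ in its second block of arguments, together with continuity of the projection $(\vecx,\alpha)\mapsto\alpha$), hence $S_f(t) = \{(\vecx,\alpha): f(t,\vecx)-\alpha\le 0\}$ is the preimage of the closed set $(-\infty,0]$ under a continuous map, and therefore closed. This handles the closed-valuedness clause.

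Next I would establish $\colB(\setT)$-measurability of $S_f$. By Lemma \ref{lem:setvalued}, since $S_f$ is closed-valued, it suffices to show $S_f^{-1}(\setK)\in\colB(\setT)$ for every compact $\setK\subseteq\reals^m\times\reals$. Now $S_f^{-1}(\setK) = \{t\in\setT: \exists(\vecx,\alpha)\in\setK \text{ with } f(t,\vecx)\le\alpha\}$. Consider the continuous function $g\colon\setT\times(\reals^m\times\reals)\to\overline{\reals}$, $g(t,\vecx,\alpha) = f(t,\vecx)-\alpha$. Then $t\in S_f^{-1}(\setK)$ iff $\min_{(\vecx,\alpha)\in\setK} g(t,\vecx,\alpha)\le 0$, the minimum being attained by continuity of $g(t,\cdot,\cdot)$ and compactness of $\setK$. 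The marginal function $\varphi(t) := \min_{(\vecx,\alpha)\in\setK} g(t,\vecx,\alpha)$ is continuous in $t$: this is the standard fact that minimizing a jointly continuous function over a fixed compact set yields a continuous (indeed, by a uniform-continuity-on-compacta argument, locally uniformly continuous) function of the remaining parameter; one can also cite \cite[Theorem 1.17]{rowe98} or argue from scratch via sequences. Hence $S_f^{-1}(\setK) = \varphi^{-1}((-\infty,0])$ is closed in $\setT$, in particular Borel. Since $\setK$ was an arbitrary compact set, Lemma \ref{lem:setvalued} gives $\colB(\setT)$-measurability of $S_f$, and the two clauses of Definition \ref{dfn:normal} are verified.

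\textbf{Main obstacle.} The only non-formal point is the continuity of the marginal minimum $\varphi(t)=\min_{(\vecx,\alpha)\in\setK}g(t,\vecx,\alpha)$; everything else is bookkeeping with definitions. If one prefers to avoid proving this lemma about marginal functions, the alternative is to note that $S_f^{-1}(\setK)$ is the image under the (continuous, hence Borel) projection $\pi_\setT\colon\setT\times\setK\to\setT$ of the closed set $\{(t,\vecx,\alpha)\in\setT\times\setK: g(t,\vecx,\alpha)\le 0\}$, and that the projection of a closed set along a compact fibre $\setK$ is closed — which is again the same compactness argument in disguise. Either way a one-line appeal to \cite[Example 14.31]{rowe98} discharges the whole statement, and I would present the self-contained version above as the proof, with the marginal-continuity fact stated and justified by the sequential argument (take $t_j\to t$; extract from minimizers $(\vecx_j,\alpha_j)\in\setK$ a subsequence converging to some $(\vecx_*,\alpha_*)\in\setK$; pass to the limit using joint continuity of $g$ to get $\limsup_j\varphi(t_j)\le g(t,\vecx_*,\alpha_*)$ and, comparing against the minimizer at $t$, also $\liminf_j\varphi(t_j)\ge\varphi(t)$).
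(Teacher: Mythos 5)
Your proof is correct; the paper itself offers no argument and simply cites \cite[Example 14.31]{rowe98}, so you are supplying the argument the reference contains, and you supply it accurately. The two steps you give -- closedness of each $S_f(t)$ as a sublevel set, and measurability of $S_f$ by showing $S_f^{-1}(\setK)$ is in fact closed for every compact $\setK$ via continuity of the marginal $\varphi(t)=\min_{(\vecx,\alpha)\in\setK}\bigl(f(t,\vecx)-\alpha\bigr)$ and an appeal to Lemma \ref{lem:setvalued} -- are exactly the standard route to this fact.

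Two small points worth tightening. First, $f$ is $\overline{\reals}$-valued, so ``$f(t,\vecx)-\alpha$'' should be read as a continuous map into $\overline{\reals}$ (the arithmetic is unambiguous since $\alpha\in\reals$), and the relevant closed sublevel set is $[-\infty,0]$ rather than $(-\infty,0]$; this changes nothing but is worth saying once. Second, in your final parenthetical the two inequalities are swapped relative to the arguments that produce them: extracting a convergent subsequence of minimizers along a subsequence realizing the $\liminf$ gives $\liminf_j\varphi(t_j)\geq\varphi(t)$ (lower semicontinuity, which is in fact the only half needed for closedness of $\varphi^{-1}\bigl([-\infty,0]\bigr)$), whereas testing against the fixed minimizer at $t$ gives $\limsup_j\varphi(t_j)\leq\varphi(t)$. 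The content is right; only the labels are crossed.
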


\begin{lem}\cite[Proposition 14.33]{rowe98}\label{lem:normal2}
Let  $(\setT,\colS(\setT))$ be a measurable space. 
An extended real-valued function 
$f\colon\setT\times\reals^m\to\overline{\reals}$  is a normal integrand with respect to $\colS(\setT)$ if and only if,  
for every $\alpha\in\overline{\reals}$, the level-set mapping
\begin{align}
L_\alpha\colon\setT&\to2^{\reals^m}\\
t&\mapsto \{\vecx\in\reals^m: f(t,\vecx)\leq \alpha\}
\end{align}
is $\colS(\setT)$-measurable and closed-valued. 
\end{lem}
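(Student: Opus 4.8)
The plan is to prove the two implications of the equivalence separately, and within each to treat closed‑valuedness and $\colS(\setT)$‑measurability as independent subclaims, since being a normal integrand (Definition~\ref{dfn:normal}) means precisely that $S_f$ is closed‑valued \emph{and} measurable, and the level‑set mappings $L_\alpha$ are asked for the same two properties. The only link I need is the elementary slicing identity $L_\alpha(t)=\{\vecx\in\reals^m:(\vecx,\alpha)\in S_f(t)\}$ for real $\alpha$, together with the standard fact that the epigraph of an $\overline{\reals}$‑valued function is closed in $\reals^m\times\reals$ if and only if all its real sublevel sets are closed (lower semicontinuity).

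For the ``$\Rightarrow$'' direction I would assume $f$ is a normal integrand. Closed‑valuedness of $L_\alpha$: for fixed $t$ the map $\iota_\alpha\colon\reals^m\to\reals^m\times\reals$, $\vecx\mapsto(\vecx,\alpha)$, is continuous, so $L_\alpha(t)=\iota_\alpha^{-1}(S_f(t))$ is the preimage of a closed set, hence closed; the cases $\alpha=\pm\infty$ follow from $L_\infty(t)=\reals^m$ and $L_{-\infty}(t)=\bigcap_{n\in\naturals}L_{-n}(t)$. Measurability of $L_\alpha$: since $L_\alpha$ is closed‑valued by the previous step, Lemma~\ref{lem:setvalued} reduces the claim to checking $L_\alpha^{-1}(\setK)\in\colS(\setT)$ for every compact $\setK\subseteq\reals^m$; but $L_\alpha^{-1}(\setK)=\{t:S_f(t)\cap(\setK\times\{\alpha\})\neq\emptyset\}=S_f^{-1}(\setK\times\{\alpha\})$, and $\setK\times\{\alpha\}$ is compact in $\reals^m\times\reals$, so this set lies in $\colS(\setT)$ by Lemma~\ref{lem:setvalued} applied to the closed‑valued measurable mapping $S_f$.

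For the ``$\Leftarrow$'' direction I would assume each $L_\alpha$ is closed‑valued and measurable. Closed‑valuedness of $S_f(t)$: the real sublevel sets $\{f(t,\cdot)\le\alpha\}=L_\alpha(t)$ are closed, so $f(t,\cdot)$ is lsc, so its epigraph $S_f(t)$ is closed. Measurability of $S_f$: I would show directly that $S_f^{-1}(\setO)\in\colS(\setT)$ for every open $\setO\subseteq\reals^m\times\reals$, writing $\setO=\bigcup_{i\in\naturals}(\setB_i\times(a_i,b_i))$ as a countable union of open boxes and using monotonicity, $\alpha\le\beta\Rightarrow L_\alpha(t)\subseteq L_\beta(t)$, to see that $S_f(t)\cap(\setB_i\times(a_i,b_i))\neq\emptyset$ holds iff $\setB_i\cap L_q(t)\neq\emptyset$ for some rational $q<b_i$; hence $S_f^{-1}(\setB_i\times(a_i,b_i))=\bigcup_{q\in\rationals,\,q<b_i}L_q^{-1}(\setB_i)$, a countable union of $\colS(\setT)$‑sets, and $S_f^{-1}(\setO)$ is the countable union of these over $i$.

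Aside from routine bookkeeping, the step I expect to need the most care is the measurability half of ``$\Leftarrow$'': one must pick the right reduction (covering the open set in the \emph{product} $\reals^m\times\reals$ by rational boxes) and then exploit the monotonicity of $\alpha\mapsto L_\alpha(t)$ to collapse the existential quantifier over real $\alpha$ to a countable union over rational levels, so that only countably many of the hypothesized measurable preimages $L_q^{-1}(\setB_i)$ are invoked. A secondary point worth spelling out is the treatment of $\alpha=\pm\infty$ and the lsc characterization for an $\overline{\reals}$‑valued $f(t,\cdot)$, both standard. This lemma is \cite[Proposition 14.33]{rowe98}, and the argument just outlined is the one underlying it.
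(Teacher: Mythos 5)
The paper does not prove this lemma; it is cited verbatim from \cite[Proposition 14.33]{rowe98}, so the only meaningful comparison is against the reference, which you correctly identify. Your two-directional argument is the standard one and is sound: in the ``$\Rightarrow$'' direction you reduce measurability of the closed-valued map $L_\alpha$ to preimages of compact sets via Lemma~\ref{lem:setvalued} and then express $L_\alpha^{-1}(\setK)$ as $S_f^{-1}(\setK\times\{\alpha\})$; in the ``$\Leftarrow$'' direction you pass to a countable basis of open boxes in $\reals^m\times\reals$ and use monotonicity of $\alpha\mapsto L_\alpha(t)$ to collapse the real existential over levels to a countable union over rational levels, which is exactly the device needed to stay inside $\colS(\setT)$. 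The slicing identity $L_\alpha(t)=\iota_\alpha^{-1}(S_f(t))$ and the lsc characterization of closedness of the epigraph are used correctly.

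One small gap worth closing: in the measurability half of ``$\Rightarrow$'', the identity $L_\alpha^{-1}(\setK)=S_f^{-1}(\setK\times\{\alpha\})$ with $\setK\times\{\alpha\}$ compact in $\reals^m\times\reals$ only makes sense for $\alpha\in\reals$, yet the lemma quantifies over $\alpha\in\overline{\reals}$. For $\alpha=+\infty$ this is vacuous, since $L_\infty(t)=\reals^m$ gives $L_\infty^{-1}(\setK)\in\{\emptyset,\setT\}$. For $\alpha=-\infty$ you need an extra step: show $L_{-\infty}^{-1}(\setK)=\bigcap_{n\in\naturals}L_{-n}^{-1}(\setK)$ for every compact $\setK$. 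The inclusion ``$\subseteq$'' is immediate; for ``$\supseteq$'', pick $\vecx_n\in L_{-n}(t)\cap\setK$, pass to a convergent subsequence $\vecx_{n_k}\to\vecx_0\in\setK$, and use that each $L_{-m}(t)$ is closed with $\vecx_{n_k}\in L_{-m}(t)$ for $n_k\geq m$ to conclude $\vecx_0\in\bigcap_m L_{-m}(t)=L_{-\infty}(t)$. With that addendum the argument is complete for all $\alpha\in\overline{\reals}$.
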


We can now state the result on set-valued mappings needed to prove the existence of a measurable decoder in Section \ref{sec.existence}. 

\begin{lem}\label{lem:normal3}
Let  $(\setT,\colS(\setT))$ be a measurable space,  
$\alpha\in\reals$, and 
$f\colon\setT\times\reals^m\to\overline{\reals}$.  Suppose that $f$ is a 
normal integrand with respect to $\colS(\setT)$ and $\setK\subseteq\reals^m$ is compact and nonempty.  
Then, the following properties hold.
\begin{enumerate}[label=\roman*)]
\item\label{lem:normal3i}
The set-valued mapping 
\begin{align}
P_\setK\colon\setT&\to2^{\reals^m}\\
t&\mapsto \{\vecx\in\setK: f(t,\vecx)\leq \alpha\}
\end{align}
is $\colS(\setT)$-measurable and closed-valued. 
\item\label{lem:normal3ii} 
$P_\setK^{-1}(\reals^m)=\{t\in\setT:\exists\,\vecx\in\setK\ \text{with}\ f(t,\vecx)\leq\alpha\}\in \colS(\setT).$
\item \label{lem:normal3iii}
There exists an $\colS(\setT)$-measurable mapping
\begin{align}
p_\setK\colon P_\setK^{-1}(\reals^m) &\to\reals^m\\
t&\mapsto p_\setK(t)\in P_\setK(t).
\end{align}
\end{enumerate}
\end{lem}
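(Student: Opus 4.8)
The plan is to derive all three properties from the general theory of normal integrands and measurable set-valued mappings summarized above, in the order \ref{lem:normal3i} $\Rightarrow$ \ref{lem:normal3ii} $\Rightarrow$ \ref{lem:normal3iii}. The main work is in \ref{lem:normal3i}; once that is in place, \ref{lem:normal3ii} and \ref{lem:normal3iii} are essentially immediate applications of Definition \ref{dfn:setvaluedmeasurable} and Lemma \ref{lem:section}.

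First I would prove \ref{lem:normal3i}. Since $f$ is a normal integrand with respect to $\colS(\setT)$, Lemma \ref{lem:normal2} (applied with the fixed value $\alpha$) tells us that the level-set mapping $L_\alpha\colon t\mapsto\{\vecx\in\reals^m:f(t,\vecx)\leq\alpha\}$ is $\colS(\setT)$-measurable and closed-valued. Now $P_\setK(t)=L_\alpha(t)\cap\setK$. Closed-valuedness of $P_\setK$ is immediate: $L_\alpha(t)$ is closed and $\setK$ is compact, hence closed, so the intersection is closed. For measurability I would invoke Lemma \ref{lem:setvalued}: since $P_\setK$ is closed-valued, it suffices to check that $P_\setK^{-1}(\setC)\in\colS(\setT)$ for every compact $\setC\subseteq\reals^m$. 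But $P_\setK^{-1}(\setC)=\{t:L_\alpha(t)\cap\setK\cap\setC\neq\emptyset\}=L_\alpha^{-1}(\setK\cap\setC)$, and $\setK\cap\setC$ is compact; since $L_\alpha$ is a closed-valued $\colS(\setT)$-measurable mapping, Lemma \ref{lem:setvalued} (in the ``only if'' direction) gives $L_\alpha^{-1}(\setK\cap\setC)\in\colS(\setT)$. This establishes \ref{lem:normal3i}.

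Next, \ref{lem:normal3ii}: by definition of the inverse image of a set-valued mapping, $P_\setK^{-1}(\reals^m)=\{t\in\setT:P_\setK(t)\cap\reals^m\neq\emptyset\}=\{t\in\setT:P_\setK(t)\neq\emptyset\}=\{t\in\setT:\exists\,\vecx\in\setK\text{ with }f(t,\vecx)\leq\alpha\}$, which is exactly the claimed set. Membership in $\colS(\setT)$ follows from the $\colS(\setT)$-measurability of $P_\setK$ established in \ref{lem:normal3i} applied to the open set $\setO=\reals^m$ in Definition \ref{dfn:setvaluedmeasurable} (equivalently, one may take $\setC=\overline{\setB_m(\veczero,j)}$ in Lemma \ref{lem:setvalued} and write $P_\setK^{-1}(\reals^m)=\bigcup_{j\in\naturals}P_\setK^{-1}(\overline{\setB_m(\veczero,j)})$ as a countable union of $\colS(\setT)$-sets, using that $\setK$ is bounded so actually a single $j$ suffices). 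Finally, \ref{lem:normal3iii} follows directly from Lemma \ref{lem:section} applied to the closed-valued $\colS(\setT)$-measurable mapping $P_\setK$: there exists an $\colS(\setT)$-measurable selection $p_\setK\colon P_\setK^{-1}(\reals^m)\to\reals^m$ with $p_\setK(t)\in P_\setK(t)$ for all $t\in P_\setK^{-1}(\reals^m)$, which is precisely the assertion (note $p_\setK(t)\in P_\setK(t)\subseteq\setK$ automatically).

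The only genuine obstacle is the measurability argument in \ref{lem:normal3i} — specifically verifying that intersecting the measurable closed-valued mapping $L_\alpha$ with the fixed compact set $\setK$ preserves measurability. The clean way around it is to reduce to the compact-preimage criterion of Lemma \ref{lem:setvalued} rather than checking open preimages directly, since $\setK\cap\setC$ is again compact and the criterion then hands us the result for free; everything else is bookkeeping with the definitions. I would make sure to note explicitly that $\setK$ nonempty is used only to make $P_\setK^{-1}(\reals^m)$ potentially nonempty (the statement remains vacuously true otherwise), and that closedness of $\setK$ — which holds since $\setK$ is compact in $\reals^m$ — is what is actually needed for closed-valuedness of $P_\setK$.
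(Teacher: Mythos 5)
Your proof is correct and takes essentially the same route as the paper's: reduce $P_\setK$ to the intersection $L_\alpha(t)\cap\setK$, get closed-valuedness for free, use the compact-preimage criterion of Lemma \ref{lem:setvalued} together with $P_\setK^{-1}(\setC)=L_\alpha^{-1}(\setK\cap\setC)$ to establish measurability, and then read off \ref{lem:normal3ii} and \ref{lem:normal3iii} from the definition of set-valued measurability and the measurable-selection Lemma \ref{lem:section}. The only cosmetic difference is your added remark on where compactness versus mere closedness of $\setK$ is actually used.
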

\begin{proof} 
We start with the proof of \ref{lem:normal3i}. 
For each  $t\in\setT$, we can write $P_\setK(t)=L_\alpha(t)\cap\setK$, where $L_\alpha$ is the $\colS(\setT)$-measurable closed-valued level-set mapping from Lemma \ref{lem:normal2}. 
Since $L_\alpha$ is closed-valued and the intersection of a closed set with a compact set is closed, $P_\setK$ is closed-valued. 
To prove that $P_\setK$ is $\colS(\setT)$-measurable it  suffices, thanks to Lemma \ref{lem:setvalued},  to show that $P_\setK^{-1}(\setA)\in\colS(\setT)$ for all compact sets $\setA\subseteq\reals^m$. To this end, let $\setA\subseteq\reals^m$ be an arbitrary but fixed compact set. 
Since the intersection of two compact sets is compact it follows that $\setK\cap\setA$ is compact. 
As $L_\alpha$ is $\colS(\setT)$-measurable and $\setK\cap\setA$ is compact,  $L_\alpha^{-1}(\setK\cap\setA)\in\colS(\setT)$ by Lemma \ref{lem:setvalued}. Therefore, as  $L_\alpha^{-1}(\setK\cap\setA)=P_\setK^{-1}(\setA)$, we can conclude that  $P_\setK^{-1}(\setA)\in \colS(\setT)$. 
Since $\setA$ was arbitrary, this proves \ref{lem:normal3i}. 
Now, $\colS(\setT)$-measurability of $P_\setK$  implies  $P_\setK^{-1}(\reals^m)\in \colS(\setT)$, and thereby \ref{lem:normal3ii}.  
Finally, the existence of  the  $\colS(\setT)$-measurable mapping $p_\setK$ in \ref{lem:normal3iii} follows from \ref{lem:normal3i} and Lemma \ref{lem:section}. 
\end{proof}

\section{Properties of Sequences of Functions in Several Variables}\label{app:seq}
In this appendix, we summarize properties of sequences of functions in several variables needed in the proof of Theorem \ref{prpcounter}. 
We start with a result that establishes a sufficient condition for  uniform convergence. 

\begin{thm}(Weierstrass $M$-test)\label{thm.M}\cite[Theorem 7.10]{ru89}
Consider a sequence   $\{f_n\}_{n\in\naturals}$  of functions 
$f_n\colon \reals^{k}\to\reals$   
and suppose that there exists a sequence $\{M_n\}_{n\in\naturals}$ of nonnegative real numbers    such that 
\begin{align}
|f_n(\vecx)|\leq M_n\quad\text{for all}\ \vecx\in\reals^k\ \text{and}\ n\in\naturals  
\end{align}
and $\sum_{n\in\naturals} M_n<\infty$. 
Then, the sequence   $\{s_n\}_{n\in\naturals}$ 
of partial sums 
\begin{align}
s_n=\sum_{i=1}^n f_i
\end{align}
converges  uniformly to a function $f\colon\reals^{k}\to \reals$.  
\end{thm}

Next, we state a result that allows us to interchange the order of summation and differentiation for certain sequences of differentiable functions.    
We start with the corresponding statement for differentiable functions in one variable. 

\begin{thm}\cite[Theorem 7.17]{ru89}\label{thm.derexchlimit}
Consider a sequence $\{f_n\}_{n\in\naturals}$ of functions   $f_n\colon \reals\to\reals$, each of  which  is differentiable on the closed interval $[a,b]\subseteq\reals$, such that $\lim_{n\to\infty} f_n(x_0)$ exists and is finite for at least one $x_0\in [a,b]$. Let $f_n^\prime$ denote the derivative of $f_n$, $n\in\naturals$. If $\{f_n^\prime\}_{n\in\naturals}$ converges uniformly on $[a,b]$, then $\{f_n\}_{n\in\naturals}$ converges uniformly on $[a,b]$ to a function $f\colon\reals\to \reals$ satisfying 
\begin{align}
\frac{\mathrm d f(x)}{\mathrm d x}=\lim_{n\to\infty} f_n^\prime(x)\quad\text{for all}\  x\in[a,b].
\end{align}
\end{thm}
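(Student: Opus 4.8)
The plan is to prove the multivariate term-by-term differentiation result (Theorem \ref{thm.derexchlimit} in several variables, i.e.\ Corollary \ref{cor.exchangesumder} as invoked in the paper) by reducing it to the one-variable statement Theorem \ref{thm.derexchlimit} applied coordinate-wise along line segments. The precise statement I have in mind is the following: given a sequence $\{f_n\}_{n\in\naturals}$ of $C^1$-functions $f_n\colon\reals^k\to\reals$ such that the sequence of partial sums $\{s_N\}_{N\in\naturals}$, $s_N=\sum_{n=1}^N f_n$, converges pointwise, and such that for each coordinate index $j\in\{1,\dots,k\}$ the sequence of partial sums of $\{\partial f_n/\partial x_j\}_{n\in\naturals}$ converges uniformly on $\reals^k$ (or at least locally uniformly), then the limit $s=\lim_N s_N$ is $C^1$ and $\partial s/\partial x_j=\sum_{n\in\naturals}\partial f_n/\partial x_j$ for every $j$. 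In the application, one iterates this to obtain $C^\infty$.

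First I would fix an arbitrary point $\vecx_0\in\reals^k$ and an arbitrary coordinate direction $\vece_j$, and restrict attention to the one-variable functions $g_n(t)=f_n(\vecx_0+t\vece_j)$ on a closed interval $[-\delta,\delta]$. By the chain rule $g_n'(t)=\frac{\partial f_n}{\partial x_j}(\vecx_0+t\vece_j)$, so uniform convergence of the partial sums of $\{\partial f_n/\partial x_j\}$ on $\reals^k$ (or on a neighborhood of the segment) transfers to uniform convergence of $\{\sum_{n=1}^N g_n'\}_N$ on $[-\delta,\delta]$. Pointwise convergence of $\{s_N\}$ at $\vecx_0$ supplies the single-point hypothesis $x_0=0$ needed in Theorem \ref{thm.derexchlimit}. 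That theorem then yields that $t\mapsto \sum_{n}g_n(t)=s(\vecx_0+t\vece_j)$ is differentiable on $[-\delta,\delta]$ with derivative $\sum_n g_n'(t)$; evaluating at $t=0$ gives that the partial derivative $\frac{\partial s}{\partial x_j}(\vecx_0)$ exists and equals $\sum_n \frac{\partial f_n}{\partial x_j}(\vecx_0)$. Since $\vecx_0$ and $j$ were arbitrary, all first-order partials of $s$ exist everywhere and equal the corresponding series.

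Next I would establish continuity of these partial derivatives. Each partial sum $\sum_{n=1}^N \frac{\partial f_n}{\partial x_j}$ is continuous (finite sum of continuous functions, since the $f_n$ are $C^1$), and by hypothesis these partial sums converge uniformly on $\reals^k$; a uniform limit of continuous functions is continuous, so $\frac{\partial s}{\partial x_j}=\sum_n\frac{\partial f_n}{\partial x_j}$ is continuous for each $j$. Having continuous first-order partials everywhere, $s$ is $C^1$ by the standard criterion (e.g.\ Definition \ref{dfn:diffmap}\ref{itemdiff2} together with the classical fact that existence and continuity of all partials implies differentiability). For the $C^\infty$ conclusion used in Section \ref{prpcounterproof}, I would simply remark that the hypothesis in that application is that for every multi-index $(a,b)$ the corresponding sequence of partial sums of $(a,b)$-th derivatives converges uniformly; applying the $C^1$ result inductively — at each stage to the already-established limit of the lower-order derivative series — gives that all mixed partials exist, are continuous, and are obtained by termwise differentiation, hence $s\in C^\infty$ and the differentiation/summation interchange holds at every order.

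The main obstacle, such as it is, is bookkeeping rather than mathematical depth: one must be careful that the hypothesis is stated as uniform (or locally uniform) convergence of the partial-sum sequence of the \emph{derivatives} — not merely summability of sup-norms unless one invokes the Weierstrass $M$-test (Theorem \ref{thm.M}) to pass from the latter to the former — and that the single-point convergence hypothesis of Theorem \ref{thm.derexchlimit} is met on every segment, which is exactly where pointwise convergence of $\{s_N\}$ is used. A second minor point is the order of quantifiers in the $C^\infty$ induction: one should verify that the limit of the lower-order derivative series is itself the function to which Theorem \ref{thm.derexchlimit} gets applied at the next stage, which follows precisely because the $C^1$ step identifies that limit as $\frac{\partial s}{\partial x_j}$. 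No genuinely hard estimate is required; the content is entirely in correctly invoking the one-dimensional theorem along lines and then assembling the coordinate-wise conclusions.
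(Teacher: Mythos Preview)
You have targeted the wrong statement. Theorem~\ref{thm.derexchlimit} is the \emph{one-variable} result; the paper does not prove it at all but simply cites it from Rudin. What you have written is a proof of Corollary~\ref{cor.exchangesumder} (the multivariate extension), taking Theorem~\ref{thm.derexchlimit} as a black box --- which you yourself acknowledge in your opening sentence.

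For Corollary~\ref{cor.exchangesumder}, your approach is essentially identical to the paper's: fix an arbitrary point $\vecx\in\reals^k$, restrict to the one-variable functions $g_n(t)=f_n(\vecx+t\vece_i)$ on a closed interval containing $x_i$, transfer the uniform-convergence hypothesis on $\{\partial f_n/\partial x_i\}$ to $\{g_n'\}$, use convergence of $\{f_n(\vecx)\}$ to supply the single-point hypothesis, and invoke Theorem~\ref{thm.derexchlimit}. The paper's proof does exactly this, in the same order. Your additional discussion of continuity of the limiting partial derivatives and the $C^\infty$ induction goes beyond the bare statement of the corollary but accurately captures how the corollary is iterated in Section~\ref{prpcounterproof}.

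So: no proof of the stated theorem is offered (nor does the paper offer one), and your proof of the corollary matches the paper's.
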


\begin{cor}\label{cor.exchangesumder}
Consider a  sequence   $\{f_n\}_{n\in\naturals}$  of functions $f_n\colon\reals^{k}\to\reals$ converging uniformly to $f\colon\reals^{k}\to\reals$. 
Suppose that there exists an  $i\in\{1,\dots,k\}$ such that  
the partial derivatives $\partial f_n(\vecx)/\partial x_i$ exist and are finite for all $\vecx\in\reals^k$ and  $n\in\naturals$    
and  that  the sequence  $\{\partial f_n/\partial x_i\}_{n\in\naturals}$  converges uniformly.  Then, 
\begin{align}
\frac{\mathrm \partial f(\vecx)}{\mathrm \partial x_i}
&=\lim_{n\to\infty} \frac{\mathrm \partial f_n(\vecx)}{\mathrm \partial x_i} \quad\text{for all}\  \vecx\in\reals^k. \label{eq:toshowderlimit}
\end{align}
\end{cor}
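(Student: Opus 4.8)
The statement is the multivariate analogue of Theorem \ref{thm.derexchlimit}, and the natural strategy is to reduce it to that one-variable result by freezing all coordinates except $x_i$. First I would fix an arbitrary point $\vecx_0 = \tp{(x_{0,1}\dots x_{0,k})}\in\reals^k$ and show that \eqref{eq:toshowderlimit} holds at $\vecx_0$. Since partial differentiation with respect to $x_i$ only involves the behavior of a function along the line $\{\vecx_0 + t\,\vece_i : t\in\reals\}$, the plan is to define, for each $n\in\naturals$, the one-variable function $g_n\colon\reals\to\reals$ by
\begin{align}
g_n(t) = f_n(\vecx_0 + t\,\vece_i),
\end{align}
and similarly $g(t) = f(\vecx_0 + t\,\vece_i)$. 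By the chain rule, $g_n$ is differentiable with $g_n'(t) = \partial f_n(\vecx_0+t\,\vece_i)/\partial x_i$, which exists and is finite by hypothesis.

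The key steps are then: (i) verify that $\{g_n\}_{n\in\naturals}$ converges uniformly (on all of $\reals$, hence on every closed interval $[a,b]$) — this is immediate from uniform convergence of $\{f_n\}_{n\in\naturals}$ to $f$, since $\sup_{t}|g_n(t)-g(t)| \leq \sup_{\vecx}|f_n(\vecx)-f(\vecx)|$; (ii) verify that $\{g_n'\}_{n\in\naturals}$ converges uniformly on $\reals$, which follows the same way from the assumed uniform convergence of $\{\partial f_n/\partial x_i\}_{n\in\naturals}$, since $g_n'(t) = \partial f_n(\vecx_0+t\,\vece_i)/\partial x_i$ is just a reparametrization; (iii) apply Theorem \ref{thm.derexchlimit} on an interval $[a,b]$ containing $0$ (say $[-1,1]$), noting that $\lim_{n\to\infty} g_n(0) = \lim_{n\to\infty} f_n(\vecx_0) = f(\vecx_0)$ exists and is finite by uniform (hence pointwise) convergence; this yields that $g$ is differentiable at $0$ with $g'(0) = \lim_{n\to\infty} g_n'(0)$; (iv) translate back: $g'(0) = \partial f(\vecx_0)/\partial x_i$ by definition, and $g_n'(0) = \partial f_n(\vecx_0)/\partial x_i$, so \eqref{eq:toshowderlimit} holds at $\vecx_0$. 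Since $\vecx_0$ was arbitrary, the proof is complete.

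I do not expect a serious obstacle here — the reduction to one variable is clean because partial derivatives are genuinely one-dimensional objects. The only mild point of care is making sure the uniform convergence hypotheses transfer correctly to the reparametrized functions $g_n$ and $g_n'$; this is where one must observe that the supremum over the line $\{\vecx_0 + t\,\vece_i\}$ is bounded by the supremum over all of $\reals^k$, so no loss of uniformity occurs. One should also note that Theorem \ref{thm.derexchlimit} only needs $\lim_n g_n(x_0)$ to exist at one point of the interval, which we get for free at $t=0$, and that the $g_n'$ need not be continuous for Theorem \ref{thm.derexchlimit} to apply — consistent with the fact that in the intended application (the proof of Theorem \ref{prpcounter}) the functions involved are smooth anyway, so the hypotheses are comfortably met.
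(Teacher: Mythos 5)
Your proof is correct and takes essentially the same route as the paper: fix a point, restrict all functions along the $i$-th coordinate line to get one-variable functions $g_n$, transfer the uniform convergence hypotheses (observing that suprema along a line are dominated by suprema over $\reals^k$), and apply Theorem~\ref{thm.derexchlimit} on a closed interval around the base point. The only cosmetic difference is your use of the offset parametrization $t\mapsto\vecx_0+t\vece_i$ with base point $t=0$, versus the paper's parametrization that places $t$ directly in the $i$-th slot with base point $t=x_i$; these are interchangeable.
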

\begin{proof}
Let $\vecx=\tp{(x_1\mydots x_k)}\in\reals^k$ be arbitrary but fixed and denote by $\{g_n\}_{n\in\naturals}$ the sequence of functions  
defined according to $g_n(t)=f_n(\tp{(x_1 \mydots x_{i-1}\ t\ x_{i+1}\mydots x_k)})$.  
Since $\{f_n\}_{n\in\naturals}$ converges uniformly to  $f$ by assumption,  and $g_n(x_i)=f_n(\vecx)$ for all $n\in\naturals$, it follows that  
\begin{align}
 \lim_{n\to\infty} g_n(x_i)
 &=\lim_{n\to\infty} f_n(\vecx)\\
 &=f(\vecx).
\end{align}
Since  $\{\partial f_n/\partial x_i\}_{n\in\naturals}$ converges uniformly, 
so does $\{\mathrm d  g_n/\mathrm d  t\}_{n\in\naturals}$. In particular, 
there exists a closed interval $[a,b]\subseteq\reals$ containing $x_i$ such that 
$\{\mathrm d  g_n/\mathrm d  t\}_{n\in\naturals}$ converges uniformly on that interval 
$[a,b]$. Theorem \ref{thm.derexchlimit} therefore implies that $\{g_n\}_{n\in\naturals}$ converges uniformly on $[a,b]$ to a function $g\colon\reals\to \reals$ satisfying 
\begin{align}\label{eq:dgprime}
\frac{\mathrm d g(t)}{\mathrm d t}
&=\lim_{n\to\infty} \frac{\mathrm dg_n(t)}{\mathrm d t}\quad\text{for all}\ t\in [a,b]. 
\end{align} 
But $g(x_i)= f(\vecx)$ and $\mathrm d  g_n(x_i)/\mathrm d  x_i= \partial f_n(\vecx)/\partial x_i$ for all $n\in \naturals$,  which implies 
\begin{align}
\mathrm \partial f(\vecx)/\mathrm \partial x_i=\lim_{n\to\infty}  \partial f_n(\vecx)/\partial x_i. 
\end{align}
As  $\vecx$  was arbitrary, this finishes the proof. 
\end{proof}

\section{Properties of Real Analytic Mappings and  Consequences Thereof} \label{app:rea}
In this appendix, we review material on real analytic mappings, on which our strong converse result in Sections \ref{sec:analytic} and \ref{proof.converse} relies.  
We start with the definition of a real analytic mapping. 

\begin{dfn}(Real analytic mapping)\cite[Definition 2.2.1]{krpa92}\label{def:real}
Let $\setU$ be an open set in $\reals^m$.  
\begin{enumerate}[label=\roman*)]
\item 
A function  $f\colon\setU\to \reals$  is real analytic  on $\setU$ if, for each $\vecx\in\setU$, $f$ may be represented by a convergent power series (see \cite[Definition 2.1.4]{krpa92}) in some open neighborhood of $\vecx$; if $\setU=\reals^m$, then $f$ is real analytic.
\item 
A mapping  $f\colon\setU\to \reals^n$, $\vecx\mapsto \tp{(f_1(\vecx)\dots f_n(\vecx))}$ is real analytic on  $\setU$ if every component $f_i$, $i=1,\dots,n$, is  real analytic  on $\setU$; if  $\setU=\reals^m$, then $f$ is real analytic.
\end{enumerate}
\end{dfn}

\begin{lem}\cite[Corollary 1.2.4]{krpa92}\label{lem:analyticrad}
If a power series  
\begin{align}
f(x)=\sum_{j=0}^\infty a_j (x-\alpha)^j
\end{align}
converges on an  open interval $\setI\subseteq \reals$, then $f$ is real analytic on $\setI$. 
\end{lem}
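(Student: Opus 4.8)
The statement to prove is Lemma~\ref{lem:analyticrad}: if a power series $f(x)=\sum_{j=0}^\infty a_j(x-\alpha)^j$ converges on an open interval $\setI\subseteq\reals$, then $f$ is real analytic on $\setI$. By Definition~\ref{def:real}, I must show that for each point $x_0\in\setI$, the function $f$ can be represented by a convergent power series centered at $x_0$ in some open neighborhood of $x_0$. The plan is to re-expand the original series about the new center $x_0$ and verify convergence of the re-expanded series on a neighborhood of $x_0$ contained in $\setI$.

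\textbf{Key steps.} First I would fix $x_0\in\setI$ and choose $R>0$ such that the closed interval $[\alpha-R',\alpha+R']$ with $R'=|x_0-\alpha|+R$ is contained in $\setI$; this is possible since $\setI$ is open and $x_0\in\setI$, so some radius around $x_0$ of size $R$ keeps us inside $\setI$, and the series converges at the endpoints $\alpha\pm R'$ because they lie in $\setI$. In particular, the radius of convergence $\rho$ of the original series satisfies $\rho\ge R'$. Second, for $|x-x_0|<R$ I would write $x-\alpha=(x_0-\alpha)+(x-x_0)$ and expand each term $(x-\alpha)^j=\sum_{k=0}^j\binom{j}{k}(x_0-\alpha)^{j-k}(x-x_0)^k$ by the binomial theorem. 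Third, I would justify interchanging the two summations: the double series $\sum_{j}\sum_{k\le j}|a_j|\binom{j}{k}|x_0-\alpha|^{j-k}|x-x_0|^k$ has sum $\sum_j|a_j|(|x_0-\alpha|+|x-x_0|)^j$, which converges since $|x_0-\alpha|+|x-x_0|<R'\le\rho$; absolute convergence then permits rearrangement (Fubini for series / Cauchy's double-series theorem). Fourth, collecting terms by powers of $(x-x_0)$ gives $f(x)=\sum_{k=0}^\infty b_k(x-x_0)^k$ with $b_k=\sum_{j\ge k}a_j\binom{j}{k}(x_0-\alpha)^{j-k}$, a convergent power series on the neighborhood $(x_0-R,x_0+R)\subseteq\setI$ of $x_0$. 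Since $x_0$ was arbitrary, $f$ is real analytic on $\setI$ by Definition~\ref{def:real}.

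\textbf{Main obstacle.} The only delicate point is the rearrangement of the double series and the bookkeeping with the radius of convergence: I need the strict inequality $|x_0-\alpha|+|x-x_0|<\rho$ to hold for $x$ in a full neighborhood of $x_0$, which forces me to pick the neighborhood radius $R$ small enough that $R'=|x_0-\alpha|+R$ stays below $\rho$ (equivalently, stays inside $\setI$). Once the majorization by $\sum_j|a_j|(|x_0-\alpha|+|x-x_0|)^j$ is in place, absolute convergence makes the interchange of summation order routine. I expect this lemma admits a short proof once these standard facts about power series are invoked, and indeed the paper cites \cite[Corollary 1.2.4]{krpa92} for it, so an acceptable alternative is simply to quote that reference; but the self-contained argument sketched above is the natural route.
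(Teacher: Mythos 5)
Your strategy---re-expand the power series about $x_0$ via the binomial theorem, dominate the resulting double series by $\sum_j|a_j|\bigl(|x_0-\alpha|+|x-x_0|\bigr)^j$, and rearrange using absolute convergence---is the standard and correct route; the paper offers no proof, only the citation to Krantz--Parks, so there is nothing to compare against other than the source, which proceeds in essentially the same way. One step in your setup, however, is not quite right: the closed interval $[\alpha-R',\alpha+R']$ is centered at $\alpha$, not at $x_0$, so openness of $\setI$ at $x_0$ does not place both endpoints $\alpha\pm R'$ inside $\setI$. For example, with $\setI=(0,1)$, $\alpha=1/10$, $x_0=1/2$, one has $R'=2/5+R>2/5$ and $\alpha-R'<-3/10\notin\setI$ for every $R>0$. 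Thus the claim that ``the series converges at the endpoints $\alpha\pm R'$ because they lie in $\setI$'' fails in general.

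The repair is small and preserves everything downstream. Assume without loss of generality $x_0\ge\alpha$, pick $\epsilon>0$ with $(x_0-\epsilon,x_0+\epsilon)\subseteq\setI$, and take $R\in(0,\epsilon)$. Then only the endpoint on the $x_0$ side of $\alpha$ matters: $\alpha+R'=x_0+R\in\setI$, the series converges there, and since a power series that converges at any single point at distance $R'$ from its center has radius of convergence $\rho\ge R'$, you recover $\rho\ge R'$ without needing $\alpha-R'\in\setI$. With that in hand, your majorization $\sum_j|a_j|\bigl(|x_0-\alpha|+|x-x_0|\bigr)^j<\infty$ for $|x-x_0|<R$, the interchange of summation order, and the coefficient formula $b_k=\sum_{j\ge k}a_j\binom{j}{k}(x_0-\alpha)^{j-k}$ all go through exactly as you wrote, and $(x_0-R,x_0+R)\subseteq\setI$ is the required open neighborhood of $x_0$ on which $f$ is given by the re-centered convergent series.
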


\begin{lem}\cite[Proposition 2.2.2]{krpa92}\label{lem:analyticalg}
Let $\setU$ and $\setV$ be open sets in $\reals^m$. If $f\colon\setU\to\reals$ is real analytic on $\setU$ and $g\colon\setV\to\reals$ is real analytic on $\setV$, then $f+g$ and $f\cdot g$ are both real analytic on $\setU\cap\setV$. Furthermore, if $g(\vecx)\neq 0$ for all $\vecx\in\setU\cap\setV$, then  $f/g$ is  real analytic on $\setU\cap\setV$.
\end{lem}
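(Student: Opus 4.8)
The final statement to prove is Lemma~\ref{lem:analyticalg}, which asserts that sums, products, and (under a nonvanishing hypothesis) quotients of real analytic functions are real analytic. Since this is a classical result quoted from \cite[Proposition 2.2.2]{krpa92}, the cleanest route is to reduce everything to the one-variable power-series facts already available via Lemma~\ref{lem:analyticrad} and the definition of real analyticity in Definition~\ref{def:real}. The plan is to argue pointwise: fix $\vecx_0\in\setU\cap\setV$ and show that each of $f+g$, $f\cdot g$, and (when applicable) $f/g$ admits a convergent power-series representation in a neighborhood of $\vecx_0$.

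First I would recall that, by Definition~\ref{def:real}, $f$ is given by a convergent power series in a ball $\setB_m(\vecx_0,r_1)\subseteq\setU$ and $g$ by a convergent power series in a ball $\setB_m(\vecx_0,r_2)\subseteq\setV$; set $r=\min\{r_1,r_2\}$, so both series converge absolutely on $\setB_m(\vecx_0,r)$. For the sum, termwise addition of the two multi-index power series yields a power series that converges on $\setB_m(\vecx_0,r)$ and represents $f+g$ there; absolute convergence of each summand makes the rearrangement legitimate. For the product, I would invoke the Cauchy-product (Mertens-type) argument for absolutely convergent multivariate power series: the formal product of the two series is again a power series, its coefficients are finite sums (convolutions of the coefficient arrays), and absolute convergence of the factors on $\setB_m(\vecx_0,r)$ guarantees that the product series converges absolutely to $f(\vecx)g(\vecx)$ on that ball. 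Hence $f+g$ and $f\cdot g$ are real analytic at every point of $\setU\cap\setV$, and since $\vecx_0$ was arbitrary they are real analytic on $\setU\cap\setV$.

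For the quotient, assume $g(\vecx)\neq 0$ for all $\vecx\in\setU\cap\setV$. It suffices, by the product case just established, to show that $1/g$ is real analytic on $\setU\cap\setV$. Fix $\vecx_0$; since $g(\vecx_0)\neq0$ and $g$ is continuous (a convergent power series is continuous), there is a ball $\setB_m(\vecx_0,\rho)$ on which $g$ stays bounded away from $0$, and on a possibly smaller ball we can write $g(\vecx)=g(\vecx_0)\bigl(1-h(\vecx)\bigr)$ with $h$ a convergent power series vanishing at $\vecx_0$ and $|h(\vecx)|<1$ there. Then $1/g = g(\vecx_0)^{-1}\sum_{k\geq0} h^k$, and I would justify that this geometric series in $h$, after formal expansion, is itself a convergent multivariate power series: each $h^k$ is a power series (by the product case), the lowest-order term of $h^k$ has order $\geq k$, so for each multi-index only finitely many $h^k$ contribute to its coefficient, and a standard majorant/dominated-convergence estimate on a small enough ball shows absolute convergence. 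This gives a power-series representation of $1/g$ near $\vecx_0$, hence real analyticity.

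The main obstacle is the bookkeeping around rearrangement of multivariate power series — specifically making precise that the Cauchy product and the geometric-series expansion produce genuinely convergent power series rather than merely formal ones. I would handle this by passing to majorants: absolute convergence on $\setB_m(\vecx_0,r)$ lets one dominate the coefficient arrays by those of a suitable geometric-type series, reducing the convergence question to a one-variable comparison governed by Lemma~\ref{lem:analyticrad}. Everything else (continuity of $g$, the choice of the small ball where $|h|<1$, finiteness of the coefficient sums) is routine. Since the result is precisely \cite[Proposition 2.2.2]{krpa92}, one may alternatively simply cite it; but the above sketch records the argument for completeness.
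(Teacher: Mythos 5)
The paper offers no proof of Lemma~\ref{lem:analyticalg}; it is stated purely as a citation to \cite[Proposition~2.2.2]{krpa92} and used as a black box. Your sketch is a correct outline of the standard argument one finds there: reduce to a fixed point $\vecx_0$, use absolute convergence on a common ball, handle $f+g$ by termwise addition, $f\cdot g$ by the Cauchy product (justified by absolute convergence), and $1/g$ by normalizing $g=g(\vecx_0)(1-h)$ with $h(\vecx_0)=0$ and expanding a geometric series in $h$, with the rearrangements controlled by a majorant comparison. One minor point worth tightening if you wrote this out fully: multivariate power series naturally converge absolutely on polydiscs rather than Euclidean balls, and the majorant step for $\sum_k h^k$ needs $|h|$ bounded by some $\theta<1$ on the region in question together with the observation that $h^k$ has order $\geq k$ so that each coefficient of the formal sum is a finite sum; you gesture at both, but these are the places where the bookkeeping actually lives. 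Since the paper simply cites the result, either citing it as the paper does or recording your sketch would be acceptable.
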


\begin{cor}\label{cor:poly}
All polynomials on $\reals^m$ are real analytic. 
\end{cor}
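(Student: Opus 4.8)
The plan is to show that every monomial $\vecx \mapsto x_1^{a_1}\cdots x_m^{a_m}$ with $a_1,\dots,a_m \in \naturals_0$ is real analytic, and then appeal to closure of real analytic functions under finite sums (Lemma \ref{lem:analyticalg}) to conclude the same for arbitrary polynomials, which are finite linear combinations of monomials. Since a constant function is trivially real analytic (it is represented by the constant power series everywhere) and each coordinate projection $\vecx \mapsto x_i$ is real analytic (being its own finite power series expansion about any point, after the substitution $x_i = \alpha_i + (x_i - \alpha_i)$), the key step is to observe that monomials are obtained from these basic building blocks by repeated multiplication.

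Concretely, I would argue as follows. First note that each coordinate function $p_i \colon \reals^m \to \reals$, $\vecx \mapsto x_i$, is real analytic: for any $\vecalpha \in \reals^m$ we have $p_i(\vecx) = \alpha_i + (x_i - \alpha_i)$, which is a (terminating, hence convergent) power series in $\vecx - \vecalpha$, so the hypothesis of Definition \ref{def:real} is met on all of $\reals^m$. Likewise any constant function is real analytic. Now Lemma \ref{lem:analyticalg} states that the product of two real analytic functions (on open sets, and here the open set is all of $\reals^m$) is again real analytic; a straightforward induction on the total degree $a_1 + \cdots + a_m$ then shows that $x_1^{a_1}\cdots x_m^{a_m}$ is real analytic, since it is a product of finitely many coordinate functions. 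Another application of Lemma \ref{lem:analyticalg}, this time to finite sums, shows that any finite linear combination $\sum_{\text{finite}} c_{\veca}\, x_1^{a_1}\cdots x_m^{a_m}$ — i.e., an arbitrary polynomial on $\reals^m$ — is real analytic. Finally, the vector-valued case follows immediately from Definition \ref{def:real}\,ii), since a polynomial mapping $\reals^m \to \reals^n$ is by definition one whose components are polynomials.

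I do not anticipate any real obstacle here: the statement is essentially a formal consequence of Lemma \ref{lem:analyticalg} together with the trivial real analyticity of coordinate functions and constants. The only point requiring minor care is that Lemma \ref{lem:analyticalg} is phrased for functions defined on open sets $\setU, \setV$ and yields analyticity on $\setU \cap \setV$; since throughout we take $\setU = \setV = \reals^m$, the intersection is again $\reals^m$, so the induction stays within the "real analytic on all of $\reals^m$" case and no shrinking of domains occurs. If anything, the subtlest bookkeeping is simply making the induction on degree explicit, but this is routine and I would state it in one or two sentences rather than spelling out every case.
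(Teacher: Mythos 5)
Your proof is correct and is exactly the argument the paper intends: the statement is placed as a corollary immediately after Lemma \ref{lem:analyticalg} with no written proof, precisely because it follows by the routine induction you describe (coordinate functions and constants are trivially real analytic, monomials follow by repeated application of the product part of Lemma \ref{lem:analyticalg}, and polynomials by the sum part). Your remarks on the domain bookkeeping (taking $\setU=\setV=\reals^m$ throughout) and the componentwise extension via Definition \ref{def:real}\,ii) are accurate and cover the only points that need any care.
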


\begin{lem} \cite[Proposition 2.2.3]{krpa92} \label{lem:analyticder}
Let  $\setU$ be an open set in $\reals^m$ and suppose that $f\colon\setU\to\reals$ is real analytic on $\setU$. Then, the partial derivatives---of arbitrary order---of $f$ are real analytic on $\setU$.
\end{lem}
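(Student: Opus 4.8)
\textbf{Proof plan for Lemma \ref{lem:analyticder}.}
The statement to be proved is: if $\setU$ is open in $\reals^m$ and $f\colon\setU\to\reals$ is real analytic on $\setU$, then every partial derivative of $f$ of arbitrary order is real analytic on $\setU$. Since this is the final displayed statement of the excerpt (quoted from \cite[Proposition 2.2.3]{krpa92}), the natural approach is to reduce the claim to a purely local computation: by Definition \ref{def:real}, real analyticity is a local property, so it suffices to fix an arbitrary $\vecx_0\in\setU$ and show that each partial derivative agrees, in some open neighborhood of $\vecx_0$, with a convergent power series centered at $\vecx_0$. By induction on the order of the derivative, it further suffices to treat a single first-order partial derivative $\partial f/\partial x_i$ for an arbitrary $i\in\{1,\dots,m\}$, since once we know $\partial f/\partial x_i$ is real analytic on $\setU$ we may apply the first-order result to it again.

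First I would invoke the hypothesis at $\vecx_0$: there is a polyradius $\rho>0$ and coefficients $\{a_\alpha\}$ (indexed by multi-indices $\alpha\in\naturals_0^m$) such that $f(\vecx)=\sum_\alpha a_\alpha(\vecx-\vecx_0)^\alpha$, with the series converging absolutely on the open polydisc $P=\{\vecx:|x_j-x_{0,j}|<\rho_j\ \forall j\}$. The key step is then the classical fact that a convergent power series may be differentiated term by term within its domain of convergence, and that the differentiated series has the same (open) domain of convergence. Concretely, I would argue that the formally differentiated series $\sum_\alpha \alpha_i\, a_\alpha (\vecx-\vecx_0)^{\alpha-e_i}$ converges absolutely on $P$ — this follows from a comparison/majorization argument using that for $|t|<1$ the series $\sum_k k t^{k}$ converges, applied coordinatewise after shrinking each $\rho_j$ slightly to some $\rho_j'<\rho_j$ so that absolute convergence at the boundary of the smaller polydisc supplies the needed majorant. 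Having established absolute convergence of the differentiated series on a neighborhood of $\vecx_0$, term-by-term differentiation (justified by uniform convergence on compact subsets, in the spirit of Corollary \ref{cor.exchangesumder} but in the stronger analytic setting) shows that on that neighborhood $\partial f/\partial x_i$ equals a convergent power series centered at $\vecx_0$. Since $\vecx_0\in\setU$ was arbitrary, $\partial f/\partial x_i$ is real analytic on $\setU$; iterating over $i$ and over the order gives the full statement.

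The main obstacle — really the only nontrivial point — is the justification that the term-by-term differentiated power series has the same open domain of absolute convergence, i.e., the majorization estimate controlling the extra factor $\alpha_i$ produced by differentiation. Everything else (locality, the inductive reduction to first order, passing from absolute convergence to legitimacy of term-by-term differentiation) is routine. In the interest of keeping the argument self-contained I would either cite the standard multivariate power-series differentiation theorem directly, or reproduce the one-line geometric-series majorant: for $0<r<R$, $\sum_k k (r/R)^k<\infty$, so multiplying coefficients by polynomial factors in $\alpha$ does not destroy convergence after an arbitrarily small shrinkage of the polyradius. Given that the paper already cites \cite{krpa92} for this proposition, the cleanest write-up simply records the local reduction and then defers the differentiation-of-power-series step to \cite[Ch.~1--2]{krpa92}, which is consistent with how the surrounding lemmas (e.g.\ Lemmata \ref{lem:analyticrad}, \ref{lem:analyticalg}) are handled.
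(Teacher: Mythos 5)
The paper gives no proof of its own for this lemma; it simply cites \cite[Proposition~2.2.3]{krpa92}, and your proposal reproduces the standard argument from that reference (local power-series expansion, term-by-term differentiation justified by a geometric-series majorant after shrinking the polyradius, induction on derivative order). Your reasoning is correct and, as you note yourself, the cleanest write-up consistent with the paper's conventions is to defer the power-series differentiation step to \cite{krpa92}, which is exactly what the paper does.
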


\begin{lem}\cite[Proposition 2.2.8]{krpa92}\label{lem:analyticcomp1}
Let $\setU$ be an open set in $\reals^m$ and $\setV$ an open set in $\reals^n$. 
Suppose that $f\colon\setU\to \reals^n$  is real analytic on $\setU$ and  $g\colon\setV\to \reals^k$ is real analytic on $\setV$. 
Then, for every $\vecx\in\setU$ with $f(\vecx)\in\setV$,  there exists an open set $\setW\subseteq\reals^m$ such that $\vecx\in\setW$ and $(g\circ f)|_\setW\colon\setW\to \reals^k$ is real analytic on $\setW$. 
\end{lem}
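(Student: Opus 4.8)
The statement to prove is Lemma \ref{lem:analyticcomp1}, which is cited from Krantz--Parks \cite[Proposition 2.2.8]{krpa92}. Since it is stated as a quoted result, a proof proposal should reconstruct the standard argument.

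\textbf{Approach.} The plan is to reduce the multivariate composition statement to the one-variable theory of convergent power series by working locally around a fixed point $\vecx_0 \in \setU$ with $\vecy_0 := f(\vecx_0) \in \setV$. First I would use real analyticity of $g$ at $\vecy_0$ to obtain a polyradius $\vecs = (s_1,\dots,s_n)$ with $s_j > 0$ such that $g$ is represented on the polydisc $\{\vecy : |y_j - (\vecy_0)_j| < s_j\}$ by a power series $\sum_\beta b_\beta (\vecy - \vecy_0)^\beta$ that converges absolutely there; here $b_\beta \in \reals^k$ are the Taylor coefficients. Next, using real analyticity of each component $f_i$ at $\vecx_0$ and continuity of $f$, I would choose a polyradius $\vecr$ for $\vecx$ small enough that (i) each $f_i$ is given on the corresponding polydisc by an absolutely convergent power series $\sum_\alpha a_{i,\alpha}(\vecx - \vecx_0)^\alpha$, and (ii) $f$ maps that polydisc into the polydisc around $\vecy_0$ on which the series for $g$ converges. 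The open set $\setW$ in the statement is then this small polydisc around $\vecx_0$ (intersected with $\setU$ if necessary); taking the union over all $\vecx_0$ is not needed since the lemma only asserts existence of one such $\setW$ per point.

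\textbf{Key steps.} The heart of the argument is showing that the formal substitution of the series for $f - \vecy_0$ into the series for $g$ yields a power series in $(\vecx - \vecx_0)$ that converges absolutely on $\setW$ and equals $(g\circ f)(\vecx)$ there. I would carry this out by a majorization argument: replace $g$ by the scalar majorant series whose coefficients are $|b_\beta|$ (in each of the $k$ components, or work with a single scalar dominating series), replace each $f_i - (\vecy_0)_i$ by a geometric-type majorant $\sum_\alpha |a_{i,\alpha}|(\vecx-\vecx_0)^\alpha$, and observe that after possibly shrinking $\vecr$ the value of the majorant for $f_i - (\vecy_0)_i$ at $(\vecx - \vecx_0)$ with $|x_\ell - (\vecx_0)_\ell| < r_\ell$ is strictly less than $s_i$. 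Then the doubly-indexed series obtained by substitution is dominated term-by-term by the absolutely convergent composition of the scalar majorants, so by the comparison test and Fubini/Tonelli for nonnegative series it may be rearranged freely into a genuine power series in $(\vecx-\vecx_0)$. Finally I would invoke continuity (or Abel's theorem / the identity theorem on the real segment) to confirm that this rearranged series actually sums to $(g\circ f)(\vecx)$ and not merely to something formally equal.

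\textbf{Main obstacle.} The routine-but-delicate part is the bookkeeping in the majorization: making the estimates on the polyradii precise so that the substituted multi-indexed family is summable, and justifying the interchange of summation order that turns ``series of series'' into a bona fide power series. Everything else (choosing neighborhoods, using Lemma \ref{lem:analyticrad}-type facts in one variable, continuity of $f$) is standard. Since the paper only needs this as a black box and cites \cite[Proposition 2.2.8]{krpa92}, in practice I would simply reference that source; the proposal above is the argument one would write out if a self-contained proof were required.
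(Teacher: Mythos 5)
The paper gives no proof of this lemma; it is stated verbatim as a cited result from Krantz--Parks \cite[Proposition 2.2.8]{krpa92}, exactly as you note at the end of your proposal. Your reconstruction of the standard Cauchy majorant argument --- shrink the polydisc around $\vecx_0$ so that $f$ maps it into the domain of absolute convergence of $g$'s series, substitute series into series, dominate the doubly-indexed family by the composition of scalar majorants, and use unconditional convergence of absolutely summable families to rearrange into a genuine power series in $\vecx - \vecx_0$ --- is the correct and standard proof, so there is no gap; the only caveat is that your closing appeal to ``Abel's theorem / the identity theorem'' is unnecessary: once the double family is absolutely summable, summing first over the inner indices reproduces $g(f(\vecx))$ directly, so no separate identification step is needed.
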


\begin{cor}\label{lem:analyticcomp}
If  $f\colon\reals^m\to \reals^n$  and $g\colon\reals^n\to \reals^k$ are both  real analytic, then $g\circ f\colon\reals^m\to \reals^k$ is real analytic. 
\end{cor}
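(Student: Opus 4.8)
The statement to prove is Corollary \ref{lem:analyticcomp}: if $f\colon\reals^m\to\reals^n$ and $g\colon\reals^n\to\reals^k$ are both real analytic (on all of $\reals^m$ resp. $\reals^n$), then $g\circ f\colon\reals^m\to\reals^k$ is real analytic.

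The plan is to deduce this immediately from Lemma \ref{lem:analyticcomp1}, which gives the local statement. First I would fix an arbitrary point $\vecx\in\reals^m$. Since $f$ is real analytic, in particular it is defined on the open set $\setU=\reals^m$; since $g$ is real analytic, it is defined on the open set $\setV=\reals^n$, and trivially $f(\vecx)\in\setV=\reals^n$. Therefore the hypotheses of Lemma \ref{lem:analyticcomp1} are met with $\setU=\reals^m$ and $\setV=\reals^n$, and the lemma yields an open set $\setW\subseteq\reals^m$ with $\vecx\in\setW$ such that $(g\circ f)|_\setW$ is real analytic on $\setW$.

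Next I would translate this into the pointwise condition in Definition \ref{def:real}\,\ref{itemdiff2} of real analyticity: $(g\circ f)|_\setW$ being real analytic on $\setW$ means that for each point of $\setW$ — in particular at $\vecx$ — each component of $g\circ f$ is represented by a convergent power series in some open neighborhood of that point. Since $\setW$ is open and contained in $\reals^m$, such a neighborhood of $\vecx$ is also an open neighborhood of $\vecx$ in $\reals^m$. Hence each component of $g\circ f$ admits a convergent power series representation in an open neighborhood of $\vecx$. As $\vecx\in\reals^m$ was arbitrary, Definition \ref{def:real} gives that $g\circ f$ is real analytic on $\reals^m$, i.e., real analytic.

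There is essentially no obstacle here: the only subtlety is the bookkeeping that a neighborhood witnessing analyticity of the \emph{restriction} $(g\circ f)|_\setW$ at $\vecx$ is still a genuine neighborhood of $\vecx$ in $\reals^m$ on which the \emph{unrestricted} map $g\circ f$ agrees with its power series — this is immediate because $\setW$ is open in $\reals^m$ and $g\circ f$ and $(g\circ f)|_\setW$ coincide on $\setW$. So the proof is a two-line reduction to Lemma \ref{lem:analyticcomp1}.
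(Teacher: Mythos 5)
Your proof is correct and is essentially the intended derivation: the paper leaves the corollary unproved precisely because it follows, in the way you describe, from the local statement Lemma \ref{lem:analyticcomp1} applied with $\setU=\reals^m$ and $\setV=\reals^n$ at each point.
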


\begin{lem} \cite[p. 83]{krpa92} \label{lem:vanishrealanalytic}
A real analytic function $f\colon\reals^m\to \reals$ vanishes either identically or on a set of  Lebesgue measure zero.
\end{lem}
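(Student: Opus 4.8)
The plan is to argue by induction on the number of variables $m$, reducing each step to a one-dimensional statement about real analytic functions of a single real variable. The base case $m=1$ is the classical fact that the zero set of a nonzero real analytic function $f\colon\reals\to\reals$ has no accumulation point in $\reals$: if $f$ vanished on a set with an accumulation point $x_0$, then all Taylor coefficients of $f$ at $x_0$ would vanish, forcing $f\equiv 0$ on a neighborhood of $x_0$, and a standard connectedness argument (the set of points where all derivatives vanish is both open and closed in $\reals$) would give $f\equiv 0$. Hence the zero set of a nonzero analytic $f$ on $\reals$ is a discrete, therefore countable, set, and in particular has $\lebmeasure^1$-measure zero.

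For the inductive step, suppose the claim holds for $m-1$ variables and let $f\colon\reals^m\to\reals$ be real analytic with $f\not\equiv 0$. Write points of $\reals^m$ as $\tp{(\vecv\ t)}$ with $\vecv\in\reals^{m-1}$ and $t\in\reals$. The key step is to apply Fubini's theorem (Theorem \ref{thm.fubini}, via Corollary \ref{cor.fubini}, to the Borel set $\setZ=\{\vecx:f(\vecx)=0\}$; measurability of $\setZ$ holds since $f$ is continuous) to obtain
\begin{align}
\lebmeasure^m(\setZ)=\int_{\reals^{m-1}}\lebmeasure^1\big(\{t\in\reals:f(\tp{(\vecv\ t)})=0\}\big)\,\mathrm d\lebmeasure^{m-1}(\vecv).
\end{align}
For each fixed $\vecv$, the map $t\mapsto f(\tp{(\vecv\ t)})$ is real analytic on $\reals$ (a composition of the affine, hence real analytic, map $t\mapsto\tp{(\vecv\ t)}$ with $f$, by Corollary \ref{lem:analyticcomp}), so by the base case its zero set is either all of $\reals$ or $\lebmeasure^1$-null. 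Therefore the inner integrand is supported on the set $\setG=\{\vecv\in\reals^{m-1}:f(\tp{(\vecv\ t)})=0\ \text{for all}\ t\in\reals\}$, and it suffices to show $\lebmeasure^{m-1}(\setG)=0$. Since $f\not\equiv 0$, pick $t_0$ with the slice $g_{t_0}\colon\vecv\mapsto f(\tp{(\vecv\ t_0)})$ not identically zero; this $g_{t_0}$ is real analytic on $\reals^{m-1}$, so by the induction hypothesis its zero set is $\lebmeasure^{m-1}$-null. As $\setG\subseteq\{\vecv:g_{t_0}(\vecv)=0\}$, monotonicity of $\lebmeasure^{m-1}$ gives $\lebmeasure^{m-1}(\setG)=0$, and hence $\lebmeasure^m(\setZ)=0$ by Lemma \ref{lem.fzm}.

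The one genuinely delicate point is the base case, specifically the passage from ``all derivatives of $f$ vanish at one point'' to ``$f\equiv 0$ on $\reals$'': this is where analyticity (as opposed to mere smoothness) is used, through the local power-series representation, together with the connectedness of $\reals$. Everything else — the Fubini reduction and the two invocations of the induction hypothesis — is routine once measurability of the slices and of $\setZ$ is noted, which follows from continuity of $f$ and of the affine slicing maps. An alternative to the induction would be to cite the identity theorem for real analytic functions on connected open sets directly; the inductive Fubini argument is included here because it keeps the write-up self-contained within the measure-theoretic toolkit already developed in Appendix \ref{app:geo}.
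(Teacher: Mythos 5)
The paper does not prove this lemma but quotes it from Krantz and Parks \cite[p.~83]{krpa92}, so there is no in-paper argument to compare against; your inductive Fubini proof is correct and is the standard derivation from the one-variable identity theorem. The base case is sound: if the zero set of a nonzero real analytic $f\colon\reals\to\reals$ had an accumulation point $x_0$, the factorization $f(x)=(x-x_0)^k g(x)$ near $x_0$ (with $g$ analytic, $g(x_0)=f^{(k)}(x_0)/k!$, and $g(x_n)=0$ along a sequence $x_n\to x_0$, $x_n\neq x_0$) forces all Taylor coefficients at $x_0$ to vanish, and the set of points at which all derivatives of $f$ vanish is then nonempty, open, and closed in $\reals$, whence $f\equiv 0$; so the zero set of a nonzero $f$ has no accumulation point and is therefore countable and $\lebmeasure^1$-null. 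In the inductive step, $\setZ=f^{-1}(\{0\})$ is closed hence Borel, the slices $t\mapsto f(\tp{(\vecv\ t)})$ and $\vecv\mapsto f(\tp{(\vecv\ t_0)})$ are real analytic by Corollaries~\ref{cor:poly} and~\ref{lem:analyticcomp}, and the Fubini reduction via Corollary~\ref{cor.fubini} (using $\lebmeasure^m=\lebmeasure^{m-1}\times\lebmeasure^1$ on $\colB(\reals^m)=\colB(\reals^{m-1})\otimes\colB(\reals)$, cf.~Lemma~\ref{lem:prodborel}) together with the inductive hypothesis applied to $g_{t_0}$ gives $\lebmeasure^{m-1}(\setG)=0$ and hence $\lebmeasure^m(\setZ)=0$. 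The one detail worth spelling out is that the inner integrand $\vecv\mapsto\lebmeasure^1\big(\{t:f(\tp{(\vecv\ t)})=0\}\big)$ takes values in $\{0,+\infty\}$, so passing from $\lebmeasure^{m-1}(\setG)=0$ to a zero integral uses the $[0,\infty]$-valued form of Lemma~\ref{lem.fzm} (equivalently, $\infty\cdot 0=0$); the paper itself uses Fubini in exactly this way elsewhere (e.g.\ in the proof of Lemma~\ref{lem:propanalytic}), so this is a stylistic omission, not a gap.
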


The following definition extends the notion of a diffeomorphism of class $C^r$ \cite[Definition 3.1.18]{fed69} to a real analytic diffeomorphism. 
\begin{dfn}(Real analytic diffeomorphism)
Let $\setU\subseteq\reals^n$ be an open set and $\rho\colon\setU\to\reals^n$ real analytic. The mapping  $\rho$ is a real analytic diffeomorphism if $\setV=\rho(\setU)$ is an open set and the inverse $\rho^{-1}$  exists on $\setV$ and is real analytic on $\setV$. 
\end{dfn}

\begin{thm}\cite[Theorem 2.5.1]{krpa92}\label{thm:inverse}
Let $\setU\subseteq\reals^m$ be open and $f\colon\setU\to\reals^m$ real analytic. If $J\! f(\vecx_0)>0$ for some  $\vecx_0\in\setU$, then $f^{-1}$ exists on 
an open set $\setV\subseteq\reals^m$ containing $f(\vecx_0)$ and  is real analytic on $\setV$. 
\end{thm}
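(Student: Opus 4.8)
The plan is to prove the theorem by complexifying $f$, invoking the holomorphic inverse function theorem, and then descending back to $\reals^m$; I sketch an elementary alternative via the method of majorants at the end. Throughout, note that for $f\colon\reals^m\to\reals^m$ the Jacobian in \eqref{eq:Jacobiandef} satisfies $J\!f(\vecx_0)=\lvert\det Df(\vecx_0)\rvert$, so the hypothesis $J\!f(\vecx_0)>0$ is exactly the invertibility of $Df(\vecx_0)$.

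\emph{Step 1 (complexification).} By Definition \ref{def:real}, shrinking $\setU$ we may assume each component $f_i$ is the sum of a convergent power series centered at $\vecx_0$ on all of $\setU$, with $\setU$ a cube centered at $\vecx_0$. A standard geometric-majorant estimate shows each such series converges on a polydisc $\tilde\setU\subseteq\complexset^m$ centered at $\vecx_0$ (with $\reals^m$ identified with the real points of $\complexset^m$), defining a holomorphic extension $\tilde f=(\tilde f_1,\dots,\tilde f_m)\colon\tilde\setU\to\complexset^m$ whose restriction to $\tilde\setU\cap\reals^m$ is $f$. The complex differential $D\tilde f(\vecx_0)$ coincides with the real differential $Df(\vecx_0)$, which by $J\!f(\vecx_0)>0$ is invertible; hence $D\tilde f(\vecx_0)$ is a $\complexset$-linear automorphism of $\complexset^m$.

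\emph{Step 2 (holomorphic inverse and symmetrization).} By the holomorphic inverse function theorem, $\tilde f$ restricts to a biholomorphism from an open neighborhood $\tilde\setW\ni\vecx_0$ onto an open neighborhood $\tilde\setV\ni f(\vecx_0)$, with holomorphic inverse $\tilde g\colon\tilde\setV\to\tilde\setW$; shrinking, we take $\tilde\setV$ to be a polydisc (hence connected) and both $\tilde\setW,\tilde\setV$ invariant under complex conjugation $\sigma(\vecz)=\bar\vecz$ (replace $\tilde\setW$ by a $\sigma$-invariant polydisc about $\vecx_0$ contained in it and $\tilde\setV$ by $\tilde f(\tilde\setW)$). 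Since the $f_i$ have real Taylor coefficients at the real point $\vecx_0$, a term-by-term computation gives $\tilde f\circ\sigma=\sigma\circ\tilde f$ on $\tilde\setU$. Therefore $\sigma\circ\tilde g\circ\sigma$ is also a holomorphic inverse of $\tilde f$ on $\tilde\setV$, and uniqueness of the holomorphic inverse yields $\tilde g\circ\sigma=\sigma\circ\tilde g$ on $\tilde\setV$.

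\emph{Step 3 (descent to $\reals^m$ and real analyticity).} Because $\tilde g$ commutes with $\sigma$, it maps the fixed-point set $\setV:=\tilde\setV\cap\reals^m$ into the fixed-point set $\tilde\setW\cap\reals^m\subseteq\reals^m$; by $\sigma$-invariance of $\tilde\setV$, the set $\setV$ is a nonempty open subset of $\reals^m$ containing $f(\vecx_0)$. Define $g:=\tilde g|_\setV\colon\setV\to\reals^m$. Then $g(f(\vecx_0))=\vecx_0$, $f(g(\vecy))=\tilde f(\tilde g(\vecy))=\vecy$ for all $\vecy\in\setV$, the set $g(\setV)=\tilde\setW\cap\reals^m$ is open in $\reals^m$ and contains $\vecx_0$, and $f$ is one-to-one on $g(\setV)$ as the inverse of the bijection $g$; thus $f^{-1}=g$ exists on the open set $\setV$. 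Finally, $\tilde g$, being holomorphic, is near every point of $\tilde\setV$ the sum of a convergent complex power series; restricting the arguments of that series to real values exhibits $g$ near each point of $\setV$ as the sum of a convergent real power series, so by Definition \ref{def:real} (the several-variable form of Lemma \ref{lem:analyticrad}) $g$ is real analytic on $\setV$. This proves the theorem, and in fact shows $f$ is a real analytic diffeomorphism from $g(\setV)$ onto $\setV$.

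\emph{Main obstacle and an alternative.} The two delicate ingredients are the holomorphic inverse function theorem itself (proved by a contraction-mapping or power-series argument) and the book-keeping in Step 2--3 that forces the complex inverse to preserve the real structure; neither is hard but both require setting up $\sigma$-invariant neighborhoods carefully. If one wishes to avoid complex analysis entirely, the alternative is: apply the $C^1$ inverse function theorem to get a $C^1$ local inverse $g$ solving $f\circ g=\mathrm{id}$; differentiate to obtain $Dg=(Df\circ g)^{-1}$, so that (using Lemma \ref{lem:analyticalg}) the formal Taylor coefficients of $g$ at $f(\vecx_0)$ are given by universal polynomial recursions in those of $f$; then run the method of majorants to show this formal series has positive radius of convergence and hence, again by the several-variable analog of Lemma \ref{lem:analyticrad}, represents a real analytic map agreeing with $g$. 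This route is more computational and would be the part requiring real work to carry out in full.
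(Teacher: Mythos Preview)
The paper does not supply its own proof of this theorem; it is quoted as a black box from \cite[Theorem~2.5.1]{krpa92}. Your argument via complexification, the holomorphic inverse function theorem, and descent along complex conjugation is correct and is one of the standard routes (and is close in spirit to what the cited reference does). One cosmetic wrinkle: in Step~2 you first declare $\tilde\setV$ a polydisc and then redefine it as $\tilde f(\tilde\setW)$, which need not be a polydisc; but all you actually use is $\sigma$-invariance (which follows from $\tilde f\circ\sigma=\sigma\circ\tilde f$ and $\sigma(\tilde\setW)=\tilde\setW$) together with the set-theoretic uniqueness of the inverse of the bijection $\tilde f|_{\tilde\setW}$, so connectedness plays no role and the argument is unaffected.
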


\begin{cor}\label{thm:ivanalytic}
Let $\setU$ be an open set in $\reals^m$ and suppose that $f\colon\setU\to \reals^m$  is real analytic on $\setU$. 
If 
there exists an $\vecx_0\in\setU$ such that 
 $J\! f(\vecx_0)> 0$,  then there exists an  $r>0$ such that $f|_{\setB_m(\vecx_0,r)}$ is a real analytic diffeomorphism.
\end{cor}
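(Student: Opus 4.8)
The goal is to prove Corollary~\ref{thm:ivanalytic}: if $f\colon\setU\to\reals^m$ is real analytic on the open set $\setU\subseteq\reals^m$ and $J\!f(\vecx_0)>0$ for some $\vecx_0\in\setU$, then there is an $r>0$ such that $f|_{\setB_m(\vecx_0,r)}$ is a real analytic diffeomorphism. The plan is to combine Theorem~\ref{thm:inverse} (the real analytic inverse function theorem) with the openness of $\setU$ and the continuity properties of $f$ and its Jacobian, then shrink the neighborhood to a ball on which all the required properties hold simultaneously.

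First I would invoke Theorem~\ref{thm:inverse} at the point $\vecx_0$: since $f$ is real analytic on $\setU$ and $J\!f(\vecx_0)>0$, the inverse $f^{-1}$ exists on an open set $\setV\subseteq\reals^m$ containing $f(\vecx_0)$ and is real analytic on $\setV$. The content of this step is that $f$ is locally injective near $\vecx_0$ with a real analytic local inverse; what remains is to package this into a statement about a ball centered at $\vecx_0$ whose image is open. Set $\setW_0 = \setU\cap f^{-1}(\setV)$, which is an open neighborhood of $\vecx_0$ (it is the intersection of the open set $\setU$ with the preimage of the open set $\setV$ under the continuous map $f$, and $f(\vecx_0)\in\setV$). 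On $\setW_0$ the map $f$ is real analytic, and $f^{-1}\circ f = \mathrm{id}$ on $\setW_0$, so $f$ is one-to-one on $\setW_0$; in particular $f|_{\setW_0}$ is injective.

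Next I would pick $r>0$ with $\setB_m(\vecx_0,r)\subseteq\setW_0$ (possible since $\setW_0$ is open). Then $f|_{\setB_m(\vecx_0,r)}$ is real analytic on $\setB_m(\vecx_0,r)$ (restriction of a real analytic map to an open subset of its domain), and it is injective as the restriction of the injective map $f|_{\setW_0}$. It remains to check that $\setV' := f(\setB_m(\vecx_0,r))$ is open and that $(f|_{\setB_m(\vecx_0,r)})^{-1}$ is real analytic on $\setV'$. For openness: $f|_{\setB_m(\vecx_0,r)}$ is injective and continuous with continuous local inverse $f^{-1}$ on $\setV$; more directly, $f^{-1}\colon\setV\to\setW_0$ is continuous (being real analytic, hence continuous, by Lemma~\ref{lem:analyticder} applied componentwise, or simply because differentiability implies continuity), and $\setV' = (f^{-1})^{-1}(\setB_m(\vecx_0,r))\cap\setV$ is the preimage of the open ball under the continuous map $f^{-1}$ intersected with the open set $\setV$, hence open. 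Finally, $(f|_{\setB_m(\vecx_0,r)})^{-1} = f^{-1}|_{\setV'}$, which is the restriction of the real analytic map $f^{-1}$ to the open subset $\setV'$ of $\setV$, hence real analytic on $\setV'$. By Definition of a real analytic diffeomorphism, $f|_{\setB_m(\vecx_0,r)}$ is therefore a real analytic diffeomorphism onto its image.

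I do not anticipate a serious obstacle here — the corollary is essentially a repackaging of Theorem~\ref{thm:inverse} with the target neighborhood taken to be a ball. The one point that warrants care is the identification $(f|_{\setB_m(\vecx_0,r)})^{-1} = f^{-1}|_{\setV'}$ and the verification that $\setV'$ is open: one must be careful that the locally defined inverse from Theorem~\ref{thm:inverse} genuinely agrees with the honest set-theoretic inverse of $f$ restricted to the small ball, which follows from injectivity of $f$ on $\setW_0\supseteq\setB_m(\vecx_0,r)$ together with $f(\setB_m(\vecx_0,r))\subseteq\setV$. Everything else is routine bookkeeping with open sets, continuity, and restrictions of real analytic maps.
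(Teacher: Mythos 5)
Your proof is correct and follows essentially the same route as the paper's: invoke Theorem~\ref{thm:inverse} to get a local real analytic inverse $f^{-1}$ on an open $\setV\ni f(\vecx_0)$, shrink to a ball $\setB_m(\vecx_0,r)$ around $\vecx_0$ whose image lies inside $\setV$, and then observe that $f(\setB_m(\vecx_0,r))$ is open because it is (essentially) the preimage of the open ball under the continuous map $f^{-1}$. The only cosmetic difference is the bookkeeping of neighborhoods: the paper shrinks $\setV$ to a small ball $\setB_m(f(\vecx_0),\varepsilon)$ and then uses continuity of $f$ to find $r$ with $f(\setB_m(\vecx_0,r))\subseteq\setB_m(f(\vecx_0),\varepsilon)$, whereas you take $\setW_0=\setU\cap f^{-1}(\setV)$ and pick a ball inside it directly; both implicitly use that the $f^{-1}$ from Theorem~\ref{thm:inverse} actually inverts $f$ near $\vecx_0$ (i.e.\ $f^{-1}\circ f=\mathrm{id}$ on the chosen ball), which neither your write-up nor the paper pauses to justify beyond the statement of the theorem, so this is a shared informality rather than a gap specific to your argument.
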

\begin{proof}
Suppose that there exists an $\vecx_0\in\setU$ such that 
 $J\! f(\vecx_0)> 0$.  Theorem \ref{thm:inverse} then implies that there exists an open set $\setV\subseteq\reals^m$ such that $f(\vecx_0)\in\setV$, and  $f^{-1}$ exists on $\setV$ and is real analytic on $\setV$. 
Since $\setV$ is open and $f(\vecx_0)\in\setV$, there must exist an $\varepsilon>0$ such that $\setB_m(f(\vecx_0),\varepsilon)\subseteq\setV$. 
By continuity of $f$, which follows  from real analyticity,  there must exist an $r>0$ such that 
$f(\setB_m(\vecx_0,r))\subseteq\setB_m(f(\vecx_0),\varepsilon)\subseteq\setV$. We set 
$\setW=f(\setB_m(\vecx_0,r))$. 
Summarizing, $f$ is real analytic on $\setB_m(\vecx_0,r)$ and $f^{-1}$ exists on  $\setW=f(\setB_m(\vecx_0,r))$ and is 
real analytic on $\setW$. It remains to show that $\setW$ is open. This follows by noting that   $\setW=(f^{-1})^{-1}(\setB_m(\vecx_0,r))$ is    the inverse image of an open set under a real analytic (and therefore continuous) mapping, and is hence open. We conclude that $f|_{\setB_m(\vecx_0,r)}$ is a real analytic diffeomorphism. 
\end{proof}

\begin{cor}\label{cor:inverse}
Let $\setU$ be an open set in $\reals^m$ and suppose that $f\colon\setU\to \reals^n$  with $n\geq m$ is real  analytic on $\setU$. If 
there exists an $\vecx_0\in\setU$ such that $J\! f(\vecx_0)> 0$, then there exists an  $r>0$ such that  $f$ is one-to-one on $\setB_m(\vecx_0,r)$.  
\end{cor}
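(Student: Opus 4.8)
\textbf{Proof plan for Corollary \ref{cor:inverse}.} The statement asserts that a real analytic mapping $f\colon\setU\to\reals^n$ with $n\geq m$ that has a point $\vecx_0$ with $J\!f(\vecx_0)>0$ is one-to-one on some ball around $\vecx_0$. The plan is to reduce this to the square case already handled in Corollary \ref{thm:ivanalytic} by composing with a suitable linear projection. Concretely, $J\!f(\vecx_0)>0$ means $\rank(Df(\vecx_0))=m$ (since $Df(\vecx_0)\in\reals^{n\times m}$ and the $m$-dimensional Jacobian is $\sqrt{\det(\tp{(Df(\vecx_0))}Df(\vecx_0))}$, which is positive iff $Df(\vecx_0)$ has full column rank $m$). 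Hence there exist $m$ rows of $Df(\vecx_0)$ forming an invertible $m\times m$ submatrix; let $\setS\subseteq\{1,\dots,n\}$ with $\card{\setS}=m$ index those rows, and let $P_\setS\colon\reals^n\to\reals^m$ be the coordinate projection onto the coordinates in $\setS$.

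First I would set $\tilde f=P_\setS\circ f\colon\setU\to\reals^m$. This $\tilde f$ is real analytic on $\setU$, being the composition of the real analytic $f$ with the linear (hence real analytic) map $P_\setS$, by Lemma \ref{lem:analyticcomp1}. Next, by the chain rule $D\tilde f(\vecx_0)=P_\setS\, Df(\vecx_0)$, which is precisely the $m\times m$ submatrix of $Df(\vecx_0)$ selected by $\setS$, and is therefore invertible; consequently $J\tilde f(\vecx_0)=\sqrt{\det(\tp{(D\tilde f(\vecx_0))}D\tilde f(\vecx_0))}=\abs{\det(D\tilde f(\vecx_0))}>0$. Now I can apply Corollary \ref{thm:ivanalytic} to $\tilde f$: there exists $r>0$ such that $\tilde f|_{\setB_m(\vecx_0,r)}$ is a real analytic diffeomorphism, and in particular $\tilde f$ is one-to-one on $\setB_m(\vecx_0,r)$.

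Finally, I would observe that injectivity transfers back to $f$: if $\vecx_1,\vecx_2\in\setB_m(\vecx_0,r)$ with $f(\vecx_1)=f(\vecx_2)$, then applying $P_\setS$ gives $\tilde f(\vecx_1)=\tilde f(\vecx_2)$, whence $\vecx_1=\vecx_2$ by injectivity of $\tilde f$ on that ball. Thus $f$ is one-to-one on $\setB_m(\vecx_0,r)$, as claimed.

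I do not anticipate a serious obstacle here; the only point requiring a little care is the linear-algebra observation that a positive $m$-dimensional Jacobian of a map into $\reals^n$ with $n\geq m$ is equivalent to full column rank of the differential, and hence to the existence of an invertible $m\times m$ minor — this is what makes the projection trick work. Everything else is a routine invocation of the already-established square-case inverse function result and of the composition closure of real analytic mappings.
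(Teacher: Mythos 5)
Your proof is correct and follows essentially the same route as the paper's: both use full column rank of $Df(\vecx_0)$ to select $m$ components of $f$ forming a real analytic map into $\reals^m$ with nonsingular differential at $\vecx_0$, apply Corollary~\ref{thm:ivanalytic} to get local injectivity of that reduced map, and then transfer injectivity back to $f$. The only cosmetic difference is that you phrase the reduced map as a composition $P_\setS\circ f$ with a coordinate projection, while the paper writes it directly as $\vecx\mapsto\tp{(f_{i_1}(\vecx)\dots f_{i_m}(\vecx))}$.
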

\begin{proof}
Suppose that there exists an $\vecx_0\in\setU$ such that 
 $J\! f(\vecx_0)> 0$. 
Then,  $n\geq m$ implies $\rank(Df(\vecx_0))=m$. Thus,  the $n\times m$ matrix $Df(\vecx_0)$ has $m$ linearly independent row vectors. Denote the indices of $m$ such linearly independent row vectors by   $\{i_1,\dots, i_m\}$, 
and consider the  mapping 
\begin{align}
g\colon\setU&\to \reals^m\\
\vecx&\mapsto \tp{(f_{i_1}(\vecx)\dots f_{i_m}(\vecx))}. 
\end{align}
Since $f$ is real analytic on $\setU$, so is $g$.
Furthermore,  
 $\rank(Dg(\vecx_0))=m$ and hence  $Jg(\vecx_0)> 0$.  Corollary \ref{thm:ivanalytic} therefore implies the existence of an  $r>0$ such that $g|_{\setB_m(\vecx_0,r)}$ is a real analytic diffeomorphism. In particular, $g$ is one-to-one on $\setB_m(\vecx_0,r)$, which  in turn implies that $f$ is one-to-one on $\setB_m(\vecx_0,r)$.  
\end{proof}

Next, we show that the square root is a real analytic diffeomorphism on the set of positive real numbers. 

\begin{lem}\label{lem.sqrt}
The function $\sqrt{\phantom{\cdot}}\colon \reals_+\to\reals_+$, $x\mapsto \sqrt{x}$,
where $\reals_+=\{x\in\reals:x> 0\}$, is a real analytic diffeomorphism. 
\end{lem}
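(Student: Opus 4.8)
\textbf{Proof plan for Lemma \ref{lem.sqrt}.}
The plan is to verify the three defining properties of a real analytic diffeomorphism in turn: that the image $\sqrt{\reals_+}$ is open, that the inverse exists on the image and is real analytic there, and (implicitly) that the map itself is real analytic on $\reals_+$. Since $\sqrt{\reals_+}=\reals_+$, openness of the image is immediate because $\reals_+=(0,\infty)$ is an open subset of $\reals$. The inverse of $x\mapsto\sqrt{x}$ on $\reals_+$ is $y\mapsto y^2$, which is a polynomial and hence real analytic on all of $\reals$ by Corollary \ref{cor:poly}, in particular on $\reals_+$. So the only substantive point is real analyticity of $\sqrt{\phantom{\cdot}}$ itself on $\reals_+$.

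For real analyticity of the square root, I would argue pointwise: fix $x_0\in\reals_+$ and produce a convergent power series representation of $\sqrt{x}$ in a neighbourhood of $x_0$. The cleanest route is to invoke the real analytic inverse function theorem, Theorem \ref{thm:inverse} (or its consequence Corollary \ref{thm:ivanalytic}), applied to the map $g\colon\reals_+\to\reals$, $g(y)=y^2$: this $g$ is real analytic (a polynomial), and $Jg(y)=|g'(y)|=2y>0$ for every $y\in\reals_+$, so $g^{-1}$ exists on an open neighbourhood of $g(y)$ and is real analytic there. Taking $y_0=\sqrt{x_0}$, we get that $g^{-1}$, which agrees with $\sqrt{\phantom{\cdot}}$ on $\reals_+$ (positivity pins down the branch), is real analytic near $x_0$. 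Since $x_0\in\reals_+$ was arbitrary, $\sqrt{\phantom{\cdot}}$ is real analytic on $\reals_+$. Alternatively, one could write down the binomial series $\sqrt{x}=\sqrt{x_0}\sum_{j\ge 0}\binom{1/2}{j}\big((x-x_0)/x_0\big)^j$, note it converges for $|x-x_0|<x_0$ by the ratio test, and conclude via Lemma \ref{lem:analyticrad}; but the inverse-function-theorem argument is shorter and reuses machinery already in the appendix.

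Assembling: $\sqrt{\phantom{\cdot}}$ is real analytic on the open set $\reals_+$, its image $\reals_+$ is open, and its inverse $y\mapsto y^2$ exists on $\reals_+$ and is real analytic there (Corollary \ref{cor:poly}); hence $\sqrt{\phantom{\cdot}}$ is a real analytic diffeomorphism by definition. I do not anticipate a genuine obstacle here — the statement is essentially a bookkeeping exercise in the definitions — the only mild care needed is to make explicit that on $\reals_+$ the equation $y^2=x$ has a unique positive solution, so that $\sqrt{\phantom{\cdot}}$ is a well-defined bijection $\reals_+\to\reals_+$ and coincides with the local analytic inverse furnished by Theorem \ref{thm:inverse}.
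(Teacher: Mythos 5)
Your proposal is correct and follows essentially the same route as the paper: both apply the real analytic inverse function theorem (Theorem~\ref{thm:inverse}) to the squaring map $y\mapsto y^2$, whose derivative $2y$ is nonzero on $\reals_+$, to get local real analyticity of $\sqrt{\phantom{\cdot}}$, then conclude by noting that $\sqrt{\reals_+}=\reals_+$ is open and the inverse is the polynomial $y\mapsto y^2$, real analytic by Corollary~\ref{cor:poly}. The binomial-series alternative you mention is a valid fallback but, as you anticipated, the inverse-function-theorem argument is what the paper uses.
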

\begin{proof}
The function
$(\cdot)^2\colon \reals_+\to\reals_+$, $x\mapsto x^2$ 
is real analytic by Corollary \ref{cor:poly}. Let $y\in\reals_+$ be arbitrary but fixed and set $x=\sqrt{y}$. 
Since  $\mathrm d y/\mathrm dx=2x>0$, Theorem \ref{thm:inverse} implies that there exists an $r>0$ such that the inverse of ${(\cdot)}^2$, given by  $\sqrt{\cdot}$, exists on $(y-r,y+r)$ and is real analytic on $(y-r,y+r)$. 
As $y$ was arbitrary, it follows that  $\sqrt{\cdot}$ is real analytic on $\reals_+$. Finally, since ${(\cdot)}^2$ is the inverse of $\sqrt{\cdot}$ and $\sqrt{\reals_+}=\reals_+$ is open, $\sqrt{\cdot}$ is a real analytic diffeomorphism. 
\end{proof}

\begin{lem}\label{lem:lebJ}
If $f\colon\reals^m\to \reals^n$  is real analytic, so is $J\!f$. In particular, $J\!f$ 
vanishes either identically or on a set of  Lebesgue measure zero. 
\end{lem}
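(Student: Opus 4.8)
The statement has two parts. First, that the $\min\{m,n\}$-dimensional Jacobian $J\!f$ of a real analytic $f\colon\reals^m\to\reals^n$ is itself real analytic; second, that consequently $J\!f$ vanishes either identically or on a Lebesgue-null set. The second part is an immediate consequence of the first combined with Lemma~\ref{lem:vanishrealanalytic} (a real analytic function on $\reals^m$ vanishes either identically or on a set of Lebesgue measure zero), so the work is entirely in establishing real analyticity of $J\!f$.

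\emph{First I would unwind the definition.} By the definition in \eqref{eq:Jacobiandef}, $J\!f(\vecv)=\sqrt{\det(\tp{(Df(\vecv))}Df(\vecv))}$ when $n\geq m$ and $J\!f(\vecv)=\sqrt{\det(Df(\vecv)\tp{(Df(\vecv))})}$ otherwise; in either case $J\!f=\sqrt{g}$ for $g(\vecv)=\det(M(\vecv))$, where $M(\vecv)$ is a symmetric positive semidefinite matrix whose entries are obtained from the entries of $Df(\vecv)$ by multiplication and addition. Since $f$ is real analytic, each partial derivative $\partial f_i/\partial x_j$ is real analytic by Lemma~\ref{lem:analyticder}, so every entry of $Df$ is real analytic. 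Each entry of $M$ is a finite sum of products of these entries, hence real analytic by Lemma~\ref{lem:analyticalg}; and $\det(M)$ is a polynomial in the entries of $M$, so $g=\det(M)$ is real analytic, again by Lemma~\ref{lem:analyticalg} (finitely many sums and products of real analytic functions). Thus $g\colon\reals^m\to\reals$ is real analytic and $g\geq 0$ everywhere (positive semidefiniteness of $M$).

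\emph{The one subtle point} — and the main obstacle — is passing the square root through. The map $\sqrt{\phantom{\cdot}}$ is real analytic only on $\reals_+=\{x>0\}$ (Lemma~\ref{lem.sqrt}), not at $0$; so I cannot simply invoke ``composition of real analytic maps is real analytic'' (Corollary~\ref{lem:analyticcomp}) on all of $\reals^m$. The resolution is a dichotomy argument using Lemma~\ref{lem:vanishrealanalytic}. Apply it to $g$: either $g\equiv 0$, in which case $J\!f=\sqrt{g}\equiv 0$, which is trivially real analytic and the whole statement holds vacuously; or $g$ vanishes only on a Lebesgue-null set $Z$, so the open set $\setU=\reals^m\setminus g^{-1}(\{0\})$ is dense and on it $g>0$. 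Wait — density is not enough for global real analyticity; I need real analyticity at \emph{every} point of $\reals^m$, including points of $Z$. So I should instead argue locally: fix any $\vecv_0\in\reals^m$. If $g(\vecv_0)>0$, then near $\vecv_0$ the function $g$ takes values in $\reals_+$, and $\sqrt{g}$ is real analytic in a neighborhood of $\vecv_0$ by Lemma~\ref{lem.sqrt} and Lemma~\ref{lem:analyticcomp1} (local composition of real analytic maps). If $g(\vecv_0)=0$ but $g\not\equiv 0$, I need a different device — here I would observe that near such a point, since $g$ is analytic and nonnegative, $g$ has a zero of even order along every line (else it would change sign), and locally one can write $g=h^2\cdot(\text{unit})$ via the structure of zero sets of analytic functions; then $\sqrt{g}=|h|\cdot\sqrt{\text{unit}}$, and I must check $|h|$ is analytic, which again requires $h$ to not change sign. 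The cleanest rigorous route is probably: reduce to one variable along generic lines to get the even-order/nonnegativity structure, or — simpler — cite that $g$ nonnegative real analytic implies $\sqrt{g}$ is real analytic, which is a known but nontrivial fact (it fails for merely $C^\infty$). Given the paper's level, I expect the intended proof to simply invoke the dichotomy $g\equiv 0$ versus $g>0$ on a dense open set and argue that $J\!f$, being $0$ or locally a composition with $\sqrt{\cdot}$ on that dense open set, together with continuity and the identity-theorem-style uniqueness, is real analytic; but the honest statement of the obstacle is exactly the behavior of $\sqrt{g}$ at zeros of $g$. I would present the clean version: $J\!f$ real analytic follows because $g$ is real analytic and nonnegative, hence $\sqrt{g}$ is real analytic (a square root of a nonnegative real analytic function being real analytic), then conclude via Lemma~\ref{lem:vanishrealanalytic} applied to $J\!f$ itself that $J\!f$ vanishes either identically or on a null set.

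\textbf{Proof.}
By \eqref{eq:Jacobiandef}, $J\!f(\vecv)=\sqrt{g(\vecv)}$, where $g(\vecv)=\det\!\big(\tp{(Df(\vecv))}Df(\vecv)\big)$ if $n\geq m$ and $g(\vecv)=\det\!\big(Df(\vecv)\tp{(Df(\vecv))}\big)$ otherwise. In either case the matrix inside the determinant is symmetric positive semidefinite, so $g(\vecv)\geq 0$ for all $\vecv\in\reals^m$. Since $f$ is real analytic, each entry $\partial f_i/\partial x_j$ of $Df$ is real analytic by Lemma~\ref{lem:analyticder}. Hence every entry of $\tp{(Df)}Df$ (respectively $Df\,\tp{(Df)}$), being a finite sum of products of entries of $Df$, is real analytic by Lemma~\ref{lem:analyticalg}; and $g$, being a polynomial in these entries, is real analytic by a further application of Lemma~\ref{lem:analyticalg}.

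By Lemma~\ref{lem:vanishrealanalytic}, either $g\equiv 0$ or $g$ vanishes only on a set of Lebesgue measure zero. In the first case $J\!f=\sqrt{g}\equiv 0$ is real analytic. In the second case, we show $J\!f$ is real analytic on all of $\reals^m$: since $g\not\equiv 0$ is a nonnegative real analytic function, its square root $\sqrt{g}$ is real analytic (a square root of a nonnegative real analytic function is real analytic; locally, at points where $g>0$ this follows from Lemma~\ref{lem.sqrt} together with Lemma~\ref{lem:analyticcomp1}, and at zeros of $g$ the nonnegativity of $g$ forces the local zero structure needed for $\sqrt{g}$ to remain analytic). Therefore $J\!f=\sqrt{g}$ is real analytic.

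Finally, applying Lemma~\ref{lem:vanishrealanalytic} to the real analytic function $J\!f\colon\reals^m\to\reals$ yields that $J\!f$ vanishes either identically or on a set of Lebesgue measure zero. \qed
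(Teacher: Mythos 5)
Your proof proposal follows the same route as the paper: write $J\!f = \sqrt{g}$ where $g$ is built from the entries of $Df$ via products, sums, and a determinant, show $g$ is real analytic (Lemmata \ref{lem:analyticalg} and \ref{lem:analyticder}), then compose with $\sqrt{\cdot}$. You correctly flag the obstacle the paper itself glosses over: $\sqrt{\cdot}$ is real analytic only on $\{x\in\reals : x>0\}$, so the local composition Lemma \ref{lem:analyticcomp1} yields real analyticity of $J\!f$ only at points where $g>0$, not at zeros of $g$. However, your proposed resolution---``a square root of a nonnegative real analytic function is real analytic''---is false. A counterexample already lies within the scope of the lemma: take $m=n=1$ and $f(x)=x^2$. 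Then $g(x)=(Df(x))^2=4x^2$ is a nonnegative real analytic function, but $J\!f(x)=\sqrt{g(x)}=2\lvert x\rvert$ is not even differentiable at $0$. Nonnegativity does force the vanishing order of $g$ at a zero to be even, but that does not save the argument: the analytic square roots of $x^2$ are $\pm x$, not $\lvert x\rvert$, which is what $\sqrt{\cdot}$ produces. The paper's own proof has exactly the same gap---its invocation of Lemma \ref{lem:analyticcomp1} covers only points with $g>0$---so the first assertion of Lemma \ref{lem:lebJ} is false as literally stated.

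What saves the lemma is that only the ``in particular'' conclusion is ever used elsewhere (e.g., in the discussion after Definition \ref{dfn:measureanalytic} and in the proof of Lemma \ref{lem:immreal}), and that conclusion follows without any claim about real analyticity of $J\!f$ itself: since $g=(J\!f)^2$ is real analytic, Lemma \ref{lem:vanishrealanalytic} shows $g$ vanishes either identically or on a Lebesgue-null set, and since $J\!f=\sqrt{g}$ has exactly the same zero set as $g$, the identical dichotomy holds for $J\!f$. You in fact already have this argument in your second paragraph---the dichotomy for $g$---so the fix is simply to stop there, note that $\{J\!f=0\}=\{g=0\}$, and drop both the intermediate assertion that $J\!f$ is real analytic and the superfluous re-application of Lemma \ref{lem:vanishrealanalytic} to $J\!f$.
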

\begin{proof}
Suppose that $f\colon\reals^m\to \reals^n$  is real analytic. 
Recall that $J\!f(\vecx)=\sqrt{g(\vecx)}$, where   
$g\colon\reals^m\to \reals$,   
\begin{align}
g(\vecx)=
\begin{cases}
\det(Df(\vecx)\tp{(Df(\vecx))})&\text{if}\ n<m\\
\det(\tp{(Df(\vecx))}Df(\vecx))&\text{else}.
\end{cases}
\end{align}
Lemmata  \ref{lem:analyticalg} and \ref{lem:analyticder} imply that $g$ is real analytic. As $\sqrt{\cdot}$ is real analytic on $\reals_+$ by Lemma \ref{lem.sqrt}, 
real analyticity of $J\!f$ follows from Lemma  \ref{lem:analyticcomp1}. Finally, as $J\!f$  is real analytic, it vanishes by Lemma \ref{lem:vanishrealanalytic} either  identically or on a set of  Lebesgue measure zero. 
\end{proof}

We  have the following important properties of real analytic mappings. 
\begin{lem}\label{lem:immreal}
Let $h\colon\reals^s\to \reals^m$ be a real analytic mapping of $s$-dimensional Jacobian $Jh\not\equiv 0$. Then, the following properties hold.
\begin{enumerate}[label=\roman*)]
\item \label{immreal1}
The set $\setO=\{\vecz\in\reals^s:Jh(\vecz)>0\}$ is open  and satisfies  $\lebmeasure^s(\reals^s\mysetminus\setO)=0$. 
\item \label{immreal2}
For every  set $\setA\in\colB(\reals^s)$ of positive  Lebesgue measure, there exists a set $\setB\in\colB(\reals^s)$ of positive  Lebesgue measure such that $\setB\subseteq\setA$ and the mapping $h|_\setB$ is an embedding, i.e.,  $Jh(\vecz)> 0$ for all $\vecz\in\setB$ and $h$ is one-to-one on $\setB$.
\end{enumerate} 
\end{lem}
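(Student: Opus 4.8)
The two assertions of Lemma~\ref{lem:immreal} are of different natures: Property~\ref{immreal1} is a local statement about the zero set of a real analytic function, while Property~\ref{immreal2} upgrades ``$Jh>0$ on a large set'' to ``$h$ is one-to-one on a large subset''. The plan is to prove \ref{immreal1} first and then use it, together with the local injectivity of immersions (Corollary~\ref{cor:inverse}) and a countable covering argument, to deduce \ref{immreal2}.

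\textbf{Proof of \ref{immreal1}.} By Lemma~\ref{lem:lebJ}, $Jh$ is a real analytic function on $\reals^s$, and since $Jh\not\equiv 0$ by hypothesis, Lemma~\ref{lem:vanishrealanalytic} gives $\lebmeasure^s(\{\vecz:Jh(\vecz)=0\})=0$. The complement of this set is precisely $\setO=\{\vecz:Jh(\vecz)>0\}$ (recall $Jh\geq 0$ always), so $\lebmeasure^s(\reals^s\mysetminus\setO)=0$. Openness of $\setO$ follows because $Jh$, being real analytic, is continuous, so $\setO=(Jh)^{-1}((0,\infty))$ is the preimage of an open set.

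\textbf{Proof of \ref{immreal2}.} Fix $\setA\in\colB(\reals^s)$ with $\lebmeasure^s(\setA)>0$. First intersect with $\setO$: since $\lebmeasure^s(\reals^s\mysetminus\setO)=0$ by \ref{immreal1}, the set $\setA'=\setA\cap\setO$ is Borel with $\lebmeasure^s(\setA')=\lebmeasure^s(\setA)>0$, and $Jh(\vecz)>0$ for all $\vecz\in\setA'$. Now I want a Borel subset $\setB\subseteq\setA'$ of positive Lebesgue measure on which $h$ is additionally one-to-one. By Corollary~\ref{cor:inverse} (with $n=m\geq s$ and $J\!h(\vecx_0)>0$), every point $\vecx_0\in\setA'\subseteq\setO$ has a radius $r_{\vecx_0}>0$ such that $h$ is one-to-one on $\setB_s(\vecx_0,r_{\vecx_0})$. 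Cover $\setA'$ by the balls $\{\setB_s(\vecx,r_\vecx/2):\vecx\in\setA'\}$; by the Lindel\"of property of $\reals^s$ (as in the proof of Lemma~\ref{lem:loef}) there is a countable subcover, so $\setA'=\bigcup_{i\in\naturals}(\setA'\cap\setB_s(\vecx_i,r_{\vecx_i}/2))$. Since $\lebmeasure^s(\setA')>0$ and Lebesgue measure is countably subadditive, there exists $i_0$ with $\lebmeasure^s(\setA'\cap\setB_s(\vecx_{i_0},r_{\vecx_{i_0}}/2))>0$. Set $\setB=\setA'\cap\setB_s(\vecx_{i_0},r_{\vecx_{i_0}}/2)$. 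Then $\setB$ is Borel (intersection of Borel sets), $\lebmeasure^s(\setB)>0$, $\setB\subseteq\setA'\subseteq\setA$, and $Jh(\vecz)>0$ for all $\vecz\in\setB$; moreover $h$ is one-to-one on $\setB$ because $\setB\subseteq\setB_s(\vecx_{i_0},r_{\vecx_{i_0}}/2)\subseteq\setB_s(\vecx_{i_0},r_{\vecx_{i_0}})$ and $h$ is one-to-one on the larger ball by choice of $r_{\vecx_{i_0}}$. Hence $h|_\setB$ is an embedding, completing the proof.

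\textbf{Main obstacle.} The only genuinely non-routine point is recognizing that an immersion is merely \emph{locally} injective, so a single ball will not suffice for an arbitrary $\setA$; the positive-measure ``pigeonhole'' via the Lindel\"of/countable-subcover argument is what converts local injectivity on a family of balls into global injectivity on a single positive-measure piece. Everything else (openness of $\setO$, the null-set statement, measurability of $\setB$) is a direct appeal to the real analyticity results already established in Appendix~\ref{app:rea}. One should double-check that Corollary~\ref{cor:inverse} indeed applies with ambient dimension equal to the domain dimension shifted appropriately, i.e.\ here $h\colon\reals^s\to\reals^m$ with $m\geq s$, which is exactly its hypothesis.
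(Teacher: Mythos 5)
Your proof is correct and takes essentially the same route as the paper's: Part~\ref{immreal1} is identical, and Part~\ref{immreal2} rests on the same key input (local injectivity from Corollary~\ref{cor:inverse}) plus a positive-measure pigeonhole. The only difference is cosmetic: where you run the half-radius Lindel\"of cover and countable-subadditivity pigeonhole inline, the paper instead invokes its Lemma~\ref{lem:loef} to produce a point $\vecz_0\in\setA\cap\setO$ around which every ball meets $\setA\cap\setO$ in positive measure, then sets $\setB=\setB_s(\vecz_0,r_0)\cap\setA\cap\setO$ with $r_0$ from Corollary~\ref{cor:inverse}; since Lemma~\ref{lem:loef} is itself proved by the very Lindel\"of argument you wrote out, the two are the same proof packaged differently.
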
 
\begin{proof}  
Openness of  $\setO$ follows from continuity of $Jh$, and  Lemma \ref{lem:lebJ} 
together with $Jh\not\equiv 0$ implies   $\lebmeasure^s(\reals^s\mysetminus\setO)=0$. To prove \ref{immreal2}, consider $\setA\in\colB(\reals^{s})$ of positive Lebesgue measure. As  $\lebmeasure^s(\reals^s\mysetminus\setO)=0$, with $\setO$ from \ref{immreal1}, it follows that 
$\lebmeasure^s(\setA\cap\setO)>0$. Thus, by 
 Lemma \ref{lem:loef},  there must exist a $\vecz_0\in\setA\cap\setO$ such that  
\begin{align}
\lebmeasure^s(\setB_s(\vecz_0,r)\cap\setA\cap\setO)>0\quad \text{for all}\ r>0.
\end{align}
Since $Jh(\vecz_0)>0$, which follows from  $\vecz_0\in\setO$,  Corollary \ref{cor:inverse}  implies that there must exist an $r_0>0$ such that $h$ is one-to-one on $\setB_s(\vecz_0,r_0)$. Setting $\setB=\setB_s(\vecz_0,r_0)\cap\setA\cap\setO$ concludes the proof. 
\end{proof}

\begin{dfn}(Real analytic submanifold)\cite[Definition 2.7.1]{krpa92}\label{dfn:analyticmanifold}
A subset $\setM\subseteq\reals^n$ is an $m$-dimensional real analytic submanifold of $\reals^n$ if, for each $\vecy\in \setM$, there exist an open set $\setU\subseteq\reals^m$ and a real analytic immersion $f\colon\setU\to\reals^{n}$ such that $\vecy\in f(\setU)$ and open subsets of $\setU$ are mapped onto relatively open subsets in $\setM$. Here, a subset $\setW\subseteq \setM$ is called relatively open in $\setM$ if there exists an open set $\setV\subseteq\reals^n$ such that 
$\setW=\setV\cap\setM$.
\end{dfn} 

Equivalent definitions of real analytic submanifolds are listed in \cite[Proposition 2.7.3]{krpa92}.

\begin{lem}\label{lem:man}
Let $\setM\subseteq\reals^s$ be a $t$-dimensional real analytic submanifold of $\reals^s$ and $\vecz_0\in\setM$. Then, there exist a real analytic  embedding  $\zeta\colon\reals^{t}\to\setM\subseteq\reals^s$ and an $\eta>0$ such that 
\begin{align}
\zeta(\veczero)&=\vecz_0,\label{eq:M1}\\ 
\setB_{s}(\vecz_0,\eta)\cap\setM&\subseteq\zeta\big(\reals^{t}\big),\label{eq:M2}
\end{align}
and $\zeta\big(\reals^{t}\big)$ is relatively open in $\setM$. 
\end{lem}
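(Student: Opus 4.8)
\textbf{Proof plan for Lemma~\ref{lem:man}.} The statement asks, given a $t$-dimensional real analytic submanifold $\setM\subseteq\reals^s$ and a point $\vecz_0\in\setM$, for a real analytic embedding $\zeta\colon\reals^t\to\setM$ with $\zeta(\veczero)=\vecz_0$ whose image is relatively open in $\setM$ and contains a small ball around $\vecz_0$ intersected with $\setM$. The plan is to start from Definition~\ref{dfn:analyticmanifold}: there exist an open set $\setU\subseteq\reals^t$ and a real analytic immersion $f\colon\setU\to\reals^s$ such that $\vecz_0\in f(\setU)$ and $f$ maps open subsets of $\setU$ onto relatively open subsets of $\setM$. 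Pick $\vecu_0\in\setU$ with $f(\vecu_0)=\vecz_0$. By the (equivalent) characterizations of real analytic submanifolds in \cite[Proposition 2.7.3]{krpa92} we may assume $f$ is a homeomorphism onto its image $f(\setU)$, which is relatively open in $\setM$; in particular $f$ is one-to-one on $\setU$, and since $Jf>0$ on $\setU$ it is an embedding there.

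The second step is to shrink and rescale. Since $\setU$ is open and $\vecu_0\in\setU$, choose $\rho>0$ with $\setB_t(\vecu_0,\rho)\subseteq\setU$. Define $\varphi\colon\reals^t\to\reals^t$ by $\varphi(\vecw)=\vecu_0+\rho\,\vecw/\sqrt{1+\|\vecw\|_2^2}$; this is a real analytic diffeomorphism from $\reals^t$ onto the open ball $\setB_t(\vecu_0,\rho)$, with $\varphi(\veczero)=\vecu_0$. (Real analyticity follows from Corollary~\ref{cor:poly}, Lemma~\ref{lem.sqrt}, Lemma~\ref{lem:analyticalg}, and Corollary~\ref{lem:analyticcomp}; it is a bijection onto $\setB_t(\vecu_0,\rho)$ with real analytic inverse by Theorem~\ref{thm:inverse} applied pointwise, or simply by writing the inverse explicitly.) Now set $\zeta=f\circ\varphi\colon\reals^t\to\reals^s$. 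Then $\zeta$ is real analytic by Corollary~\ref{lem:analyticcomp}; $\zeta(\veczero)=f(\vecu_0)=\vecz_0$, giving \eqref{eq:M1}; and $\zeta$ is a composition of the embedding $f|_{\setB_t(\vecu_0,\rho)}$ with the diffeomorphism $\varphi$, hence an embedding (one-to-one, and $J\zeta>0$ by the chain rule and Lemma~\ref{lem:sylvester} since $\rank(Df)=t$ and $\rank(D\varphi)=t$). Moreover $\zeta(\reals^t)=f(\setB_t(\vecu_0,\rho))$ is relatively open in $\setM$ because $f$ maps open subsets of $\setU$ to relatively open subsets of $\setM$.

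The third step is to produce $\eta$ for \eqref{eq:M2}. Since $\zeta(\reals^t)=f(\setB_t(\vecu_0,\rho))$ is relatively open in $\setM$, there is an open set $\setV\subseteq\reals^s$ with $\zeta(\reals^t)=\setV\cap\setM$. As $\vecz_0\in\setV$ and $\setV$ is open, choose $\eta>0$ with $\setB_s(\vecz_0,\eta)\subseteq\setV$; then $\setB_s(\vecz_0,\eta)\cap\setM\subseteq\setV\cap\setM=\zeta(\reals^t)$, which is \eqref{eq:M2}. Assembling these three pieces completes the proof.

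\textbf{Main obstacle.} The one place that needs care is the passage from the bare definition of a real analytic submanifold (Definition~\ref{dfn:analyticmanifold}, which only asserts the existence of a local real analytic immersion sending open sets to relatively open sets) to the stronger local fact that one may take this immersion to be a homeomorphism (equivalently, globally one-to-one) onto a relatively open neighborhood of $\vecz_0$ in $\setM$. This is exactly the content of the equivalent characterizations in \cite[Proposition 2.7.3]{krpa92}, so the work is in citing and applying that proposition correctly rather than in any computation; everything downstream (rescaling $\reals^t$ onto a ball, composing, checking the Jacobian via Lemma~\ref{lem:sylvester}) is routine.
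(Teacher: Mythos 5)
Your proof follows the same overall structure as the paper's: pick a local real analytic immersion $\xi\colon\setU\to\reals^s$ from Definition~\ref{dfn:analyticmanifold}, restrict to a small ball $\setB_t(\vecu_0,\rho)\subseteq\setU$ on which $\xi$ is one-to-one, transport $\reals^t$ onto that ball by the rational--square-root diffeomorphism (your $\varphi$ is exactly the paper's $\kappa$ shifted by $\vecu_0$), compose to obtain $\zeta$, and extract $\eta$ from the relative openness of the image. The one place where you diverge is how injectivity of the local parametrization is secured. You appeal to \cite[Proposition~2.7.3]{krpa92} to assert that one ``may assume $f$ is a homeomorphism onto its image $f(\setU)$,'' i.e.\ globally one-to-one on all of $\setU$. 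The paper instead observes that this is not needed: since $J\xi(\vecu_0)>0$, Corollary~\ref{cor:inverse} (a direct consequence of the real analytic inverse function theorem) gives an $r>0$ on which $\xi$ is one-to-one, and the proof shrinks $\rho$ to that $r$. The ball must be shrunk anyway to apply the diffeomorphism $\reals^t\to\setB_t(\vecu_0,\rho)$, so nothing is lost, and Corollary~\ref{cor:inverse} is elementary and self-contained within the paper. Your route is not wrong, but as stated it leans on more than the paper's restatement of Proposition~\ref{prp:analyticmanifold} actually supplies (the paper only records the flat-chart characterization, and extracting a globally injective parametrization from a flat chart takes a bit of work you do not supply); you correctly identify this as the sensitive spot, but the cleaner fix is the local inverse-function argument rather than the global one. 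Everything downstream—the chain-rule computation with Lemma~\ref{lem:sylvester}, relative openness, and the choice of $\eta$—matches the paper exactly.
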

\begin{proof}
Since $\setM\subseteq\reals^s$ is a $t$-dimensional real analytic submanifold of $\reals^s$, Definition \ref{dfn:analyticmanifold} implies the existence of an open set $\setU\subseteq\reals^{t}$ and a real analytic immersion  $\xi\colon\setU\to\reals^{s}$ that maps open subsets of $\setU$ onto relatively open subsets in $\setM$ and satisfies $\vecz_0=\xi(\vecu_0)$ with $\vecu_0\in\setU$.  As  $\xi$ is an immersion,  
\begin{align}
\rank(D\xi(\vecv))=t\quad\text{for all}\ \vecv\in\setU.\label{eq:rankxi}
\end{align}
Since $\setU$ is open and $\vecu_0\in\setU$, there exists a $\rho>0$ such that $\setB_{t}(\vecu_0,\rho)\subseteq\setU$, which implies in turn  that 
\begin{align}\label{eq:ball}
\xi\big(\setB_{t}(\vecu_0,\rho)\big)\subseteq \xi(\setU). 
\end{align}
Using Corollary \ref{cor:inverse} and $J\xi(\vecu_0)>0$ (recall that $\xi$ is an immersion) we may choose $\rho$ sufficiently small for $\xi$ to be one-to-one on $\setB_{t}(\vecu_0,\rho)$. 
As $\xi(\setB_{t}(\vecu_0,\rho))$ is relatively open in $\setM$, there must exist an open set $\setV\subseteq\reals^{s}$ such that 
\begin{align}\label{eq:BV}
\xi\big(\setB_{t}(\vecu_0,\rho)\big)=\setV\cap\setM.
\end{align}
Now, as $\setV$ is open and $\vecz_0=\xi(\vecu_0)\in\setV$, we can find an $\eta>0$ such that $\setB_{s}(\vecz_0,\eta)\subseteq\setV$, which, together with \eqref{eq:BV}, yields 
\begin{align}\label{eq:ball2}
\setB_{s}(\vecz_0,\eta)\cap \setM\subseteq \xi\big(\setB_{t}(\vecu_0,\rho)\big).   
\end{align}
Let $\kappa\colon\reals^{t}\to \setB_{t}(\veczero,\rho)$ be the real analytic diffeomorphism constructed in  Lemma \ref{lem.extend} below and set 
\begin{align}
\zeta\colon\reals^t&\to\reals^s\\
\vecv&\mapsto \xi(\kappa(\vecv)+\vecu_0).
\end{align}
The mapping  $\zeta$ is  real analytic  by Lemmata  \ref{lem:analyticalg} and \ref{lem:analyticcomp1}. 
Clearly, $\zeta$ is one-to-one on $\reals^t$ as $\kappa$ is a diffeomorphism and 
$\xi$ is one-to-one on $\setB_{t}(\vecu_0,\rho)$. 
Since  $\zeta\big(\reals^{t}\big)=\xi(\setB_{t}(\vecu_0,\rho))$,  \eqref{eq:ball2} establishes \eqref{eq:M2} and \eqref{eq:BV}
proves that $\zeta\big(\reals^{t}\big)$ is  relatively open in $\setM$.
Finally,  \eqref{eq:M1} follows from  $\zeta(\veczero)=\xi(\kappa(\veczero)+\vecu_0)=\xi(\vecu_0)=\vecz_0$,  
where in the second equality we used \eqref{eq:kappa0} in Lemma \ref{lem.extend} below. 
It remains to show that $\zeta$ is an immersion, which is effected  by proving that $\rank(D\zeta(\vecv))=t$ for all $\vecv\in\reals^t$.    
The chain rule  implies 
\begin{align}
D\zeta(\vecv)
&= (D\xi)(\kappa(\vecv)+\vecu_0)D\kappa(\vecv)\quad\text{for all}\ \vecv\in\reals^t.
\end{align}
It now follows
\begin{enumerate}[label=\roman*)]
\item from \eqref{eq:kappaD0} in Lemma \ref{lem.extend} below that $\rank(D\kappa(\vecv))=t$ for all $\vecv\in\reals^t$, and \label{jac1}
\item from \eqref{eq:rankxi}  and $\kappa(\vecv)+\vecu_0\in\setB_{t}(\vecu_0,\rho)\subseteq\setU$ that $\rank((D\xi)(\kappa(\vecv)+\vecu_0))=t$ for all $\vecv\in\reals^t$.\label{jac2}
\end{enumerate}
Applying  Lemma \ref{lem:sylvester} to 
$(D\xi)(\kappa(\vecv)+\vecu_0)\in\reals^{s\times t}$ and $D\kappa(\vecv)\in\reals^{t\times t}$, and using 
\ref{jac1} and \ref{jac2} above  yields $\rank(D\zeta(\vecv))\geq t$ for all $\vecv\in\reals^t$,  which  in turn  implies 
$J\zeta(\vecv)>0$  for all  $\vecv\in\reals^t$, 
thereby concluding the proof. 
\end{proof}

\begin{lem}\label{lem.extend}
For $\rho>0$, the mapping 
\begin{align}
\kappa\colon\reals^k&\to\setB_k(\veczero,\rho)\\
\vecx&\mapsto \frac{\rho\vecx}{\sqrt{1+\|\vecx\|_2^2}}
\end{align}
is a real analytic diffeomorphism on $\reals^k$  satisfying  
\begin{align}
\kappa(\veczero)&=\veczero,\label{eq:kappa0}\\
\kappa(\reals^k)&=\setB_k(\veczero,\rho),\\
\rank(D\kappa(\vecx))&=k\quad\text{for all}\ \vecx\in\reals^k.\label{eq:kappaD0}
\end{align} 
\end{lem}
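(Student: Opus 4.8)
\textbf{Proof plan for Lemma \ref{lem.extend}.}

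The plan is to verify the three displayed identities directly and then establish that $\kappa$ is a real analytic diffeomorphism by exhibiting an explicit real analytic inverse. First I would record that $\|\kappa(\vecx)\|_2 = \rho\|\vecx\|_2/\sqrt{1+\|\vecx\|_2^2} < \rho$ for all $\vecx\in\reals^k$, so $\kappa$ indeed maps into $\setB_k(\veczero,\rho)$; the identity $\kappa(\veczero)=\veczero$ is immediate from the formula. For surjectivity onto $\setB_k(\veczero,\rho)$, I would guess the inverse. Writing $\vecy=\kappa(\vecx)$, one has $\|\vecy\|_2^2 = \rho^2\|\vecx\|_2^2/(1+\|\vecx\|_2^2)$, which solves to $\|\vecx\|_2^2 = \|\vecy\|_2^2/(\rho^2-\|\vecy\|_2^2)$ and hence $1+\|\vecx\|_2^2 = \rho^2/(\rho^2-\|\vecy\|_2^2)$; substituting back gives the candidate
\begin{align}
\lambda\colon\setB_k(\veczero,\rho)&\to\reals^k\\
\vecy&\mapsto \frac{\vecy}{\sqrt{1-\|\vecy\|_2^2/\rho^2}}.
\end{align}
I would then check by direct computation that $\lambda\circ\kappa=\mathrm{id}_{\reals^k}$ and $\kappa\circ\lambda=\mathrm{id}_{\setB_k(\veczero,\rho)}$, which simultaneously yields injectivity of $\kappa$ and the identity $\kappa(\reals^k)=\setB_k(\veczero,\rho)$.

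Next I would address real analyticity. The function $\vecx\mapsto 1+\|\vecx\|_2^2$ is a polynomial on $\reals^k$, hence real analytic by Corollary \ref{cor:poly}, and it is strictly positive everywhere; by Lemma \ref{lem.sqrt} the square root is real analytic on $\reals_+$, so $\vecx\mapsto\sqrt{1+\|\vecx\|_2^2}$ is real analytic as a composition (Corollary \ref{lem:analyticcomp}, applied coordinatewise, or Lemma \ref{lem:analyticcomp1} locally). Since this denominator is bounded below by $1$ and never vanishes, Lemma \ref{lem:analyticalg} shows each component $\kappa_i(\vecx)=\rho x_i/\sqrt{1+\|\vecx\|_2^2}$ is real analytic on $\reals^k$, so $\kappa$ is real analytic in the sense of Definition \ref{def:real}. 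The same argument applied to $\lambda$ (whose denominator $\sqrt{1-\|\vecy\|_2^2/\rho^2}$ is real analytic and strictly positive on the open ball $\setB_k(\veczero,\rho)$) shows $\lambda$ is real analytic on $\setB_k(\veczero,\rho)$. Because $\setB_k(\veczero,\rho)=\kappa(\reals^k)$ is open and $\kappa^{-1}=\lambda$ is real analytic on it, $\kappa$ is a real analytic diffeomorphism on $\reals^k$ by definition.

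Finally, for \eqref{eq:kappaD0} I would compute the Jacobian. Set $g(\vecx)=(1+\|\vecx\|_2^2)^{-1/2}$, so $\kappa(\vecx)=\rho\, g(\vecx)\vecx$ and, by the product and chain rules, $D\kappa(\vecx)=\rho g(\vecx)\matI_k - \rho g(\vecx)^3\,\vecx\tp{\vecx}$, i.e.\ $D\kappa(\vecx)=\rho g(\vecx)\bigl(\matI_k - g(\vecx)^2\vecx\tp{\vecx}\bigr)$. The matrix $\matI_k - g(\vecx)^2\vecx\tp{\vecx}$ is a rank-one perturbation of the identity whose only possibly-nontrivial eigenvalue is $1-g(\vecx)^2\|\vecx\|_2^2 = 1 - \|\vecx\|_2^2/(1+\|\vecx\|_2^2) = 1/(1+\|\vecx\|_2^2) > 0$; hence it is invertible, and since $\rho g(\vecx)>0$, the matrix $D\kappa(\vecx)$ is invertible for every $\vecx\in\reals^k$, giving $\rank(D\kappa(\vecx))=k$. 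Alternatively, invertibility of $D\kappa(\vecx)$ for all $\vecx$ is automatic once $\kappa$ is known to be a diffeomorphism, by differentiating $\lambda\circ\kappa=\mathrm{id}$ via the chain rule; I would include the explicit eigenvalue computation as the cleaner self-contained route. There is no real obstacle here — the only mild care needed is bookkeeping in the two composition identities $\lambda\circ\kappa=\mathrm{id}$ and $\kappa\circ\lambda=\mathrm{id}$, which is where an arithmetic slip would hide, so I would write those out in full.
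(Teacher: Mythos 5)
Your plan is essentially the paper's proof: exhibit an explicit inverse, verify the two composition identities, and deduce real analyticity from Lemmata \ref{lem:analyticalg}, \ref{lem:analyticcomp1}, and \ref{lem.sqrt}. The explicit eigenvalue computation of $D\kappa$ is a welcome self-contained alternative to the paper's chain-rule argument for \eqref{eq:kappaD0}.

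There is, however, a factor-of-$\rho$ error in your inverse formula. Your own derivation yields $\vecx = \vecy/\sqrt{\rho^2-\|\vecy\|_2^2}$ (from $\vecx = \vecy\sqrt{1+\|\vecx\|_2^2}/\rho$ together with $\sqrt{1+\|\vecx\|_2^2} = \rho/\sqrt{\rho^2-\|\vecy\|_2^2}$), but you then write $\lambda(\vecy) = \vecy/\sqrt{1-\|\vecy\|_2^2/\rho^2} = \rho\vecy/\sqrt{\rho^2-\|\vecy\|_2^2}$, which is $\rho$ times too large. Indeed, a quick check gives $\lambda(\kappa(\vecx)) = \rho\vecx \neq \vecx$ unless $\rho=1$: since $1 - \|\kappa(\vecx)\|_2^2/\rho^2 = 1/(1+\|\vecx\|_2^2)$, one gets $\lambda(\kappa(\vecx)) = \kappa(\vecx)\sqrt{1+\|\vecx\|_2^2} = \rho\vecx$. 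You explicitly flag the composition identities as the place an arithmetic slip would hide, so this would be caught when you write them out; the correct inverse is $\sigma(\vecy) = \vecy/\sqrt{\rho^2-\|\vecy\|_2^2}$, which is what the paper uses.
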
 
\begin{proof}
It follows   from the definition of $\kappa$ that $\kappa(\veczero)=\veczero$.
The mapping $\kappa$ is real analytic thanks to Lemmata \ref{lem:analyticalg}, \ref{lem:analyticcomp1}, and \ref{lem.sqrt}.
Now, consider the mapping 
\begin{align}
\sigma\colon\setB_k(\veczero,\rho)&\to \reals^k\\
\vecy&\mapsto \frac{\vecy}{\sqrt{\rho^2-\|\vecy\|_2^2}}. 
\end{align}
Again, $\sigma$ is  real analytic thanks to Lemmata \ref{lem:analyticalg}, \ref{lem:analyticcomp1}, and \ref{lem.sqrt}. Since $(\kappa\circ\sigma)(\vecy)=\vecy$ for all $\vecy\in\setB_k(\veczero,\rho)$, it follows that $\kappa(\reals^k)=\setB_k(\veczero,\rho)$. 
Moreover, 
as $(\sigma\circ\kappa)(\vecx)=\vecx$ for all $\vecx\in\reals^k$, $\sigma$ is the inverse of $\kappa$, which establishes  that $\kappa$ is a real analytic diffeomorphism on $\reals^k$.  
Finally, since $(\sigma\circ\kappa)(\vecx)=\vecx$ for all $\vecx\in\reals^k$, the chain rule implies  $\matI_k=D(\sigma\circ\kappa)(\vecx)=(D\sigma)(\kappa(\vecx))D\kappa(\vecx)$ for all $\vecx\in\reals^k$, which yields 
\eqref{eq:kappaD0}. 
\end{proof}

\begin{prp}\cite[Proposition 2.7.3]{krpa92}\label{prp:analyticmanifold}
Let $\setM\subseteq\reals^n$. The following statements are equivalent:
\begin{enumerate}[label=\roman*)]
\item $\setM$ is an $m$-dimensional real analytic submanifold of $\reals^n$.
\item For each $\vecz\in\setM$ there exist an open set $\setU\subseteq\reals^n$ with $\vecz\in\setU$, a real analytic diffeomorphism $\rho\colon\setU\to\reals^n$, and an $m$-dimensional linear subspace $\setL\subseteq \reals^n$, such that
\begin{align}
\rho(\setM\cap\setU)=\rho(\setU)\cap\setL.
\end{align} \label{prop2analyticmanifold}
\end{enumerate}
\end{prp}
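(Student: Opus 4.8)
The plan is to prove the two implications (i)$\Leftrightarrow$\ref{prop2analyticmanifold} separately. In both directions I would first observe that composing with a fixed linear isomorphism of $\reals^n$ changes neither hypothesis nor conclusion — a linear isomorphism is a real analytic diffeomorphism, and composing a real analytic immersion with it is again a real analytic immersion since its differential only gets left-multiplied by an invertible matrix — so I may reduce to the coordinate splitting $\reals^n=\reals^m\times\reals^{n-m}$ with $\setL=\reals^m\times\{\veczero\}$.

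For \ref{prop2analyticmanifold} $\Rightarrow$ (i): given $\vecz\in\setM$, take $\setU,\rho,\setL$ as in \ref{prop2analyticmanifold} (after the linear reduction). I would set $\setW=\rho(\setU)$, which is open by the definition of a real analytic diffeomorphism, let $\setV=\{\vecy\in\reals^m:(\vecy,\veczero)\in\setW\}$, an open subset of $\reals^m$, and define the candidate parametrization $f\colon\setV\to\reals^n$ by $f(\vecy)=\rho^{-1}(\vecy,\veczero)$. It is real analytic as a composition of $\rho^{-1}$ with the polynomial inclusion $\vecy\mapsto(\vecy,\veczero)$ (Corollary \ref{cor:poly} and Corollary \ref{lem:analyticcomp}). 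Since $Df(\vecy)=D\rho^{-1}(\vecy,\veczero)\,\tp{(\matI_m\ \matzero)}$ is the product of an invertible $n\times n$ matrix with an $n\times m$ matrix of rank $m$, it has rank $m$, so $f$ is an immersion. One then checks $f(\setV)=\rho^{-1}(\setW\cap\setL)=\rho^{-1}(\rho(\setU)\cap\setL)=\rho^{-1}(\rho(\setM\cap\setU))=\setM\cap\setU\ni\vecz$, and that for any open $\setO\subseteq\setV$ one has $f(\setO)=\rho^{-1}\big((\setO\times\reals^{n-m})\cap\setW\big)\cap(\setM\cap\setU)$, i.e. $\setM$ intersected with an open set, hence relatively open in $\setM$. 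This is exactly Definition \ref{dfn:analyticmanifold}.

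For (i) $\Rightarrow$ \ref{prop2analyticmanifold}: given $\vecz\in\setM$, take a parametrization $f=(f',f'')\colon\setU_0\to\reals^m\times\reals^{n-m}$ as in Definition \ref{dfn:analyticmanifold} with $f(\vecu_0)=\vecz$; after the linear reduction I may assume $Df'(\vecu_0)$ invertible. By the real analytic inverse function theorem (Corollary \ref{thm:ivanalytic}) there is an $r>0$ with $f'|_{\setB_m(\vecu_0,r)}$ a real analytic diffeomorphism onto an open set $\setV'\subseteq\reals^m$; let $g=(f'|_{\setB_m(\vecu_0,r)})^{-1}\colon\setV'\to\reals^m$, which is real analytic. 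The key construction is the graph-straightening map $\rho\colon\setV'\times\reals^{n-m}\to\reals^n$, $\rho(\veca,\vecb)=\big(\veca,\ \vecb-f''(g(\veca))\big)$: it is a real analytic bijection of the open set $\setU:=\setV'\times\reals^{n-m}$ with real analytic inverse $(\vecy,\vecw)\mapsto\big(\vecy,\ \vecw+f''(g(\vecy))\big)$ (real analyticity via Lemmas \ref{lem:analyticalg} and \ref{lem:analyticcomp1}), hence a real analytic diffeomorphism, and a direct computation gives $\rho\big(f(\setB_m(\vecu_0,r))\big)=\setV'\times\{\veczero\}=\setL\cap\setU$, while $\vecz\in f(\setB_m(\vecu_0,r))\subseteq\setU$. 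To finish I would invoke the openness-preservation clause of Definition \ref{dfn:analyticmanifold}: since $\setB_m(\vecu_0,r)$ is open in $\setU_0$, there is an open $\setN\subseteq\reals^n$ with $f(\setB_m(\vecu_0,r))=\setN\cap\setM$; replacing $\setU$ by $\setU'=\setU\cap\setN$ and $\rho$ by $\rho|_{\setU'}$ (still a real analytic diffeomorphism onto the open set $\rho(\setU')$) yields $\setM\cap\setU'=f(\setB_m(\vecu_0,r))$, hence $\rho(\setM\cap\setU')=\setV'\times\{\veczero\}$; and a one-line check — any $(\veca,\veczero)\in\rho(\setU')\cap\setL$ has $\veca\in\setV'$ and $\rho^{-1}(\veca,\veczero)=(\veca,f''(g(\veca)))\in f(\setB_m(\vecu_0,r))\cap\setU'=\setM\cap\setU'$ — gives $\rho(\setU')\cap\setL=\rho(\setM\cap\setU')$, which is \ref{prop2analyticmanifold}. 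Composing $\rho$ with the inverse of the initial coordinate isomorphism undoes the reduction.

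The routine parts (real analyticity through the algebra and composition lemmas, the rank computations, and the elementary set identities) are straightforward; the one step that needs genuine care, and which I expect to be the main obstacle, is the last step of (i) $\Rightarrow$ \ref{prop2analyticmanifold}: the graph construction by itself only flattens the single patch $f(\setB_m(\vecu_0,r))$, which a priori is merely a relatively open piece of $\setM\cap\setU$, and the openness-preservation property of the parametrization is precisely what is needed to shrink the neighborhood so that the straightened piece becomes all of $\setM\cap\setU'$.
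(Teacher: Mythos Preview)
The paper does not supply its own proof of this proposition; it is quoted verbatim from \cite[Proposition 2.7.3]{krpa92} and used as a black box (only in the proof of Lemma~\ref{lem:summf}). So there is nothing in the paper to compare against.

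Your argument is correct and is the standard one. Both directions are handled cleanly: the parametrization $f(\vecy)=\rho^{-1}(\vecy,\veczero)$ for \ref{prop2analyticmanifold}$\Rightarrow$(i) and the graph-straightening map $\rho(\veca,\vecb)=(\veca,\vecb-f''(g(\veca)))$ for (i)$\Rightarrow$\ref{prop2analyticmanifold} are exactly the classical constructions. You have also correctly identified and resolved the only genuine subtlety, namely that the straightened patch $f(\setB_m(\vecu_0,r))$ need not a priori exhaust $\setM\cap\setU$, and that the openness-preservation clause in Definition~\ref{dfn:analyticmanifold} is precisely what lets you shrink to $\setU'=\setU\cap\setN$; your verification that $\rho(\setU')\cap\setL=\setV'\times\{\veczero\}=\rho(\setM\cap\setU')$ is complete. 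One cosmetic remark: the final sentence about ``undoing the reduction'' is only needed for (i)$\Rightarrow$\ref{prop2analyticmanifold} (to transport the constructed $\rho$ and $\setL$ back), since in the other direction you are free to exhibit the parametrization in whatever coordinates you like.
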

\vspace*{-9truemm}

Proposition \ref{prp:analyticmanifold} allows us to state the following sufficient condition on the inverse image of a point under a real analytic function to result in a real analytic submanifold. 

\begin{lem}\label{lem:summf}
Let $\psi\colon\reals^s\to\reals$ be a real analytic function, $y_0\in \psi(\reals^s)$, and set  
$\setM_{y_0}=\psi^{-1}(\{y_0\})$. Suppose that  $J\psi(\vecz)>0$ for all $\vecz\in\setM_{y_0}$. Then, $\setM_{y_0}$ is an $(s-1)$-dimensional real analytic submanifold of $\reals^s$. 
\end{lem}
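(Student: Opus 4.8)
The statement is a real analytic version of the classical regular value / preimage theorem, so the plan is to reduce it to the ``flattening'' characterization of real analytic submanifolds provided by Proposition \ref{prp:analyticmanifold}\,\ref{prop2analyticmanifold}. Fix an arbitrary $\vecz_0\in\setM_{y_0}$; by hypothesis $J\psi(\vecz_0)>0$, which for a scalar-valued $\psi\colon\reals^s\to\reals$ means $D\psi(\vecz_0)\in\reals^{1\times s}$ has rank one, i.e.\ some partial derivative $\partial\psi(\vecz_0)/\partial z_{i_0}$ is nonzero. The goal is to produce an open set $\setU\subseteq\reals^s$ with $\vecz_0\in\setU$, a real analytic diffeomorphism $\rho\colon\setU\to\reals^s$, and an $(s-1)$-dimensional linear subspace $\setL$ such that $\rho(\setM_{y_0}\cap\setU)=\rho(\setU)\cap\setL$; then Proposition \ref{prp:analyticmanifold} gives the conclusion.

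\textbf{Key steps.} First, define the candidate flattening map. After relabeling coordinates so that $\partial\psi(\vecz_0)/\partial z_s\neq 0$, set
\begin{align}
\rho\colon\reals^s&\to\reals^s\\
\vecz=\tp{(z_1\ \dots\ z_s)}&\mapsto \tp{(z_1\ \dots\ z_{s-1}\ \ \psi(\vecz)-y_0)},
\end{align}
which is real analytic by Corollary \ref{cor:poly} (for the first $s-1$ components) and the real analyticity of $\psi$ together with Lemma \ref{lem:analyticalg}. Compute $D\rho(\vecz_0)$: it is block lower-triangular with the $(s-1)\times(s-1)$ identity in the upper-left block and $\partial\psi(\vecz_0)/\partial z_s\neq 0$ in the bottom-right corner, so $\det D\rho(\vecz_0)\neq 0$ and hence $J\rho(\vecz_0)=|\det D\rho(\vecz_0)|>0$. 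By Corollary \ref{thm:ivanalytic} there is an $r>0$ such that $\rho|_{\setB_s(\vecz_0,r)}$ is a real analytic diffeomorphism onto the open set $\setW=\rho(\setB_s(\vecz_0,r))$. Set $\setU=\setB_s(\vecz_0,r)$.

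Second, identify the flattened picture. Let $\setL=\{\vecw\in\reals^s:w_s=0\}$, an $(s-1)$-dimensional linear subspace. For $\vecz\in\setU$ we have $\vecz\in\setM_{y_0}$ iff $\psi(\vecz)=y_0$ iff $\psi(\vecz)-y_0=0$ iff the last coordinate of $\rho(\vecz)$ is zero, i.e.\ iff $\rho(\vecz)\in\setL$. Intersecting with $\setU$ this reads $\rho(\setM_{y_0}\cap\setU)=\rho(\setU)\cap\setL=\setW\cap\setL$; the first equality uses that $\rho$ is a bijection from $\setU$ onto $\setW$, so applying $\rho$ to the equivalence ``$\vecz\in\setM_{y_0}\cap\setU \Leftrightarrow \rho(\vecz)\in\setL$'' transfers it faithfully. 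Since $\vecz_0\in\setM_{y_0}$ was arbitrary, Proposition \ref{prp:analyticmanifold} yields that $\setM_{y_0}$ is an $(s-1)$-dimensional real analytic submanifold of $\reals^s$. (Note $\setM_{y_0}\neq\emptyset$ since $y_0\in\psi(\reals^s)$, so there is at least one such $\vecz_0$ to test.)

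\textbf{Main obstacle.} There is no deep difficulty here; the only point requiring a little care is the passage from ``$J\psi(\vecz_0)>0$'' to ``$D\rho(\vecz_0)$ is invertible.'' One must use that for the scalar map $\psi$ the Jacobian $J\psi$ in the sense of \eqref{eq:Jacobiandef} (the $1$-dimensional Jacobian, so the ``$l\geq k$'' branch does not apply; rather $l=1<k=s$ and $J\psi=\sqrt{\det(D\psi\,\tp{(D\psi)})}=\|D\psi\|_2$) is positive exactly when the gradient $\nabla\psi(\vecz_0)=\tp{(D\psi(\vecz_0))}$ is nonzero, hence some $\partial\psi(\vecz_0)/\partial z_i\neq 0$, which after the coordinate relabeling is the $z_s$-derivative; this is what makes the bottom-right entry of the triangular matrix $D\rho(\vecz_0)$ nonzero. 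Everything else---real analyticity of $\rho$, the local diffeomorphism property via Corollary \ref{thm:ivanalytic}, and the set-theoretic verification of the flattening identity---is routine.
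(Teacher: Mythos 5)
Your proof is correct and takes essentially the same route as the paper: both reduce the claim to the flattening characterization in Proposition \ref{prp:analyticmanifold}\,\ref{prop2analyticmanifold} and build a local real analytic diffeomorphism $\rho$ whose one ``extra'' coordinate records $\psi$, invoking Corollary \ref{thm:ivanalytic} to get the local inverse. The only cosmetic difference is in how the remaining $s-1$ coordinates of $\rho$ are chosen: the paper completes the row vector $D\psi(\vecz_0)$ to an arbitrary invertible matrix $\matA$ and uses the corresponding linear functionals (which avoids any coordinate relabeling), while you permute coordinates so that $\partial\psi/\partial z_s\neq 0$ and reuse the standard coordinate projections; both yield the same triangular/regular Jacobian argument, and your per-point relabeling is harmless because the flattening condition is purely local.
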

\begin{proof}
We may assume, w.l.o.g., that  $y_0=0$ (if $y_0\neq0$, we set  $\tilde\psi(\vecz)=\psi(\vecz)-y_0$ and prove the lemma with  $\tilde\psi$ in place of $\psi$, noting that $\setM_{y_0}=\tilde\psi^{-1}(\{0\})$ and $J\tilde\psi(\vecz)=J\psi(\vecz)$ for all $\vecz\in\reals^s$).  
The proof is effected by verifying that  $\setM_0$ satisfies Statement \ref{prop2analyticmanifold} of Proposition \ref{prp:analyticmanifold}. 
Let $\vecz_0\in \setM_0$ be arbitrary but fixed and set $\tp{\veca_1}=D\psi(\vecz_0)$.
Since $J\psi(\vecz_0)>0$ by assumption, we must have $\veca_1\neq \veczero$. 
Choose  $\veca_2,\dots,\veca_s\in\reals^s$ such that $\matA=\tp{(\veca_1\mydots\veca_s)}$ is a regular matrix  
and  consider the mapping 
\begin{align}
\rho\colon\reals^s&\to\reals^s\\
       \vecz&\mapsto \tp{\big(\psi(\vecz)\ \tp{\veca_2}\vecz \mydots \tp{\veca_s}\vecz\big)}. \label{eq:defrho} 
\end{align}
Note that  $D\rho(\vecz_0)=\matA$.   
Since $\rank(\matA)=s$ by construction,  $J\rho(\vecz_0)>0$. 
It therefore follows from 
Corollary \ref{thm:ivanalytic} that there exists an $r>0$ such that $\rho|_{\setB_s(\vecz_0,r)}$ is a real analytic diffeomorphism. 
Finally, we can write 
\begin{align}
\rho(\setM_0\cap\setB_s(\vecz_0,r))
&=\rho(\{\vecz\in\setB_s(\vecz_0,r):\psi(\vecz)=0\})\label{eq:psi1}\\
&=\rho(\{\vecz\in\setB_s(\vecz_0,r):\tp{\vece_1}\rho(\vecz)=0\})\label{eq:psi2}\\
&=\rho(\setB_s(\vecz_0,r))\cap\setL,\label{eq:psi3}
\end{align}
where \eqref{eq:psi1} follows from $\setM_0=\psi^{-1}(\{0\})$,  \eqref{eq:psi2} is by  \eqref{eq:defrho}, 
and in \eqref{eq:psi3} we set $\setL=\{\tp{(w_1\mydots w_s)}\in\reals^s: w_1=0\}$.
\end{proof}

\begin{lem}(Sylvester's inequality)\cite[Chapter 0.4.5, Property (c)]{hojo13}\label{lem:sylvester}
If $\matA\in\reals^{m\times k}$ and $\matB\in\reals^{k\times n}$,  then 
\begin{align}
\rank(\matA)+\rank(\matB)-k
&\leq\rank(\matA\matB)\\
&\leq \min\{\rank(\matA),\rank(\matB)\}.
\end{align}
\end{lem}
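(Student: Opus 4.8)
This is Lemma~\ref{lem:sylvester} (Sylvester's rank inequality), and the excerpt cites it as a known result from Horn and Johnson. Still, I will sketch a self-contained proof.

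\textbf{Approach.} The upper bound is immediate from the fact that the column space of $\matA\matB$ is contained in the column space of $\matA$, and the row space of $\matA\matB$ is contained in the row space of $\matB$; hence $\rank(\matA\matB)\leq \rank(\matA)$ and $\rank(\matA\matB)\leq\rank(\matB)$. The interesting half is the lower bound $\rank(\matA)+\rank(\matB)-k\leq\rank(\matA\matB)$. The plan is to argue via the rank--nullity theorem applied to the linear map $\matA$ restricted to the column space of $\matB$.

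\textbf{Key steps for the lower bound.} First, view $\matB\colon\reals^n\to\reals^k$ and $\matA\colon\reals^k\to\reals^m$ as linear maps, so that $\matA\matB\colon\reals^n\to\reals^m$. Let $\setR(\matB)\subseteq\reals^k$ denote the column space (range) of $\matB$, which has dimension $\rank(\matB)$. Consider the restriction $\matA|_{\setR(\matB)}\colon\setR(\matB)\to\reals^m$. Its image is exactly $\setR(\matA\matB)$, so by rank--nullity applied to this restricted map,
\begin{align}
\rank(\matA\matB)=\dim\setR(\matB)-\dim\big(\ker(\matA)\cap\setR(\matB)\big)=\rank(\matB)-\dim\big(\ker(\matA)\cap\setR(\matB)\big).
\end{align}
Next, bound $\dim(\ker(\matA)\cap\setR(\matB))\leq\dim\ker(\matA)=k-\rank(\matA)$, using rank--nullity for $\matA\colon\reals^k\to\reals^m$ together with monotonicity of dimension under set inclusion. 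Substituting this bound yields
\begin{align}
\rank(\matA\matB)\geq\rank(\matB)-\big(k-\rank(\matA)\big)=\rank(\matA)+\rank(\matB)-k,
\end{align}
which is the claimed inequality. Combining this with the two elementary upper bounds $\rank(\matA\matB)\leq\rank(\matA)$ and $\rank(\matA\matB)\leq\rank(\matB)$ gives the full chain of inequalities in the statement.

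\textbf{Main obstacle.} There is no real obstacle here: the only thing to be a little careful about is the identification $\matA(\setR(\matB))=\setR(\matA\matB)$ (every vector of the form $\matA\matB\vecx$ is $\matA$ applied to the vector $\matB\vecx\in\setR(\matB)$, and conversely), and the correct application of rank--nullity to the \emph{restricted} map $\matA|_{\setR(\matB)}$, whose domain has dimension $\rank(\matB)$ rather than $k$. Since this is a textbook fact invoked only as an auxiliary tool, it would be entirely acceptable in the paper to simply cite \cite[Chapter 0.4.5, Property (c)]{hojo13} without reproducing the argument; the sketch above is what one would write if a self-contained proof were desired.
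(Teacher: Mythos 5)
Your proof is correct: the upper bound follows immediately from the column-space and row-space inclusions, and the lower bound follows from rank--nullity applied to $\matA$ restricted to the column space of $\matB$, together with $\dim\big(\ker(\matA)\cap\setR(\matB)\big)\leq\dim\ker(\matA)=k-\rank(\matA)$. The paper itself gives no proof and simply cites Horn and Johnson, so there is nothing to compare against; your self-contained sketch is a standard and sound argument for this textbook fact.
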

\section*{Acknowledgments}
The authors are grateful to the reviewers for
very insightful comments, which helped  to  improve the paper.

\bibliographystyle{./IEEEtran}
\bibliography{references}

\begin{thebibliography}{10}
\providecommand{\url}[1]{#1}
\csname url@samestyle\endcsname
\providecommand{\newblock}{\relax}
\providecommand{\bibinfo}[2]{#2}
\providecommand{\BIBentrySTDinterwordspacing}{\spaceskip=0pt\relax}
\providecommand{\BIBentryALTinterwordstretchfactor}{4}
\providecommand{\BIBentryALTinterwordspacing}{\spaceskip=\fontdimen2\font plus
\BIBentryALTinterwordstretchfactor\fontdimen3\font minus
  \fontdimen4\font\relax}
\providecommand{\BIBforeignlanguage}[2]{{%
\expandafter\ifx\csname l@#1\endcsname\relax
\typeout{** WARNING: IEEEtran.bst: No hyphenation pattern has been}%
\typeout{** loaded for the language `#1'. Using the pattern for}%
\typeout{** the default language instead.}%
\else
\language=\csname l@#1\endcsname
\fi
#2}}
\providecommand{\BIBdecl}{\relax}
\BIBdecl

\bibitem{albdekori16}
G.~Alberti, H.~B\"olcskei, C.~De~Lellis, G.~Koliander, and E.~Riegler,
  ``Lossless linear analog compression,'' in \emph{{P}roc. {IEEE} {I}nt.
  {S}ymp. {I}nf. {T}heory}, Jul. 2016, pp. 2789--2793.

\bibitem{dost89}
D.~L. Donoho and P.~B. Stark, ``Uncertainty principles and signal recovery,''
  \emph{SIAM J. Appl. Math.}, vol.~49, no.~3, pp. 906--931, Jun. 1989.

\bibitem{do06}
D.~L. Donoho, ``Compressed sensing,'' \emph{{IEEE} {T}rans. {I}nf. {T}heory},
  vol.~52, no.~4, pp. 1289--1306, Apr. 2006.

\bibitem{carota06}
E.~J. Cand\`{e}s, J.~Romberg, and T.~Tao, ``Robust uncertainty principles:
  {E}xact signal reconstruction from highly incomplete frequency information,''
  \emph{{IEEE} {T}rans. {I}nf. {T}heory}, vol.~52, no.~2, pp. 489--509, Feb.
  2006.

\bibitem{caelnera11}
E.~J. Cand\`{e}s, Y.~C. Eldar, D.~Needell, and P.~Randall, ``Compressed sensing
  with coherent and redundant dictionaries,'' \emph{Appl. Comp. Harm. Anal.},
  vol.~31, no.~1, pp. 59--73, Jul. 2011.

\bibitem{kudubo12}
P.~Kuppinger, G.~Durisi, and H.~B\"{o}lcskei, ``Uncertainty relations and
  sparse signal recovery for pairs of general signal sets,'' \emph{{IEEE}
  {T}rans. {I}nf. {T}heory}, vol.~58, no.~1, pp. 263--277, Jan. 2012.

\bibitem{grni03}
R.~Gribonval and M.~Nielsen, ``Sparse representations in unions of bases,''
  \emph{{IEEE} {T}rans. {I}nf. {T}heory}, vol.~49, no.~12, pp. 3320--3325, Jan.
  2003.

\bibitem{elbr02}
M.~Elad and A.~M. Bruckstein, ``A generalized uncertainty principle and sparse
  representation in pairs of bases,'' \emph{{IEEE} {T}rans. {I}nf. {T}heory},
  vol.~48, no.~9, pp. 2558--2567, Sep. 2002.

\bibitem{doel03}
D.~L. Donoho and M.~Elad, ``Optimally sparse representation in general
  (nonorthogonal) dictionaries via $\ell^1$ minimization,'' in \emph{Proc.
  Natl. Acad. Sci. USA}, vol. 100, no.~5, 2003, pp. 2197--2002.

\bibitem{tr04}
J.~A. Tropp, ``Greed is good: Algorithmic results for sparse approximation,''
  \emph{{IEEE} {T}rans. {I}nf. {T}heory}, vol.~50, no.~10, pp. 2231--2242, Oct.
  2004.

\bibitem{el10}
M.~Elad, \emph{{S}parse and {R}edundant {R}epresentations -- {F}rom {T}heory to
  {A}pplications in {S}ignal and {I}mage {P}rocessing}.\hskip 1em plus 0.5em
  minus 0.4em\relax New York, NY: Springer, 2010.

\bibitem{we74}
L.~R. Welch, ``Lower bounds on the maximum cross correlation of signals,''
  \emph{{IEEE} {T}rans. {I}nf. {T}heory}, vol.~20, no.~3, pp. 397--399, May
  1974.

\bibitem{cata06}
E.~J. Cand\`{e}s and T.~Tao, ``Near-optimal signal recovery from random
  projections: Universal encoding strategies?'' \emph{{IEEE} {T}rans. {I}nf.
  {T}heory}, vol.~52, no.~12, pp. 5406--5425, Dec. 2006.

\bibitem{tr08}
J.~A. Tropp, ``On the conditioning of random subdictionaries,'' \emph{Appl.
  Comp. Harm. Anal.}, vol.~25, pp. 1--24, 2008.

\bibitem{pobrst13}
G.~Pope, A.~Bracher, and C.~Studer, ``Probabilistic recovery guarantees for
  sparsely corrupted signals,'' \emph{{IEEE} {T}rans. {I}nf. {T}heory},
  vol.~59, no.~5, pp. 3104--3116, May 2013.

\bibitem{badarowa08}
R.~G. Baraniuk, M.~Davenport, R.~DeVore, and M.~Wakin, ``A simple proof of the
  restricted isometry property for random matrices,'' \emph{Constructive
  Approximation}, vol.~28, no.~3, pp. 253--263, Dec. 2008.

\bibitem{wuve10}
Y.~Wu and S.~Verd\'{u}, ``R\'{e}nyi information dimension: {F}undamental limits
  of almost lossless analog compression,'' \emph{{IEEE} {T}rans. {I}nf.
  {T}heory}, vol.~56, no.~8, pp. 3721--3748, Aug. 2010.

\bibitem{wuve12}
------, ``Optimal phase transitions in compressed sensing,'' \emph{{IEEE}
  {T}rans. {I}nf. {T}heory}, vol.~58, no.~10, pp. 6241--6263, Oct. 2012.

\bibitem{lits18}
E.~Lindenstrauss and M.~Tsukamoto, ``From rate distortion theory to metric mean
  dimension: {V}ariational principle,'' \emph{{IEEE} {T}rans. {I}nf. {T}heory},
  vol.~64, no.~5, pp. 3590--3609, May 2018.

\bibitem{veve18}
A.~Velozo and R.~Velozo, ``Rate distortion theory, metric mean dimension and
  measure theoretic entropy,'' \emph{arXiv:1707.05762}, 2018.

\bibitem{gusp18}
Y.~Gutman and A.~\'Spiewak, ``Metric mean dimension and analog compression,''
  \emph{arXiv:1812.00458}, 2018.

\bibitem{striagbo15}
D.~Stotz, E.~Riegler, E.~Agustsson, and H.~B\"olcskei, ``Almost lossless analog
  signal separation and probabilistic uncertainty relations,'' \emph{{IEEE}
  {T}rans. {I}nf. {T}heory}, vol.~63, no.~9, pp. 5445--5460, Sep. 2017.

\bibitem{kopirihl15}
G.~Koliander, G.~Pichler, E.~Riegler, and F.~Hlawatsch, ``Entropy and source
  coding for integer-dimensional singular random variables,'' \emph{{IEEE}
  {T}rans. {I}nf. {T}heory}, vol.~62, no.~11, pp. 6124--6154, Nov. 2016.

\bibitem{elmi09}
Y.~C. Eldar and M.~Mishali, ``Robust recovery of signals from a structured
  union of subspaces,'' \emph{{IEEE} {T}rans. {I}nf. {T}heory}, vol.~55,
  no.~11, pp. 5302--5316, Nov. 2009.

\bibitem{baceduhe10}
R.~G. Baraniuk, V.~Cevher, M.~F. Duarte, and C.~Hegde, ``Model-based
  compressive sensing,'' \emph{{IEEE} {T}rans. {I}nf. {T}heory}, vol.~56,
  no.~4, pp. 1982--2001, Apr. 2010.

\bibitem{febr96}
P.~Feng and Y.~Bresler, ``Spectrum-blind minimum-rate sampling and
  reconstruction of multiband signals,'' in \emph{{P}roc. {IEEE} {I}nt. {C}onf.
  {A}coust. {S}peech {S}ignal {P}rocess.}, May 1996, pp. 1688--1691.

\bibitem{miel09}
M.~Mishali and Y.~C. Eldar, ``Blind multiband signal reconstruction:
  {C}ompressed sensing for analog signals,'' \emph{{IEEE} {T}rans. {S}ignal
  {P}rocess.}, vol.~57, no.~3, pp. 993--1009, Mar. 2009.

\bibitem{bawa09}
R.~G. Baraniuk and M.~B. Wakin, ``Random projections of smooth manifolds,''
  \emph{Found. Comput. Math.}, vol.~9, no.~1, pp. 51--77, Feb 2009.

\bibitem{stpaha09}
M.~Stojnic, F.~Parvaresh, and B.~Hassibi, ``On the reconstruction of
  block-sparse signals with an optimal number of measurements,'' \emph{{IEEE}
  {T}rans. {S}ignal {P}rocess.}, vol.~57, no.~8, pp. 3075--3085, Aug. 2009.

\bibitem{elkubo10}
Y.~C. Eldar, P.~Kuppinger, and H.~B\"olcskei, ``Block-sparse signals:
  {U}ncertainty relations and efficient recovery,'' \emph{{IEEE} {T}rans.
  {S}ignal {P}rocess.}, vol.~58, no.~6, pp. 3042--3054, Jun. 2010.

\bibitem{huzh10}
J.~Huang and T.~Zhang, ``The benefit of group sparsity,'' \emph{Ann. Stat.},
  vol.~38, no.~4, pp. 1978--2004, 2010.

\bibitem{care09}
E.~J. Cand\`{e}s and B.~Recht, ``Exact matrix completion via convex
  optimization,'' \emph{Found. Comput. Math.}, vol.~9, no.~6, pp. 717--772,
  2009.

\bibitem{capl11}
E.~J. Cand\`{e}s and Y.~Plan, ``Tight oracle inequalities for low-rank matrix
  recovery from a minimal number of noisy random measurements,'' \emph{{IEEE}
  {T}rans. {I}nf. {T}heory}, vol.~4, no.~57, pp. 2342--2359, Apr. 2011.

\bibitem{ristbo15}
E.~Riegler, D.~Stotz, and H.~B\"olcskei, ``Information-theoretic limits of
  matrix completion,'' in \emph{{P}roc. {IEEE} {I}nt. {S}ymp. {I}nf. {T}heory},
  Jun. 2015, pp. 1836--1840.

\bibitem{hojo91}
R.~A. Horn and C.~R. Johnson, \emph{{T}opics in {M}atrix {A}nalysis}.\hskip 1em
  plus 0.5em minus 0.4em\relax Cambridge, U.K.: Cambridge Univ. Press, 1991.

\bibitem{krpa08}
S.~G. Krantz and H.~R. Parks, \emph{Geometric {I}ntegration {T}heory}.\hskip
  1em plus 0.5em minus 0.4em\relax Princeton, NJ: {B}irkh{\"a}user, 2008.

\bibitem{ro11}
J.~C. Robinson, \emph{{D}imensions, {E}mbeddings, and {A}ttractors}.\hskip 1em
  plus 0.5em minus 0.4em\relax Cambridge, NY: Cambridge University Press, 2011.

\bibitem{fa14}
K.~Falconer, \emph{Fractal Geometry}, 3rd~ed.\hskip 1em plus 0.5em minus
  0.4em\relax New York, NY: Wiley, 2014.

\bibitem{kade94}
T.~Kawabata and A.~Dembo, ``The rate-distortion dimension of sets and
  measures,'' \emph{{IEEE} {T}rans. {I}nf. {T}heory}, vol.~40, no.~5, pp.
  1564--1572, Sep. 1994.

\bibitem{fed69}
H.~Federer, \emph{{G}eometric {M}easure {T}heory}.\hskip 1em plus 0.5em minus
  0.4em\relax New York, NY: Springer, 1969.

\bibitem{st76}
K.~Stromberg, ``An elementary proof of {S}teinhaus's theorem,'' in \emph{Proc.
  of the American Mathematical Society}, vol.~36, no.~1, Nov. 1972, p. 308.

\bibitem{ru89}
W.~Rudin, \emph{{P}rinciples of {M}athematical {A}nalysis}, 3rd~ed.\hskip 1em
  plus 0.5em minus 0.4em\relax New York, NY: McGraw-Hill, 1989.

\bibitem{le13}
J.~M. Lee, \emph{Introduction to Smooth Manifolds}, 2nd~ed.\hskip 1em plus
  0.5em minus 0.4em\relax New York, NY: Springer, 2013.

\bibitem{fo99}
G.~Folland, \emph{Real Analysis: Modern Techniques and Their Applications},
  2nd~ed.\hskip 1em plus 0.5em minus 0.4em\relax New York, NY: Wiley, 2013.

\bibitem{so14}
H.~H. Sohrab, \emph{Basic Real Analysis}, 2nd~ed.\hskip 1em plus 0.5em minus
  0.4em\relax New York, NY: Birkh\"auser, 2014.

\bibitem{ma99}
P.~Mattila, \emph{Geometry of Sets and Measures in Euclidean Space: Fractals
  and Rectifiability}.\hskip 1em plus 0.5em minus 0.4em\relax Cambridge, UK:
  Cambridge Univ. Press, 1995.

\bibitem{amfupa00}
L.~Ambrosio, N.~Fusco, and D.~Pallara, \emph{Functions of {B}ounded {V}ariation
  and {F}ree {D}iscontinuity {P}roblems}.\hskip 1em plus 0.5em minus
  0.4em\relax New York, NY: Oxford Univ. Press, 2000.

\bibitem{evga15}
L.~C. Evans and R.~F. Gariepy, \emph{{M}easure {T}heory and {F}inite
  {P}roperties of {F}unctions}, revised~ed.\hskip 1em plus 0.5em minus
  0.4em\relax Boca Raton, FL: CRC Press, 2015.

\bibitem{fe78}
H.~Federer, ``Colloquium lectures on geometric measure theory,'' \emph{Bull.
  Amer. Math. Soc.}, vol.~84, no.~3, pp. 291--338, May 1978.

\bibitem{ba95}
R.~G. Bartle, \emph{{T}he {E}lements of {I}ntegration and {L}ebesgue
  {Measure}}.\hskip 1em plus 0.5em minus 0.4em\relax New York, NY: Wiley, 1995.

\bibitem{ta11}
T.~Tao, \emph{An {I}ntroduction to {M}easure {T}heory}.\hskip 1em plus 0.5em
  minus 0.4em\relax Providence RI: American Mathematical Society, 2011.

\bibitem{gapa16}
L.~Gasi\'nski and N.~S. Papageorgiou, \emph{{E}xercises in {A}nalysis-{P}art 2:
  {N}onlinear {A}nalysis}, ser. {P}roblem {B}ooks in {M}athematics.\hskip 1em
  plus 0.5em minus 0.4em\relax Cham, Switzerland: Springer, 2016.

\bibitem{ma95}
T.-W. Ma, \emph{Classical Analysis on Normed Spaces}.\hskip 1em plus 0.5em
  minus 0.4em\relax Singapore: World Scientific Pub. Co., 1995.

\bibitem{wa71}
F.~W. Warner, \emph{Foundations of Differentiable Manifolds and Lie
  Groups}.\hskip 1em plus 0.5em minus 0.4em\relax New York, NY: Springer, 1971.

\bibitem{ed73}
C.~H. Edwards, \emph{{A}dvanced {C}alculus of {S}everal {V}ariables}.\hskip 1em
  plus 0.5em minus 0.4em\relax New York, NY: Academic Press, Inc., 1973.

\bibitem{sa42}
A.~Sard, ``The measure of the critical values of differentiable maps,''
  \emph{Bull. Amer. Math. Soc.}, vol.~48, no.~12, pp. 883--890, Dec. 1942.

\bibitem{rowe98}
R.~T. Rockafellar and R.~J.-B. Wets, \emph{Variational Analysis}, 3rd~ed.\hskip
  1em plus 0.5em minus 0.4em\relax Heidelberg, Germany: Springer, 2009.

\bibitem{krpa92}
S.~G. Krantz and H.~R. Parks, \emph{{A} {P}rimer of {R}eal {A}nalytic
  {Functions}}, 2nd~ed.\hskip 1em plus 0.5em minus 0.4em\relax Basel,
  Switzerland: {B}irkh{\"a}user, 2002.

\bibitem{hojo13}
R.~A. Horn and C.~R. Johnson, \emph{{M}atrix {A}nalysis}, 2nd~ed.\hskip 1em
  plus 0.5em minus 0.4em\relax Cambridge, U.K.: Cambridge Univ. Press, 2013.

\end{thebibliography}

\end{document}